\definecolor{Red}{rgb}{1,0,0}
\definecolor{Blue}{rgb}{0,0,1}
\definecolor{Olive}{rgb}{0.41,0.55,0.13}
\definecolor{Yarok}{rgb}{0,0.5,0}
\definecolor{Green}{rgb}{0,1,0}
\definecolor{MGreen}{rgb}{0,0.8,0}
\definecolor{DGreen}{rgb}{0,0.55,0}
\definecolor{Yellow}{rgb}{1,1,0}
\definecolor{Cyan}{rgb}{0,1,1}
\definecolor{Magenta}{rgb}{1,0,1}
\definecolor{Orange}{rgb}{1,.5,0}
\definecolor{Violet}{rgb}{.5,0,.5}
\definecolor{Purple}{rgb}{.75,0,.25}
\definecolor{Brown}{rgb}{.75,.5,.25}
\definecolor{Grey}{rgb}{.5,.5,.5}
\newcommand{\ind}{\mathbbm{1}}
\newcommand{\Gr}{\mathbb{G}}
\newcommand{\E}[1]{\mathbb{E}\left[#1\right]}
\newcommand{\G}{\mathbb{G}}
\newcommand{\Z}{\mathbb{Z}}
\newcommand{\R}{\mathbb{R}}
\newcommand{\N}{\mathbb{N}}
\renewcommand{\Z}{\mathbb{Z}}
\newcommand\given[1][]{\:#1\vert\:}
\newcommand{\rvline}{\hspace*{-\arraycolsep}\vline\hspace*{-\arraycolsep}}
\newcommand{\ip}[2]{\langle{#1},{#2}\rangle} 
\newcommand{\ipbig}[2]{\left\langle{#1},{#2}\right\rangle}
\newcommand{\pr}{\mathbb{P}}
\renewcommand{\E}[1]{\mathbb{E}\!\left[#1\right]}
\renewcommand{\R}{\mathbb{R}}
\renewcommand{\Z}{\mathbb{Z}}
\renewcommand{\G}{\mathbb{G}}
\newcommand{\distr}{\stackrel{d}{=}}
\newcommand{\npp}{\texttt{NPP} }
\newcommand{\nppp}{\texttt{NPP}}
\newcommand{\vbp}{\texttt{VBP} }
\newcommand{\bincube}{\mathcal{B}_n}
\newcommand{\A}{\mathcal{A}}
\newcommand{\Overlap}{\mathcal{\mathcal{O}}}
\newcommand{\OBar}{\overline{\mathcal{\mathcal{O}}}}
\newcommand{\ignore}[1]{\relax}
\newlength\myindent
\newtheorem{theorem}{Theorem}[section]
\newtheorem{remark}[theorem]{Remark}
\newtheorem{lemma}[theorem]{Lemma}
\newtheorem{proposition}[theorem]{Proposition}
\newtheorem{definition}[theorem]{Definition}
\newcommand{\compgap}{\emph{statistical-to-computational gap} }
\newcommand{\compgapp}{\emph{statistical-to-computational gap}}
\newcommand{\ogp}{\emph{Overlap Gap Property} }
\newcounter{parentnumber}
\def\BState{\State\hskip-\ALG@thistlm}
\definecolor{Red}{rgb}{1,0,0}
\definecolor{Blue}{rgb}{0,0,1}
\definecolor{Olive}{rgb}{0.41,0.55,0.13}
\definecolor{Green}{rgb}{0,1,0}
\definecolor{MGreen}{rgb}{0,0.8,0}
\definecolor{DGreen}{rgb}{0,0.55,0}
\definecolor{Yellow}{rgb}{1,1,0}
\definecolor{Cyan}{rgb}{0,1,1}
\definecolor{Magenta}{rgb}{1,0,1}
\definecolor{Orange}{rgb}{1,.5,0}
\definecolor{Violet}{rgb}{.5,0,.5}
\definecolor{Purple}{rgb}{.75,0,.25}
\definecolor{Brown}{rgb}{.75,.5,.25}
\definecolor{Grey}{rgb}{.5,.5,.5}
\definecolor{Pink}{rgb}{1,0,1}
\definecolor{DBrown}{rgb}{.5,.34,.16}
\definecolor{Black}{rgb}{0,0,0}
\author{{\sf David Gamarnik}\thanks{MIT; e-mail: {\tt gamarnik@mit.edu}. Research supported  by the NSF grants DMS-2015517.}
\and
{\sf Eren C. K{\i}z{\i}lda\u{g}}\thanks{MIT; e-mail: {\tt kizildag@mit.edu}.}
}
\begin{document}

\title{Algorithmic Obstructions in the Random Number Partitioning Problem}
\date{\today}

\maketitle
\begin{abstract}
We consider the algorithmic problem of finding a near-optimal solution for the number partitioning problem (\nppp). This problem appears in many practical applications, including the design of randomized controlled trials, multiprocessor scheduling, and cryptography; and is also of  theoretical significance. %in theoretical computer science, statistical mechanics, and discrepancy theory. 
The \npp possesses the so-called \compgapp: when its input $X$ has distribution $\mathcal{N}(0,I_n)$, the optimal value of the \npp is $\Theta\left(\sqrt{n}2^{-n}\right)$ w.h.p.; whereas the best polynomial-time algorithm achieves the objective value of only $2^{-\Theta(\log^2 n)}$, w.h.p. %In this paper, we initiate the rigorous study of the nature of this gap by studying the landscape of the \nppp. 

In this paper,  we initiate the  study of the nature of this gap. Inspired by insights from statistical physics, 
we study the landscape of the \npp and establish the presence of the Overlap Gap Property (OGP), 
an intricate geometric property which is known to be a rigorous evidence of an algorithmic hardness for large classes of algorithms. By leveraging the OGP, we establish that (a)  any sufficiently stable algorithm, appropriately defined, fails to find a near-optimal solution with energy 
below $2^{-\omega(n \log^{-1/5} n)}$; 
and (b)  a very natural Markov Chain Monte Carlo dynamics fails for find  near-optimal solutions. Our simulation results suggest that the state of the art algorithm achieving  the value $2^{-\Theta(\log^2 n)}$ is indeed stable, but formally verifying this is left as an open
problem.

OGP regards the overlap structure of $m-$tuples of  solutions achieving a certain objective value. 
When $m$ is constant we prove the presence of OGP for the objective values
of order
$2^{-\Theta(n)}$, and the absence of it in the regime $2^{-o(n)}$. Interestingly, though, by considering overlaps with growing values of $m$
we prove the presence of the OGP up to the level $2^{-\omega(\sqrt{n\log n})}$. Our proof  of the failure of stable algorithms at
values $2^{-\omega(n \log^{-1/5} n)}$ employs
methods from Ramsey Theory from the extremal combinatorics, and is of independent interest.
\end{abstract}
\newpage
\tableofcontents
\newpage
\section{Introduction}
In this paper, we study the number partitioning problem (\texttt{NPP}): given $n$ ``items" with associated weights (where $n$ is a positive integer), partition them into two ``bins", $A$ and $B$, such that the subset sums corresponding to $A$ and $B$ are as close as possible. More formally, given $n$ numbers $X_i\in\mathbb{R}$, $1\le i\le n$; find a subset $A\subset [n]\triangleq \{1,2,\dots,n\}$ such that the discrepancy
$
\mathcal{D}(A) \triangleq \left|\sum_{i\in A}X_i - \sum_{i\in A^c}X_i\right|$
is minimized. Encoding the membership $X_i\in A$ as a $+1$  and $X_i\in B$ as a $-1$; \npp can equivalently be posed as a combinatorial optimization problem over the binary cube $\bincube\triangleq \{-1,1\}^n$:
\begin{equation}\label{eq:NPP-main}
   \displaystyle \min_{\sigma\in \bincube}\left|\sum_{1\le i\le n}\sigma_i X_i\right|.
\end{equation}
Our focus is on the algorithmic problem of solving the minimization problem \eqref{eq:NPP-main} ``approximately" and ``efficiently" (in polynomial time) when the numbers $X_i\in\mathbb{R}$, $1\le i\le n$, are i.i.d.\,standard normal. We refer to ${\bf X}=(X_i:1\le i\le n)\in \R^n$ as an \emph{instance} of the \nppp. Moreover, motivated from a statistical physics perspective, we refer to $\sigma\in\bincube$ as a \emph{spin configuration}; and to any approximate minimum $\sigma$ of the problem \eqref{eq:NPP-main} as a \emph{near ground-state}. In the sequel, we slightly abuse the terminology; and use the word ``discrepancy" to refer to the optimal value of the combinatorial optimization problem \npp \eqref{eq:NPP-main} and its high-dimensional variant \eqref{eq:vbp} (see below); as well as to refer to the discrepancy achieved by any partition and the spin configuration induced by this partition. 

\npp is a special case of what is called as the \emph{vector balancing problem} (\texttt{VBP}), where the goal is to minimize the discrepancy 
\begin{equation}\label{eq:vbp}
\mathcal{D}_n\triangleq \min_{\sigma\in\bincube}\left\|\sum_{1\le i\le n}\sigma_i X_i\right\|_\infty
\end{equation}
of a collection $X_i\in\R^d$, $1\le i\le n$, of vectors. This problem is at the heart of a very important application in statistics, dubbed as \emph{randomized controlled trials}, which is often considered to be the gold standard for clinical trials \cite{krieger2019nearly,harshaw2019balancing}. Consider $n$ individuals participating in a randomized study that seeks inference for an additive treatment effect. Each individual $i$, $1\le i\le n$, has associated with them a set of covariate information $X_i\in\R^d$, a vector carrying the statistics relevant to them such as their age, weight, height, and so on. The individuals are divided into two groups, the treatment group (denoted by a $+$) and the control group (denoted by a $-$). Each group is then subject to a different condition; and a response is evaluated. Based on this response, one seeks to infer the effect of the treatment. To ensure accurate inference based on the response, it is desirable for the groups to have roughly the same covariates. See the very recent work on the design of such randomized controlled experiments by Harshaw, S{\"a}vje, Spielman, and Zhang \cite{harshaw2019balancing} (and the references therein) for a more elaborate discussion on this front. 

Besides its significance in statistics, \npp appears in many other practical applications.\,One such application is the \emph{multiprocessor scheduling}: each item represents the running time of a certain job and each bin represents a group of items that are run on the same processor in a multiprocessor environment \cite{tsai1992asymptotic}. Other practical applications of the \npp include minimizing the size and the delay of VLSI circuits \cite{coffman1991probabilistic,tsai1992asymptotic}, and the so-called Merkle-Hellman cryptosystem \cite{merkle1978hiding}, one of the earliest public key cryptosystem. For more practical applications of \texttt{NPP}, see the book by Coffman and Lueker \cite{coffman1991probabilistic}. 

In addition to its important role in statistics and its wide practical applications, \npp is also of great theoretical importance, especially in theoretical computer science, statistical physics, and combinatorial discrepancy theory (see below). \npp is included in the list of \emph{six basic NP-complete problems} by Garey and Johnson \cite{gareyjohnson}; and is the only such problem in this list dealing with numbers. For this reason, it is often used as a basis for establishing the NP-hardness of other problems dealing with numbers, including bin packing, quadratic programming; and the knapsack problem. In statistical physics, \npp is the first system for which the local REM conjecture was established \cite{borgs2009proof,borgs2009proof2}. That is, \npp is the first system which was shown to behave locally like Derrida's random energy model \cite{derrida1980random,derrida1981random}, a feature that was conjectured to be universal in random discrete systems \cite{bauke2004universality}. Last but not the least, \npp is one of the first NP-hard problems for which a certain phase transition is established rigorously, which we now discuss. Let $X_i$, $1\le i\le n$, be i.i.d. uniform from the set $\{1,2,\dots,M\}$ where $M=2^m$ (namely $X_i$ consists of $m$-bits). As a function of a certain control parameter $\kappa\triangleq m/n$ suggested by Gent and Walsh \cite{gent1996phase}, Mertens \cite{mertens1998phase} gave, a very elegant yet nonrigorous statistical mechanics argument, for the existence of a phase transition depending on whether $\kappa<1$ or $\kappa>1$: the property of finding a perfect partition (that is, a partition with zero discrepancy if $\sum_{1\le i\le n}X_i$ is even, and that with a discrepancy of one if $\sum_{1\le i\le n}X_i$ is odd) undergoes as phase transition as $\kappa$ crosses one from the above. It has been observed empirically that this phase transition is linked with the change of character of typical computational hardness of this problem. Subsequent work by Borgs, Chayes, and Pittel \cite{borgs2001phase} rigorously confirmed the existence of this phase transition. These results further highlight the significance of \npp at the intersection of computer science, statistical mechanics, and statistics.

As already mentioned, much work has been done on the \npp and its multi-dimensional version, \texttt{VBP}. %where the goal is to minimize, with a slight abuse of notation, the discrepancy $\mathcal{D}_n\triangleq \min_{\sigma\in\bincube}\|\sum_{1\le i\le n}\sigma_i X_i\|_\infty$ of a collection $X_i\in\R^d$, $1\le i\le n$, of vectors. 
The prior work visited below can be broadly classified into two categories, namely uncovering the value of the optimal discrepancy; and finding a near ground-state  $\sigma\in\bincube$ by means of an efficient algorithm. This was done broadly in for two settings, where the inputs $X_i\in\R^d$, $1\le i\le n$, are treated as \emph{worst-case}; and where they are treated as i.i.d. samples of a distribution, referred to as the \emph{average-case} setting. 

We first visit the \emph{worst-case} results which operate under minimal structural assumptions on the input vectors $X_i\in\R^d$, $1\le i\le n$. A landmark result of discrepancy theory in this setting is due to Spencer \cite{spencer1985six}. He established, using a 
very elegant argument called the \emph{partial coloring}, that the discrepancy $\mathcal{D}_n$ of \vbp per \eqref{eq:vbp} is at most $6\sqrt{n}$ if $d=n$ and $\max_{1\le i\le n}\|X_i\|_\infty \le 1$. Spencer's method, however, is non-constructive.
%In the \emph{worst-case} setting, Spencer established in \cite{spencer1985six}, using a very elegant argument called the \emph{partial coloring}, that $\mathcal{D}_n\le 6\sqrt{n}$ if $d=n$ and $\max_{1\le i\le n}\|X_i\|_\infty \le 1$. Spencer's method, however, is non-constructive. 
Later research on this front focused on the algorithmic problem of \emph{efficiently} finding a spin configuration $\sigma\in\bincube$ that \emph{approximately} attains a small discrepancy value. These papers are based on techniques including random walks \cite{bansal2010constructive,lovett2015constructive}, multiplicative weights \cite{levy2017deterministic}, random weights \cite{rothvoss2017constructive}; and are tight in the regime $d\ge n$: these algorithms return a spin configuration $\sigma$ with ``objective value" $O(\sqrt{n\log(2d/n)})$; and there exist examples whose discrepancy matches this value.

We next visit the \emph{average-case} results, starting with the typical value of the optimal discrepancy. A canonical assumption that the reader should keep in mind is that the inputs are i.i.d.\,standard normal. The first result to this end is due to Karmarkar et al. \cite{karmarkar1986probabilistic}. They established, using the second moment method, that  the objective value of \npp \eqref{eq:NPP-main} is $\Theta\left(\sqrt{n}2^{-n}\right)$ with high probability as $n\to\infty$. Their result remains valid when $X_i\in\R$, $1\le i\le n$ are i.i.d. samples of a distribution that is sufficiently regular. Later research extended this result to the multi-dimensional version, \texttt{VBP}. In the case where the dimension $d$ is constant, $d=O(1)$, Costello established in~\cite{costello2009balancing} that the objective value of \vbp \eqref{eq:vbp} is $\Theta\left(\sqrt{n}2^{-n/d}\right)$ with high probability. When the dimension $d$ is super-linear, in particular $d\ge 2n$, Chandrasekaran and Vempala \cite{chandrasekaran2014integer} established that the optimal discrepancy for \vbp per \eqref{eq:vbp} is essentially $O(\sqrt{n\log(2d/n)})$, ignoring certain polylogarithmic factors. In the regime where $\omega(1)\le d\le o(n)$, Turner et al. \cite{turner2020balancing} showed that the optimal discrepancy achieved per \eqref{eq:vbp} is $\Theta\left(\sqrt{n}2^{-n/d}\right)$. Moreover, their result transfer also to the case when $X_i\in\R^d$, $1\le i\le n$ consists of i.i.d. coordinates drawn from a density $f$ that is sufficiently regular (in particular, $f$ is square integrable, even; and the coordinates of $X_i$ have a finite fourth moment) and $d=O(n/\log n)$. In addition to the sub-linear regime $d=o(n)$; \cite{turner2020balancing} studies also the regime where $d\le \delta n$ for a sufficiently small constant $\delta$. For this regime, they establish that the objective value of \eqref{eq:vbp} is $O\left(\sqrt{n}2^{-1/\delta}\right)$ with probability at least $99\%$. This, together with the results of \cite{chandrasekaran2014integer} implies that there exists an explicit function $c(\delta)$ such that the discrepancy is $\Theta\left(c(\delta)\sqrt{n}\right)$ with probability at least $99\%$ for $d=\delta n$ and all $\delta>0$. This is a step towards proving the following conjecture by Aubin et al.~\cite{aubin2019storage}: there exists an explicit function $c(\delta)$ such that  the discrepancy is $c(\delta)\sqrt{n}$ \emph{with high probability} for the regime $d=\delta n$ and any $\delta>0$.

We now focus on the available algorithmic results. The best known (polynomial-time) algorithm for the \npp is due to Karmarkar and Karp \cite{karmarkar1982differencing} which, for a broad class of distributions, produces a discrepancy of $O\left(n^{-\alpha \log n}\right)$ with high probability as $n\to\infty$. The original algorithm that they analyzed rigorously is a rather complicated one. Their algorithm, however, is based on a strikingly simple yet a quite elegant, idea; called the \emph{differencing method}, which is based on the following observation. Given a list $L$ of items, placing $x,y\in L$ to the different sides of the partition amounts to removing $x$ and $y$ from $L$, and adding $|x-y|$ to $L$ instead, an operation that we refer to as \emph{differencing}. Namely, the \emph{differencing} operations applied on $x,y\in L$ returns a new list  $L\cup\{|x-y|\}\setminus \{x,y\}$. Using the \emph{differencing}, Karmarkar and Karp proposes two simple (alternative) ways of creating a partition (though they do not rigorously analyze them): the paired differencing method (PDM) and the largest differencing method (LDM). In the former, the items are ordered, and then $\lfloor n/2\rfloor$ \emph{differencing} operations are performed on the largest and second largest items, on the third and fourth largest items, and so on. The remaining $\lceil n/2\rceil$ numbers are ordered again, and the aforementioned procedure is repeated until a single item remains, which is the discrepancy achieved by PDM. In LDM, the numbers are again ordered. The \emph{differencing} operation is now applied on the largest and second largest items. The remaining list (now consisting of $n-1$ items) is ordered again, and the procedure is repeated until a single number remains. Recalling that $n$ items can be sorted in near-linear time $O(n\log n)$, the running times of PDM and LDM are indeed polynomial (in $n$). They conjectured that these two simple natural heuristics also achieve an objective value of $O\left(n^{-\alpha \log n}\right)$ with high probability. For PDM, this conjecture was disproven by Lueker \cite{lueker1987note} who showed that when the items $X_i$, $1\le i\le n$, are i.i.d.\,uniform on $[0,1]$ then the expected discrepancy achieved by the PDM algorithm is rather poor, $\Theta(n^{-1})$. For LDM, however, Yakir \cite{yakir1996differencing} confirmed this conjecture, and showed that the expected discrepancy achieved by the LDM is $n^{-\Theta(\log n)}$, when the items $X_i$ are i.i.d.\,uniform on $[0,1]$. His proof extends to the case when the items $X_i$ follow the exponential distribution, as well. Later, Boettcher and Mertens \cite{boettcher2008analysis} studied the constant in the exponent, and argued, based on non-rigorous calculations, that the expected discrepancy for LDM is $n^{-\alpha \log n}$ for $\alpha = \frac{1}{2\ln 2}=0.721\dots$. 

Another algorithm is due to Krieger et al. \cite{krieger2019nearly} which achieves an objective value of $O\left(n^{-2}\right)$. It is worth noting that albeit having a poor performance, the algorithm of Krieger et al. finds a \emph{balanced} partition: a spin configuration $\sigma\in\bincube$ with $\sum_{1\le i\le n}\sigma_i\in\{0,1\}$ depending on the parity of $n$. This is of practical relevance in the design of randomized trials where the treatment and control groups are often desired to have roughly similar size. Moreover, for the multi-dimensional case $d\ge 2$, they also argue that their algorithm achieves a performance of $O\left(n^{-2/d}\right)$.  Finally, Turner et al. \cite{turner2020balancing} devised a generalized version of the Karmarkar-Karp algorithm \cite{karmarkar1982differencing}, which returns a partition with discrepancy $2^{-\Theta\left(\log^2 n/d\right)}$ provided the dimension $d\ge 2$ satisfies $d = O\left(\sqrt{\log n}\right)$. 

The results recorded above highlight a striking gap between what the existential methods (such as the second moment method) guarantee and what the polynomial-time algorithms achieve. To recap, in the case when $X_i$, $1\le i\le n$, are i.i.d.\,standard normal, the optimal discrepancy of the \npp per \eqref{eq:NPP-main} is $\Theta\left(\sqrt{n}2^{-n}\right)$ with high probability; whereas the-state-of-the-art  algorithm (by Karmarkar and Karp) only achieves a performance of $2^{-\Theta(\log^2 n)}$, which is exponentially worse. On the negative side, Hoberg et al.~\cite{hoberg2017number} provides an evidence of computational hardness for the problem of approximating the discrepancy per \eqref{eq:NPP-main} in worst-case by showing that any (polynomial-time) oracle that can approximate the discrepancy  
to within a multiplicative factor of $O\left(2^{\sqrt{n}}\right)$ is also an (polynomial-time) approximation oracle for Minkowski's problem. 

{\bf A \emph{Statistical-to-computational Gap.}} In light of these findings, it is plausible to conjecture that \npp exhibits a \compgapp: a gap between what can be achieved \emph{information-theoretically} (with unbounded computational power) and what algorithms with bounded computational power (such as polynomial time algorithms) can promise. Such gaps are a universal feature of many algorithmic problems in high-dimensional statistics and in the study of random combinatorial structures; and the study of such gaps is at the forefront of current research. A partial and evergrowing list of problems with a \compgap  includes certain ``non-planted models", such as the random constraint satisfaction problems \cite{mezard2005clustering,achlioptas2008algorithmic,kothari2017sum}, the problem of finding maximum independent sets in sparse random graphs \cite{gamarnik2017,coja2015independent}, largest submatrix problem \cite{gamarnik2018finding}, the $p$-spin model \cite{montanari2019optimization,gamarnik2021overlap} and the diluted $p$-spin model \cite{chen2019suboptimality}; as well as certain ``planted" models arising in high-dimensional statistical inference tasks, such as the matrix principle component analysis (PCA) \cite{berthet2013computational,lesieur2015mmse,lesieur2015phase} and its variant, tensor PCA \cite{hopkins2015tensor,hopkins2017power,arous2020algorithmic}, high-dimensional linear regression \cite{gamarnik2017high,gamarnik2017sparse}; and the infamous planted clique problem \cite{jerrum1992large,deshpande2015improved,meka2015sum,barak2019nearly,gamarnik2019landscape}.

Unfortunately, there is as yet no analogue of the standard NP-completeness theory for these \emph{average-case} problems; and current techniques fall short of proving the hardness of such problems even under the assumption that $P\ne NP$. A notable exception to this though is when the problem possesses \emph{random self-reducibility}. As an example, Gamarnik and K{\i}z{\i}lda\u{g} \cite{gk2020} established the average-case hardness of the algorithmic problem of exactly computing the partition function of the Sherrington-Kirkpatrick spin glass under the assumption $P\ne \#P$, an assumption that is much weaker than $P\ne NP$.

Nevertheless, a very promising direction of research proposed various approaches that serve as rigorous evidence of hardness for such problems. A non-exhaustive list includes the failure of Markov chain algorithms (such as the MCMC and the Glauber Dynamics) \cite{jerrum1992large}, methods from statistical physics and in particular the failure of approximate message passing (AMP) algorithms \cite{zdeborova2016statistical, bandeira2018notes}, reductions from the infamous planted clique problem---a canonical problem widely believed to be hard on average---\cite{berthet2013computational,brennan2018reducibility,brennan2019optimal},  lower bounds against the Sum-of-Squares hierarchy \cite{hopkins2015tensor,hopkins2017power,raghavendra2018high,barak2019nearly}, lower bounds in the statistical query (SQ) model \cite{kearns1998efficient, diakonikolas2017statistical,feldman2017statistical}, low-degree methods \cite{hopkins2018statistical} and the low-degree likelihood ratio \cite{kunisky2019notes}; and so on, see \cite{kunisky2019notes} and the references therein. Another such approach, which is also our focus, is based on the insights gained from statistical physics described below.

{\bf The Overlap Gap Property (OGP).}
%As was mentioned already, the standard NP-completeness theory, tailored for proving \emph{worst-case hardness}, fall short of proving hardness for such \emph{average-case} problems for which an apparent \compgap is present.
A relatively recent, and very promising, form of formal evidence of the average-case hardness is the presence of a certain intricate geometric property in the ``energy landscape" of the problem, dubbed as the \emph{Overlap Gap Property (OGP)}. Roughly speaking, the OGP is a disconnectivity property; and states that for every two near ground-state (appropriately defined) spin configurations $\sigma_1,\sigma_2\in\bincube$, their ``normalized" overlap do not take intermediate values: for some $0<\nu_1<\nu_2<1$, $\Overlap\left(\sigma_1,\sigma_2\right)\triangleq n^{-1}\left|\ip{\sigma_1}{\sigma_2}\right|\in[0,\nu_1]\cup [\nu_2,1]$. It was previously shown that the OGP, whenever present, is an impediment to the success of certain classes of algorithms (see below).

{\bf Origins of the OGP.} The OGP emerged originally in spin glass theory \cite{talagrand2010mean}. A precursory link between the OGP and the formal algorithmic hardness was first made in the context of random constraint satisfaction problems (k-SAT), in a series of papers by Achlioptas and Coja-Oghlan \cite{achlioptas2008algorithmic}; Achlioptas, Coja-Oghlan, and Ricci-Tersenghi \cite{achlioptas2011solution}; and by M{\'e}zard, Mora, and Zecchina \cite{mezard2005clustering}. These papers show an intriguing ``clustering" property: they establish that a large portion of the set of satisfying assignments is essentially partitioned into ``clusters" that are disconnected with respect to the natural topology of the solution space. As the onset of this clustering property coincides roughly with the regime where the known polynomial-time algorithms fail, this property was conjecturally linked with the formal algorithmic hardness. Strictly speaking, these papers do not establish the OGP. However, an inspection of their proof techniques reveals that their arguments show that it does: the normalized overlap between two satisfying assignments takes values in a set $[0,\nu_1]\cup[\nu_2,1]$ for some $0<\nu_1<\nu_2<1$ (the normalization ensures that resulting overlap values lie in $[0,1]$). The aforementioned clustering property is then inferred as a consequence of the OGP.

{\bf First algorithmic implications of OGP.} The first formal algoritmic implication of the OGP is due to Gamarnik and Sudan \cite{gamarnik2017}. In that paper, the authors study the problem of finding maximum independent sets in (sparse) random $d$-regular graphs. It is known, see in particular~\cite{frieze1990independence,frieze1992independence,bayati2010combinatorial}, that  the largest independent set of this model is of size $2(\log d/d)n$ w.h.p., in the double limit as $n\to\infty$ followed by $d\to\infty$; whereas the best known polynomial-time algorithm---a straightforward greedy algorithm---returns an independent set of cardinality at most $(\log d/d)n$. Namely, the problem exhibits a \compgapp. Gamarnik and Sudan took a rigorous look at the nature of this gap; and established, through a first moment argument, that any two independent sets with cardinality at least $\left(1+1/\sqrt{2}\right)(\log d/d)n$ have either a significant intersection (overlap) or a small intersection (namely the intermediate values are not permitted). As a consequence, they show, through an interpolation argument, that a class of powerful graph algorithms called the \emph{local algorithms/factors of i.i.d.} fails to find independent sets of cardinality larger than $\left(1+1/\sqrt{2}\right)(\log d/d)n$; and thus refuting an earlier conjecture by Hatami, Lov{\'a}sz,  and Szegedy \cite{hatami2014limits}. Later research, again through the lens of the OGP, established that low-degree polynomials also cannot find independent sets of size larger than $\left(1+1/\sqrt{2}\right)(\log d/d)n$ \cite{gamarnik2020lowFOCS}---which recovers the result of \cite{gamarnik2017} as a special case, see \cite[Appendix~A]{gamarnik2020lowFOCS}. The ``oversampling" factor, $1/\sqrt{2}$, is an artifact of their analysis; and subsequent research removed this factor for the case of local algorithms by Rahman and Vir{\'a}g~\cite{rahman2017}, and for the case of the low-degree polynomials by Wein~\cite{wein2020optimal}. This is achieved by studying the overlap structure corresponding to $m-$tuples of independent sets (as opposed to the pairs), and is tight: independent sets of cardinality near $(\log d/d)n$ can be found by means of local algorithms~\cite{lauer2007large}. The idea of looking at the overlap structure between $m$-tuples of configurations is also at the core of this paper, and is elaborated further next. 

{\bf Multioverlap Version of OGP: $m$-OGP.} It was previously observed that the idea of looking at the multioverlap structure (as opposed to the overlap of a pair) can potentially lower the phase transition point, which we detail now. As was mentioned already, for the problem of finding a maximum independent set of a (sparse) random $d$-regular graphs, %In the context of (sparse) random $d$-regular graphs, it was established in  \cite{frieze1990independence,frieze1992independence,bayati2010combinatorial} that the largest independent set is of size $2(\log d/d)n$ (in the double limit as $n\to\infty$ followed by $d\to\infty$) with high probability, whereas the best known polynomial-time algorithm returns an independent set of cardinality at  most $(\log d/d)n$ (hence the problem exhibits a \compgapp). 
Gamarnik and Sudan \cite{gamarnik2017} established that the local algorithms fail to find an independent set of size larger than $(1+\beta)(\log d/d)n$, where $\beta>1/\sqrt{2}$, which is still a factor of $\beta$ off the computational threshold, $(\log d/d)n$. Subsequent research by Rahman and Vir{\'a}g removed the extra oversampling factor, $1/\sqrt{2}$: instead of looking at the ``forbidden" intersection pattern for a pair of independent  sets of large cardinality, they instead proposed to look at a more intricate intersection pattern, involving many independent sets of sufficient cardinality. That way, they managed to pull the threshold (above which the local algorithms provably fail) down to $(\log d/d)n$---below which polynomial-time algorithms are known to exist. This idea of looking at the overlap structure of multiple independent sets is also employed recently by Wein~\cite{wein2020optimal} to show that the low-degree polynomials also fail to find independent sets of size greater than $(\log d/d)n$. Yet another instance, where the same theme has recurred, is the so-called Not-All-Equal-K-SAT (NAE-K-SAT) problem in the context of random constraint satisfaction problems. It was established in~\cite{coja2012catching} that such random formulas are satisfiable w.h.p. when $d<d_s\triangleq 2^{K-1}\ln 2 - \ln 2/2-1/4-o_K(1)$ where $d$ is the clause-to-variable ratio, dubbed as the \emph{density of formula}; and are non-satiable w.h.p. when $d>d_s$. Nevertheless, the best known polynomial-time algorithm---which is rather quite simple---works provided $d<\rho 2^{K-1}/K\simeq d_s/K$ \cite{achlioptas2002two} where $\rho$ is a universal constant. In particular, the NAE-K-SAT problem also exhibits a \compgapp. Gamarnik and Sudan \cite{gamarnik2017performance} established that a class of algorithms, dubbed as \emph{sequential local algorithms}---an abstraction capturing local implementations of various powerful algorithms---with a number of iterations growing moderately in the number of variables; fail to find satisfying assignments when $d>(2^{K-1}/K)\ln^2 K \simeq (d_S/K)\ln^2 K$ (which is essentially the computational threshold modulo the $\ln^2 K$ factor). The crux of their analysis is again based on establishing the aforementioned intricate geometric property of the landscape by studying at the overlap structure of  $m-$tuples of ``nearly" satisfying assignment, for an appropriate constant $m$. More specifically, they show, using a first moment argument, that w.h.p. there exists no $m$-tuple $\left(\sigma^{(i)}:1\le i\le m\right)$ of assignments such that each $\sigma^{(i)}$ satisfies a certain minimum number of clauses; and the overlap between any pair $\sigma^{(i)}$ and $\sigma^{(j)}$ of assignments, $1\le i<j\le m$, lies in a fixed interval. If one, instead, considers only pairs of satisfying assignments; then the sequential local algorithms can be shown to fail only for very high densities $d$, specifically for $d>d_s/2$.

%(in the context of the study of the overlap distributions of replicas generated from a Gibbs measure, see the book by Talagrand)

\subsection*{Our Contributions}
%Given the important role the \npp plays in theory as well as in many practical applications; it is natural to inquire into the origin of its \compgapp.
%Recall that for i.i.d. standard normal inputs; the optimal value of \npp \eqref{eq:NPP-main} is w.h.p. $\Theta\left(\sqrt{n}2^{-n}\right)$ \cite{karmarkar1986probabilistic}; whereas the best known (polynomial-time) algorithm achieves an ``energy value" of only $2^{-\Theta(\log^2 n)}$ \cite{karmarkar1982differencing}. 
In this paper, we initiate the  study of the nature of the apparent \compgap of the \npp and \texttt{VBP}. Our approach is through the lens of the intricate geometry of the energy landscape of this problem. Specifically, our approach is based on proving and leveraging the aforementioned \ogp (OGP). %Guided by the insights gained from the study of models arising in statistical physics, our approach is based on proving and leveraging a certain property of the intricate geometry of the energy landscape of this problem, dubbed as the \ogp (OGP). %,  our approach is based on proving and leveraging a certain property of the intricate geometry of the energy landscape of this problem, dubbed as the \ogp (OGP).  
For the sake of a clear presentation, it is convenient to interpret the aforementioned gap in terms of the \emph{``exponent"} $E_n$ of the energy level $2^{-E_n}$. Thus the information-theoretical guarantee is $E_n =n$; whereas the best (efficient) computational guarantee available is only $E_n=\Theta(\log^2 n)$. Our main contributions are now in order.
\paragraph{ The regime $E_n = \Theta(n)$.} In this regime, our main result is the following. Let $X\in\R^n$ be a random vector with i.i.d. standard normal coordinates. Then for any $\epsilon>0$, there exist  $m\in\mathbb{N}$ and $\beta>\eta>0$, such that with high probability as $n$ diverges, 
there does not exist an $m$-tuple $\left(\sigma^{(i)}:1\le i\le m\right)$ of spin configurations $\sigma^{(i)}\in\bincube$ such that each $\sigma^{(i)}$ is a near ground-state in the sense $\left|\ip{\sigma^{(i)}}{X}\right|=O\left(\sqrt{n}2^{-n\epsilon}\right)$, $1\le i\le m$; and their pairwise overlaps satisfy $\Overlap\left(\sigma^{(i)},\sigma^{(j)}\right)\in[\beta-\eta,\beta]$, $1\le i<j\le m$. This is the $m$-OGP and it is the 
subject of Theorem~\ref{thm:main-eps-energy}.
%Let $X\in\R^n$ be a random vector with i.i.d. standard normal coordinates. Then for any $\epsilon>0$, there exists an $m\in\mathbb{N}$ and $\beta>\eta>0$, such that with high probability, there does not exist an $m$-tuple $\left(\sigma^{(i)}:1\le i\le m\right)$ of spin configurations $\sigma^{(i)}\in\bincube$ such that they are near ground-states, $\left|\ip{\sigma^{(i)}}{X}\right|=O\left(\sqrt{n}2^{-n\epsilon}\right)$, $1\le i\le m$; and their pairwise overlaps satisfy $\Overlap\left(\sigma^{(i)},\sigma^{(j)}\right)\in[\beta-\eta,\beta]$, $1\le i<j\le m$.
We establish Theorem~\ref{thm:main-eps-energy} using the so-called \emph{first moment method}; 
and the smallest $m$ (for fixed $\epsilon>0$) for which this result holds true is of order $1/\epsilon$. 
While we state and prove this result for the \npp \eqref{eq:NPP-main} for simplicity, an inspection of our proof reveals that it extends to the \vbp \eqref{eq:vbp}, when $d=o(n)$. 
%Hence, the (multi) OGP (abbreviated when $m=O(1)$ as the $m-$OGP) is present for the entire regime $E_n=\epsilon n$, $0<\epsilon<1$.

Note that this geometric result pertains the overlap structure of an $m$-tuple, rather than a pair, of configurations. This is necessary to cover 
all values of $\epsilon\in (0,1]$, since as we show in Theorem~\ref{thm:2-ogp}, the OGP for pairs holds only up to $\epsilon\in(1/2,1]$.
The idea of studying the $m-$OGP in order to lower the ``threshold"---as we have done---was employed in the earlier works by Rahman and Vir{\'a}g~\cite{rahman2017}, Gamarnik and Sudan~\cite{gamarnik2017performance}, and more recently by Wein~\cite{wein2020optimal}. In particular, the overlap structure we rule out is essentially the same as the one considered in~\cite{gamarnik2017performance}. Moreover, as we establish; this result holds also for a family of correlated random vectors $X_i\in\R^n$, $1\le i\le m$ rather than a single instance. This is known as the ``ensemble" variant of the OGP, and it is instrumental in proving the failure of any ``sufficiently stable" algorithm.%, as we do in this paper (more details to follow). 
%See the next section for a more elaborate discussion on $m-$OGP and its ensemble variant.

\paragraph{ The regime $E_n=o(n)$.} To complement our first result, we investigate the overlap structure when the exponent $E_n$ is sublinear, $E_n=o(n)$. Perhaps rather surprisingly, we establish the \emph{absence} of $m-$OGP---for $m=O(1)$---when $E_n=o(n)$. To that end, let $X\in\R^n$ be a random vector with i.i.d. standard normal coordinates. We establish that for every $E_n\in o(n)$, $m\in\mathbb{N}$, $\rho\in(0,1)$ and $\bar{\rho}\ll \rho$, it is the case that with high probability there exists an $m$-tuple $\left(\sigma^{(i)}:1\le i\le m\right)$ of spin configurations $\sigma^{(i)}\in \bincube$ such that 
they are near ground-states, namely $\left|\ip{\sigma^{(i)}}{X}\right|=O\left(\sqrt{n}2^{-E_n}\right)$, $1\le i\le m$, and their pairwise overlaps satisfy $\Overlap\left(\sigma^{(i)},\sigma^{(j)}\right)\in [\rho-\bar{\rho},\rho+\bar{\rho}]$, $1\le i<j\le m$. Namely, the overlaps ``span" the interval $[0,1]$. This is our next main result; and is the subject of Theorem~\ref{thm:ogp-absent}.

Theorem~\ref{thm:ogp-absent} is shown by using the so-called \emph{second moment method} together with a careful overcounting idea. While we state and prove this result for a \emph{single} instance $X\in\R^n$ for simplicity, it is conceivable that our technique extends also to correlated instances $X_i\in \R^n$, $1\le i\le m$ albeit perhaps at the cost of more computations and details. It is worth recalling once more that this result is shown under the assumption that $m$ is constant $O(1)$ (with respect to $n$).

Despite Theorem~\ref{thm:main-eps-energy} in discussed previously, the aforementioned \compgap of \npp still persists. That is, the exponents ``ruled out" in Theorem~\ref{thm:main-eps-energy}, $E_n=\epsilon n$ for $0<\epsilon<1$, are still far greater than the current computational limit, $\Theta\left(\log^2 n\right)$. Furthermore, in the case $E_n$ is sub-linear, $E_n = o(n)$; the $m-$OGP (for $m=O(1)$) is actually \emph{absent} as shown in Theorem~\ref{thm:ogp-absent}.

The rationale for studying the multioverlap version of the OGP ($m-$OGP), noted first by Rahman and Vir{\'a}g~\cite{rahman2017}, was the observation that studying the overlap structures of $m-$tuples (of spin configurations), as opposed to pairs, lowers the ``threshold" above which the algorithms can be ruled out. The prior work studying $m-$OGP gave high probability guarantees for the overlap structures of the $m-$tuples as the size $n$ of the problem tends to infinity, while $m$ remains constant with respect to $n$, $m=O(1)$. For instance, in the case of NAE-K-SAT problem, the OGP is shown for $m=\lceil \frac{\epsilon^2 K}{\ln K}\rceil$ to rule out densities $d\ge (1+\epsilon)2^{K-1}\ln^2 K/K$ as the number $n$ of Boolean variables tend to infinity, see \cite[Theorem~4.1]{gamarnik2017performance}. Likewise, in the context of maximum independent set problem, Wein considered the ``forbidden" structure corresponding to $K\ge 1+5/\epsilon^2$ independent sets to rule out independent sets of size $(1+\epsilon)(\log d/d)n$, again as $n\to\infty$, see \cite[Proposition~2.3]{wein2020optimal}. This is also the case for our first $m-$OGP result, where to rule out energy levels of form $2^{-\epsilon n}$ for $0<\epsilon<1$, we consider $m-$tuples with $m\sim \frac{2}{\epsilon}$. 

However, the $m-$OGP with $m=O(1)$ still falls short of going from $\Theta(n)$ all the way down to current computational threshold, $\Theta\left(\log^2 n\right)$. %In order to bypass this obstacle, we initiate the study of the overlap structures for $m-$tuples of configurations, where $m$ itself is super-constant in $n$, $m=\omega_n(1)$. We demonstrate that this technique indeed provides further reduction in the ``threshold" of exponent values that one can rule out. 
%It is now worth recalling the following. As was mentioned already, the $m-$OGP with $m=O(1)$ was established previously in various contexts~\cite{gamarnik2017performance,rahman2017,wein2020optimal} in order to rule out certain classes of algorithms (for corresponding problems). The rationale for this, which was noted first by Rahman and Vir{\'a}g~\cite{rahman2017}, was the observation that studying the overlap structures of $m-$tuples (of spin configurations), as opposed to pairs, lowers the ``threshold" above which the algorithms can be ruled out. %Examples include the problem of finding maximum independent set in sparse random graphs, as well as the so-called NAE-K-SAT problem (we do not define these problems formally herein, and instead refer the reader to the references above). 
Having observed that the aforementioned \compgap still persists when $E_n=o(n)$, %and the evidence that studying $m-$OGP with larger $m$ can potentially reduce the gap; 
it is quite natural to ask what happens when $m$ is super-constant, $m=\omega_n(1)$. To the best of our knowledge, this line of research has not been investigated previously---presumably due to the fact that it always sufficed to take $m=O(1)$ to reach the thresholds below which polynomial-time algorithms are known to exist. In order to penetrate further into the nature of this persisting gap, we then study the $m-$OGP in the case when $m$ is super-constant, $m=\omega_n(1)$. In this regime, we establish the presence of the $m-$OGP all the way down to $E_n=\omega\left(\sqrt{n\log n}\right)$. This is the subject of Theorem~\ref{thm:m-ogp-superconstant-m}. Furthermore in Section~\ref{sec:m-ogp-beyond-is-impossible}, we give an informal argument which explains that $\omega\left(\sqrt{n\log n}\right)$ is the best exponent eliminated through this technique that one could hope for.

%Note that despite the absence of the OGP, the aforementioned gap still persists: the energy levels eliminated ($E_n=\epsilon n$ for $0<\epsilon<1$) are still far above $\Theta(\log^2 n)$. In order to lower this ``threshold" even further, we inspect the overlap structure of $m-$tuples when $m$ is super-constant in $n$, $m=\omega_n(1)$. In this regime, we establish the presence of the $m-$OGP all the way down to $E_n=\omega\left(\sqrt{n\log n}\right)$. This is the subject of Theorem~\ref{thm:m-ogp-superconstant-m}. Furthermore in Section~\ref{sec:m-ogp-beyond-is-impossible}, we give an informal argument which explains that $\omega\left(\sqrt{n\log n}\right)$ is the best exponent eliminated through this technique that one could hope for.

Like Theorem~\ref{thm:main-eps-energy}, Theorem~\ref{thm:m-ogp-superconstant-m} also pertains to the the case of the ``ensemble" variant of the OGP. We later leverage Theorem~\ref{thm:m-ogp-superconstant-m} to rule out any ``sufficiently stable" algorithm, appropriately defined.

To the best of our knowledge, ours is the first work establishing the need to consider the $m-$OGP for super-constant values of $m$. The potential gain of considering superconstant overlaps for other models is an interesting question for future research. 

\paragraph{ Failure of ``Stable" Algorithms.} We then focus on the algorithmic front, where we view an algorithm $\mathcal{A}$ (potentially randomized) 
as a mapping $\A:\R^n\to\bincube$, which takes an $X\in\R^n$ as its input (numbers/items to be partitioned) and returns a spin configuration $\A(X)$ (from which the partition is inferred). Our main algorithmic result is summarized as follows: the ``ensemble" version of the $m-$OGP with $m=\omega(1)$ (Theorem~\ref{thm:m-ogp-superconstant-m}) we have described above 
is an obstruction for any ``sufficiently stable" algorithm.  In particular, we establish the following result. Let $\epsilon \in\left(0,\frac15\right)$ be arbitrary; and $E_n$ be an energy exponent with
\[
\omega\left(n\log^{-\frac15+\epsilon} n\right)\le E_n \le o(n).
\]
Then, there exists no ``sufficiently stable" (in an appropriate sense), and potentially randomized, algorithm $\mathcal{A}$ such that with high probability, $n^{-1/2}\bigl|\ip{X}{\A(X)}\bigr|=2^{-E_n}$. Here, the probability is taken with respect to the randomness in $X\distr \mathcal{N}(0,I_n)$, as well as the coin flips of the algorithm. This is the subject of Theorem~\ref{thm:Main}. It is worth noting that the algorithm $\A$ need not be a polynomial-time algorithm: as long as $\A$ is stable in an appropriate sense, there is no restriction on its runtime. As was shown in \cite{gamarnik2020lowFOCS} stable algorithms
include many special classes of algorithms such as algorithms based on low-degree polynomials and through that the approximate
message passing type algorithms.

It is thus natural to inquire the stability property of the  algorithms known the be successful for the \npp, 
in particular the LDM algorithm which achieves the state of the 
art $n^{-\Theta(\log n)}$. We were not able to establish the stability of this algorithm, and instead resorted to simulation study which
is reported in Subsection~\ref{subsection:simulations}. The simulations are conducting by running the LDM on two correlated instances
of the \npp and measuring the overlap of the algorithm results as a function of the correlation.
The simulation results suggest that indeed the LDM algorithm is stable in the sense
we define. Curiously, it reveals additionally an interesting property. Recall the constant $\alpha = \frac{1}{2\ln 2}=0.721\dots$ which
is suggested heuristically as the leading constant in the performance of the algorithm. We discover a phase transition: when
the correlation between two instances is of the order at least approximately $1-n^{-\alpha \log_2n}$ (in other words the level of ''perturbation
is order $n^{-\alpha \log_2 n}$), the two outputs of the algorithm are identical or nearly identical. Whereas, when the correlation is smaller
than this value, there appears to be a linear discrepancy between the two outcomes. The coincidence of this phase transition with the
objective value $n^{-\alpha \log_2 n}$ is remarkable and at this stage we do not have an explanation for it.

\paragraph{ Failure of an MCMC Family.} A consequence of the $2-$OGP established in Theorem~\ref{thm:2-ogp} (which holds for energy levels $E_n = \epsilon n$, $\epsilon\in\left(\frac12,1\right]$) is the presence of a certain property, called a \emph{free energy well} (FEW), in the landscape of the \nppp. This property is known to be a rigorous barrier for a family of Markov Chain Monte Carlo (MCMC) methods~\cite{arous2020algorithmic} and has been previously employed for other average-case problems~\cite{gjs2019overlap,gamarnik2019landscape} to establish slow mixing of the Markov chain associated with the MCMC method
and thus the failure of the method. We establish the presence of a FEW in the landscape of \npp in Theorem~\ref{thm:FEW}; and leverage this property in Theorem~\ref{thm:slow-mixing} to establish the failure of a very natural class of MCMC dynamics tailored for the \nppp. More concretely, Theorem~\ref{thm:FEW} establishes the presence of the FEW of exponentially small ``Gibbs mass" in the landscape of \nppp. Theorem~\ref{thm:slow-mixing} then leverages this property, and shows that for a very natural MCMC dynamics with an appropriate initialization, it takes an exponential time for this chain to reach a region of non-trivial Gibbs mass. See the corresponding  section for further details. 
\paragraph{ Study of Local Optima.} Our final focus is on the local optima of this model. For any spin configuration $\sigma\in\bincube$, denote by $\sigma^{(i)}\in\bincube$, $1\le i\le n$, the configuration obtained by flipping the $i-$th bit of $\sigma$. A spin configuration $\sigma$ is called a \emph{local optimum} if $\left|\ip{\sigma^{(i)}}{X}\right|\ge \left|\ip{\sigma}{X}\right|$ for $1\le i\le n$. Namely, $\sigma$ is a local optimum if 
the ``swapping" the place (with respect to $\sigma$) of any ``item" returns a worse partition. To further complement our landscape analysis in the ``hard" regime, $E_n=\Theta(n)$, we study the expectation of number $N_\epsilon$ of local optima with energy value $O\left(\sqrt{n}2^{-n\epsilon}\right)$, $0<\epsilon<1$. We show that this expectation is exponential in $n$, and we give a precise, linear, trade-off between the ``exponent" of $\mathbb{E}[N_\epsilon]$ and $\epsilon$. This is the subject of Theorem~\ref{thm:local-opt}.  This suggests that a very simple greedy algorithm, starting from an arbitrary $\sigma\in\bincube$ and proceeding by flipping a single spin so as to ``reduce energy" as long  as there is such a spin, will likely fail to find a ground-state solution for the \nppp. 

This analysis is inspired by the work of Addario-Berry et al.~\cite{addario2019local} who carried out an analogous analysis for the local optima of the Hamiltonian of the Sherrington-Kirkpatrick spin glass model.
\subsection*{Overview of Our Techniques}
\paragraph{ Presence of the OGP.} We establish the presence of the overlap gap property (Theorems~\ref{thm:2-ogp}, \ref{thm:main-eps-energy}, and \ref{thm:m-ogp-superconstant-m}) using the so-called first moment method. Specifically, we let a certain random variable count the number of tuples (either pairs, or $m-$tuples with $m=O(1)$ or $m=\omega_n(1)$) of near ground-state spin configurations with a prescribed overlap pattern. We then show that expectation of these random variables are exponentially small, establishing the presence of the OGP via Markov inequality. 
At a technical level, this requires a delicate analysis of a certain covariance structure governing the joint probability. 

\paragraph{ Absence of the OGP.} In the regime $E_n=o(n)$, we establish in Theorem~\ref{thm:ogp-absent} that the $m-$OGP (for $m=O(1)$) is absent. 
That is, $m$-tuples with a prescribed pairwise overlaps are attainable for every overlap level. 
This is done by letting a certain random variable count the number of allowed configurations like above; and then using the so-called \emph{second moment method}. In addition, the proof requires a novel overcounting idea, in order to ``decorrelate" pairs of tuples of spin configurations encountered during the second moment computation. Again at a technical level, the proof also requires a delicate analysis of a block covariance matrix; as well as a probabilistic method argument. 

\paragraph{ Failure of Stable Algorithms.} Our theorem~\ref{thm:Main}  establishing the failure of stable algorithms (appropriately defined) is arguably the most technically involved proof; and combines many different ideas, including the $m-$OGP result shown in Theorem~\ref{thm:m-ogp-superconstant-m} and certain concentration inequalities. Furthermore,  interestingly, the proof also uses ideas from the extremal combinatorics and Ramsey Theory. In particular, see Theorems~\ref{thm:ramsey} and~\ref{thm:ramsey-1}; and Propositions~\ref{thm:clique-exist} and~\ref{prop:G-mc-clique}. This is necessary so as to generate a forbidden configuration which contradicts with the OGP. 
In order to guide the reader, we provide in Section~\ref{sec:pf-thm-Main} a brief outline of the proof.
\paragraph{ Failure of an MCMC Family.} We establish the failure of the MCMC families by first establishing the so-called FEW property. This is shown by i) leveraging our $2-$OGP result, Theorem~\ref{thm:2-ogp}, and ii) using a slightly more refined property on the energy landscape of \nppp, borrowed from \cite{karmarkar1986probabilistic}. The proof is a rather direct one. The failure of the MCMC family, Theorem~\ref{thm:slow-mixing}, uses fairly routine arguments; but included nevertheless in full detail for completeness.
\paragraph{Paper Organization.} The rest of the paper is organized as follows. Our main results regarding the geometry of the energy landscape of \npp are found in Section~\ref{sec:main:geo-results}. Specifically, our result establishing the presence of the $m-$OGP for $m=O(1)$ for energy levels $2^{-\Theta(n)}$ is presented in  Section~\ref{sec:m-ogp-O(1)-present}; our result showing the absence of $m-$OGP for energy levels $2^{-o(n)}$ is presented in Section~\ref{sec:m-ogp-absent}; our result showing the presence of $m-$OGP for energy levels $2^{-E_n}$ with $\omega\left(\sqrt{n\log_2 n}\right)\le E_n\le o(n)$, for $m=\omega_n(1)$ is presented in Section~\ref{sec:super-constant-ogp-present}. Our result on the the expected number of local optima is found in Section~\ref{sec:exp-local-opt}. The failure of stable algorithms (appropriately defined) is shown in Section~\ref{sec:failure}. The same section contains the simulation results.
The limitations of our proofs for establishing the $m-$OGP in the case  when $m$ is super-constant, $m=\omega_n(1)$, are studied in Section~\ref{sec:m-ogp-beyond-is-impossible}. We briefly recapitulate our conclusions and outline several interesting open problems and future research directions in Section~\ref{sec:conclusion}. Finally, the proofs of all of our results are presented in Section~\ref{sec:proofs}.
\paragraph{Notation.}  The set of real numbers is denoted by $\R$. The sets $\N$ and $\Z_+$ denote the set of positive integers. For any $N\in\mathbb{N}$, the set $\{1,2,\dots,N\}$ is denoted by $[N]$. For two sets $A,B$; their Cartesian product $\{(a,b):a\in A,b\in B\}$ is denoted by $A\times B$. For any set $A$, $|A|$ denotes its cardinality. For any $r\in\mathbb{R}$, the largest integer not exceeding $r$ (that is, the floor of $r$) is denoted by $\lfloor r\rfloor$; and the smallest integer not less than $r$ (that is, the ceiling of $r$) is denoted by $\lceil r\rceil$. For any $X=(X_i:1\le i\le n)\in\R^n$, its Euclidean $\ell_2$ norm, $\sqrt{\sum_{1\le i\le n}X_i^2}$ and its Euclidean $\ell_\infty$ norm, $\max_{1\le i\le n}|X_i|$ are denoted respectively by $\|X\|_2$ and $\|X\|_\infty$. For any $X,Y\in\R^n$, their Euclidean inner product, $\sum_{1\le i\le n}X_iY_i$, is denoted by $\ip{X}{Y}$. The symbol $\ind\{\mathcal{E}\}$ denotes the indicator of $\mathcal{E}$, which is equal to one if $\mathcal{E}$ is true; and equal to zero if $\mathcal{E}$ is false. $\bincube$ denotes the discrete cube $\{-1,1\}^n$. For any $\sigma,\sigma'\in\bincube$, their Hamming distance $\sum_{1\le i\le n}\ind\{\sigma_i\ne \sigma_i'\}$ is denoted by $d_H\left(\sigma,\sigma'\right)$, their normalized overlap $n^{-1}\left|\ip{\sigma}{\sigma'}\right|$ is denoted by $\Overlap\left(\sigma,\sigma'\right)$; and their normalized inner product $n^{-1}\ip{\sigma}{\sigma'}$ is denoted by $\OBar\left(\sigma,\sigma'\right)$. $\log$ and $\log_2$ denote respectively the logarithms with respect to base $e$ and with respect to base $2$. For any $r\in\R$, $2^r$ is denoted by $\exp_2(\alpha)$; and $e^r$ is denoted by $\exp(r)$. Binary entropy function (that is, the entropy of a Bernoulli random variable with parameter $p$) is denoted by $h(p)=-p\log_2 p-(1-p)\log_2(1-p)$. $\mathcal{N}(0,1)$ denotes the standard normal random variable; and $\mathcal{N}(0,I_n)$ denotes the distribution of a random vector $X=(X_i:1\le i\le n)\in\R^n$ where $X_i\distr\mathcal{N}(0,1)$, i.i.d. For any matrix $\mathcal{M}$, we denote its Frobenius norm, spectral norm, spectrum, smallest singular value, largest singular value, determinant, and  trace by $\|\mathcal{M}\|_F$, $\|\mathcal{M}\|_2$, $\sigma(\mathcal{M})$, $\sigma_{\min}(\mathcal{M})$, $\sigma_{\max}(\mathcal{M})$, $|\mathcal{M}|$, and ${\rm trace}(\mathcal{M})$, respectively. A graph $\Gr=(V,E)$ is a collection of vertices $V$ with some edges $(v,v')\in E$ between $v,v'\in E$. In the sequel, we consider only \emph{simple} graphs, that is, graphs that are undirected with no loops. A clique is a complete graph, that is a graph $\G=(V,E)$ where for every distinct $v,v'\in V$; $(v,v')\in E$. The clique on $m-$vertices is denoted by $K_m$. A subset $S\subset V$ of vertices (of $\G$) is called an independent set if for every distinct $v,v'\in S$; $(v,v')\notin E$. The largest cardinality of such an independent set is called the independence number of $\G$; and is denoted by $\alpha(\G)$. A $q-$coloring of a graph $\Gr=(V,E)$ is a function $\varphi:V\to\{1,2,\dots,q\}$ assigning to each edge of $\Gr$ one of $q$ available colors. 

We employ the standard Bachmann-Landau asymptotic notation, e.g. $\Theta(\cdot),O(\cdot),o(\cdot)$, $\omega(\cdot)$, and $\Omega(n)$ throughout the paper. Whenever a function $f(n)$, say, has growth $o(n)$, we either denote by $f(n)=o(n)$ or $f(n)\in o(n)$. Finally, whenever $f$ has a lower and upper bound on its growth, we abuse the notation slightly and use inequalities. For instance, when $f(n)\in \omega_n(1)$ and $f(n)\in o(n)$ (that is, $f$ is super-constant but sub-linear), we often find it convenient to write $\omega_n(1)\le f(n)\le o(n)$.

Finally, in order to keep our presentation simple, we omit all floor and ceiling operators.
\section{Main Results. The Landscape of the \npp}\label{sec:main:geo-results}
In this section, we present our results regarding the geometry of the energy landscape of the number partitioning problem (\nppp). 

Our results concern the overlap structures of the tuples of near ground-state configurations, formalized next.

\begin{definition}\label{def:overlap-set}
Fix an $m\in\mathbb{N}$, and $0<\eta<\beta<1$. Let $X_i\distr \mathcal{N}(0,I_n)$, $0\le i\le m$, be i.i.d.\,random vectors; and let $\mathcal{I}$ be any subset of $[0,1]$.  Denote by $\mathcal{S}(\beta,\eta,m,E_n,\mathcal{I})$ the set of all $m-$tuples $\left(\sigma^{(i)}:1\le i\le m\right)$ of spin configurations $\sigma^{(i)}\in \bincube$, such that the following holds:
\begin{itemize}
    \item[(a)] {\bf (Pairwise Overlap Condition) } For any $1\le i<j\le m$,
    \[
    \beta-\eta\le \Overlap\left(\sigma^{(i)},\sigma^{(j)}\right) \le \beta
    \]
    where $\Overlap(\sigma^{(i)},\sigma^{(j)})\triangleq n^{-1}\left|\ip{\sigma^{(i)}}{\sigma^{(j)}}\right|$ is the (normalized) overlap between spin configurations $\sigma^{(i)},\sigma^{(j)}\in\bincube$. 
    \item[(b)] {\bf (Near Ground-State Condition) } There exists $\tau_i\in\mathcal{I}$, $1\le i\le m$, such that 
    \[
    \frac{1}{\sqrt{n}}\left|\ip{\sigma^{(i)}}{Y_i\left(\tau_i\right)}\right|\le 2^{-E_n}, \quad\text{where}\quad Y_i(\tau_i)=\sqrt{1-\tau_i^2}X_0+\tau_i X_i,\quad\text{for}\quad 1\le i\le m.
    \]
\end{itemize}
\end{definition}
Here, $m$ refers to size of the tuple we investigate; the quantities $\beta$ and $\eta$ control the overlap region; $E_n$ controls the ``exponent" of the energy level $2^{-E_n}$ with respect to which $\sigma^{(i)}\in \bincube$ are near ground-state; and $\mathcal{I}$ is a certain index set for describing the correlated instances (more on this later). 

The set $\mathcal{S}(\beta,\eta,m,E_n,\mathcal{I})$ is the set of all $m-$tuples of spin configurations $\sigma^{(i)}\in\bincube$,  $1\le i\le m$; where i) the pairwise overlaps between $\sigma^{(i)}$ lie in the interval $[\beta-\eta,\beta]$; and ii) each $\sigma^{(i)}$, $1\le i\le m$, is a (near) ground-state with respect to an instance of the \npp dictated by the entries of the vector $Y_i(\tau_i)\in \R^n$. Note that the instances $Y_i\left(\tau_i\right)$ with respect to which $\sigma^{(i)}$ are near-optimal need not be the same; each individually distributed as $\mathcal{N}(0,I_n)$; and are correlated. This will later turn out to be useful in ruling out ``sufficiently stable" algorithms, appropriately defined.

Our main results are now in order.
\subsection{Overlap Gap Property for the Energy Levels $2^{-\Theta(n)}$}\label{sec:m-ogp-O(1)-present}
We start by recalling from the introduction that the number partitioning problem (\nppp) exhibits a \compgapp: while the optimal value of \eqref{eq:NPP-main} for the i.i.d. standard normal inputs is $\Theta\left(\sqrt{n}2^{-n}\right)$ w.h.p.; the best known polynomial-time algorithm achieves a performance of only $2^{-\Theta(\log^2 n)}$. In terms of the exponent $E_n$ of the energy level $2^{-E_n}$---a quantity that is more convenient to work with---the existential methods guarantee an exponent of $E_n=n$, whereas the best known polynomial-time algorithm achieves $E_n=\Theta\left(\log^2 n\right)$.

In this section, we establish certain geometric properties regarding the overlaps of the tuples of near ground-state configurations of the \nppp; in order the better comprehend the nature of the aforementioned gap. 

Our first focus is on the pairs of near ground-state configurations; and on the energy levels $E_n = \epsilon n$ where $\epsilon\in\left(\frac12,1\right]$.
\begin{theorem}\label{thm:2-ogp}
Let $X\distr \mathcal{N}(0,I_n)$; and $\epsilon\in\left(\frac12,1\right]$ be arbitrary. Then, there exists a $\rho\triangleq \rho(\epsilon)\in(0,1)$ such that with probability $1-\exp\left(-\Theta(n)\right)$, there are no pairs $\left(\sigma,\sigma'\right)\in\bincube\times \bincube$ of spin configurations for which $\Overlap\left(\sigma,\sigma'\right)\in\left[\rho,\frac{n-2}{n}\right]$; $\frac{1}{\sqrt{n}}\left|\ip{\sigma}{X}\right| = O\left(2^{-n\epsilon}\right)$, and $\frac{1}{\sqrt{n}}\left|\ip{\sigma'}{X}\right| =O\left(2^{-n\epsilon}\right)$.
\end{theorem}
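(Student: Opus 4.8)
The plan is to carry out a first moment computation on the number of forbidden pairs. Specifically, for a parameter $\rho\in(0,1)$ to be chosen, let $Z$ count the number of pairs $(\sigma,\sigma')\in\bincube\times\bincube$ with $\Overlap(\sigma,\sigma')\in[\rho,\tfrac{n-2}{n}]$, $\tfrac{1}{\sqrt n}|\ip{\sigma}{X}|\le C2^{-n\epsilon}$, and $\tfrac{1}{\sqrt n}|\ip{\sigma'}{X}|\le C2^{-n\epsilon}$, where $C$ is the implied constant. The goal is to show $\E{Z}=\exp(-\Theta(n))$, after which Markov's inequality gives $\pr(Z\ge 1)\le\E{Z}=\exp(-\Theta(n))$, which is exactly the claim. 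By linearity of expectation, $\E{Z}=\sum_{(\sigma,\sigma')}\pr(\mathcal{E}_{\sigma,\sigma'})$, where $\mathcal{E}_{\sigma,\sigma'}$ is the event that both inner products are small, and the sum is over the admissible overlap range.

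The key steps are as follows. First, fix a pair $(\sigma,\sigma')$ with normalized overlap $\lambda=\OBar(\sigma,\sigma')$, so $\lambda\in[-\tfrac{n-2}{n},-\rho]\cup[\rho,\tfrac{n-2}{n}]$ (the two-sided nature is handled by symmetry). The vector $(\ip{\sigma}{X},\ip{\sigma'}{X})$ is a centered bivariate Gaussian with covariance matrix $n\begin{pmatrix}1&\lambda\\\lambda&1\end{pmatrix}$. The probability that both coordinates lie in an interval of half-width $C\sqrt n\,2^{-n\epsilon}$ around the origin is, up to constants, $\frac{(C\sqrt n\,2^{-n\epsilon})^2}{n\sqrt{1-\lambda^2}}=\frac{C^2 2^{-2n\epsilon}}{\sqrt{1-\lambda^2}}$, using that the Gaussian density near the origin is bounded and that the determinant of the covariance is $n^2(1-\lambda^2)$. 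The factor $1/\sqrt{1-\lambda^2}$ stays polynomially bounded as long as $\lambda\le\tfrac{n-2}{n}$, since then $1-\lambda^2\ge 1-(\tfrac{n-2}{n})^2=\Theta(1/n)$; this is precisely why the overlap interval is capped at $\tfrac{n-2}{n}$ rather than $1$. Second, count the pairs with a given overlap: the number of pairs $(\sigma,\sigma')$ with $\OBar(\sigma,\sigma')=\lambda$ (i.e.\ agreeing in $n(1+\lambda)/2$ coordinates) is $2^n\binom{n}{n(1+\lambda)/2}\le 2^n\cdot 2^{nh((1+\lambda)/2)}$, where $h$ is the binary entropy. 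Summing over the admissible $\lambda$ (there are only $O(n)$ values), we get
\[
\E{Z}\le \mathrm{poly}(n)\cdot 2^{-2n\epsilon}\cdot 2^n\cdot \max_{\rho\le|\lambda|\le 1}2^{nh((1+\lambda)/2)}.
\]
Since $h((1+\lambda)/2)$ is decreasing in $|\lambda|$ on $[0,1]$, the max over $|\lambda|\ge\rho$ is $2^{nh((1+\rho)/2)}$. Hence $\E{Z}\le\mathrm{poly}(n)\cdot 2^{n(1+h((1+\rho)/2)-2\epsilon)}$.

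The remaining step is to choose $\rho$. We need $1+h((1+\rho)/2)<2\epsilon$. As $\rho\to 1$, $h((1+\rho)/2)\to 0$, so the left side tends to $1$; and since $\epsilon>\tfrac12$, we have $2\epsilon>1$, so for $\rho$ sufficiently close to $1$ (depending on $\epsilon$) the inequality holds strictly, giving $\E{Z}=\exp(-\Theta(n))$ with the implied exponent $n(2\epsilon-1-h((1+\rho)/2))>0$. This defines $\rho=\rho(\epsilon)\in(0,1)$. I do not anticipate a serious obstacle here; the one point requiring mild care is the Gaussian small-ball estimate when $\lambda$ is very close to $\tfrac{n-2}{n}$ — one must verify that the $1/\sqrt{1-\lambda^2}=O(\sqrt n)$ blow-up is harmless (it is, being subexponential) and that the bivariate density is genuinely bounded on the relevant region, which follows since the covariance matrix, after normalizing by $n$, has smallest eigenvalue $1-|\lambda|\ge\Theta(1/n)$ but the region over which we integrate shrinks like $2^{-2n\epsilon}$, overwhelming any polynomial factor. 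A second minor point is bookkeeping the constant $C$ from the $O(\cdot)$ in the hypothesis, which only affects the polynomial prefactor. Everything else is routine entropy-versus-probability balancing.
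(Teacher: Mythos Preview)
Your proposal is correct and follows essentially the same approach as the paper's own proof: a first moment bound combining the bivariate Gaussian small-ball estimate (with the $1/\sqrt{1-\lambda^2}=O(\sqrt n)$ blow-up absorbed into the polynomial prefactor) and the entropy count, then choosing $\rho$ so that $1+h((1+\rho)/2)-2\epsilon<0$. The paper writes the entropy term as $h((1-\rho)/2)$, but this equals your $h((1+\rho)/2)$ by the symmetry $h(x)=h(1-x)$, so the arguments are identical.
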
 The proof of Theorem~\ref{thm:2-ogp} is based on a first moment argument, and is provided in Section~\ref{sec:pf-thm:2-ogp}. 

Several remarks are now in order. Theorem~\ref{thm:2-ogp} establishes that for the energy levels $E_n=\epsilon n$ with $\epsilon\in\left(\frac12,1\right]$, it is the case that with high probability, the overlap of any pair of near ground-state spin configurations exhibits a gap. Namely, in the language of Definition~\ref{def:overlap-set}, Theorem~\ref{thm:2-ogp} establishes that for any  $\epsilon\in\left(\frac12,1\right]$ there exists a $\rho\triangleq \rho(\epsilon)\in (0,1)$ such that the set $\mathcal{S}(\beta,\eta,m,E_n,\mathcal{I})$ with parameters $\beta =1-\frac2n$, $\eta = 1-\frac2n-\rho$, $m=2$, $E_n=2^{-\epsilon n}$ and $\mathcal{I}=\{0\}$ is empty with probability at least $1-\exp\left(-\Theta(n)\right)$.

Note that the rightmost end of this gap is independent of $\epsilon$; and reaches $\frac{n-2}{n}$: this is the largest overlap that can be attained by two spin configurations $\sigma,\sigma'$ with $\sigma\ne\pm \sigma'$. That is, if $1\le d_H\left(\sigma,\sigma'\right)\le n-1$ then $\Overlap\left(\sigma,\sigma'\right)\le \frac{n-2}{n}$ with equality if and only if $d_H\left(\sigma,\sigma'\right)\in\{1,n-1\}$. 

We will later use our pairwise OGP result, Theorem~\ref{thm:2-ogp}, to establish the existence of what is known as a \emph{free energy well} (FEW), which is a provable barrier for the Markov chain type algorithms (such as the Glauber Dynamics). While proving the existence of a FEW, we will leverage the feature that the prohibit overlap region extends all the way up to $\frac{n-2}{n}$.

Observe that the energy levels $E_n$ ``ruled out" per Theorem~\ref{thm:2-ogp} are still far above the current computational threshold, $\Theta\left(\log^2 n\right)$. In order to better comprehend the aforementioned \compgap of the \nppp; and to address the energy levels $E_n=\Theta(n)$, that is, energy levels of form $E_n=\epsilon n$, where $\epsilon\in (0,1]$ is a constant independent of $n$; we now consider $m-$tuples of near ground-state configurations. We will establish a similar geometric property, this time regarding the overlaps of $m-$tuples of near ground-state configurations of the \nppp.

Our next main result shows that the \npp exhibits $m-$\emph{Overlap Gap Property} ($m-$OGP)---for constant $m$, $m=O(1)$---for such energy levels. 
\begin{theorem}\label{thm:main-eps-energy}
Let $\epsilon>0$. Then there exists an $m\triangleq m(\epsilon)\in\mathbb{N}$; $\beta$, and $\eta$ with $0<\eta<\beta<1$ such that the following holds. For i.i.d. random  vectors $X_i\in\R^n$, $0\le i\le m$ with distribution $\mathcal{N}(0,I_n)$ and any subset $\mathcal{I}\subset [0,1]$ with $|\mathcal{I}|=2^{o(n)}$, 
\[
\mathbb{P}\Bigl(\mathcal{S}\left(\beta,\eta,m,\epsilon,\mathcal{I}\right)\ne\varnothing\Bigr)\le \exp_2\left(-\Theta\left(n\right)\right).
\]
Here $\mathcal{S}\left(\beta,\eta,m,\epsilon,\mathcal{I}\right)$ is a shorthand notation for the set $\mathcal{S}\left(\beta,\eta,m,E_n,\mathcal{I}\right)$ with $E_n = n\epsilon$ introduced in the Definition~\ref{def:overlap-set}.
\end{theorem}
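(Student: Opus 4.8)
The plan is to prove Theorem~\ref{thm:main-eps-energy} via a first moment argument on a random variable counting the ``forbidden'' $m$-tuples. Specifically, fix $\epsilon>0$, and let $Z = Z(\beta,\eta,m,\epsilon,\mathcal{I})$ count the number of tuples $\left(\sigma^{(1)},\dots,\sigma^{(m)}\right)\in\bincube^m$ together with a choice of indices $\tau_1,\dots,\tau_m\in\mathcal{I}$ satisfying conditions (a) and (b) of Definition~\ref{def:overlap-set}. By Markov's inequality, it suffices to show that $\mathbb{E}[Z]\le\exp_2(-\Theta(n))$ for a suitable choice of the constants $m=m(\epsilon)$ and $0<\eta<\beta<1$. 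The expectation factors as a sum over tuples and over index choices; since $|\mathcal{I}|=2^{o(n)}$, there are at most $2^{o(n)}$ index choices, and the number of tuples with a prescribed pairwise-overlap pattern is at most $2^{nm}$ (crudely) or, more precisely, governed by an entropy term that I will want to track. The heart of the matter is to bound, for a fixed tuple with overlaps $\Overlap(\sigma^{(i)},\sigma^{(j)})\approx\beta$, the probability
\[
p := \mathbb{P}\left(\frac{1}{\sqrt n}\bigl|\ip{\sigma^{(i)}}{Y_i(\tau_i)}\bigr|\le 2^{-n\epsilon},\ 1\le i\le m\right).
\]

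First I would analyze the Gaussian vector $\left(\ip{\sigma^{(i)}}{Y_i(\tau_i)}:1\le i\le m\right)\in\R^m$. Each coordinate is mean-zero Gaussian with variance $n$ (since $\|\sigma^{(i)}\|_2^2=n$ and $Y_i(\tau_i)\distr\mathcal{N}(0,I_n)$). Its covariance matrix $\Sigma$ has entries determined by $\ip{\sigma^{(i)}}{\sigma^{(j)}}$ and the correlations among the $Y_i$'s, which in turn come from the shared $X_0$ component: $\text{Cov}(\ip{\sigma^{(i)}}{Y_i},\ip{\sigma^{(j)}}{Y_j}) = \sqrt{(1-\tau_i^2)(1-\tau_j^2)}\ip{\sigma^{(i)}}{\sigma^{(j)}}$. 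So $\Sigma = n\bigl(D + \text{(off-diagonal terms of size }\beta n/n\bigr)$ — after normalizing, $\Sigma/n$ is a matrix with unit diagonal and off-diagonal entries of absolute value roughly $\beta$ (times the $\sqrt{1-\tau^2}$ factors, which are at most $1$). The probability that a Gaussian vector with covariance $\Sigma$ lands in the box $[-\sqrt n\,2^{-n\epsilon},\sqrt n\,2^{-n\epsilon}]^m$ is at most $\bigl(2\cdot 2^{-n\epsilon}/\sqrt{2\pi\,\sigma_{\min}(\Sigma/n)}\bigr)^m$ by the standard bound $\mathbb{P}(\text{Gaussian}\in\text{box})\le \text{Vol(box)}\cdot(2\pi)^{-m/2}\det(\Sigma)^{-1/2}$, provided $\Sigma$ is non-degenerate. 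Thus $p\lesssim 2^{-nm\epsilon}\cdot\sigma_{\min}(\Sigma/n)^{-m/2}$.

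The key spectral step is then to lower-bound $\sigma_{\min}(\Sigma/n)$ uniformly over admissible overlap patterns. Writing $\Sigma/n = I + R$ where $R$ has zero diagonal and entries bounded by $\beta$ in absolute value (up to sign and the $\tau$-factors), Gershgorin gives $\sigma_{\min}(\Sigma/n)\ge 1-(m-1)\beta$. This is positive as long as $\beta<1/(m-1)$, which I can arrange. Then $\sigma_{\min}(\Sigma/n)^{-m/2}$ is a dimension-free constant (depending on $m,\beta$), so $p\le C(m,\beta)\,2^{-nm\epsilon}$. Combining: $\mathbb{E}[Z]\le 2^{o(n)}\cdot 2^{H(\text{overlap pattern})n}\cdot C(m,\beta)\,2^{-nm\epsilon}$, where the entropy factor is at most $2^{nm}$ trivially but should really be bounded using the constraint that the $\binom m2$ pairwise overlaps are all near $\beta$ — I expect a bound of the form $2^{n(1 + (m-1)h(\cdot) + o(1))}$ or even just $2^{n\cdot m}$ suffices if $m\epsilon$ is large enough. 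Choosing $m = m(\epsilon)$ large enough that $m\epsilon$ dominates the entropy exponent (this is where $m\sim 2/\epsilon$, or some constant multiple, enters: one needs $m\epsilon > $ (entropy contribution per configuration), and since the entropy per configuration is at most $1$ bit, $m > 1/\epsilon$ roughly suffices — with the $\binom m2$ overlap constraints helping further), we get $\mathbb{E}[Z]\le 2^{-\Theta(n)}$. Finally I would verify that once $m$ is fixed this way, a genuine interval $[\beta-\eta,\beta]$ (not just the single value $\beta$) still works, since the spectral bound and the entropy bound are continuous in the overlaps and the $2^{o(n)}$ slack absorbs the discretization of the overlap interval into $O(n)$ many sub-intervals.

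The main obstacle I anticipate is twofold. First, making the entropy count precise: the number of $m$-tuples of spin configurations with all $\binom m2$ pairwise overlaps in a narrow window around $\beta$ is not simply $2^{nm}$ — there are strong correlations, and a careful count (e.g.\ via a chain of conditional entropies, or by viewing the tuple as a word in $\{-1,1\}^m$ of length $n$ whose empirical distribution of columns must be consistent with the target overlaps) could give a substantially smaller exponent, which would let $m$ be smaller. I would likely use the crude $2^{nm}$ bound first and only refine if needed to get $m\sim 2/\epsilon$. Second, handling the $\tau_i$ parameters cleanly: the covariance $\Sigma$ depends on the $\tau_i$'s through the factors $\sqrt{1-\tau_i^2}\le 1$, and these only \emph{shrink} the off-diagonal entries, so the Gershgorin bound $\sigma_{\min}(\Sigma/n)\ge 1-(m-1)\beta$ remains valid for all choices of $\tau_i$; one must also ensure $\Sigma$ stays non-degenerate even when some $\tau_i=1$ (in which case that coordinate decouples and the argument only gets easier), and take a union bound over the $|\mathcal{I}|^m = 2^{o(n)}$ choices of $(\tau_1,\dots,\tau_m)$, which is harmless. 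Modulo these bookkeeping points, the argument is a standard first-moment/spectral computation.
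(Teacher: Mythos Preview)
Your overall architecture is right --- first moment, Gaussian small-ball bound governed by $|\Sigma|$, union bound over $|\mathcal{I}|^m=2^{o(n)}$ choices of $(\tau_1,\dots,\tau_m)$ --- and this is exactly what the paper does. But there is a genuine gap in the way you propose to control the covariance, and it is not just bookkeeping.

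Your spectral step is Gershgorin: $\sigma_{\min}(\Sigma/n)\ge 1-(m-1)\beta$, which forces you to take $\beta<1/(m-1)$, i.e.\ $\beta$ \emph{small}. But then the entropy count blows up. The refined count you mention is
\[
|S(\beta,\eta,m)|\le 2^n\Bigl(\sum_{\rho\in[\beta-\eta,\beta]}\binom{n}{n\tfrac{1-\rho}{2}}\Bigr)^{m-1}=\exp_2\Bigl(n+(m-1)n\,h\bigl(\tfrac{1-\beta+\eta}{2}\bigr)+o(n)\Bigr),
\]
and with $\beta$ small, $h((1-\beta+\eta)/2)$ is close to $h(1/2)=1$. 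So the first-moment exponent (in units of $n$) is
\[
1+(m-1)h\bigl(\tfrac{1-\beta+\eta}{2}\bigr)-m\epsilon\ \approx\ m-m\epsilon\ =\ m(1-\epsilon),
\]
which is \emph{positive} for every $\epsilon<1$ --- precisely the regime the theorem is about. Your sentence ``$m\epsilon>$ (entropy contribution per configuration) $\Rightarrow m>1/\epsilon$ suffices'' compares $m\epsilon$ to the entropy of a \emph{single} configuration rather than to the total entropy $\approx m$; once corrected, the inequality reads $\epsilon>1$, which is vacuous. The $\binom m2$ overlap constraints do not rescue you either: for $\beta$ near $0$, nearly-orthogonal $m$-tuples are abundant (random $\sigma^{(i)}$'s already satisfy them), so the count really is $2^{nm(1-o(1))}$.

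The resolution --- and this is the actual content of the paper's proof --- is to take $\beta$ \emph{close to $1$}, so that $h((1-\beta+\eta)/2)<\epsilon/2$ and the exponent $1+(m-1)h(\cdot)-m\epsilon<1-m\epsilon/2<0$ once $m>2/\epsilon$. But then Gershgorin is useless and you must control $|\Sigma|$ differently. The paper writes $\Sigma=A\bar\Sigma A+(I-A^2)$ with $A=\mathrm{diag}(\sqrt{1-\tau_i^2})$ and reduces to bounding $\lambda_{\min}(\bar\Sigma)$, where $\bar\Sigma$ has unit diagonal and off-diagonals $\rho_{ij}=\tfrac1n\ip{\sigma^{(i)}}{\sigma^{(j)}}\in[\beta-\eta,\beta]\cup[-\beta,-\beta+\eta]$ (the overlap is $|\rho_{ij}|$, so both signs occur). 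For each of the $2^{\binom m2}$ sign patterns, the determinant of the matrix with off-diagonals $\pm\beta$ is a degree-$m$ polynomial in $\beta$, not identically zero (it equals $1$ at $\beta=0$), hence nonzero on a deleted neighborhood of $1$; one then takes $\eta$ small enough that the perturbation $\|E\|_F\le m\eta$ does not kill the smallest eigenvalue. This is the step your proposal is missing.
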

The proof of Theorem~\ref{thm:main-eps-energy} is based on a first moment argument; and is provided in Section~\ref{sec:pf-eps-m-OGP}.

Several important remarks are now in order. Theorem~\ref{thm:main-eps-energy} asserts that for any $\epsilon>0$, the \npp indeed exhibits the $m-$OGP for energy level $2^{-\epsilon n}$ for an appropriate $m\in\mathbb{N}$: there exists $1>\beta>\eta>0$ such that with high probability, it is the case that for any $m-$tuple of near ground-state spin  configurations $\sigma^{(i)}\in \bincube$, $1\le i\le m$, one  can find indices $1\le i<j\le m$ such that $\Overlap\left(\sigma^{(i)},\sigma^{(j)}\right)\in [0,\beta-\eta)\cup (\beta,1]$. Our analysis reveals that for a fixed $\epsilon>0$, taking $m\sim \frac{2}{\epsilon}$ suffices. That is, roughly speaking; one can establish $m-$OGP for energy levels $\exp_2\left(-\frac2m n\right)$. The proof will use $\eta$ which is much smaller than $\beta$. Hence the structure we rule out can be viewed as a near ground-state configuration consisting of (nearly) equidistant points of $\bincube$. 

It is important to highlight that Theorem~\ref{thm:main-eps-energy} studies the overlap structure of $m-$tuples spin configurations. This is in contrast to Theorem~\ref{thm:2-ogp} which studies the overlap structure of pairs. The study of the overlap structures of $m-$tuples is necessary, in order to cover the regime $E_n=\epsilon n$ for $\epsilon\in(0,1]$: if we consider instead the overlap structure for pairs, then the OGP can be established only for very low energy levels $2^{-\epsilon n}$ where $\epsilon \in (\frac12,1]$, as was shown in Theorem~\ref{thm:2-ogp}. 

Furthermore, Theorem~\ref{thm:main-eps-energy} pertains the ``ensemble" variant of the OGP: the spin configurations $\sigma^{(i)}$, $1\le i\le m$, need not be near ground-states for the same instance of the problem; and are  near ground-states for potentially correlated instances. This idea was employed also in other works \cite{gamarnik2021overlap,gamarnik2020lowFOCS,wein2020optimal}. Using the ensemble variant of OGP, it appears possible that virtually any sufficiently stable algorithm can be ruled out (here, ``stability" refers to the property that a small change in the input of the algorithm induces a small change in the output). %Later in Section~\ref{sec:failure}, we will indeed leverage this feature and rule out ``sufficiently stable" algorithms, appropriately defined, for the \nppp. 

%This idea was further extended in the works \cite{gamarnik2019overlap,gamarnik2020low,wein2020optimal}, and the ``ensemble" variant of the OGP was introduced. In this variant, the overlap structures are considered between configurations that are near optimal not only with respect to the same instance, but optimal with respect to \emph{correlated instances} of the problem. For instance, the recent work by Wein~\cite{wein2020optimal} considered the ``forbidden" structures between independent sets which come from correlated random graphs rather than the same graph. 
%: the This feature will also turn out to be useful in ruling out 
\begin{remark}\label{remark:m-ogp-applies-to-vbp-aswell}
It is worth noting that while we state and prove Theorem~\ref{thm:main-eps-energy} for the \npp \eqref{eq:NPP-main} for simplicity; our result still remains valid for the high-dimensional version, \vbp\eqref{eq:vbp}. More concretely, recalling that the optimal value of \eqref{eq:vbp} for random i.i.d. standard normal inputs $X_i\distr \mathcal{N}(0,I_d)$, $1\le i\le n$ is $\Theta\left(\sqrt{n}2^{-n/d}\right)$ for $\omega(1)\le d\le o(n)$; our approach still remains valid and the $m-$OGP still takes place for energy levels $\Theta\left(\sqrt{n}2^{-\epsilon n/d}\right)$ for any $\epsilon\in(0,1]$. 
\end{remark}
\subsection{Absence of $m$-Overlap Gap Property for Energy Levels $2^{-o(n)}$}\label{sec:m-ogp-absent}
The energy exponents (of form $\epsilon n$ for $0<\epsilon\le 1)$ that Theorem~\ref{thm:main-eps-energy} rules out are still far above the current best computational result, $\Theta\left(\log^2 n\right)$. In order to handle this issue, we now focus our attention to the sub-linear exponent regime, $E_n=o(n)$. 
\iffalse
We first slightly modify Definition~\ref{def:overlap-set}.
\begin{definition}\label{def:overlap-absent-set}
Fix any $X\in\R^n$, $m\in\mathbb{N}$, $\beta\in(0,1)$; $\eta \in(0,\beta)$; and $f:\mathbb{N}\to \R^+$. Denote by $S(m,\beta,\eta,E)$ the set of all $m-$tuples $\left(\sigma^{(i)}:1\le i\le m\right)$ of spin configurations $\sigma^{(i)}\in \bincube$, such that the following holds:
\begin{itemize}
    \item[(a)] {\bf (Pairwise Overlap Condition)} For any $1\le i<j\le m$, 
    \[
   \beta-\eta \le \Overlap\left(\sigma^{(i)},\sigma^{(j)}\right)\le \beta+\eta.
    \]
   % where $d_H(\sigma,\sigma')=\sum_{1\le k\le n}\ind\{\sigma(k)\ne \sigma'(k)\}$ for  $\sigma=(\sigma(k):1\le k\le n)\in \bincube$ and $\sigma'=(\sigma'(k):1\le k\le n)\in\bincube$.
    \item[(b)] {\bf (Near Ground-State Condition)} For $1\le i\le m$,
    \[
    \frac{1}{\sqrt{n}}\left|\ip{\sigma^{(i)}}{X}\right|\le E,\quad\text{where}\quad E=2^{-f(n)}.
    \]
\end{itemize}
\end{definition}
\fi
%In particular, Definition~\ref{def:overlap-absent-set} is a minor modification of Definition~\ref{def:overlap-set} in the sense that (a) $\beta$ is now the center of the overlap interval (which is slightly more convenient to work with) and (b)  the near ground-state condition is now with respect to the same instance $X$ of the problem.

Perhaps rather surprisingly, we establish that the $m$-OGP is actually \emph{absent} in this regime, when $m$ is constant with respect to $n$, $m=O(1)$. That is, the overlaps ``span" the entire interval, in an certain sense concretized as follows.
\begin{theorem}\label{thm:ogp-absent} 
Let $X\distr \mathcal{N}\left(0,I_n\right)$. Fix any $\eta>0$ and $m\in\mathbb{N}$. Suppose that $f(n):\mathbb{N}\to \R^+$ is any arbitrary function with  $f(n)=\omega_n(1)$ and $f(n)=o(n)$. Then,
\[
\lim_{n\to\infty}\mathbb{P}\Bigl(\forall \beta\in[0,1]:\mathcal{S}\left(m,\beta,\eta,f(n),\{0\}\right)\ne \varnothing\Bigr) =1;
\]
where the set $\mathcal{S}$ is introduced in Definition~\ref{def:overlap-set} with the following modification on the pairwise overlap condition: $\beta-\eta  \le \Overlap\left(\sigma^{(i)},\sigma^{(j)}\right)\le \beta+\eta$ for $1\le i<j\le m$. 
\end{theorem}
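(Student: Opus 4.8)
The plan is to fix an arbitrary $\beta^\ast\in[0,1]$, set $\eta':=\eta/2$, prove that $\mathbb P\bigl(\mathcal S(m,\beta^\ast,\eta',f(n),\{0\})\ne\varnothing\bigr)\to 1$, and then pass to the ``for all $\beta$'' statement by a net argument: cover $[0,1]$ by an $\eta'$--net $\beta_1,\dots,\beta_L$ with $L=O(1/\eta)=O(1)$; since $[\beta_\ell-\eta',\beta_\ell+\eta']\subseteq[\beta-\eta,\beta+\eta]$ whenever $|\beta-\beta_\ell|\le\eta'$, the event $\{\forall\ell:\mathcal S(m,\beta_\ell,\eta',f(n),\{0\})\ne\varnothing\}$ is contained in $\{\forall\beta\in[0,1]:\mathcal S(m,\beta,\eta,f(n),\{0\})\ne\varnothing\}$, and a union bound over the $L$ net points finishes. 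Since $\mathcal I=\{0\}$ forces $\tau_i=0$ and $Y_i(0)=X_0$, the near ground--state condition is just $n^{-1/2}\bigl|\langle\sigma^{(i)},X\rangle\bigr|\le 2^{-f(n)}$ for the single instance $X:=X_0\sim\mathcal N(0,I_n)$. The case $m=1$, and the case $\beta^\ast>1-\eta'$ for $m\ge 2$, are immediate: the all--equal tuple $(\sigma^\ast,\dots,\sigma^\ast)$, with $\sigma^\ast$ an exact minimiser of \eqref{eq:NPP-main}, has pairwise overlap $1\in[\beta^\ast-\eta',\beta^\ast+\eta']$ when $\beta^\ast+\eta'>1$ (the overlap constraint being vacuous for $m=1$), and $n^{-1/2}\min_\sigma|\langle\sigma,X\rangle|=\Theta(2^{-n})\le 2^{-f(n)}$ w.h.p.\ by \cite{karmarkar1986probabilistic} since $f(n)=o(n)$. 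So assume $m\ge 2$ and $\beta^\ast\le 1-\eta'$.

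The remaining cases are handled by a second moment argument, which is the heart of the proof. First I would produce an exponentially large family of ``admissible'' tuples by a probabilistic construction. Draw the $n$ columns $(\sigma_k^{(1)},\dots,\sigma_k^{(m)})\in\{-1,1\}^m$, $1\le k\le n$, i.i.d.\ from the measure $\mu_{\beta^\ast}$ that, with probability $\beta^\ast$, sets all $m$ bits equal to a common uniform sign and, with probability $1-\beta^\ast$, makes them i.i.d.\ uniform (legitimate since $\beta^\ast<1$). Then $\mathbb E_{\mu_{\beta^\ast}}[z_iz_j]=\beta^\ast$ for $i\ne j$, so $\mathbb E\bigl[n^{-1}\langle\sigma^{(i)},\sigma^{(j)}\rangle\bigr]=\beta^\ast$, and the Gram matrix $G=\bigl(n^{-1}\langle\sigma^{(i)},\sigma^{(j)}\rangle\bigr)_{i,j}$ concentrates entrywise around $(1-\beta^\ast)I_m+\beta^\ast J_m$, whose least eigenvalue is $1-\beta^\ast\ge\eta'$. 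By Hoeffding's inequality and a union bound over the $\binom m2$ pairs, with probability at least $\tfrac12$ over this draw every pairwise overlap lies within $\eta'$ of $\beta^\ast$ and $\lambda_{\min}(G)\ge\eta'/2$, hence $\det G\ge c_0:=(\eta'/2)^m$, once $n$ is large. Call a tuple \emph{admissible} if its pairwise overlaps lie in $[\beta^\ast-\eta',\beta^\ast+\eta']$ and $\det G\ge c_0$, and let $Z$ be the number of admissible tuples. Since the heaviest atom of $\mu_{\beta^\ast}^{\otimes n}$ has mass at most $q^n$ with $q:=\beta^\ast/2+(1-\beta^\ast)2^{-m}\le\tfrac12$, the good event above involves at least $\tfrac12q^{-n}\ge 2^{\delta' n}$ distinct admissible tuples for a constant $\delta'=\delta'(\eta,m)>0$; also $Z\le 2^{mn}$. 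Both conditions defining admissibility are invariant under jointly permuting the coordinates of all $\sigma^{(i)}$, under permuting the components $\sigma^{(1)},\dots,\sigma^{(m)}$ of the tuple, and under flipping a common subset of coordinates of all $\sigma^{(i)}$; since the last group acts transitively on $\bincube$, the number of admissible tuples with a \emph{prescribed} $i$--th component equal to a \emph{prescribed} vector is a quantity $W$ depending on neither the index nor the vector, and $Z=2^nW$.

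Now let $N$ count admissible tuples that also satisfy $n^{-1/2}|\langle\sigma^{(i)},X\rangle|\le 2^{-f(n)}$ for all $i$. For a fixed admissible tuple, $(\langle\sigma^{(i)},X\rangle)_{i}$ is centred Gaussian with covariance $nG$; because $f(n)=\omega(1)$ the target box has half--width $\sqrt n\,2^{-f(n)}=o(\sqrt n)$ relative to the least standard deviation $\sqrt{n\lambda_{\min}(G)}\ge\sqrt n\,\sqrt{c_0/m^{m-1}}$, so
\[
\mathbb P(\text{tuple valid})=(1+o(1))\,\frac{2^m\,2^{-mf(n)}}{(2\pi)^{m/2}\sqrt{\det G}}=\Theta\bigl(2^{-mf(n)}\bigr)
\]
uniformly over admissible tuples (using $c_0\le\det G\le m^m$). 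Hence $\mathbb E[N]=\Theta\bigl(Z\,2^{-mf(n)}\bigr)=2^{\delta' n(1-o(1))}\to\infty$. For the second moment I would fix $\epsilon_n:=(f(n)/n)^{1/4}$, so that $\epsilon_n\to 0$ while $\epsilon_n^2 n/f(n)\to\infty$ because $f(n)=o(n)$, and split $\mathbb E[N^2]=\sum_{\mathcal P,\bar{\mathcal P}\ \mathrm{adm.}}\mathbb P(\mathcal P,\bar{\mathcal P}\ \text{both valid})$ according to whether all cross--overlaps $n^{-1}|\langle\sigma^{(i)},\bar\sigma^{(j)}\rangle|$ are at most $\epsilon_n$ (Type~II) or some exceeds $\epsilon_n$ (Type~I). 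For Type~II pairs the $2m\times 2m$ Gram matrix has block form $\left(\begin{smallmatrix}G_{\mathcal P}&B\\B^\top&G_{\bar{\mathcal P}}\end{smallmatrix}\right)$ with $\|B\|_\infty\le\epsilon_n$, and a Schur--complement estimate using $\lambda_{\min}(G_{\mathcal P}),\lambda_{\min}(G_{\bar{\mathcal P}})\ge c_0/m^{m-1}$ gives $\det G_{\mathrm{full}}=\det G_{\mathcal P}\det G_{\bar{\mathcal P}}\bigl(1+O_{m,\eta}(\epsilon_n^2)\bigr)$; together with the smallness of the target box this yields $\mathbb P(\text{both valid})=(1+o(1))\,\mathbb P(\mathcal P\ \text{valid})\,\mathbb P(\bar{\mathcal P}\ \text{valid})$ uniformly, so summing over the Type~II pairs (a subset of all admissible pairs) gives $\sum_{\mathrm{II}}\le(1+o(1))\,\mathbb E[N]^2$. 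For Type~I pairs I would simply use $\mathbb P(\text{both valid})\le 1$ and count them: choosing the witnessing indices $(i,j)$ ($m^2$ ways), then $\sigma^{(i)}$ ($2^n$ ways), then an admissible completion of $\mathcal P$ ($W$ ways), then $\bar\sigma^{(j)}$ among the at most $2^{n(1-c'\epsilon_n^2)}$ sign vectors $v$ with $n^{-1}|\langle v,\sigma^{(i)}\rangle|\ge\epsilon_n$ (Hoeffding, $c'=1/(2\ln 2)$), and then an admissible completion of $\bar{\mathcal P}$ ($W$ ways), the number of Type~I pairs is at most $m^2\,2^{-c'\epsilon_n^2 n}(2^nW)^2=m^2\,2^{-c'\epsilon_n^2 n}Z^2$. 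Hence $\sum_{\mathrm I}\le m^2\,2^{-c'\epsilon_n^2 n}Z^2=o\bigl(2^{-2mf(n)}Z^2\bigr)=o(\mathbb E[N]^2)$, using $\epsilon_n^2 n\gg f(n)$. Combining, $\mathbb E[N^2]=(1+o(1))\,\mathbb E[N]^2$, so by the Paley--Zygmund inequality $\mathbb P(N\ge 1)\ge\mathbb E[N]^2/\mathbb E[N^2]\to 1$; since $N\ge 1$ forces $\mathcal S(m,\beta^\ast,\eta',f(n),\{0\})\ne\varnothing$, the net argument gives the theorem.

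The hard part is the second moment step, and within it two intertwined points. First, one must make the individual probabilities $\mathbb P(\text{tuple valid})$ \emph{uniformly} of order $2^{-mf(n)}$; this fails for near--degenerate tuples (where $\det G$ is tiny and the Gaussian density at the origin blows up), which is why the non--degeneracy guard $\det G\ge c_0$ is built into the definition of $N$ and the explicit construction is arranged to produce exponentially many tuples satisfying it. Second, the overcounting dichotomy for \emph{pairs} of tuples must go through: the correlated (Type~I) pairs have to be exponentially rare relative to all admissible pairs, and the decorrelated (Type~II) pairs have to factorise up to a $1+o(1)$ factor at the level of the block covariance matrix. This is precisely where both hypotheses on $f$ are consumed: $f(n)=\omega(1)$ guarantees the target box is asymptotically small, so the Gaussian mass of a box around the origin is $(1+o(1))$ times its volume times the density at the origin; and $f(n)=o(n)$ is exactly what allows the decorrelation threshold $\epsilon_n$ to be sent to $0$ while still keeping $\epsilon_n^2 n\gg f(n)$, so that the $O(\epsilon_n^2)$ determinant perturbation in Type~II vanishes and the $2^{-c'\epsilon_n^2 n}$ gain in Type~I beats the $2^{2mf(n)}$ loss.
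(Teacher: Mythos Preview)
Your proposal is correct and follows essentially the same route as the paper: a second moment (Paley--Zygmund) argument on the count of near-ground-state $m$-tuples with constrained pairwise overlaps, with the second moment controlled by the same overcounting dichotomy (pairs with some large cross-overlap are rare enough that bounding their probability by $1$ suffices; pairs with all cross-overlaps small are decorrelated via a block Schur complement), followed by a finite net over $\beta$.

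Two technical choices differ and are worth flagging. First, you let the decorrelation threshold $\epsilon_n=(f(n)/n)^{1/4}$ vary with $n$, so that both $\epsilon_n\to 0$ and $\epsilon_n^2 n\gg f(n)$ hold simultaneously; the paper instead fixes $\epsilon>0$, obtains $\mathbb P(N\ge 1)\ge \overline\varphi(\epsilon)^{1/2}+o_n(1)$ with $\overline\varphi(\epsilon)\to 1$ as $\epsilon\to 0$, and then sends $n\to\infty$ followed by $\epsilon\to 0$. Your single-limit version is cleaner and avoids the somewhat delicate order-of-limits discussion. Second, you enforce non-degeneracy by building the guard $\det G\ge c_0$ into the definition of ``admissible'' (and verify that exponentially many tuples from your mixture construction satisfy it); the paper instead takes the overlap window width $\bar\rho\ll(1-\rho)/m$ so small that Wielandt--Hoffman forces $\det\Sigma$ to be uniformly bounded away from zero across \emph{all} tuples in the set. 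Both achieve the same uniform $\Theta(2^{-mf(n)})$ control on the per-tuple probability; your version is a bit more robust since it does not require shrinking the overlap window. The probabilistic existence step also differs cosmetically (your mixture $\mu_{\beta^\ast}$ versus the paper's i.i.d.\ Rademacher$(\eta^\ast)$ coordinates with $2\eta^\ast(1-\eta^\ast)=\gamma$), but both are straightforward.
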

In particular, since $\eta$ in the statement of Theorem~\ref{thm:ogp-absent} is arbitrary, we conclude that the overlaps indeed ``span" the entire interval. The $m-$tuples $\left(\sigma^{(i)}:1\le i\le m\right)$ that we consider in Theorem~\ref{thm:ogp-absent} consist of spin configurations that are near ground-state with respect to the same instance of the problem: $n^{-1/2}\left|\ip{\sigma^{(i)}}{X}\right| \le \exp_2(-f(n))$ for $1\le i\le m$. Moreover, our proof will demonstrate something stronger: one can find such $m-$tuples $\sigma^{(i)} \in\bincube$, $1\le i\le m$ satisfying not only the constraints on absolute values of inner products but inner products themselves: $\beta-\eta \le \frac1n\ip{\sigma^{(i)}}{\sigma^{(j)}}\le \beta+\eta$.   The slight modification of Definition~\ref{def:overlap-set} (where the interval $[\beta-\eta,\beta+\eta]$ is considered instead of $[\beta-\eta,\beta]$) is for convenience. 

%Observe that if $d_H(\sigma,\sigma')=nk$, then $n^{-1}\ip{\sigma}{\sigma'} = 1-2k$. Hence, for any $\left(\sigma^{(i)}:1\le i\le m\right) \in S\left(m,\frac{1-\rho}{2},\frac{\bar{\rho}}{2},2^{-f(n)}\right)$ and indices $1\le i<j\le m$, it is the case that  $\rho-\bar{\rho} \le n^{-1}\ip{\sigma^{(i)}}{\sigma^{(j)}} \le \rho+\bar{\rho}$. For $\bar{\rho}$ sufficiently small; the normalized inner product is then positive, and coincides with the overlap $\Overlap\left(\sigma^{(i)},\sigma^{(j)}\right)$. 

The proof of Theorem~\ref{thm:ogp-absent} uses the probabilistic method and the second moment method together with a crucial overcounting idea; and is provided in Section~\ref{sec:pf-absent-OGP}.
\subsection{$m$-Overlap Gap Property Above $2^{-\Theta(n)}$: Super-Constant $m$}\label{sec:super-constant-ogp-present}

We now establish the existence of $m-$OGP, where $m$ is super-constant, $m=\omega_n(1)$, for certain energy levels whose exponents are sub-linear.
\begin{theorem}\label{thm:m-ogp-superconstant-m}
Let $E_n:\mathbb{N}\to\R^+$ be any arbitrary ``energy exponent" with growth condition 
\[
E_n \in\omega\left(\sqrt{n\log_2 n}\right)\quad\text{and} \quad E_n \in o(n).
\]
Suppose that $X_i\in\R^n$, $1\le i \le m$, are i.i.d. with distribution $\mathcal{N}(0,I_n)$, and $\mathcal{I}\subset[0,1]$ with $|\mathcal{I}|=n^{O(1)}$. 
Define the sequences $(m_n)_{n\ge 1}$, $(\beta_n)_{n\ge 1}$ and $(\eta_n)_{n\ge 1}$ with $1>\beta_n>\eta_n>0$, $n\ge 1$ by
\begin{equation}\label{eq:beta-eta-m-n-main-supconstant}
m_n \triangleq \frac{2n}{E_n},\quad \beta_n\triangleq 1-\frac{2g(n)}{E_n},\quad\text{and}\quad \eta_n = \frac{g(n)}{2n},
\end{equation}
where $g(n)$ is any arbitrary function with growth condition
\begin{equation}\label{g-n-appear-main-thm}
g(n) \in \omega(1) \quad\text{and}\quad g(n) = o\left(\frac{E_n^2}{n\log_2 n}\right). 
\end{equation}
Then, %the following holds. %There exists sequences $(m_n)_{n\ge 1}\subset \mathbb{N}$, $(\beta_n)_{n\ge 1}$, $(\eta_n)_{n\ge  1}$ with $1>\beta_n>\eta_n>0$, $n\ge 1$, such that $n\eta_n = \omega(1)$ and
\begin{equation}\label{eq:thesis-m-ogp-superconstant-m}
\mathbb{P}\Bigl(\mathcal{S}\left(\beta_n,\eta_n,m_n,E_n,\mathcal{I}\Bigr)\ne\varnothing\right)\le \exp\left(-\Theta(n)\right).
\end{equation}
Here, $\mathcal{S}\left(\beta_n,\eta_n,m_n,E_n,\mathcal{I}\right)$ is the set introduced in Definition~\ref{def:overlap-set} with the modification that the pairwise inner products (as opposed to the overlaps) are constrained, that is
\[
\beta-\eta\le \frac1n \ip{\sigma^{(i)}}{\sigma^{(j)}}  \le \beta, \quad\text{for}\quad 1\le i<j\le m.
\]
Moreover, in the special case where
\[
E_n \in \omega\left(n\cdot \log^{-\frac15+\epsilon} n\right) \quad\text{and}\quad E_n = o(n)
\]
(with $\epsilon\in(0,\frac15)$ being arbitrary),~\eqref{eq:thesis-m-ogp-superconstant-m} still remains valid with $g(n)$ satisfying
\begin{equation}\label{eq:main-g-n-for-main-failure-result}
g(n) = n\cdot \left(\frac{E_n}{n}\right)^{2+\frac{\epsilon}{8}}. %s(n)\cdot z(n)\quad\text{where}\quad s(n) = \frac{E_n}{n} \quad\text{and}\quad z(n) = s(n)^{1+\frac{\epsilon}{8}}. 
\end{equation}
\end{theorem}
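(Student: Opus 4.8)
# Proof Proposal for Theorem~\ref{thm:m-ogp-superconstant-m}

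\textbf{Overall strategy: first moment over a random family of overlap-constrained tuples.} The plan is to define, for each choice of the correlation indices $\tau_1,\dots,\tau_{m_n}\in\mathcal{I}$, a counting random variable
\[
Z \;=\; \sum_{(\sigma^{(1)},\dots,\sigma^{(m_n)})} \;\prod_{i=1}^{m_n} \ind\!\left\{\tfrac{1}{\sqrt n}\bigl|\ip{\sigma^{(i)}}{Y_i(\tau_i)}\bigr|\le 2^{-E_n}\right\},
\]
where the outer sum ranges over $m_n$-tuples of configurations in $\bincube$ whose pairwise \emph{inner products} $\tfrac1n\ip{\sigma^{(i)}}{\sigma^{(j)}}$ lie in $[\beta_n-\eta_n,\beta_n]$. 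One then takes a union bound over the at most $|\mathcal{I}|^{m_n}=n^{O(1)\cdot m_n}=\exp(O(m_n\log n))$ choices of $(\tau_i)$, shows $\E[Z]$ is exponentially small, and applies Markov's inequality. The key arithmetic balance to verify is that $\log\E[Z]$, which will have the rough shape $n\cdot(\text{entropy of the overlap-constrained tuple set}) - m_n E_n\log 2 + (\text{Gaussian correction})$, remains $-\Theta(n)$ even after paying the $O(m_n\log n)$ cost of the union bound over $\mathcal{I}$; this is precisely where the growth condition $g(n)=o(E_n^2/(n\log_2 n))$ enters (and, in the stronger regime, where the specific exponent $2+\epsilon/8$ in \eqref{eq:main-g-n-for-main-failure-result} is calibrated).

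\textbf{Counting the overlap-constrained tuples and the Gaussian probability.} First I would count the number of $m_n$-tuples with the prescribed pairwise inner products. Writing $\beta_n=1-2g(n)/E_n$, the configurations are forced to be pairwise at Hamming distance roughly $n\cdot g(n)/E_n$ (up to $n\eta_n = g(n)/2$ slack), so the tuple set has size $2^n\cdot\exp(O(m_n\cdot n h(g(n)/E_n)))$ by a union bound over which coordinates differ; here $h$ is the binary entropy and the product $m_n\cdot n\, h(g(n)/E_n)$ should turn out to be $o(n)$ thanks to $m_n=2n/E_n$ and $g(n)=o(E_n)$. Second, for a \emph{fixed} such tuple I would analyze $\pr\bigl(\forall i:\,n^{-1/2}|\ip{\sigma^{(i)}}{Y_i(\tau_i)}|\le 2^{-E_n}\bigr)$. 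Since each $n^{-1/2}\ip{\sigma^{(i)}}{Y_i(\tau_i)}$ is a mean-zero Gaussian with variance $1$, this is the probability that a mean-zero Gaussian vector in $\R^{m_n}$ with covariance matrix $\Sigma$ lands in a small box of side $2\cdot 2^{-E_n}$, which is at most $(2\cdot 2^{-E_n})^{m_n}/\bigl((2\pi)^{m_n/2}\sqrt{\det\Sigma}\bigr)$. The off-diagonal entries of $\Sigma$ are products of the pairwise overlaps $\tfrac1n\ip{\sigma^{(i)}}{\sigma^{(j)}}$ with a factor depending on $\tau_i,\tau_j$; since the overlaps are all close to $1$, $\Sigma$ is close to the all-ones matrix, so its smallest eigenvalue is small and $\det\Sigma$ needs care.

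\textbf{Controlling $\det\Sigma$ — the main obstacle.} The hard part will be lower-bounding $\det\Sigma$ (equivalently, bounding $\sqrt{\det\Sigma}$ away from being \emph{too} small relative to the $2^{-m_n E_n}$ gain). When all pairwise overlaps equal $1-\delta$ with $\delta=\Theta(g(n)/E_n)$, the matrix $\Sigma$ is (approximately) $(1-(1-\delta))I + (1-\delta)J = \delta I + (1-\delta)J$ up to the $\tau$-factors and the $\eta_n$ slack, whose determinant is $\approx \delta^{m_n-1}(1+(m_n-1)(1-\delta)/\delta)\approx m_n\delta^{m_n-1}$; thus $-\tfrac12\log\det\Sigma\approx \tfrac{m_n}{2}\log(1/\delta)$. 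Plugging everything in, $\log\E[Z]\lesssim n\log 2 + o(n) - m_n E_n\log 2 + \tfrac{m_n}{2}\log(E_n/g(n)) + m_n\log(|\mathcal{I}|)$. With $m_n E_n = 2n$, the dominant negative term $-2n\log 2$ beats $n\log 2$, and one needs the remaining positive terms $\tfrac{m_n}{2}\log(E_n/g(n))$ and $m_n\log n = (2n/E_n)\log n$ to be $o(n)$: the first is $O((n/E_n)\log(E_n/g(n)))$, which is $o(n)$ since $E_n=\omega(\sqrt{n\log n})\gg$ anything logarithmic, and the $\eta_n$-slack contribution to $\det\Sigma$ must be shown not to overwhelm $\delta^{m_n-1}$, which is exactly the role of $\eta_n=g(n)/(2n)\ll\delta$. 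To make the $\eta_n$-perturbation rigorous I would use eigenvalue interlacing / Weyl's inequality: perturbing each off-diagonal entry of $\delta I+(1-\delta)J$ by at most $O(\eta_n)$ changes every eigenvalue by at most $O(m_n\eta_n)=O(g(n)/E_n)\ll\delta$, so $\det\Sigma \ge (\delta/2)^{m_n-1}\cdot\Omega(m_n\delta)$ still holds. Finally, for the stronger regime $E_n=\omega(n\log^{-1/5+\epsilon}n)$ with $g(n)=n(E_n/n)^{2+\epsilon/8}$, I would simply re-run the same estimate and check that $g(n)$ still satisfies \eqref{g-n-appear-main-thm} — i.e. $g(n)=\omega(1)$ (clear, since $E_n/n = \omega(\log^{-1/5+\epsilon}n)$ so $g(n)=\omega(n\cdot\log^{(-1/5+\epsilon)(2+\epsilon/8)}n)$, and one checks the exponent on $\log$ times... actually $g(n)=\omega(1)$ follows because $(E_n/n)^{2+\epsilon/8}$ decays only polylogarithmically while $n$ grows) and $g(n)=o(E_n^2/(n\log_2 n))$, which reduces to $(E_n/n)^{2+\epsilon/8}\cdot n = o((E_n/n)^2\cdot n/\log_2 n)$, i.e. $(E_n/n)^{\epsilon/8}\log_2 n = o(1)$; since $E_n/n=\omega(\log^{-1/5+\epsilon}n)$... this needs $(\log^{-1/5+\epsilon}n)^{\epsilon/8}\log n\to 0$, which fails, so the correct reading is that $g(n)$ is chosen at the \emph{boundary} and the constant $\epsilon/8$ is picked small enough that the inequality $g(n)=o(E_n^2/(n\log_2 n))$ holds with the given lower bound on $E_n$ — verifying this clean inequality is the last routine step.
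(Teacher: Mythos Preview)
Your approach matches the paper's --- first moment over overlap-constrained tuples, entropy counting, Gaussian box probability via $\det\Sigma$, union bound over $\mathcal{I}^{m_n}$ --- and the dominant arithmetic $n - m_nE_n = -n$ is exactly right. Two genuine gaps remain.

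First, your determinant analysis drops the $\tau$-factors. The off-diagonals of $\Sigma$ are $\gamma_i\gamma_j\rho_{ij}$ with $\gamma_i=\sqrt{1-\tau_i^2}$ ranging over all of $[0,1]$, so $\Sigma$ is \emph{not} close to $\delta I+(1-\delta)J$ in general (take some $\tau_i=1$ and that row's off-diagonals vanish). You need the determinant lower bound to hold \emph{uniformly} over $\tau_1,\dots,\tau_{m_n}\in\mathcal I$. The paper does this by writing $\Sigma=A\bar\Sigma A+(I-A^2)$ with $A={\rm diag}(\gamma_i)$ and $\bar\Sigma_{ij}=\rho_{ij}$, then proving via Courant--Fischer (using $\lambda_{\min}(\bar\Sigma)\le 1$) that $\lambda_{\min}(\Sigma)\ge\lambda_{\min}(\bar\Sigma)$; only after this reduction does your perturbation argument on $\bar\Sigma$ apply. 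A related slip: your claim $O(m_n\eta_n)\ll\delta$ is false --- $m_n\eta_n=g(n)/E_n=\nu_n$ exactly, while $\delta=1-\beta_n=2\nu_n$, so the perturbation is half the target eigenvalue, not negligible. The bound $\lambda_{\min}(\bar\Sigma)>\nu_n$ survives only because of the deliberate factor-of-two relation $\eta_n=(1-\beta_n)/(2m_n)$ built into \eqref{eq:beta-eta-m-n-main-supconstant}.

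Second, your handling of the special case is confused, and you correctly sensed why: the choice $g(n)=n(E_n/n)^{2+\epsilon/8}$ does \emph{not} satisfy $g(n)=o(E_n^2/(n\log n))$ in general (your failed verification is right --- try $E_n=n/\log\log n$). Case~2 is not a corollary of Case~1. The fix is already implicit in your own entropy estimate: with $s=E_n/n$ and $g(n)/E_n=s^{1+\epsilon/8}$, the counting exponent $m_n\cdot n\,h(g(n)/E_n)\sim 2n\cdot s^{\epsilon/8}\log(1/s)$ is $o(n)$ for any $\epsilon>0$ simply because $x^{\epsilon/8}\log(1/x)\to 0$ as $x\to 0^+$. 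The point is that in this regime the entropy factor $\log(E_n/g(n))=\Theta(\log(n/E_n))$ replaces the crude $O(\log n)$ bound used in Case~1; this sharper bookkeeping --- not a verification of \eqref{g-n-appear-main-thm} --- is what makes the larger $g(n)$ admissible.
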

The idea of the proof of Theorem~\ref{thm:m-ogp-superconstant-m} is quite similar to that of Theorem~\ref{thm:main-eps-energy}, yet it does not follow directly from Theorem~\ref{thm:main-eps-energy}. This is due to the fact that Theorem~\ref{thm:m-ogp-superconstant-m} uses different asymptotic bounds for certain cardinality terms; and requires a more careful asymptotic analysis. For this reason, we provide a separate and complete proof in 
Section~\ref{sec:m-ogp-superconstant-m}.

Several remarks are in order. In what follows, we suppress the subscript $n$ from $m_n,\beta_n,\eta_n$; while the reader should keep in his mind that all these quantities are functions of $n$. 

Theorem~\ref{thm:m-ogp-superconstant-m} states that the $m-$OGP still takes place in some portion of the sub-exponential energy regime $2^{-E_n}$, specifically when the exponent $E_n$ satisfies
\[
\omega\left(\sqrt{n\log_2 n}\right)\le E_n \le o(n);
\]
provided $m$ is super-constant, $m=\omega_n(1)$. 

Furthermore, analogous to Theorem~\ref{thm:main-eps-energy}; Theorem~\ref{thm:m-ogp-superconstant-m} also pertains to the ``ensemble" variant of the OGP, where $\sigma^{(i)}\in \bincube$, $1\le i\le m$, need not be near ground-states with respect to the \emph{same} instance of the problem. Later in Section~\ref{sec:failure}, we will indeed leverage this feature and rule out ``sufficiently stable" algorithms, appropriately defined, for the \nppp. 

We treat the case $E_n = \omega(n\cdot \log^{-1/5 +\epsilon} n)$ with a different choice of $g(n)$ (though keeping $m,\beta,\eta$---as functions of $g(n)$---the same). In particular, in this case the $g(n)$ parameter (hence the $\eta$ parameter) appearing in Theorem~\ref{thm:m-ogp-superconstant-m} can be taken to be larger. Note that this yields a stronger conclusion: it implies that the length $\eta$ of the forbidden region is larger. Later in Theorem~\ref{thm:Main}, we will leverage Theorem~\ref{thm:m-ogp-superconstant-m} for the case $E_n = \omega(n\cdot \log^{-1/5+\epsilon} n)$ with $g(n)$ chosen as in~\eqref{eq:main-g-n-for-main-failure-result} (and $\beta,\eta,m$ prescribed according to~\eqref{eq:beta-eta-m-n-main-supconstant}) to rule out sufficiently stable algorithms, appropriately defined. %Our proof of Theorem~\ref{thm:m-ogp-superconstant-m} will be all at once, where we justify each step separately for this special case. (The rationale for this is to avoid stating and proving a separate result for the case $E_n = \omega(n\log^{-1/5+\epsilon} n)$ containing many redundant and repeated technical details.)

Our next remark pertains to the size of the index set, $\mathcal{I}$. While we restricted our attention to sets with $|\mathcal{I}|\triangleq |\mathcal{I}(n)| = n^{O(1)}={\rm poly}(n)$, it appears that our technique still remains valid, so long as
$|\mathcal{I}(n)|\le 2^{CE_n}$, where $C$ is a small enough constant.

%We now highlight an important distinction between Theorem~\ref{thm:main-eps-energy} and Theorem~\ref{thm:m-ogp-superconstant-m}. Note that, Theorem~\ref{thm:main-eps-energy} operates for the energy levels $O(2^{-n\epsilon})$, namely energy levels of form $C2^{-n\epsilon}$ where $C>0$ is some constant; whereas Theorem~\ref{thm:m-ogp-superconstant-m} operates with levels $2^{-E_n}$ (namely, the constant term is $1$). The reason for this difference is the following: if the energy levels for the $m-$tuple are taken to be $C_i 2^{-n\epsilon}$, $1\le i\le m$, then for $m=O(1)$, $\max_{1\le i\le n}C_i =O(1)$, and likewise $\min_{1\le i\le n}C_i = O(1)$. However, if $m=\omega_n(1)$, it is not clear what the correct order of magnitude for $\max_{1\le i\le n}C_i$ and $\min_{1\le i\le m}C_i$ are. This can potentially hurt the first moment argument.

We now comment on the energy exponent, $E_n$. For Theorem~\ref{thm:m-ogp-superconstant-m} to hold true, $E_n$ should grow faster than $\omega\left(\sqrt{n\log_2 n}\right)$. While we do not rule out the OGP for smaller values of $E_n$, we will provide an argument which shows that $\omega\left(\sqrt{n\log_2 n}\right)$ is tight. It uses the first moment argument employed here. Whether this growth rate is indeed tight is left as an open problem. See Section~\ref{sec:m-ogp-beyond-is-impossible} for more details.

%Furthermore, Theorem pertains the ``ensemble" variant of the OGP: the spin configurations $\sigma^{(i)}$, $1\le i\le m$, need not be near ground-states for the same instance of the problem; and are  near ground-states for potentially correlated instances. This idea was employed also in other works \cite{gamarnik2019overlap,gamarnik2020low,wein2020optimal}. Using the ensemble variant of OGP, it appears possible that virtually any sufficiently stable algorithm can be ruled out (here, ``stability" refers to the property that a small change in the input of the algorithm induces a small change in the output). %Later in Section~\ref{sec:failure}, we will indeed leverage this feature and rule out ``sufficiently stable" algorithms, appropriately defined, for the \nppp. 

\subsubsection*{An Illustration of Theorem~\ref{thm:m-ogp-superconstant-m} with a Concrete Choice of Parameters.}
We now illustrate Theorem~\ref{thm:m-ogp-superconstant-m} with a concrete choice of the energy exponent $E_n$ and concrete choices of parameters $m,\beta$, and $\eta$.  %The following example concretely works out the quantities/parameters necessary to eliminate a specific choice of energy exponent.

Fix $\delta\in(0,\frac12)$ and consider ``ruling out" the energy levels $E_n = n^{1-\delta}$. That is, our goal is to establish the presence of the $m$-OGP for $E_n = n^{1-\delta}$ for appropriate $m,\beta$, and $\eta$ parameters.  Next, take $m=2n/E_n = 2n^\delta$, per~\eqref{eq:beta-eta-m-n-main-supconstant}. Choose a $\delta'>0$ such that $\delta'+2\delta<1$. Then, set $g(n) = n^{\delta'}$. It is easily verified that
 \[
 g(n) = \omega_n(1)\quad\text{and}\quad g(n) = o\left(\frac{E_n^2}{n\log_2 n}\right) = o\left(\frac{n^{1-2\delta}}{\log_2n}\right).
 \]
 We then take, again per~\eqref{eq:beta-eta-m-n-main-supconstant},
 \[
 \beta_n = 1 - \frac{2g(n)}{E_n} = 1-2n^{\delta'+\delta-1}\quad \text{and}\quad \eta_n = \frac{g(n)}{2n} = \frac12 n^{-1+\delta'}.
 \]
% set $\phi(n) = E_n/\sqrt{n\log_2 n}$, that is
%\[
%\phi(n) = \frac{n^{\frac12-\delta}}{\sqrt{\log_2 n}}\quad\text{and}\quad m = 2n^\delta.
%\]
%Note that $\phi(n) = \omega(1)$. Choose $\delta'>0$ such that $\delta'+2\delta<1$. Let $g(n)=n^{\delta'}$. Then, 
%\[
%g(n) = n^{\delta'}=\omega_n(1)\quad\text{and}\quad g(n) =o\left(\phi(n)^2\right)=o\left(\frac{n^{1-2\delta}}{\log_2 n}\right).
%\]
%With this, we now let
%\[
%\beta = 1-2\nu_n\quad\text{where}\quad \nu_n = \frac{g(n)}{\phi(n)\sqrt{n\log_2 n}} = n^{-1+\delta+\delta'}.
%\]
%Also, set
%\[
%\eta =\frac{1-\beta}{2m} = \frac{\nu_n}{m} = \frac12 n^{-1+\delta'}.
%\]
Note that the overlap region, $[\beta-\eta,\beta]$, has a length $\eta$.
For the statement of the theorem to be non-vacuous, $n$ times the overlap length must contain some integer values: $\left|[n\beta-n\eta,n\beta]\cap \mathbb{Z}\right|=\Omega(1)$ must hold.  We verify that indeed $n\eta = \frac12 n^{\delta'}=\omega_n(1)$. Namely, $n$ times the length of the overlap interval grows polynomially in $n$. 
\subsection{Expected Number of Local Optima}\label{sec:exp-local-opt}
In this section, we complement our earlier analysis in the ``hard" regime, $2^{-\Theta(n)}$. Specifically, we focus on the local optima at these  energy levels. 

\begin{definition}\label{def:loc-opt}
Let $\sigma\in\bincube$ be a spin configuration. For every $1\le i\le n$, denote by $\sigma^{(i)}\in \bincube$ the spin configuration obtained by flipping $i-$th bit of $\sigma$. That is, $\sigma^{(i)}(i) = -\sigma(i)$ and $\sigma^{(i)}(j) = \sigma(j)$ for  $j\ne i$. Given $X\in\R^n$, a spin configuration  $\sigma\in\bincube$ is called a {\bf local optimum} if
\[
\left|\ip{\sigma^{(i)}}{X}\right|\ge \left|\ip{\sigma}{X}\right|,\quad\text{for}\quad 1\le i\le n.
\]
\iffalse
\[
\left|\sum_{1\le j\le n}\sigma^{(i)}(j) X_j\right|\ge 
\left|\sum_{1\le j\le n}\sigma(j) X_j\right|,\quad\text{for}\quad 1\le i\le n.
\]
In other words, $\sigma$ is a local optimum if
\[
\min_{\sigma'\in \mathcal{B}\left(\sigma,1\right)}\left|\ip{\sigma'}{{\bf  X}}\right| = \left|\ip{\sigma}{{\bf X}}\right|
\]
where $\mathcal{B}(\sigma,1)\triangleq \left\{\sigma'\in\bincube:d_H\left(\sigma,\sigma'\right)\le 1\right\}$ is the Hamming ball of radius one centered at $\sigma$.
\fi 
\end{definition}
For energy exponents $E_n$ of form $\epsilon n$, $0<\epsilon<1$, we now compute the expected number of local optima below the energy level $2^{-E_n}$.
\begin{theorem}\label{thm:local-opt} 
Let $X\distr\mathcal{N}(0,I_n)$. Fix any $\epsilon\in(0,1)$, and let $N_\epsilon$ be the number of spin configurations $\sigma\in\bincube$ which satisfies the following:
\begin{itemize}
    \item[(a)] $\sigma$ is a local optimum in the sense of Definition~\ref{def:loc-opt}.
    \item[(b)]$\displaystyle \frac{1}{\sqrt{n}}|\ip{\sigma}{X}| = O(2^{-n\epsilon})$. 
\end{itemize}
Then, $
\displaystyle\lim_{n\to\infty}\frac1n \log \E{N_\epsilon} = 1-\epsilon$.
\end{theorem}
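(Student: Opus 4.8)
\textbf{Proof proposal for Theorem~\ref{thm:local-opt}.}

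The plan is to compute $\E{N_\epsilon}$ directly via linearity of expectation, writing $N_\epsilon = \sum_{\sigma\in\bincube}\ind\{\sigma \text{ is a local optimum with }|\ip{\sigma}{X}|=O(\sqrt n 2^{-n\epsilon})\}$, so that $\E{N_\epsilon}=2^n\,\mathbb{P}(\mathcal{E}_\sigma)$ for a fixed $\sigma$ (say $\sigma=(1,\dots,1)$, by symmetry of the standard Gaussian law under sign flips of coordinates). Thus the whole problem reduces to estimating, up to subexponential factors, the probability that a \emph{single} fixed configuration is simultaneously a near ground-state at level $2^{-n\epsilon}$ \emph{and} a local optimum. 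Since $\frac{1}{n}\log\E{N_\epsilon}\to\log 2 + \lim\frac1n\log\mathbb{P}(\mathcal{E}_\sigma)/\log 2$ in base-$2$ terms (I will be careful to use $\log_2$ consistently, matching the $2^{-E_n}$ normalization), the target identity $1-\epsilon$ will follow once I show $\mathbb{P}(\mathcal{E}_\sigma) = 2^{-n\epsilon + o(n)}$, i.e. the local-optimum constraint costs only a subexponential factor beyond the near-ground-state constraint.

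First I would set $S\triangleq \ip{\sigma}{X}=\sum_i X_i$ and note that flipping bit $i$ changes the signed sum to $S - 2\sigma_i X_i = S - 2X_i$. The local-optimum condition is $|S-2X_i|\ge |S|$ for all $i$, which (squaring) is equivalent to $X_i(X_i - S)\ge 0$ for every $i$, i.e. each $X_i\notin(0,S)$ if $S>0$ (and symmetrically for $S<0$). Conditioned on the event $\{S=s\}$ — more precisely on $S\in[s,s+ds]$ with $|s|\le O(\sqrt n 2^{-n\epsilon})$, which is a tiny window around $0$ — the near-ground-state event already forces $|s|$ to be exponentially small, so the ``forbidden interval'' $(0,s)$ (or $(s,0)$) has length $O(\sqrt n 2^{-n\epsilon})$. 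The key computation is then: the density of $S$ at $0$ is $\Theta(1/\sqrt n)$, so $\mathbb{P}(|S|\le c\sqrt n 2^{-n\epsilon})=\Theta(2^{-n\epsilon})$, contributing the $2^{-n\epsilon}$ factor; and conditioned on $S=s$ with $s\to 0$, the conditional law of $(X_1,\dots,X_n)$ is (in the limit) i.i.d.\ $\mathcal{N}(0,1)$ conditioned to sum to $\approx 0$, and $\mathbb{P}(X_i\notin(0,s)\ \forall i\mid S=s)\to 1$ as $s\to 0$ because $(0,s)$ shrinks to a point. Making this conditional statement rigorous (the conditioning is on a measure-zero event) is best handled by a change of variables: write $X = \frac{s}{n}\mathbf{1} + Z$ where $Z$ lives in the hyperplane $\sum Z_i = 0$, with $Z\sim\mathcal{N}(0,I_n - \frac1n\mathbf{1}\mathbf{1}^\top)$ independent of $S$; then the forbidden-interval event becomes $Z_i\notin(-\tfrac sn, s-\tfrac sn)$, an event of a union of $n$ half-line complements each of Gaussian measure $1-O(s/n)\cdot\Theta(1)$, so by a union bound its probability is $1-O(s)=1-o(1)$. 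Combined, $\mathbb{P}(\mathcal{E}_\sigma)=\Theta(2^{-n\epsilon})\cdot(1-o(1)) = 2^{-n\epsilon+o(n)}$, and multiplying by $2^n$ gives $\E{N_\epsilon}=2^{n(1-\epsilon)+o(n)}$, hence the claim.

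I expect the main obstacle to be the rigorous treatment of the conditioning on $\{S=s\}$ (or equivalently the joint density estimate for $(S,X_1,\dots,X_n)$ restricted to the sublevel set), since naively conditioning on a null event requires care; the cleanest route is the explicit orthogonal decomposition $X = \tfrac Sn\mathbf 1 + Z$ above, which makes $S$ and $Z$ genuinely independent and converts everything into an unconditional Gaussian computation, after which only elementary Gaussian tail/anticoncentration bounds remain. A secondary technical point is bookkeeping the $O(\cdot)$ and $\sqrt n$ polynomial factors so they are absorbed into the $o(n)$ exponent — this is routine but must be done consistently in base $2$. One should also double-check the boundary cases $s=0$ exactly (parity) and $|S|$ attaining the extreme of the $O(\cdot)$, but these affect only lower-order terms. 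I would also remark that the matching lower bound on $\E{N_\epsilon}$ (needed for the $\liminf$) comes from the same computation since all estimates above are two-sided ($\Theta$ and $1-o(1)$), so no separate argument is needed.
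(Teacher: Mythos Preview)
Your proposal is correct and follows essentially the same route as the paper: linearity of expectation reduces to a single $\sigma$, the local-optimum condition is rewritten as $X_i\notin(0,S)$, and the orthogonal decomposition $X_i=\tfrac{S}{n}+\bar X_i$ (your $Z_i$) makes $S$ independent of the centered coordinates, after which a union bound handles the forbidden intervals---exactly as in the paper's computation of $\mathbb{P}(\mathcal{E}_2)$. One small bookkeeping slip: the interval $(-s/n,\,s-s/n)$ has length $s$, not $s/n$, so the union bound gives $1-O(ns)$ rather than $1-O(s)$; since $ns=O(n^{3/2}2^{-n\epsilon})=o(1)$ this does not affect the conclusion.
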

The proof of Theorem~\ref{thm:local-opt} is provided in Section~\ref{sec:pf-thm-local-opt}.

Several remarks are now in order. First, the notion of local optimality per Definition~\ref{def:loc-opt} is the same one that Addario-Berry et al. considered in~\cite{addario2019local}. In particular, they study the local optima of the Hamiltonian of the Sherrington-Kirkpatrick spin glass; and carry out a very similar analysis---namely they show the expected number of local optima is exponentially large, and compute the exponent (though the proofs corresponding to these models are different).

Second, Theorem~\ref{thm:local-opt} gives a precise trade-off between the exponent of the energy value and the exponent of the expectation: the exponent of the (expected) number of local optima decays linearly in the exponent $\epsilon$ of the energy level, as $\epsilon$ varies in $(0,1)$. In particular, the expected number of the local optima is exponential with exponent growing linearly as the energy level moves away from the energy of the ground state. 

Third, Theorem~\ref{thm:local-opt} suggests the likely failure of a very simple, yet natural, greedy algorithm. Consider a greedy algorithm which starts from a spin configuration $\sigma\in\bincube$ and performs a sequence of local, greedy, moves: at each step, flip a spin configuration that decreases the energy, $\left|\ip{\sigma}{X}\right|$. This greedy algorithm continues until one cannot move any further, therefore reaching a local optimum, in the sense of Definition~\ref{def:loc-opt}. Theorem~\ref{thm:local-opt} shows that there exists, in expectation, exponentially many such local optima. This suggests that the greedy algorithm will likely to fail in finding a ground-state solution. 
\section{Main Results. Failure of Algorithms}
\subsection{$m-$Overlap Gap Property Implies Failure of Stable Algorithms}\label{sec:failure}
Our focus in this section is to understand how well one can solve the optimization problem \eqref{eq:NPP-main} via ``sufficiently stable" algorithms, when the input $X_i$, $1\le i\le n$, consists of i.i.d. standard normal weights. 

\paragraph{ Algorithmic Setting.} We interpret an algorithm $\mathcal{A}$ as a mapping from the Euclidean space $\R^n$ to the binary cube $\bincube\triangleq\{-1,1\}^n$. We also allow $\A$ to be potentially randomized. More concretely, we assume that there exists a probability space $\left(\Omega,\mathbb{P}_\omega\right)$, such that $\A:\R^n\times \Omega \to \bincube$ and for every $\omega\in\Omega$, $\A(\cdot,\omega):\R^n\to\bincube$. Here, $X\in\R^n$ denotes the ``items" to be partitioned; whereas for a fixed $\omega\in\Omega$, $\A(X,\omega)\in\bincube$ is the spin configuration returned by this potentially randomized algorithm, $\A$; which encodes a partition. 

We now formalize the class of ``sufficiently stable" algorithms we study herein by specifying the relevant performance parameters. 
\begin{definition}\label{def:optimal-alg}
Let $E>0$; $f\in\mathbb{N}$, $L\in\mathbb{R}^+$ and $\rho',p_f,p_{\rm st}\in[0,1]$. A randomized algorithm $\A:\R^n\times \Omega\to\bincube$ for the \npp \eqref{eq:NPP-main} is called $\left(E,f,L,\rho',p_f,p_{\rm st}\right)-$optimal if the following are satisfied.
\begin{itemize}
    \item {\bf (Near-Optimality)} For $\left(X,\omega\right)\sim \mathcal{N}(0,I_n)\otimes \mathbb{P}_\omega$, 
    \[
    \mathbb{P}_{\left(X,\omega\right)}\left(\frac{1}{\sqrt{n}}\left|\ip{X}{\A\left(X,\omega\right)}\right|\le E\right)\ge 1-p_f.
    \]
    \item {\bf (Stability)} For every $\rho \in [\rho',1]$, it holds that
    \[
    \mathbb{P}_{(X,Y,\omega):X\sim_\rho Y}\Bigl( d_H\left(\A(X,\omega),\A(Y,\omega)\right)\le f+L\|X-Y\|_2^2\Bigr)\ge 1-p_{\rm st}.
    \]
    Here, the probability is taken with respect to joint randomness $\mathbb{P}_{X,Y}\otimes \mathbb{P}_\omega$ of $(X,Y,\omega)$:
    $X,Y\distr \mathcal{N}(0,I_n)$ with ${\rm Cov}(X,Y)=\rho I_n$ (which together uniquely specify the joint distribution $\mathbb{P}_{X,Y}$ denoted by $X\sim_\rho Y$); and $\omega\sim \mathbb{P}_\omega$, which is the ``coin flips" of the algorithm.
    %For every fixed $\omega\in\Omega$, and any $X,Y\in\R^n$,
   % \[
   % d_H\left(\A(X,\omega),\A(Y,\omega)\right)\le f+L\|X-Y\|_2^2.
   % \]
\end{itemize}
\end{definition}
In what follows, we will abuse the notation, and refer to $\A:\R^n\to\bincube$ (by suppressing $\omega$) as a \emph{randomized algorithm}.

We next comment on the performance parameters appearing in Definition~\ref{def:optimal-alg}. The parameter, $E$, refers to the cost (i.e., objective value) achieved by the partition returned by $\A$. The parameter, $p_f$, controls the ``failure" probability---the probability that  algorithm fails to return a partition with cost  below $E$. 

An important feature of Definition~\ref{def:optimal-alg} is that the stability guarantee is probabilistic; and the parameters, $f,L,\rho',p_{\rm st}$, control the stability of the algorithm. Specifically, in order to talk about stability in a probabilistic setting, one has to consider two random input vector that are potentially correlated. $\rho'$ controls the region of correlation parameters that the inputs are allowed to take. %Observe that taking $X\distr \mathcal{N}(0,I_n)$ and letting $Y\sim_\rho X$ uniquely specifices the joint distribution of the input vectors $(X,Y)\in\R^n\times \R^n$. 
The parameter, $p_{\rm st}$, controls the stability probability. $L$ essentially acts like a Lipschitz constant, whereas  $f$  is introduced so that when $X$ and $Y$ are ``too close", the algorithm is still allowed to make roughly ``$f$ flips". This ``extra room" of $f$ bits is necessary: in the absence of the $f$ term, the algorithm is vacuous, since any map $\A: \R^n \to\bincube$ that is Lipschitz is trivially constant. In our application, the parameter $f$ will depend on $n$, and will essentially be $\omega\left(n\log^{-O(1)}n\right)$ (see below). 

We now state our main result regarding the failure of stable algorithms for solving the \nppp. 

%\begin{theorem}\label{thm:Main}
%Fix any $\epsilon\in\left(0,\frac15\right)$ and $L>0$. Let $E_n:\mathbb{N}\to\R^+$ be an energy exponent satisfying
%\[
%\omega\left(n\cdot \log^{-\frac15+\epsilon} n\right)\le E_n \le o(n). 
%\]
%Fix any $\epsilon\in(0,1]$, $\mathcal{C}>0$ and $L>0$. 
%Then there exists a function $C_1(n)>0$; parameters $p_f\in(0,1)$, $p_{\rm st}\in(0,1)$, $\rho'\in(0,1)$; and an $N^*\in\mathbb{N}$ such that the following holds. For every $n\ge N^*$, there exists no randomized algorithm, $\A:\R^n\to \bincube$ such that $\A$ is $\left(2^{-E_n},C_1(n)\cdot n,L,\rho',p_f,p_{\rm st}\right)-$optimal (for the \nppp) in the sense of Definition~\ref{def:optimal-alg}.
%\end{theorem}

\begin{theorem}\label{thm:Main}
Fix any $\epsilon\in\left(0,\frac15\right)$ and $L>0$. Let $E_n:\mathbb{N}\to\R^+$ be an energy exponent satisfying
\[
\omega\left(n\cdot \log^{-\frac15+\epsilon} n\right)\le E_n \le o(n). 
\]
%Fix any $\epsilon\in(0,1]$, $\mathcal{C}>0$ and $L>0$. 
For any $c>0$, define 
\begin{equation}\label{eq:main-choice-of-T}
T(c)\triangleq \exp_2\left(2^{8c L \left(\frac{n}{E_n}\right)^{5+\frac{\epsilon}{4}} \log_2\left(cL\left(\frac{n}{E_n}\right)^{4+\frac{\epsilon}{4}}\right)} \right);
\end{equation} 
and set
\begin{equation}  \label{eq:main-choice-of-param}
\rho_n'(c) \triangleq 1 - \frac{1}{cL}\left(\frac{E_n}{n}\right)^{4+\frac{\epsilon}{4}},\quad p_{f,n}(c)\triangleq \frac{1}{4T(c) \left(c L \left(\frac{n}{E_n}\right)^{4+\frac{\epsilon}{4}}+1\right)},\quad\text{and}\quad p_{{\rm st},n}(c)\triangleq \frac{\left(E_n/n\right)^{8+\frac{\epsilon}{2}}}{9c^2 L^2T(c)}.
\end{equation}
Then, there exists constant $c_1,c_2>0$ and an $N^*\in\mathbb{N}$ such that the following holds. For every $n\ge N^*$, there exists no randomized algorithm, $\A:\R^n\to \bincube$ such that $\A$ is 
\[
% \label{eq:main-choice-of-param}
\Bigl(2^{-E_n}, c_1\cdot n\cdot (E_n/n)^{4+\frac{\epsilon}{4}},L,\rho_n'(c_2),p_{f,n}(c_2),p_{{\rm st},n}(c_2)\Bigr)-\text{optimal}
\]
 (for the \nppp) in the sense of Definition~\ref{def:optimal-alg}.
\end{theorem}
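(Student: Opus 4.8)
The plan is to derive Theorem~\ref{thm:Main} from the super-constant $m$-OGP established in Theorem~\ref{thm:m-ogp-superconstant-m} via a contradiction argument. Suppose $\A$ is $(2^{-E_n}, f, L, \rho_n'(c_2), p_{f,n}(c_2), p_{{\rm st},n}(c_2))$-optimal for the stated parameters. First I would build an interpolation path of correlated inputs: fix an integer $T=T(c_2)$ and let $X_0,\dots,X_T$ be a sequence of standard Gaussian vectors in $\R^n$ such that consecutive $X_{t-1},X_t$ have correlation $\rho_n'$, realized by the standard construction $X_t = \sqrt{1-\tau_t^2}\,Z_0 + \tau_t Z_t$ for i.i.d.\ Gaussian $Z_i$ and an appropriate index set $\mathcal{I}=\{\tau_t\}$ with $|\mathcal{I}|=T = n^{O(1)}$ (one must check this polynomial bound holds for the chosen $E_n$, using \eqref{eq:main-choice-of-T}). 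Running $\A$ (with one shared realization of the coin flips $\omega$, chosen so that both near-optimality and stability hold along the whole path — a union bound over the $T$ steps using $p_{f,n}$ and $p_{{\rm st},n}$ makes this event positive-probability by the choice of those parameters) produces outputs $\sigma^{(t)} = \A(X_t,\omega)$. By near-optimality each $\sigma^{(t)}$ satisfies $n^{-1/2}|\langle \sigma^{(t)},X_t\rangle|\le 2^{-E_n}$, i.e.\ the near ground-state condition (b) of Definition~\ref{def:overlap-set}.

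The second ingredient is to extract from this long path an $m$-tuple realizing the forbidden overlap pattern of Theorem~\ref{thm:m-ogp-superconstant-m}, where $m=m_n = 2n/E_n$. Here is where the Ramsey/extremal-combinatorics machinery (Theorems~\ref{thm:ramsey}, \ref{thm:ramsey-1} and Propositions~\ref{thm:clique-exist}, \ref{prop:G-mc-clique}) enters. The point is: along the interpolation, $d_H(\sigma^{(0)},\sigma^{(T)})$ is large (roughly $n/2$, since $X_0$ and $X_T$ are essentially independent and near-optimal solutions for independent instances are essentially uncorrelated), while stability forces $d_H(\sigma^{(t-1)},\sigma^{(t)}) \le f + L\|X_{t-1}-X_t\|_2^2 \le 2f$ with high probability (the Gaussian norm concentrates, and $\|X_{t-1}-X_t\|_2^2 \approx 2(1-\rho_n')n$ is small by the choice of $\rho_n'$). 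So the Hamming distances along the chain move in small increments but traverse a long range. Color the pairs $(s,t)$ of time-indices according to which ``dyadic band'' the normalized overlap $\OBar(\sigma^{(s)},\sigma^{(t)})$ falls in; a Ramsey-type argument on $[T]$ then produces $m$ indices $t_1<\dots<t_m$ all of whose pairwise overlaps lie in a single narrow band, which one tunes to be exactly the interval $[\beta_n-\eta_n,\beta_n]$ of Theorem~\ref{thm:m-ogp-superconstant-m} (with $g(n)$ as in~\eqref{eq:main-g-n-for-main-failure-result}, whence the exponents $4+\epsilon/4$, $8+\epsilon/2$, etc.\ in the theorem statement). The small-increment property guarantees the band can be made as narrow as $\eta_n = g(n)/(2n)$, and $T$ is taken large enough (doubly exponential, per~\eqref{eq:main-choice-of-T}) that the Ramsey number for $m_n$ colors and cliques of size $m_n$ is still $\le T$. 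This yields, with positive probability, an element of $\mathcal{S}(\beta_n,\eta_n,m_n,E_n,\mathcal{I})$, contradicting~\eqref{eq:thesis-m-ogp-superconstant-m}.

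To make the bookkeeping work I would organize it as: (i) fix the failure/stability events and union-bound over the $O(T)$ interpolation steps and over the $O(T^2)$ pairs whose Hamming distances I need controlled — this is why $p_{f,n}$ and $p_{{\rm st},n}$ carry the $1/T$ (resp.\ $1/T^2$-type) factors in~\eqref{eq:main-choice-of-param}; (ii) establish the ``macroscopic displacement'' estimate that $\sigma^{(0)}$ and $\sigma^{(T)}$ have overlap bounded away from $1$, again via a Gaussian anticoncentration / first-moment estimate that near-optimal solutions of nearly-independent instances cannot be highly correlated (this may itself be a small lemma, or may be absorbed into the OGP statement by noting that if \emph{all} pairwise overlaps were $\ge \beta_n$ we would already be inside a forbidden-adjacent regime); (iii) run the Ramsey extraction; (iv) verify the extracted tuple meets Definition~\ref{def:overlap-set} verbatim (inner-product version) and invoke Theorem~\ref{thm:m-ogp-superconstant-m}. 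The main obstacle I expect is step (iii) together with the quantitative matching of parameters: one needs the Ramsey number $R(m_n;m_n\text{ colors})$ to be at most the chosen $T(c)$, which forces $T$ to be doubly exponential in $m_n = 2n/E_n$, and simultaneously the increment bound $2f$ divided by the total displacement $\Theta(n)$ must be small enough to fit $\Theta(n/E_n)$ ``rungs'' into the narrow band $\eta_n$ — reconciling these two constraints (which pull $g(n)$ in opposite directions) is exactly what pins down the admissible range $E_n = \omega(n\log^{-1/5+\epsilon}n)$ and the precise exponents $c_1 n (E_n/n)^{4+\epsilon/4}$ for $f$ and the probabilities in~\eqref{eq:main-choice-of-param}. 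A secondary subtlety is handling the randomness $\omega$ uniformly along the path: since stability is stated for fixed $\omega$ after averaging, one argues by Fubini that a positive-measure set of $\omega$ admits the good event, and works on that set.
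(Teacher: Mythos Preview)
Your overall strategy — interpolate, invoke stability plus near-optimality along the path, then extract via Ramsey an $m$-tuple contradicting Theorem~\ref{thm:m-ogp-superconstant-m} — is the right shape, but the specific construction and the specific Ramsey argument you describe are not the ones that work, and as written the argument has a genuine gap.

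\textbf{The interpolation is two-dimensional, not a single chain.} The paper does not build one chain $X_0,\dots,X_T$ with consecutive correlation $\rho'$. It generates $T$ \emph{replicas} $X_1,\dots,X_T$ i.i.d.\ $\mathcal N(0,I_n)$ together with a common $X_0$, and for each replica $i$ interpolates in a second variable (time) via $Y_i(\tau_k)=\sqrt{1-\tau_k^2}\,X_0+\tau_k X_i$ for $0\le k\le Q$. Thus the index set $\mathcal I$ in Definition~\ref{def:overlap-set} is the set of \emph{times} $\{\tau_0,\dots,\tau_Q\}$, of size $Q+1=n^{O(1)}$; it is \emph{not} $T$. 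Your claim $|\mathcal I|=T=n^{O(1)}$ is inconsistent with \eqref{eq:main-choice-of-T}: $T(c)$ is doubly exponential in $\mathrm{polylog}\,n$, and must be, because it is chosen to dominate a Ramsey number. The line ``realized by the standard construction $X_t=\sqrt{1-\tau_t^2}Z_0+\tau_t Z_t$'' does not give a chain with consecutive correlation $\rho'$ at all (distinct $X_s,X_t$ then share only the $Z_0$ component).

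\textbf{The Ramsey coloring is by time of crossing, not by overlap band.} This is where your scheme breaks. If you color pairs $(s,t)$ by the band containing $\OBar(\sigma^{(s)},\sigma^{(t)})$ and apply multicolor Ramsey, you obtain a monochromatic $K_m$ of \emph{some} color, but you have no mechanism to force that color to be the forbidden band $[\beta_n-\eta_n,\beta_n]$; the sentence ``which one tunes to be exactly the interval $[\beta_n-\eta_n,\beta_n]$'' has no content. The paper instead argues as follows. At $\tau=0$ all pairwise overlaps among the $T$ replicas equal $1$; at $\tau=1$ the replicas are independent, and a separate first-moment ``chaos'' lemma (Lemma~\ref{lem:kaotik}) shows that for \emph{every} $m$-subset of replicas some pair has overlap below $\beta_n-\eta_n$. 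Since overlaps move by at most $\eta/5$ per time step (this is where stability and the choices of $C_1,Q$ enter, Lemma~\ref{lemma:steps-stable}), that pair must land in $(\beta_n-\eta_n,\beta_n)$ at some time $\tau_k$. One then builds a graph $\Gr$ on the $T$ replicas with an edge whenever the pair's overlap ever enters the forbidden band; the preceding sentence says exactly that $\alpha(\Gr)\le m-1$. Two-color Ramsey (Proposition~\ref{thm:clique-exist}) on $T\ge\binom{2M-2}{M-1}$ vertices forces $\Gr$ to contain a clique $K_M$ with $M=Q^{mQ}$, and \emph{now} one colors the edges of $K_M$ by the \emph{first time} the corresponding overlap enters the band — that is $Q$ colors — and applies $R_Q(m)\le Q^{mQ}=M$ to get a monochromatic $K_m$. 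Monochromatic here means: at one common time $\tau'$, all $\binom{m}{2}$ overlaps lie in the forbidden band, which is precisely the configuration Theorem~\ref{thm:m-ogp-superconstant-m} excludes. The two-stage Ramsey (first extract a large clique from an $m$-admissible graph, then $Q$-color by time) is the missing idea in your outline.

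Two smaller points: (i) your step (ii) ``macroscopic displacement'' is not handled by anticoncentration of a single pair but by the $m$-tuple chaos event \eqref{eq:chaos}, proved by essentially rerunning the first-moment bound of Theorem~\ref{thm:m-ogp-superconstant-m} with $\beta=1$, $\eta=3g(n)/E_n$ and $\Sigma=I$; (ii) the union bounds are over $T(Q+1)$ success events and $TQ$ stability events (one per replica per time step), not $O(T^2)$ pairs, which is why $p_{f,n}$ carries a $1/(T(Q+1))$ factor and $p_{{\rm st},n}$ a $1/(TQ^2)$ factor.
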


The proof of Theorem~\ref{thm:Main} is provided in Section~\ref{sec:pf-thm-Main}.

Several remarks are in order. In what follows, one should keep in their mind that $E_n/n = \log^{-O(1)} n$. 
Note first that there is no restriction on the runtime of $\A$, provided that it is stable. The algorithms that are ruled out satisfy %(ignoring the probabilistic statement)
\[
d_H\left(\A(X),\A(Y)\right)\le c_1\cdot n\cdot \log^{-O(1)} n +L\|X-Y\|_2^2.
\]
%An inspection of the proof of Theorem~\ref{thm:Main} reveals that $C_1(n)=\log^{-O(1)}n$. 
Namely, while $\A$ is stable, it is still allowed to make $\Omega \left(n\log^{-O(1)}n \right)$ ``flips" even when $X$ and $Y$ are ``too close".  

Next, since $c_2$ and $L$ are constant in $n$, 
\[
\rho_n'(c_2) = 1- \frac{1}{c_2L} \left(\frac{E_n}{n}\right)^{4+\frac{\epsilon}{4}} = 1-\log^{-O(1)} n.
\] 
%Furthermore, our analysis reveals $\rho'$ to be
%\[
%\rho' =1- \log^{-O(1)} n.
%\]
Namely, for our stability assumption, we restrict our attention to $\rho \in[1-\log^{-O(1)} n,1]$. 
It is worth noting that in the case when $\rho$ is constant, $\rho=O(1)$, the stability per Definition~\ref{def:optimal-alg} holds (with a sufficiently large constant $L$) irrespective of the algorithm: by the law of large numbers, $\|X-Y\|_2^2$ is $\Theta(n)$, whereas $d_H(\A(X,\omega),\A(Y,\omega))\le n$ for any $X,Y\in\R^n$ and $\omega\in\Omega$; hence for $L$ large enough (though constant), $L\|X-Y\|_2^2 > d_H(\A(X,\omega),\A(Y,\omega))$. In particular, in some sense the interesting regime is indeed when $\rho =1-o_n(1)$, as we investigate here.

Our next remark pertains to the term $T(c)$ appearing in~\eqref{eq:main-choice-of-T}. Keeping in mind that $c$ and $L$ are constants (in $n$); and $n/E_n$ is $\omega(1)$, it follows that
\[
8cL \left(\frac{n}{E_n}\right)^{5+\frac{\epsilon}{4}} \log_2\left(cL\left(\frac{n}{E_n} \right)^{4+\frac{\epsilon}{4}}\right) = \Theta\left(\left(\frac{n}{E_n}\right)^{5+\frac{\epsilon}{4}} \log_2 \left(\frac{n}{E_n}\right)\right).
\]
By assumption on $E_n$, $E_n/n=\log^{O(1)} n$. This yields the following order of growth for $T(c)$:
\[
T(c) = \exp_2\Bigl(2^{o\left(\log^{c'}n\right)}\Bigr)\quad\text{for some}\quad c'\in(0,1).
\]
In fact, any $c'>\left(\frac15-\epsilon\right)\left(5+\frac{\epsilon}{2}\right)$ works above. Since $\left(\frac15-\epsilon\right)\left(5+\frac{\epsilon}{2}\right) = 1 -49\epsilon/10 +\Theta(\epsilon^2)$, the interval for $c'$ is indeed non-vacuous as long as $\epsilon>0$. Moreover, this interval gets larger as $\epsilon\to 1/5$ (more on this below). 

An inspection of the terms $p_{f,n}(c)$ and $p_{{\rm st},n}(c)$ appearing in~\eqref{eq:main-choice-of-param} reveals that they have the same order of growth as $T(c)^{-1}$. That is,
%Keeping in mind that $c$ and $L$ are constant in $n$
%Our next remark pertains to the probability of success $1-p_f$ and the probability of stability $1-p_{\rm st}$. Inspecting the term $T(c)$ appearing in Theorem~\ref{thm:Main} (while keeping in mind that $c,L$ are constant in $n$), it follows that $p_f$ and $p_{\rm st}$ have the same asymptotic behaviour
\[
p_{f,n}(c), p_{{\rm st},n}(c)  =\exp_2\Bigl(-2^{o\left(\log^{c'}n\right)}\Bigr)\quad\text{for any}\quad c'>\left(\frac15-\epsilon\right)\left(5+\frac{\epsilon}{2}\right).
\]  

%It can be recovered from our analysis that
%\[
%p_f, p_{\rm st} = \exp_2\Bigl(-2^{o(\log^{c'}n)}\Bigr)\quad\text{for some}\quad c'\in(0,1).
%\]
%A more precise trade-off between $c'>0$ and $\epsilon$ appears to be
%\[
%c ' \simeq\left(\frac15-\epsilon\right)\left(5+\frac{\epsilon}{2}\right).
%\]
In particular, while Theorem~\ref{thm:Main} requires high probability guarantees, these guarantees need not be exponential: a sub-exponential choice suffices. Moreover, as $\epsilon\to \frac15$, the restrictions become milder. In particular, in the limit (which corresponds essentially to $E_n = \Theta(n)$); it suffices to take a (large) constant probability of success and stability (as we elaborate below).

While the lower bound on the energy exponent $E_n$ can potentially be improved slightly to $E_n = \omega\left(n\cdot \log^{-1/5}n \cdot \log \log n\right)$; it appears that $E_n = \Omega\left(n\cdot \log^{-1/5} n \right)$ is, in fact, necessary; see Section~\ref{sec:ramsey-limited} for an informal argument. For the sake of keeping our presentation simple, we do not pursue this improvement . 

%While it seems possible to slightly improve the lower bound on the energy exponent $E_n$ to $E_n = \omega\left(n\cdot \log^{-1/5}n \cdot \log \log n\right)$; 
%While it may potentially be possible to slightly improve the lower bound on the energy exponent $E_n$ to $E_n = \omega\left(n\cdot \log^{-1/5}n \cdot \log \log n\right)$; we do not pursue this direction for simplicity. Furthermore, it appears that $E_n = \Omega\left(n\cdot \log^{-1/5} n \right)$ is, in fact, necessary. See Section~\ref{sec:ramsey-limited} for an informal argument.

An inspection of the proof of Theorem~\ref{thm:Main} reveals certain other trade-offs, which we now discuss. Theorem~\ref{thm:Main} is proven using the $m-$OGP result (with $m=\omega(1)$) established in Theorem~\ref{thm:m-ogp-superconstant-m}. The crux of this argument is that sufficiently stable algorithms cannot overcome the overlap barrier. Now, the ``forbidden" region of overlaps, $[\beta-\eta,\beta]$, shrinks as $E_n$ gets smaller. As the forbidden region ``shrinks", the algorithm to be ruled out should be ``more stable". Now, Theorem~\ref{thm:Main} rules out algorithms whose corresponding ``$f$ term" per Definition~\ref{def:optimal-alg} is of form $\log^{-\mathcal{C}} n$ for some constant $\mathcal{C}>0$. As $E_1$ gets smaller, $\mathcal{C}$ should therefore get larger.  %Recalling that the ``$f$ term" in Definition~\ref{def:optimal-alg} % $C_1(n)$ should be reduced further. In particular, $C_1(n)$ behaves like $\log^{-\mathcal{C}} n$ for some constant $\mathcal{C}>0$; and as $E_1$ gets smaller, $\mathcal{C}$ gets larger. 
Moreover, while we consider the Lipschitz constant $L$ to be $O(1)$; it appears from our analysis that $L$ can be pushed all the way up to $\Theta\left(\log^{\overline{c}} n\right)$ for a sufficiently small, though positive, constant $\overline{c}>0$. The aforementioned trade-offs are mainly due to technical reasons. More specifically, the proof requires a discretization argument, namely $[0,1]$ should be discretized into $Q$ pieces so as to use the ``stability" of algorithm towards the goal of reaching a contradiction. Now, as the overlap region shrinks or $L$ increases, the discretization should be finer: $Q$ should be larger. The parameters, $p_f$ and $p_{\rm st}$, should then be tuned down, so that a certain union bound argument over $Q$ discrete steps works. See the proof for further details. 

It is worth noting that while Theorem~\ref{thm:Main} rules out algorithms that are sufficiently stable  in the sense of Definition~\ref{def:optimal-alg}, we are unable to prove that the LDM algorithm of Karmarkar and Karp~\cite{karmarkar1982differencing} is stable with appropriate parameters, even though our simulation results, reported in Section~\ref{subsection:simulations}, suggest that it is. We leave this as a very interesting, yet we believe an approachable, open problem.

%Namely, while $\A$ is stable, it is still allowed to make $C_1 n$ ``flips", even when $X$ and $Y$ are ``too close". 
%An  That is, the algorithm is ``allowed to" still make  errors even when the inputs are quite close. 

\subsubsection*{On Energy Levels $2^{-\Theta(n)}$.} 
Theorem~\ref{thm:Main} addresses energy levels $2^{-E_n}$ with $E_n = o(n)$, which naturally includes the energy levels $2^{-\Theta(n)}$. It appears, however, that for energy levels $2^{-n\epsilon}$ with $\epsilon>0$; it is possible to strengthen Theorem~\ref{thm:Main} in various aspects, which we comment now. 

It appears that a straightforward modification of Theorem~\ref{thm:Main}---in particular invoking the $m-$OGP result, Theorem~\ref{thm:main-eps-energy}, with $m=O(1)$ as opposed to Theorem~\ref{thm:m-ogp-superconstant-m}---yields that $f/n$ can be taken to be constant (in $n$): the algorithm is then allowed to make $\Theta(n)$ flips even when $X$ and $Y$ are too close. Perhaps more importantly, the probability of success and the stability guarantee can also be boosted: this yields the failure of ``stable" algorithms even with a  \emph{constant probability}  of success/stability (where the constant is sufficiently close to one). 

The trade-offs discussed above apply to this case, as well. In particular, as $\epsilon\to 0$, the forbidden region shrinks. Letting $f=C_1 n$, it is the case $C_1$ should be chosen smaller as $\epsilon\to 0$. In fact, $C_1\to 0$ as $\epsilon\to 0$. %Thus, $C_1$ should be chosen smaller; and in fact $C_1\to 0$ as $\epsilon \to 0$. 
Likewise, as $\epsilon\to 0$, one should reduce the probability $p_{\rm st}$ of failure, as well. Again, $L$ is considered to be constant. If one, instead, decides to stick to super-constant $L$, $L=\omega(1)$; then $p_f$ and $p_{\rm st}$ should be chosen $o(1)$. These trade-offs, again, are due to technical reasons and an artifact of a certain discretization argument employed for taking advantage of the stability of algorithm.

\subsection{Stability of the LDM algorithm. Simulation results}\label{subsection:simulations}
In this section we report simulation results on running the LDM on correlated pairs of $n$-dimensional gaussian vectors. Thus let 
$X,X'\distr \mathcal{N}(0,I_n)$ be independent, and let  $Y_i=\sqrt{1-\tau^2}X_i+\tau X_i', 1\le i\le n$ for a fixed
value $\tau\in [0,1]$. Then $Y\distr \mathcal{N}(0,I_n)$ as well.
We run the LDM algorithm on instances $X$ and $Y$ and denote the results by $\sigma$ and $\sigma(\tau)$ respectively. We measure
the overlap as $(1/n)\langle \sigma,\sigma(\tau)\rangle$ and report the results. The simulations were conducted for $n=50, 100$ and $500$
and reported on Figures~\ref{fig:N50},\ref{fig:N100} and \ref{fig:N500} respectively. The horizontal axis corresponds to the 
value $\rho \triangleq -\log_2(\tau)$. The logarithmic scale is motivated by scaling purposes explained below. 
Increasing $\rho$ corresponds to higher level of correlation between
$X$ and $Y$ and thus should reduce the overlap, as indeed is seen on the figures. 
For each fixed value of $n$ and $\rho$ we compute the average overlap
of $10$ runs of the experiment and this is the value reported on the figure. 
We see that increasing the continuously correlation leads to continuous increase of the average overlap, suggesting
that the stability indeed takes place. Curiously though, in addition to the observed stability, 
the empirical average of the  overlaps drops to a nearly zero level \emph{precisely}
at $\tau\approx n^{-\alpha \log n}=\exp(-\alpha \log^2 n)$, corresponding to $\rho=\alpha\log^2 n/\log 2$,
at the threshold $\alpha = \frac{1}{2\ln 2}=0.721\dots$ which is the leading constant conjectured for the performance of the LDM,
as discussed in the introduction. To check this, note that the values of $\rho$ above for $n=50, 100$ and  $500$
are $15.91, 22.05$ and $40.17$ respectively, for this choice of $\alpha$ and this is close to the values where the overlaps touch
the zero axis. At this point, we don't have a theoretical explanation for this phase transition. It is conceivable that the algorithm
produces the smallest possible discrepancy which is stable under the perturbation above. We leave it as an interesting challenge for
further investigation.

\begin{figure}
\begin{center}
\scalebox{.4}{
\includegraphics{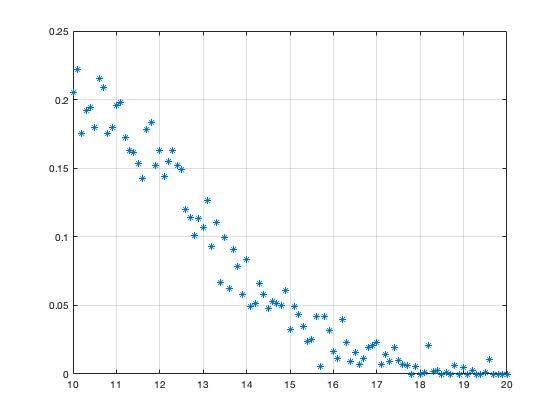}
}
\end{center}
\caption{Average overlap as a function of correlation parameter $\rho$ for $n=50$.}
\label{fig:N50}
\end{figure}

\begin{figure}
\begin{center}
\scalebox{.4}{
\includegraphics{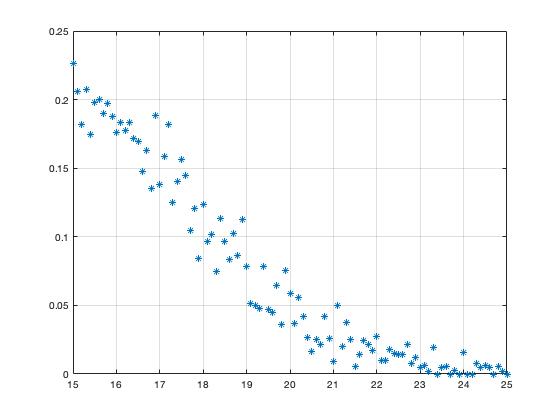}
}
\end{center}
\caption{Average overlap as a function of correlation parameter $\rho$ for $n=100$.}
\label{fig:N100}
\end{figure}

\begin{figure}
\begin{center}
\scalebox{.4}{
\includegraphics{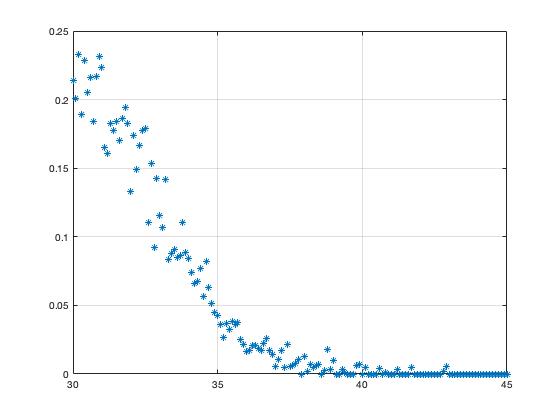}
}
\end{center}
\caption{Average overlap as a function of correlation parameter $\rho$ for $n=500$.}
\label{fig:N500}
\end{figure}

\subsection{$2-$Overlap Gap Property Implies Failure of an MCMC Family}\label{sec:mcmc-fails}
In this section, we will show that the overlap gap property for pairs of spin configurations established in Theorem~\ref{thm:2-ogp}, is a  barrier for a family of Markov Chain Monte Carlo (MCMC) methods for solving the \nppp.

Let $X\distr\mathcal{N}(0,I_n)$, denoting the ``numbers to be partitioned".
\paragraph{ The MCMC dynamics.} We begin with specifying the relevant dynamics. Let $\beta = \left(\beta_n\right)_{n\ge 1}$ with $\left(\beta_n\right)_{n\ge 1}\subset \R^+$ be a sequence of inverse temperatures. For any $\sigma\in\bincube$, define the Hamiltonian $H(\sigma)$ by
\[
H\left(\sigma\right) = \frac{1}{\sqrt{n}}\left|\ipbig{\sigma}{X}\right|.
\]
The Gibbs measure $\pi_\beta(\cdot)$ at temperature $\beta^{-1}$ defined on $\mathcal{B}_n$ is specified by the probability mass function
\begin{equation}\label{eq:gibbs-measure}
\pi_\beta(\sigma) = \frac{1}{Z_\beta}\exp\left(-\beta H(\sigma)\right) \quad\text{where}\quad Z_\beta \triangleq \sum_{\sigma\in\bincube} \exp\left(-\beta H(\sigma)\right).
\end{equation}
Here, $Z_\beta$ is the ``partition function" at inverse temperature $\beta$, which ensures proper normalization for $\pi_\beta$. It is worth noting that a minus sign is added in front of $H(\sigma)$ in order to ensure that for $\beta$ sufficiently large, that is for low enough temperatures, the Gibbs measure is concentrated on near ground-state configurations (i.e., on those $\sigma\in\bincube$ with a small $H(\sigma)$). Indeed, observe that
%``ground-state" configurations, $\pm \sigma^* \triangleq \argmin_{\sigma \in\bincube}H(\sigma)$. Indeed, 
\[
2\exp\left(-\beta H\left(\sigma^*\right)\right) \le Z_\beta = \sum_{\sigma\in\bincube}\exp\left(-\beta H(\sigma)\right) \le 2^n \exp\left(-\beta H\left(\sigma^*\right)\right).
\]
Taking logarithms and dividing by $\beta>0$, we arrive at
\[
\frac{\ln 2}{\beta}-H(\sigma^*) \le \frac{\ln Z_\beta}{\beta} \le n\frac{\ln 2}{\beta} - H(\sigma^*).
\]
Hence, for $\beta$ sufficiently large, specifically when $\beta =\Omega\left(n 2^{n\epsilon}\right)$ (which will be our eventual choice, see below) we have
\[
\frac{\ln Z_\beta}{\beta} +H\left(\sigma^*\right)\le n\frac{\ln 2}{\beta} = O\left(2^{-n\epsilon}\right).
\]
Hence, for $\beta = \Omega\left(n 2^{n\epsilon}\right)$, it is the case that the Gibbs distribution $\pi_\beta(\cdot)$ is essentially concentrated on those $\sigma\in\bincube$ with $H(\sigma) = O\left(2^{-n\epsilon}\right)$. 

We next construct the undirected graph $\Gr$ on $2^n$ vertices with edge set $E$ on which the aforementioned MCMC dynamics is run.
\begin{itemize}
    \item Each vertex corresponds to a spin configuration $\sigma\in\bincube$.
    \item For $\sigma,\sigma'\in\bincube$, $(\sigma,\sigma')\in E$ iff $d_H\left(\sigma,\sigma'\right)=1$. 
\end{itemize}
Let $X_0\in\bincube$ be a spin configuration at which we initialize the MCMC dynamics. Let $\left(X_t\right)_{t\ge 0}$ be any nearest neighbor
discrete time Markov chain on $\Gr$ initialized at $X_0$ and reversible with respect to the stationary distribution $\pi_\beta$. 
For example, $X_t$ is discretized version of the Markov process with rates from $\sigma$ to $\sigma'$ defined by 
$\exp(\beta(H(\sigma')-H(\sigma)))$ when $\sigma'$ is a neighbor of $\sigma$ and is zero otherwise.
Then the transition matrix $Q(\cdot,\cdot)$ for $\left(X_t\right)_{t\ge 0}$ satisfies the detailed balance equations for $\pi_\beta$:
$\pi_\beta(\sigma)Q(\sigma,\sigma')=\pi_\beta(\sigma')Q(\sigma',\sigma)$ for every pair $\sigma,\sigma'$ with 
$d_H\left(\sigma,\sigma'\right)=1$.

\paragraph{ Free energy wells.} We now establish that the overlap gap property (shown in Theorem~\ref{thm:2-ogp}) induces a property called a \emph{free energy well} (FEW) in the landscape of the \nppp. This is a provable barrier for the MCMC methods, and has been employed to show slow mixing in other settings, see~\cite{arous2020algorithmic,gjs2019overlap,gamarnik2019landscape}. 

Let  $\epsilon\in\left(\frac12,1\right)$ and $\rho\in(0,1)$ be the parameter dictated by Theorem~\ref{thm:2-ogp}. We define the following sets.
\begin{itemize}
    \item $I_1 = \left\{\sigma\in\bincube : -\rho \le \frac1n\ip{\sigma}{\sigma^*}\le \rho \right\}$.
    \item $I_2\triangleq \left\{\sigma\in\bincube : \rho\le \frac1n \ip{\sigma}{\sigma^*}\le \frac{n-2}{n}\right\}$, and $\overline{I_2} = \{-\sigma:\sigma\in I_2\}$. 
    \item $I_3 = \{\sigma^*\}$ and $\overline{I_3}  = \{-\sigma^*\}$.
\end{itemize}
%The dependence of $I_1,I_2,\overline{I_2}$ on $\rho$ is dropped for convenience. Namely, the set $I_1$ is the set of $\sigma\in\bincube$ whose overlap with $\sigma^*$ is ``low"; $I_2,\overline{I_2}$ are the sets with ``intermediate" overlaps; and $I_3,\overline{I_3}$ are the sets with ``largest" overlaps. 

We now establish the FEW property.
\begin{theorem}\label{thm:FEW}
Let $\epsilon \in\left(\frac12,1\right)$ be arbitrary; and $\beta = \Omega\left(n2^{n\epsilon}\right)$. Then
\[
\min\left\{\pi_\beta\left(I_1\right),\pi_\beta\left(I_3\right)\right\}\ge \exp\left(\Omega\left(\beta 2^{-n\epsilon}\right)\right)\pi_\beta \left(I_2\right)
\]
with high probability (with respect to $X$), as $n\to\infty$. 
\end{theorem}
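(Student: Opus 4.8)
The plan is to show that the set $I_2$ (an ``annulus'' of configurations at overlap between $\rho$ and $\frac{n-2}{n}$ with $\sigma^*$) is a genuine bottleneck for $\pi_\beta$: it separates $I_1$ (the ``bulk'' of configurations with small overlap with $\sigma^*$) from $I_3=\{\sigma^*\}$, yet carries exponentially less Gibbs mass. The key geometric input is Theorem~\ref{thm:2-ogp}: with probability $1-\exp(-\Theta(n))$ there are \emph{no} pairs $(\sigma,\sigma')$ of near ground-states (energy $O(2^{-n\epsilon})$) with normalized overlap in $[\rho,\frac{n-2}{n}]$. Applying this with $\sigma'=\sigma^*$ — which is the ground state, hence certainly a near ground-state — we conclude that every $\sigma\in I_2$ must have $H(\sigma)=n^{-1/2}|\ip{\sigma}{X}|$ \emph{bounded below} by some $\delta_n=\omega(2^{-n\epsilon})$; more precisely, $H(\sigma)\ge c\,2^{-n\epsilon}$ fails for no such $\sigma$, and in fact an inspection of the $O(\cdot)$ constant in Theorem~\ref{thm:2-ogp} (or a slightly larger constant) gives $H(\sigma)\ge C\cdot 2^{-n\epsilon}$ for all $\sigma\in I_2$, for a suitable constant $C$ strictly larger than the constant controlling $H(\sigma^*)=\Theta(\sqrt n\,2^{-n})$'s normalized value, which is $o(2^{-n\epsilon})$. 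So on the high-probability event, $\min_{\sigma\in I_2}H(\sigma)-H(\sigma^*)\ge \Omega(2^{-n\epsilon})$.

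From here the argument is a direct Gibbs-mass estimate. First, $\pi_\beta(I_3)=\pi_\beta(\{\sigma^*\})=Z_\beta^{-1}\exp(-\beta H(\sigma^*))$, and since $Z_\beta\le 2^n\exp(-\beta H(\sigma^*))$ we get $\pi_\beta(I_3)\ge 2^{-n}$. On the other hand,
\[
\pi_\beta(I_2)=\frac{1}{Z_\beta}\sum_{\sigma\in I_2}\exp(-\beta H(\sigma))\le \frac{|I_2|}{Z_\beta}\exp\!\Big(-\beta\min_{\sigma\in I_2}H(\sigma)\Big)\le \frac{2^n}{Z_\beta}\exp(-\beta H(\sigma^*))\exp\!\Big(-\beta\big(\textstyle\min_{\sigma\in I_2}H(\sigma)-H(\sigma^*)\big)\Big),
\]
and using $Z_\beta\ge 2\exp(-\beta H(\sigma^*))$ this is at most $2^{n-1}\exp(-\Omega(\beta 2^{-n\epsilon}))$. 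Combining, $\pi_\beta(I_3)/\pi_\beta(I_2)\ge 2^{-n}\cdot 2^{1-n}\exp(\Omega(\beta 2^{-n\epsilon}))=\exp(\Omega(\beta 2^{-n\epsilon})-2n\log 2)$, and since $\beta=\Omega(n2^{n\epsilon})$ the term $\Omega(\beta 2^{-n\epsilon})=\Omega(n)$ dominates $2n\log 2$ once the hidden constant in $\beta=\Omega(n2^{n\epsilon})$ is taken large enough; absorbing the $-2n\log2$ loss into the constant we obtain $\pi_\beta(I_3)\ge \exp(\Omega(\beta 2^{-n\epsilon}))\pi_\beta(I_2)$. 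For $\pi_\beta(I_1)$, the crude bound $\pi_\beta(I_1)\ge \pi_\beta(\{\sigma^{**}\})$ for any single configuration $\sigma^{**}\in I_1$ (e.g.\ one orthogonal-ish to $\sigma^*$, which exists for parity reasons for all large $n$) gives $\pi_\beta(I_1)\ge 2^{-n}\exp(-\beta(H(\sigma^{**})-H(\sigma^*)))$; but this is wasteful, so instead I would simply note $\pi_\beta(I_1)\ge \pi_\beta(\bincube\setminus(I_2\cup\overline{I_2}\cup I_3\cup\overline{I_3}))$ is bounded below by a constant — indeed the overwhelming majority of $\sigma\in\bincube$ lie in $I_1$ and have $H(\sigma)=\Theta(\sqrt n)$, so $\pi_\beta(I_1)$ need not be small; a short second-moment / counting estimate shows $\pi_\beta(I_1)\ge \exp(-\beta\,\mathrm{poly}(n))$ suffices, which is still $\gg \pi_\beta(I_2)$ by the same $\exp(\Omega(\beta 2^{-n\epsilon}))$ gap since $\mathrm{poly}(n)\cdot\beta \ll \beta 2^{-n\epsilon}\cdot(\text{large const})$ — wait, that inequality goes the wrong way, so here one must be more careful and use that $H$ restricted to $I_1$ attains values as small as $\Theta(\sqrt n\,2^{-n/|I_1|\text{-type}})$; the cleanest route is Karmarkar et al.'s result that the minimum of $H$ over all of $\bincube$ is $\Theta(\sqrt n 2^{-n})$ and that $\sigma^*$ attaining it lies outside $I_2$, so $\min_{I_1\cup I_3}H\le \min_{\bincube}H \le \min_{I_2}H - \Omega(2^{-n\epsilon})$, giving $\max\{\pi_\beta(I_1),\pi_\beta(I_3)\}\ge Z_\beta^{-1}\exp(-\beta\min_{I_1\cup I_3}H)\ge \exp(\Omega(\beta 2^{-n\epsilon}))\pi_\beta(I_2)$, and a symmetric/parity argument or a direct lower bound on $\pi_\beta(I_1)$ alone (since $\sigma^*\in I_3$ and $I_1,I_3$ are ``on the same side'' of the well) upgrades this to the stated $\min\{\pi_\beta(I_1),\pi_\beta(I_3)\}$.

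The main obstacle I anticipate is \emph{not} the Gibbs bookkeeping but pinning down the lower bound $\min_{\sigma\in I_2}H(\sigma)\ge \Omega(2^{-n\epsilon})$ cleanly from Theorem~\ref{thm:2-ogp}: that theorem forbids pairs \emph{both} of which are near ground-states, so to invoke it with $\sigma'=\sigma^*$ one must know the implied constant in ``$O(2^{-n\epsilon})$'' is at least as large as the (much smaller, $o(2^{-n\epsilon})$) true normalized discrepancy $n^{-1/2}|\ip{\sigma^*}{X}|$, which holds w.h.p.\ by Karmarkar et al.\ \cite{karmarkar1986probabilistic}; so the proof should open by fixing that constant, intersecting the two high-probability events (the $2$-OGP event and the event $n^{-1/2}|\ip{\sigma^*}{X}|=\Theta(2^{-n})=o(2^{-n\epsilon})$), and only then running the mass estimate above. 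A secondary subtlety is that $I_2$ as defined uses the signed inner product $\frac1n\ip{\sigma}{\sigma^*}\in[\rho,\frac{n-2}{n}]$ whereas Theorem~\ref{thm:2-ogp} is stated for the \emph{absolute} overlap $\Overlap$; since $\Overlap(\sigma,\sigma^*)=|\frac1n\ip{\sigma}{\sigma^*}|$ this is harmless on $I_2$ (the inner product is positive there), and the symmetric copy $\overline{I_2}$ is handled by the same bound applied to $-\sigma$, but this should be remarked so the reader sees why the $\sigma\leftrightarrow-\sigma$ symmetry of $H$ makes the sign issue vanish.
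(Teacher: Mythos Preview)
Your approach for the $\pi_\beta(I_3)$ versus $\pi_\beta(I_2)$ comparison is essentially the paper's: invoke Theorem~\ref{thm:2-ogp} with $\sigma'=\sigma^*$ to get $\min_{\sigma\in I_2}H(\sigma)= \Omega(2^{-n\epsilon})$ w.h.p., then compare Gibbs weights directly. (The paper uses the sharper $|I_2|\le \exp_2\bigl(nh((1-\rho)/2)+O(\log_2 n)\bigr)$ rather than your $|I_2|\le 2^n$, but since $\beta 2^{-n\epsilon}=\Omega(n)$ this only affects constants.) Your self-identified ``main obstacle'' --- matching the $O(\cdot)$ constant in Theorem~\ref{thm:2-ogp} against $H(\sigma^*)$ --- is indeed harmless once you intersect with the high-probability event $H(\sigma^*)=\Theta(2^{-n})=o(2^{-n\epsilon})$, and the paper handles it exactly as you suggest.

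The genuine gap is the $\pi_\beta(I_1)$ bound, where your proposal visibly stalls. All of your attempts fail for the same reason: you never exhibit a configuration $\sigma'\in I_1$ with $H(\sigma')=o(2^{-n\epsilon})$. Without this, no lower bound on $\pi_\beta(I_1)$ can compete with $\pi_\beta(I_2)$: a generic $\sigma\in I_1$ has $H(\sigma)=\Theta(1)$, giving $\pi_\beta(\sigma)\approx Z_\beta^{-1}e^{-\Theta(\beta)}$, which is vastly smaller than $\pi_\beta(I_2)\approx |I_2|\,Z_\beta^{-1}e^{-\Theta(\beta 2^{-n\epsilon})}$ (and summing over $|I_1|\le 2^n$ such terms does not help, as you yourself notice mid-paragraph). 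Your fallback to the global minimizer does not help either: $\argmin_{\bincube} H=\sigma^*\in I_3$, not $I_1$, so you only recover the bound for $\max\{\pi_\beta(I_1),\pi_\beta(I_3)\}$, and there is no ``symmetry/parity'' upgrade available --- the situation is genuinely asymmetric, since $I_3=\{\sigma^*\}$ is \emph{known} to contain the minimizer whereas $I_1$ a priori is not. (Nor are $I_1$ and $I_3$ ``on the same side'' of the well: $I_2$ separates $I_3$ from $I_1\cup\overline{I_2}\cup\overline{I_3}$.)

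The paper's fix is short but uses an ingredient you are missing. Pick any $\epsilon'\in(\epsilon,1)$ and invoke \cite[Theorem~3.1]{karmarkar1986probabilistic} --- a refinement asserting that w.h.p.\ there exists $\sigma'\in\bincube$ with $H(\sigma')=\Theta(2^{-n\epsilon'})$. Since $2^{-n\epsilon'}=O(2^{-n\epsilon})$, this $\sigma'$ is a near ground-state in the sense of Theorem~\ref{thm:2-ogp}, hence by the 2-OGP (applied to the pair $(\sigma',\sigma^*)$) it cannot lie in $I_2\cup\overline{I_2}$; and since $H(\sigma')\ne H(\sigma^*)$ it is not $\pm\sigma^*$ either. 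Therefore $\sigma'\in I_1$, and $\pi_\beta(I_1)\ge \pi_\beta(\sigma')=Z_\beta^{-1}\exp(-\beta\,\Theta(2^{-n\epsilon'}))$. The same ratio computation as for $I_3$ (now using $2^{-n\epsilon'}=o(2^{-n\epsilon})$) finishes the proof.
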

The proof of Theorem~\ref{thm:FEW} is provided in Section~\ref{sec:pf-thm-FEW}.

Namely, the FEW property simply states that the set $I_2$ (of spins $\sigma\in\bincube$ having a ``medium" overlap with $\sigma^*$) is a ``well" of exponentially small (Gibbs) mass separating $I_3$ and $I_1\cup \overline{I_2}\cup \overline{I_3}$. 

\paragraph{ Failure of MCMC.} We now establish, as a consequence  of the  FEW property, Theorem~\ref{thm:FEW}, that the very natural MCMC dynamics introduced earlier provably fails for solving the \npp for ``low enough temperatures", specifically when the temperature is exponentially small. This is a slow mixing result. More concretely, we establish that under an appropriate initialization, it requires an exponential amount of time for the aforementioned MCMC dynamics to ``hit" a region of ``non-trivial Gibbs mass".

To set the stage, let 
\[
\partial S \triangleq \left\{\sigma\in\bincube:d_H\left(\sigma,\sigma^*\right)=1\right\}.
\]
Clearly for any $\sigma\in\partial S$, $\Overlap(\sigma,\sigma^*)= \frac{n-2}{n}$. Thus $\partial S\subset I_2$. Now, let us initialize the MCMC via $X_0\distr \pi_\beta\left(\cdot|I_3\cup \partial S\right)$. Define also the ``escape time"
\begin{equation}\label{eq:escape-time}
\tau_\beta \triangleq \inf\left\{t\in\mathbb{N}:X_t\notin I_3\cup \partial S\mid X_0\sim \pi_\beta\left(\cdot\mid I_3\cup \partial S\right)\right\}.
\end{equation}
We  now establish the following ``slow mixing" result.
\begin{theorem}\label{thm:slow-mixing}
Let $\epsilon\in\left(\frac12,1\right)$, and $\beta=\Omega\left(n2^{n\epsilon}\right)$. Then, the following holds.
\begin{itemize} 
    \item[(a)] $I_1$ and $\overline{I_3}$ collectively contain at least a constant proportion of the Gibbs mass: 
    \[
    \pi_\beta\left(I_1\cup \overline{I_3}\right)\ge \frac12 \left(1+o_n(1)\right),
    \]
    with high probability as $n\to\infty$.
    \item[(b)] With high probability (over $X\distr \mathcal{N}(0,I_n)$) as $n\to\infty$
    \[
    \tau_\beta =\exp\left(\Omega\left(\beta 2^{-n\epsilon}\right)\right).
    \]
    In particular, for $\beta = \omega\left(n2^{n\epsilon}\right)$, we obtain $\tau_\beta = \exp\left(\Omega(n)\right)$ w.h.p. as $n\to\infty$.
\end{itemize}
\end{theorem}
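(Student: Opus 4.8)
\textbf{Proof proposal for Theorem~\ref{thm:slow-mixing}.}
The plan is to deduce both parts from the free energy well property, Theorem~\ref{thm:FEW}, which is the only probabilistic input needed: for $\epsilon\in(\tfrac12,1)$ and $\beta=\Omega(n2^{n\epsilon})$ it yields, with high probability over $X\distr\mathcal{N}(0,I_n)$, the quantitative bound
\[
\pi_\beta(I_2)\;\le\;\exp\bigl(-\Omega(\beta 2^{-n\epsilon})\bigr)\,\pi_\beta(I_3),
\]
and since $\beta 2^{-n\epsilon}=\Omega(n)$ this forces $\pi_\beta(I_2)=\exp(-\Omega(n))=o_n(1)$. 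For part (a), first note $H(-\sigma)=H(\sigma)$, so $\pi_\beta$ is invariant under $\sigma\mapsto-\sigma$; in particular $\pi_\beta(\overline{I_2})=\pi_\beta(I_2)$ and $\pi_\beta(\overline{I_3})=\pi_\beta(I_3)$. The five sets $I_1,I_2,\overline{I_2},I_3,\overline{I_3}$ cover $\bincube$ (they group the level sets of $\tfrac1n\ip{\sigma}{\sigma^*}$ according to the ranges $[-\rho,\rho]$, $[\rho,\tfrac{n-2}{n}]$, $[-\tfrac{n-2}{n},-\rho]$, $\{1\}$, $\{-1\}$, which cover $[-1,1]$), so $\pi_\beta(I_1)+2\pi_\beta(I_2)+2\pi_\beta(I_3)\ge1$. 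Hence
\[
\pi_\beta\bigl(I_1\cup\overline{I_3}\bigr)=\pi_\beta(I_1)+\pi_\beta(I_3)\;\ge\;\tfrac12\bigl(\pi_\beta(I_1)+2\pi_\beta(I_3)\bigr)\;\ge\;\tfrac12\bigl(1-2\pi_\beta(I_2)\bigr)=\tfrac12\bigl(1-o_n(1)\bigr).
\]

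For part (b), I would run a standard bottleneck argument around the set $A\triangleq I_3\cup\partial S$. The configurations outside $A$ that are $\Gr$-adjacent to $A$ are exactly those at Hamming distance two from $\sigma^*$; call this set $\partial_2 S$. For such $\sigma$ one has $\tfrac1n\ip{\sigma}{\sigma^*}=\tfrac{n-4}{n}\in[\rho,\tfrac{n-2}{n}]$ for $n$ large (as $\rho<1$ is fixed), so $\partial_2 S\subset I_2$; likewise $\partial S\subset I_2$ since $\tfrac{n-2}{n}\in[\rho,\tfrac{n-2}{n}]$. Using reversibility of the transition matrix $Q$ with respect to $\pi_\beta$, the $\pi_\beta$-weighted escape flux out of $A$ satisfies
\[
\sum_{\sigma\in A}\pi_\beta(\sigma)\,Q(\sigma,A^c)=\sum_{\sigma\in A,\;\sigma'\in A^c}\pi_\beta(\sigma')\,Q(\sigma',\sigma)\;\le\;\sum_{\sigma'\in\partial_2 S}\pi_\beta(\sigma')\;=\;\pi_\beta(\partial_2 S)\;\le\;\pi_\beta(I_2),
\]
since $Q(\sigma',\sigma)=0$ unless $\sigma'\in\partial_2 S$ and $\sum_\sigma Q(\sigma',\sigma)\le1$. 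Dividing by $\pi_\beta(A)\ge\pi_\beta(I_3)>0$ and invoking the FEW bound gives
\[
\mathbb{E}_{\sigma\sim\pi_\beta(\cdot\mid A)}\bigl[Q(\sigma,A^c)\bigr]\;=\;\frac{1}{\pi_\beta(A)}\sum_{\sigma\in A}\pi_\beta(\sigma)\,Q(\sigma,A^c)\;\le\;\frac{\pi_\beta(I_2)}{\pi_\beta(I_3)}\;\le\;\exp\bigl(-\Omega(\beta 2^{-n\epsilon})\bigr).
\]

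To turn this one-step estimate into a lower bound on $\tau_\beta$, I would introduce the censored (reflected) chain $(\widetilde X_t)$ on $A$ that follows the same dynamics as $(X_t)$ except that any proposed transition into $A^c$ is replaced by a self-loop. This chain is reversible with respect to $\pi_\beta(\cdot\mid A)$ (detailed balance for $\pi_\beta$ on bonds inside $A$ is untouched), hence $\widetilde X_t\sim\pi_\beta(\cdot\mid A)$ for all $t$ when $\widetilde X_0\sim\pi_\beta(\cdot\mid A)=X_0$, and the two chains can be coupled so that $X_s=\widetilde X_s$ for all $s<\tau_\beta$. Therefore $\{\tau_\beta=t\}$ is contained in the event that the move proposed at step $t$ from $\widetilde X_{t-1}$ lands in $A^c$, so $\mathbb{P}(\tau_\beta=t)\le\mathbb{E}[Q(\widetilde X_{t-1},A^c)]\le\exp(-\Omega(\beta 2^{-n\epsilon}))$, and a union bound yields $\mathbb{P}(\tau_\beta\le t)\le t\exp(-\Omega(\beta 2^{-n\epsilon}))$. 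Taking $t=\exp(c\beta 2^{-n\epsilon})$ with $c>0$ small makes this $o_n(1)$ (recall $\beta 2^{-n\epsilon}=\Omega(n)\to\infty$), so $\tau_\beta=\exp(\Omega(\beta 2^{-n\epsilon}))$ with probability $1-o_n(1)$ over the chain, w.h.p.\ over $X$; and when $\beta=\omega(n2^{n\epsilon})$ this is $\exp(\omega(n))\ge\exp(\Omega(n))$.

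\textbf{Main obstacle.} The genuinely delicate step is the passage from the stationary one-step escape estimate to $\mathbb{P}(\tau_\beta\le t)\le t\exp(-\Omega(\beta 2^{-n\epsilon}))$: one must set up the censoring/coupling carefully, verifying that the censored chain is reversible with respect to $\pi_\beta(\cdot\mid A)$ and that it coincides with $(X_t)$ up to the escape time so that the union bound over $t$ steps against the \emph{stationary} escape rate is legitimate. Everything else---the negation symmetry, the inclusions $\partial S,\partial_2 S\subset I_2$, and the detailed-balance flux identity---is routine bookkeeping, and all the probabilistic difficulty is already absorbed into Theorem~\ref{thm:FEW}.
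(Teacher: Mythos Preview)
Your proposal is correct and follows essentially the same approach as the paper: part~(a) is identical, and part~(b) is the same censored/reflected-chain argument reducing to the ratio $\pi_\beta(I_2)/\pi_\beta(I_3)$ from Theorem~\ref{thm:FEW}. The only cosmetic difference is that the paper bounds $\mathbb{P}(\tau_\beta=t)$ by $\mathbb{P}(\overline{X}_{t-1}\in\partial S)=\pi_\beta(\partial S\mid I_3\cup\partial S)\le\pi_\beta(I_2)/\pi_\beta(I_3)$ directly (to escape one must first be at the inner boundary $\partial S\subset I_2$), whereas you bound it by the stationary expected escape probability and reach the same ratio via the detailed-balance flux identity---one extra line, same conclusion.
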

The proof of Theorem~\ref{thm:slow-mixing} is provided in Section~\ref{sec:pf-slow-mixing}.

Per Theorem~\ref{thm:slow-mixing}, it takes an exponential amount time for $(X_t)_{t\ge 0}$ initialized by $\pi_\beta\left(\cdot|I_3\cup \partial S\right)$ to escape $\sigma^*$, and enter a region of nearly half of Gibbs mass; implying slow mixing. 

It is worth noting that Theorem~\ref{thm:slow-mixing} is shown when the temperature $\beta^{-1}$ is low enough, more specifically is exponentially small. This ensures the Gibbs measure is well-concentrated on ground states. We leave the analysis of the MCMC dynamics in the high-temperature regime (i.e., lower values of $\beta$) as an interesting open problem for future work. 
\section{Certain Natural Limitations of Our Techniques}
Given that our methods fall short of addressing the \compgap of the \npp all the way down to $2^{-\Theta(\log^2 n)}$; it is natural to inquire into their limitations. 
\subsection{Limitation of the $m-$Overlap Gap Property for Super-constant $m$}\label{sec:m-ogp-beyond-is-impossible}
In this section, we give an informal argument suggesting the absence of the $m-$OGP when the energy level $E_n$ is $O\left(\sqrt{n\log_2 n}\right)$.
\iffalse 
In this section, we give an informal argument explaining the limitations of the $m-$OGP for super-constant $m$. Albeit non-rigorous; our argument is somewhat detailed. The rationale for this is two-fold. First, as was mentioned already; our paper is the first paper studying $m-$OGP with $m=\omega_n(1)$---all prior work, e.g. \cite{gamarnik2017performance,rahman2017,wein2020optimal}, studies $m-$OGP with $m=O(1)$. Having initiated this line of research, we believe that such a discussion is necessary.

Second, and perhaps more importantly; while we provide our argument in the context of the \nppp;  our argument captures the fundamental bottlenecks of this approach, including the analysis of the exponent of the first moment, a careful asymptotic analysis of certain binomial coefficients arising from the counting terms, the invertibility of a certain covariance matrix describing a joint distribution; as well as other restrictions such as the fact that the ``overlap" region must not be void (i.e., it must contain an integer after being scaled with $n$). These features are ubiquitous in many other random combinatorial optimization problems where one may want to conduct a similar analysis. For this reason, we believe that the main ideas our argument is widely applicable to other problems, as well.
\fi 
Our informal argument will reveal the following. It appears not possible to establish $m-$OGP (for super-constant $m$) as we do in Theorem~\ref{thm:m-ogp-superconstant-m},  for energy levels above $\exp_2\left(-\omega\left(\sqrt{n\log_2 n}\right)\right)$. We now detail this.
%In this section, we give an informal argument explaining why $\omega\left(\sqrt{n\log_2 n}\right)$ is the best exponent for which the $m-$OGP for super-constant $m$,can be established. We now detail this. %An inspection of the proof of Theorem~\ref{thm:m-ogp-superconstant-m} reveals the following. , as we detail next. 
\subsection*{Step 1: $E_n=\omega(\sqrt{n})$ is necessary.} We first note, upon studying the proof of Theorem~\ref{thm:m-ogp-superconstant-m} more carefully, that for the first moment argument to work, one should take $\beta=1-o_n(1)$. %: otherwise a certain cardinality induces an exponent which diverges to $+\infty$ as $n
%\to\infty$. 
For convenience, let $\beta=1-2\nu_n$, where $\nu_n = o_n(1)$ is a sequence of positive reals. Furthermore, to ensure the invertibility of a certain covariance matrix arising in the analysis, one should also take $\eta \lesssim \nu_n/m$ (see the proof for further details on this matter). 

Now, for the OGP to be meaningful, it should be the case that $n\eta = \Omega(1)$, as noted already previously. Indeed, otherwise the overlap region is void, since no admissible overlap values $\rho$ can be found within the interval $[\beta-\eta,\beta]$. Now, since $\eta \lesssim \nu_n/m$, 
\[
n\eta = \Omega(1) \implies \frac{n\nu_n}{m} = \Omega(1)\implies n\nu_n = \Omega(m).
\]
Next, for an $m-$tuple $\left(\sigma^{(i)}:1\le i\le m\right)$; the energy value, $2^{-E_n}$, contributes to a $-mE_n$ in the exponent (we again refer the reader to the proof for further details). Finally, a very crude cardinality bound on the number of $m-$tuples $\left(\sigma^{(i)}:1\le i\le m\right)$ with pairwise inner products $\frac1n\ip{\sigma^{(i)}}{\sigma^{(j)}} \in[\beta-\eta,\beta]$, $1\le i<j\le m$, is the following: using the na\"ive approximation $\log_2\binom{n}{k}=(1+o_n(1))k\log_2 \frac{n}{k}$ valid for $k=o(n)$, we arrive at
\[
2^n \binom{n}{n\frac{1-\beta}{2}}^{m-1}\sim \exp_2\left(n+mn\nu_n\log \frac{1}{\nu_n}\right),
\]
where we have used $m=\omega_n(1)$ and $\beta=1-2\nu_n$; while ignoring the lower order terms for convenience. Blending these observations together, we arrive at the following formula, for the exponent of the first moment:
\[
\xi(n)\triangleq n+mn \nu_n \log\frac{1}{\nu_n} -mE_n.
\]
Now, for the first moment argument to work, it should be the case $-\xi(n) = \omega_n(1)$. Hence, $mE_n=\Omega(n)$ must hold. Since $n\nu_n = \Omega(m)$ as shown above, this yields 
\[
n\nu_n = \Omega(m)=\Omega\left(\frac{n}{E_n}\right).
\]
Now, a final constraint is
\[
mE_n = \Omega\left(mn\nu_n\log\frac{1}{\nu_n}\right)\Leftrightarrow E_n = \Omega\left(n\nu_n\log \frac{1}{\nu_n}\right).
\]
But since $\nu_n=o_n(1)$, we  have $\log\frac{1}{\nu_n} = \omega_n(1)$, and consequently, we must have, at the very least,
\[
E_n = \omega\left(\frac{n}{E_n}\right)\Leftrightarrow E_n = \omega(\sqrt{n}). 
\]
\subsection*{Step 2: from $E_n=\omega\left(\sqrt{n}\right)$ to $E_n=\omega\left(\sqrt{n\log_2 n}\right)$.} We now let $E_n=\phi(n)\sqrt{n}$, where $\phi(n)=\omega_n(1)$, and plug this in above to study the parameters numerically. Inspecting the lines above, one should take $ m=C\frac{\sqrt{n}}{\phi(n)}$, where $C>1$ is some constant. This, in turn, yields that we require $ \nu_n = \frac{g(n)}{\phi(n)\sqrt{n}}$, where $g(n) = \Omega(1)$. In particular, observe that
\[
\log \frac{1}{\nu_n} = \frac12\log_2 n(1+o_n(1))=\Theta(\log_2 n).
\]

Now, a final constraint, as one might recall from above, is that the exponent, $-\xi(n)$, should be $\omega_n(1)$ as $n\to\infty$. With this, it should hold
\[
n\nu_n \log \frac{1}{\nu_n} = O(E_n) = O(\phi(n)\sqrt{n}).
\]
Since
\[
n\nu_n \log \frac{1}{\nu_n} = \Theta\left(\frac{g(n)\sqrt{n}\log_2 n}{\phi(n)}\right),
\]
it should be the case
\[
\frac{g(n)\sqrt{n}\log_2 n}{\phi(n)}\lesssim \phi(n)\sqrt{n},
\]
which implies $\phi(n)=\Omega(\sqrt{\log_2 n})$. 

Namely, this argument demonstrates the following: if one wants to establish the overlap gap property for an energy exponent $E_n$ through a first moment technique, $E_n$ should have a growth of at least $\sqrt{n\log_2 n}$; otherwise the moment argument fails. 
\subsection{Limitation of the Ramsey Argument}\label{sec:ramsey-limited}
An important question that remains is whether one can leverage further the $m-$OGP result (Theorem~\ref{thm:m-ogp-superconstant-m}) to establish an analogue of our hardness result (Theorem~\ref{thm:Main}) for energy levels with an exponent $E_n$ that is at least slightly below $\omega\left(n \log^{-1/5+\epsilon} n\right)$ or all the way to $\omega\left(\sqrt{n\log_2 n}\right)$. We now argue that using our line of argument based on the Ramsey Theory,  $E_n = \Omega\left(n \log^{-\frac15}n \right)$, no beyond, is essentially the best exponent one would hope to address. 

%An important question is how one can leverage the $m-$OGP result with super-constant $m$, Theorem~\ref{thm:m-ogp-superconstant-m}, to establish an analogue of the failure result, Theorem~\ref{thm:Main}, for algorithms that are more stable (in an appropriate sense) and certain energy levels above $2^{-\Theta(n)}$. We now argue that it is conceivable to expect such a hardness result for energy levels $2^{-E_n}$ with  $E_n$ growing approximately  $\omega\left(n \log_2^{-\frac13} n\right)$, but no beyond, using our machinery. 

Let $E_n$ be a target exponent for which one wants to establish the hardness; and $m$ be the OGP parameter required per Theorem~\ref{thm:m-ogp-superconstant-m}. Our proof uses, in a crucial way, certain properties regarding Ramsey numbers arising in extremal combinatorics. To that end, let $R_Q(m)$ denotes the smallest $n\in\mathbb{N}$ such that any $Q$ (edge) coloring of $K_n$ contains a monochromatic $K_m$ (see Theorem~\ref{thm:ramsey} for more details). Our argument then contains the following ingredients. We generate a certain number $T$ of ``instances" (of the \texttt{NPP}) such that $T\ge R_2(M)$ for $M\ge R_Q(m)$ where $Q$ corresponds to a discretization level we need to address $E_n$. When then essentially (a) construct a graph $\Gr$ on $T$ vertices satisfying certain properties, in particular $\alpha(\Gr)\le M-1$ (where $\alpha(\Gr)$ is the cardinality of any largest independent set of $\Gr$) (b) extract a clique $K_M$ of $\Gr$ whose edges are colored with one of $Q$ available colors; and (c) use $M\ge R_Q(m)$ to conclude that the original graph, $\Gr$, contains a monochromatic $K_m$. From here, we then argue that this yields a forbidden configuration, a contradiction with the $m-$OGP. 

Using well-known upper and lower bounds on Ramsey numbers (see e.g.~\cite{conlon2020lower}) one should then choose $T\ge \exp_2\left(\Theta(M)\right)$. Moreover, the best lower bound on $R_Q(m)$, due to Lefmann~\cite{lefmann1987note}, asserts that $R_Q(m)\ge \exp_2\left(mQ/4 \right)$. Combining these bounds, we then conclude that $T$ should be of order at least 
\begin{equation}\label{eq:T-should-be-order-atleast}
T\ge \exp_2\Bigl(2^{\Theta(mQ)}\Bigr).
\end{equation} 
Now, an inspection of our proof of Theorem~\ref{thm:Main} yields that for certain union bounds, e.g. \eqref{eq:event-chaos-all-M}, to work; $T$ should be sub-exponential: $T=2^{o(n)}$. Combining this with \eqref{eq:T-should-be-order-atleast}; a necessary condition turns out to be
\begin{equation}\label{eq:a-necessary-cond-on-mQ}
mQ = O\Bigl(\log_2 n\Bigr).
\end{equation} 
Now, the discretization $Q$ should be sufficiently fine to ensure that the overlaps are eventually ``trapped" within the (forbidden) overlap region of length $\eta$ dictated by Theorem~\ref{thm:m-ogp-superconstant-m}. In particular, tracing our proof, it appears from~\eqref{events-steps-stable} that
\begin{equation}\label{eq:a-bound-on-Q-eta-sq}
Q = \Omega\left(\frac{1}{\eta^2}\right)
\end{equation} 
should hold. Furthermore, from the discussion on $m-$OGP; as well as the proof of Theorem~\ref{thm:m-ogp-superconstant-m}, it appears also that $mE_n$ should be $\Omega(n)$, that is
\begin{equation}\label{eq:a-bound-on-m-of-form-omega-n-e-n}
m = \Omega\left(\frac{n}{E_n}\right). 
\end{equation}
Now, we take the overlap value $\eta$ to be $g(n)/n$, where $g(n) = \omega(1)$; but it also satisfies other certain, natural, constraints. In particular, using \eqref{eq:nu-n-g-over-f} and \eqref{eq:beta:1-2nu-n}; for the parameters to make sense, $g(n)$ should be $o(E_n)$. Let
\begin{equation}\label{eq:g-n-E-n-eta-n}
E_n = ns(n), \quad g(n) = ns(n)z(n), \quad\text{and}\quad \eta = s(n)z(n) \quad\text{where}\quad s(n),z(n) = o_n(1). 
\end{equation}
Combining \eqref{eq:a-bound-on-Q-eta-sq}, \eqref{eq:a-bound-on-m-of-form-omega-n-e-n} and \eqref{eq:g-n-E-n-eta-n}; we therefore have
\begin{equation}\label{eq:we-must-therefore-have}
mQ = \Omega\left(\frac{n}{E_n \eta^2}\right) = \Omega\left(\frac{1}{s(n)^3z(n)^2}\right).
\end{equation}
Furthermore, to ensure Theorem~\ref{thm:m-ogp-superconstant-m} applies; the  ``exponent" of the first moment should not ``blow up". For this reason, using~\eqref{eq:this-will-later-be-super-useful}, \eqref{eq:a-very-loong-expression-for-the-firdt-momnet-m-ogp}, as well as the counting term~\eqref{eq:n-choose-n-1-beta-eta}, it must at least hold that
\[
mE_n = \Omega\left(\frac{mng(n)}{E_n} \right) \iff  \frac{E_n}{n} = s(n) = \Omega\left(z(n)\right).
\]
This, together with \eqref{eq:we-must-therefore-have} as well as the upper bound \eqref{eq:a-necessary-cond-on-mQ}, implies that
\[
s(n) = \Omega\left(\log^{-\frac15} n\right).
\]
Hence, 
\[
E_n = ns(n) = \Omega\left(n \log^{-\frac15} n\right),
\]
is essentially indeed the best possible. We gave ourselves an $\epsilon$ ``extra room" in Theorem~\ref{thm:Main} so as to avoid complicating relevant quantities any further. 

%. In particular, per , $Q=\Omega\left(\eta^{-2}\right)$ should hold.  It appears, from the proof of Theorem~\ref{thm:m-ogp-superconstant-m}, that $m$ should be taken $\Omega(n/E_n)$ (this is elaborated further in Section~\ref{sec:m-ogp-beyond-is-impossible}).  The overlap parameter $\eta$ is taken to be $g(n)/n$ per \eqref{eq:eta-1-beta-over2m} for a certain function $g(n)$. Moreover; it appears, from~\eqref{eq:g-of-n-should-be-little-oh-E-n}, that $g(n)$ should satisfy $g(n)= o(E_n)$. Equipped with these, we now translate the condition $mQ=O(\log_2 n)$ into a condition on $E_n$:
%\[
%Q=\Omega\left(\eta^{-2}\right)=\omega\left(\frac{n^2}{E_n^2}\right)  \implies mQ = \omega\left(\frac{n^3}{E_n^3}\right). 
%\]
%Since we also need to ensure $mQ=O(\log_2 n)$, it follows that
%\[
%E_n = \omega\left(n \log_2^{-\frac13}n \right)
%\]
%should hold, if one wishes to leverage the $m-$OGP result with $m=\omega(1)$ and apply the Ramsey argument. We leave it for a future work to formally carry out the aforementioned details.
A very interesting question is whether one can by-pass the Ramsey argument altogether. This would help establishing the failure of (presumably more) stable algorithms for even higher energy levels, $E_n = \omega\left(\sqrt{n\log_2 n}\right)$, a regime where Theorem~\ref{thm:m-ogp-superconstant-m} is applicable.

\section{Open Problems and Future Work}\label{sec:conclusion}

Our work suggests interesting avenues for future research. While we have focused on the \npp in the present paper for simplicity, we believe that many of our results extend to the multi-dimensional case, \vbp \eqref{eq:vbp}, as well; perhaps at the cost of more detailed and computation-heavy proofs. This was noted already in Remark~\ref{remark:m-ogp-applies-to-vbp-aswell}.

Yet another very important direction pertains the \compgap of the \nppp. The $m-$OGP results that we established hold for energy levels $2^{-\Theta(n)}$ when $m=O(1)$; and for $2^{-E_n}$, $\omega\left(\sqrt{n\log_2 n}\right)\le E_n \le o(n)$, when $m=\omega_n(1)$. While we are able to partially explain the aforementioned \compgap to some extent, we are unable close it all the way down to the current computational threshold: the best known polynomial-time algorithm to this date achieves an exponent of only $\Theta\left(\log^2 n\right)$. A very interesting open question is whether this gap can be ``closed" altogether. That is, either devise a better (efficient) algorithm, improving upon the algorithm by Karmarkar and Karp~\cite{karmarkar1982differencing}; or establish the hardness by taking one of the alternative routes (mentioned in the introduction) tailored for proving \emph{average-case hardness}. In light of the fact that not much work has been done in the algorithmic front since the paper~\cite{karmarkar1982differencing}, it is plausible to hope that better efficient algorithms can indeed be found. In particular, a potential direction appears to be setting up an appropriate Markov Chain dynamics, and establishing rapid mixing. We leave this as an open problem for future work.

While we are able to rule out stable algorithms in the sense  of Definition~\ref{def:optimal-alg}, we are unable to prove that the algorithm by Karmarkar and Karp, in particular the LDM algorithm introduced earlier, is stable with appropriate parameters, although our simulation results suggest that it is. We leave this as yet another open problem. 

%In order to address the LDM algorithm, another idea is to devise a different notion of stability, rigorously verify that it is satisfied by the LDM, and prove the failure under this alternative notion of stability. 

Another direction pertains the parameters of algorithms that we consider. In particular, one potential direction is to establish Theorem~\ref{thm:Main}
when the algorithm say has $o_n(1)$ probability of success. That is, $p_f$ in Definition~\ref{def:optimal-alg} is $1-o_n(1)$. 
We conjecture that the value $p_f = 1-n^{-O(1)}$ is within the reach.

\section{Proofs}\label{sec:proofs}
\subsection{Auxiliary Results}
Below, we record several auxiliary results that will guide our proofs. The first result is the standard asymptotic approximation for the factorial.
\begin{equation}\label{thm:stirling}
\log_2 n! =n\log_2 n -n\log_2 e+O\left(\log_2 n\right).
\end{equation}

The second is a very standard approximation for the binomial coefficients, whose proof we include herein for completeness.
\begin{lemma}\label{lemma:binomial-k-o(n)}
Let $n,k\in\mathbb{N}$, where $k=o(n)$. Then,
\[
\log_2 \binom{n}{k} = (1+o_n(1))k\log_2\frac{n}{k}.
\]
\end{lemma}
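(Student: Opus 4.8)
The plan is to prove the estimate $\log_2\binom{n}{k} = (1+o_n(1))\,k\log_2\frac{n}{k}$ by sandwiching $\binom{n}{k}$ between two elementary bounds and then checking that, after taking $\log_2$, both bounds agree with $k\log_2\frac nk$ up to a multiplicative $(1+o_n(1))$ factor under the hypothesis $k=o(n)$. The two bounds I would use are the standard chain of inequalities
\[
\left(\frac{n}{k}\right)^{k} \le \binom{n}{k} \le \left(\frac{en}{k}\right)^{k},
\]
the lower bound coming from $\binom nk = \prod_{j=0}^{k-1}\frac{n-j}{k-j}\ge (n/k)^k$ (each factor $\frac{n-j}{k-j}$ is at least $\frac nk$ since $n\ge k$), and the upper bound from $\binom nk \le \frac{n^k}{k!}$ together with $k!\ge (k/e)^k$ (itself a consequence of $e^k=\sum_{i\ge0}k^i/i! \ge k^k/k!$).

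Next I would take base-$2$ logarithms of both sides to get
\[
k\log_2\frac nk \;\le\; \log_2\binom nk \;\le\; k\log_2\frac nk + k\log_2 e.
\]
So it remains only to absorb the additive error $k\log_2 e$ into a multiplicative $(1+o_n(1))$ correction of the main term $k\log_2\frac nk$. This is where the hypothesis $k=o(n)$ enters: since $k/n\to 0$, we have $\log_2\frac nk \to\infty$, hence $\frac{k\log_2 e}{k\log_2(n/k)} = \frac{\log_2 e}{\log_2(n/k)} = o_n(1)$. Therefore
\[
\log_2\binom nk = k\log_2\frac nk\left(1 + O\!\left(\frac{1}{\log_2(n/k)}\right)\right) = (1+o_n(1))\,k\log_2\frac nk,
\]
which is the claim. (One should note $k\log_2(n/k)>0$ for $n$ large since $k=o(n)$ forces $n/k>1$ eventually, so dividing by it is legitimate; the degenerate cases $k=0$ are vacuous or trivial.)

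There is essentially no real obstacle here — the only mild point of care is making sure the additive-to-multiplicative conversion is valid, i.e. that the denominator $k\log_2(n/k)$ does not itself vanish or stay bounded, which is exactly guaranteed by $k=o(n)$ (ruling out the problematic regime $k=\Theta(n)$ where $\log_2(n/k)$ would be $\Theta(1)$ and the statement would genuinely fail). I would present the two-line inequality chain, take logs, and then do the one-line asymptotic simplification; Stirling's formula \eqref{thm:stirling} is not even needed for this crude version, though it could be invoked as an alternative route if one wanted sharper control. The whole argument is short, so I would just write it out directly rather than sketching.
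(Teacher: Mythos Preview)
Your proof is correct and follows essentially the same approach as the paper: both sandwich $\binom{n}{k}$ between $(n/k)^k$ and $(en/k)^k$, take logarithms, and absorb the additive $k\log_2 e$ via $\log_2(n/k)\to\infty$. The only cosmetic difference is how the upper bound is derived---you use $\binom{n}{k}\le n^k/k!$ with $k!\ge (k/e)^k$, while the paper uses $\binom{n}{k}(k/n)^k\le (1+k/n)^n\le e^k$---but these are equivalent standard arguments.
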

\begin{proof}
Note that for any $0\le i\le k-1$,
$
\frac{n-i}{k-i}\ge \frac{n}{k}$. 
Hence,
\[
\left(\frac{n}{k}\right)^k \le \prod_{0\le i\le k-1}\frac{n-i}{k-i}=\binom{n}{k}.
\]
Next,
\[
\binom{n}{k}\left(\frac{k}{n}\right)^k \le \sum_{0\le t\le n}\binom{n}{t}\left(\frac{k}{n}\right)^t =\left(1+\frac{k}{n}\right)^n.
\]
Since $\ln(1+x)\le x$, setting $x=k/n$ yields
\[
\left(1+\frac{k}{n}\right)^n \le e^k.
\]
Combining these, we obtain
\[
\left(\frac{n}{k}\right)^k \le \binom{n}{k}\le \left(\frac{en}{k}\right)^k. 
\]
Taking now the logarithms both sides, and keeping in mind that  $\log_2\frac{n}{k}=\omega_n(1)$; we arrive at
\[
k\log_2 \frac{n}{k}\le \log_2 \binom{n}{k} \le k\left(\log_2 \frac{n}{k}+\log_2 e\right)=k\log_2\frac{n}{k}(1+o_n(1)).
\]
Hence,
\[
\log_2 \binom{n}{k} = (1+o_n(1))k\log_2\frac{n}{k}
\]
as claimed.
\end{proof}

The third auxiliary result is a theorem from the matrix theory.
\begin{theorem}{{\bf (Wielandt-Hoffman)}}\label{thm:wielandt-hoffman}

Let $A,A+E\in\R^{n\times n}$ be two symmetric matrices with respective eigenvalues $\lambda_1(A)\ge \lambda_2(A)\ge\cdots\ge \lambda_n(A)$ and $\lambda_1(A+E)\ge \lambda_2(A+E)\ge \cdots\ge\lambda_n(A+E)$. Then,
\[
\sum_{1\le i\le n}\left(\lambda_i(A+E)-\lambda_i(A)\right)^2 \le \|E\|_F^2.
\]
\end{theorem}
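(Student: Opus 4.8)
\textbf{Proof proposal for the Wielandt--Hoffman theorem (Theorem~\ref{thm:wielandt-hoffman}).}

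The plan is to exploit the spectral theorem and a doubly stochastic matrix argument, which is the classical route to this inequality. First I would diagonalize both symmetric matrices: write $A = U \Lambda U^\top$ and $A+E = V M V^\top$, where $U,V$ are orthogonal, $\Lambda = \operatorname{diag}(\lambda_1(A),\dots,\lambda_n(A))$, and $M = \operatorname{diag}(\lambda_1(A+E),\dots,\lambda_n(A+E))$, with both lists of eigenvalues arranged in nonincreasing order. Since the Frobenius norm is invariant under multiplication by orthogonal matrices on either side, I would rewrite
\[
\|E\|_F^2 = \|(A+E) - A\|_F^2 = \|V M V^\top - U \Lambda U^\top\|_F^2 = \|M W - W \Lambda\|_F^2,
\]
where $W \triangleq V^\top U$ is orthogonal. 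Expanding the right-hand side entrywise and letting $w_{ij}$ denote the entries of $W$, one gets
\[
\|MW - W\Lambda\|_F^2 = \sum_{i,j} w_{ij}^2 \bigl(\lambda_i(A+E) - \lambda_j(A)\bigr)^2.
\]

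The key step is then to observe that the matrix $P$ with entries $p_{ij} \triangleq w_{ij}^2$ is \emph{doubly stochastic}: each row and each column sums to $1$ because $W$ is orthogonal. Therefore
\[
\|E\|_F^2 = \sum_{i,j} p_{ij}\, d_{ij}, \qquad d_{ij} \triangleq \bigl(\lambda_i(A+E) - \lambda_j(A)\bigr)^2,
\]
and minimizing a linear functional of $P$ over the Birkhoff polytope of doubly stochastic matrices is attained at a permutation matrix (Birkhoff--von Neumann). Hence
\[
\|E\|_F^2 \ge \min_{\pi \in S_n} \sum_{i} \bigl(\lambda_i(A+E) - \lambda_{\pi(i)}(A)\bigr)^2.
\]
To finish, I would show that this minimum over permutations is achieved by the identity permutation, i.e.\ by pairing the eigenvalues in sorted order. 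This is a standard rearrangement fact: for two nonincreasing sequences $a_1 \ge \cdots \ge a_n$ and $b_1 \ge \cdots \ge b_n$, the quantity $\sum_i (a_i - b_{\pi(i)})^2 = \sum_i a_i^2 + \sum_i b_i^2 - 2\sum_i a_i b_{\pi(i)}$ is minimized when $\sum_i a_i b_{\pi(i)}$ is maximized, and by the rearrangement inequality that maximum is $\sum_i a_i b_i$ (achieved at $\pi = \mathrm{id}$). Combining, $\|E\|_F^2 \ge \sum_i (\lambda_i(A+E) - \lambda_i(A))^2$, as claimed.

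The main obstacle is really just the assembly of the two classical ingredients cleanly: verifying that $P = (w_{ij}^2)$ is doubly stochastic (immediate from $WW^\top = W^\top W = I$), and invoking Birkhoff--von Neumann together with the rearrangement inequality at the end. There is no genuine analytic difficulty; the care needed is in the bookkeeping of the expansion $\|MW - W\Lambda\|_F^2 = \sum_{i,j} w_{ij}^2(\lambda_i(A+E)-\lambda_j(A))^2$ and in noting the orthogonal-invariance of $\|\cdot\|_F$. Alternatively, one can bypass Birkhoff--von Neumann by directly using the fact that the extreme points of the set of doubly stochastic matrices are permutations, or by a convexity/majorization argument (Schur convexity), but the route above is the most self-contained. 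I would present the doubly-stochastic reduction as the heart of the argument and relegate the rearrangement step to a one-line remark.
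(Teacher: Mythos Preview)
Your proof is correct and follows the classical doubly-stochastic/Birkhoff--von Neumann route; the expansion $\|MW - W\Lambda\|_F^2 = \sum_{i,j} w_{ij}^2(\lambda_i(A+E)-\lambda_j(A))^2$, the observation that $(w_{ij}^2)$ is doubly stochastic, and the rearrangement step are all sound. Note, however, that the paper does not actually prove this theorem: it states it as an auxiliary result and simply refers the reader to \cite[Corollary~6.3.8]{horn2012matrix} and the original paper \cite{hoffman1953variation}. Your argument is essentially the standard textbook proof (and in particular the one in Horn--Johnson), so there is nothing to compare---you have supplied what the paper chose to cite.
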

For a reference, see e.g. \cite[Corollary~6.3.8]{horn2012matrix}; and see \cite{hoffman1953variation} for the original paper by   and Hoffman. 

\subsection{Proof of Theorem~\ref{thm:2-ogp}}\label{sec:pf-thm:2-ogp}
\begin{proof}
Let $\epsilon\in\left(\frac12,1\right]$. Let $\rho\in(0,1)$ to be tuned appropriately, and
\[
\mathcal{Z}(\rho) \triangleq \left\{\left(\sigma,\sigma'\right)\in \bincube\times \bincube:\Overlap\left(\sigma,\sigma'\right)\in\left[\rho,\frac{n-2}{n}\right]\right\}.
\]
Set
\begin{equation}\label{eq:2-ogp-rv-N}
N \triangleq \sum_{\left(\sigma,\sigma'\right)\in\mathcal{Z}(\rho)}\ind\left\{\frac{1}{\sqrt{n}}\left|\ip{\sigma}{X}\right|,\frac{1}{\sqrt{n}}\left|\ip{\sigma'}{X}\right| = O\left(2^{-n\epsilon}\right)\right\}
\end{equation}
We will establish that $\mathbb{E}[N]=\exp\left(-\Theta(n)\right)$. This, together with Markov's inequality, will then yield the desired conclusion: 
\[
\mathbb{P}\left(N\ge 1\right)\le\mathbb{E}[N]=\exp\left(-\Theta(n)\right).
\]

\paragraph{ Step I. Counting.} We first upper bound the cardinality $\left|\mathcal{Z}\left(\rho\right)\right|$. Note that there are $2^n$ choices for $\sigma\in\bincube$. Having chosen $\sigma$; $\sigma'$ can now be chosen in
\[
\sum_{1\le k\le \lceil n\frac{1-\rho}{2}\rceil}\binom{n}{k}
\]
different ways. This is due to the fact that if $k=d_H\left(\sigma,\sigma'\right)$ then $\Overlap\left(\sigma,\sigma'\right)=\left|1-2\frac{k}{n}\right|$. Using Stirling's approximation \eqref{thm:stirling}, and the fact the sum contains $n^{O(1)}=\exp_2\left(O\left(\log_2 n\right)\right)$ terms, we arrive at the upper bound
\begin{equation}\label{eq:card-bound-for-2-ogp-result}
\left|\mathcal{Z}\left(\rho\right)\right|\le \exp_2\left(n+nh\left(\frac{1-\rho}{2}\right)+O\left(\log_2 n\right)\right).
\end{equation}
Here $h(x) = -x\log_2 x-(1-x)\log_2(1-x)$ is the binomial entropy function logarithm base two. 

\paragraph{ Step II. Upper bound on probability.} Let $\left(\sigma,\sigma'\right)\in \mathcal{Z}(\rho)$ with $\Overlap\left(\sigma,\sigma'\right)=\bar{\rho}$. Set
\[
Y_\sigma \triangleq \frac{1}{\sqrt{n}}\ip{\sigma}{X} \quad\text{and}\quad Y_{\sigma'}\triangleq  \frac{1}{\sqrt{n}}\ip{\sigma'}{X}.
\]
Note that $Y_\sigma,Y_{\sigma'}\distr\mathcal{N}(0,1)$ with  correlation $\bar{\rho}$. Now, let $C>0$ be a constant; and denote by $\mathcal{R}_C$ the region
\[
\mathcal{R}_C\triangleq \left[-C2^{-n\epsilon},C2^{-n\epsilon}\right] \times \left[-C2^{-n\epsilon},C2^{-n\epsilon}\right].
\]
Denote also
\[
f(x,y)\triangleq \exp\left(-\frac{1}{2\left(1-\bar{\rho}^2\right)}\left(x^2-2\bar{\rho} xy+y^2\right)\right)
\]
As long as $\bar{\rho}\in(0,1)$, we have $f(x,y)\le 1$ for every $x,y$. Furthermore,
\begin{equation}\label{eq:2-ogp-cov-asymp}
    \sqrt{1-\left(1-\frac2n\right)^2} = \sqrt{\frac4n - \frac{4}{n^2}} = \frac{2}{\sqrt{n}}\left(1+o_n(1)\right).
\end{equation}
We then have,
\begin{align}
    \mathbb{P}\left(\left(Y_\sigma,Y_{\sigma'}\right)\in\mathcal{R}_C\right) &= \frac{1}{2\pi \sqrt{1-\bar{\rho}^2}}\int_{(x,y)\in \mathcal{R}_C}f\left(x,y\right)\; dx \; dy\\
    &\le \mathcal{C}\frac{1}{ \sqrt{1-\left(1-\frac2n\right)^2}}2^{-2n\epsilon} \\
    &=\mathcal{C}'\left(1+o_n(1)\right)2^{-2n\epsilon}\sqrt{n},\label{eq:2-ogp-prob-bd}
\end{align}
where $\mathcal{C},\mathcal{C}'>0$ are some absolute constants. Note that \eqref{eq:2-ogp-prob-bd} is uniform in $\bar{\rho}$: it holds true for {\bf every} $\left(\sigma,\sigma'\right)\in\mathcal{Z}\left(\rho\right)$.

\paragraph{ Step III. Computing the expectation.} We now compute $\mathbb{E}[N]$ of $N$ introduced in \eqref{eq:2-ogp-rv-N}. Using linearity of expectation, \eqref{eq:card-bound-for-2-ogp-result}, and \eqref{eq:2-ogp-prob-bd}, we arrive at
\begin{equation}\label{eq:2-ogp-E-N-up-bd}
\mathbb{E}[N]\le \exp_2\left(n+nh\left(\frac{1-\rho}{2}\right)-2n\epsilon +O\left(\log_2 n\right)\right).
\end{equation}
Since $\epsilon>\frac12$, there exists a $\rho>0$ such that
\[
1+h\left(\frac{1-\rho}{2}\right)-2\epsilon<0.
\]
For this choice of $\rho$, we indeed have per \eqref{eq:2-ogp-E-N-up-bd} that
\[
\mathbb{E}[N]=\exp\left(-\Theta(n)\right),
\]
concluding the proof.
\end{proof}
\subsection{Proof of Theorem~\ref{thm:main-eps-energy}} \label{sec:pf-eps-m-OGP}

\begin{proof}
For any $1\le i\le m$ and $\tau\in\mathcal{I}$;
recall $Y_i(\tau)\triangleq \sqrt{1-\tau^2}X_0+\tau X_i\in\R^n$; and
\[
H(\sigma^{(i)},Y_i(\tau))\triangleq \frac{1}{\sqrt{n}}|\ip{\sigma^{(i)}}{Y_i(\tau)}|.
\]
Define,
\[
S(\beta,\eta,m)\triangleq \left\{(\sigma^{(1)},\dots,\sigma^{(m)}):\sigma^{(i)}\in\{-1,1\}^n, \Overlap(\sigma^{(i)},\sigma^{(j)})\in[\beta-\eta,\beta],1\le i<j\le m\right\},
\]
and
\begin{equation}\label{eq:expect}
N(\beta,\eta,m,\epsilon,\mathcal{I})=\sum_{(\sigma^{(1)},\dots,\sigma^{(m)})\in S(\beta,\eta,m)}\ind\left\{\exists \tau_1,\dots,\tau_m\in\mathcal{I}:H(\sigma^{(i)},Y_i(\tau_i))=O\left(2^{-n\epsilon}\right),1\le i\le m\right\}.
\end{equation}
Observe that $N(\beta,\eta,m,\epsilon,\mathcal{I})=|\mathcal{S}(\beta,\eta,m,\epsilon,\mathcal{I})|$. In what follows, we will establish that for  an appropriate choice of parameters $\beta,\eta$, and $m\in\mathbb{Z}_+$, 
\[
\mathbb{E}[N(\beta,\eta,m,\epsilon,\mathcal{I})]=\exp_2(-\Theta(n)),\]
which will then yield,
\[
\mathbb{P}(\mathcal{S}(\beta,\eta,m,\epsilon,\mathcal{I})\ne \varnothing) = \mathbb{P}(N(\beta,\eta,m,\epsilon,\mathcal{I})\ge 1)\le \exp_2(-\Theta(n))
\]
through Markov's inequality, and thus the conclusion.

\paragraph{ Step I: Counting.} We start by upper bounding $|S(\beta,\eta,m)|$. There are $2^n$ choices for $\sigma^{(1)}$. Now, for any fixed $\sigma$, we claim there exists $2\binom{n}{n\frac{1-\rho}{2}}$ sign configurations $\sigma'\in\{-1,1\}^n$ for which $\Overlap(\sigma,\sigma')=\rho$. Indeed, let $k\triangleq \sum_{1\le i\le n}\ind\{\sigma_i\neq \sigma_i'\}$, the number of coordinates $\sigma$ and $\sigma'$ disagree. With this we have $\Overlap(\sigma,\sigma')=\left|\frac{n-2k}{n}\right|$, from which we obtain $k=n\frac{1\pm \rho}{2}$. Equipped with this observation, we now compute the number of choices for $\sigma^{(2)}$ as $
2\sum_{\beta-\eta\le \rho \le \beta:\rho n\in\mathbb{Z}}\binom{n}{n\frac{1-\rho}{2}}$. We then obtain
\begin{align}
    |S(\beta,\eta,m)|&\le 2^n \left(2\sum_{\beta-\eta\le \rho \le \beta:\rho n\in\mathbb{Z}}\binom{n}{n\frac{1-\rho}{2}}\right)^{m-1} \\
    &=2^n\left(2\sum_{\beta-\eta\le \rho \le \beta:\rho n\in\mathbb{Z}}\exp_2\left(nh\left(\frac{1-\rho}{2}\right)+O(\log_2 n)\right)\right)^{m-1}\label{eq:stirling-upper} \\
    &\le 2^n\left(\exp_2\left(nh\left(\frac{1-\beta+\eta}{2}\right)+O(\log_2 n)\right)\right)^{m-1}\label{eq:involves-O(n)-term}.
\end{align}
We now justify these lines. Recall that $\log_2 n! = n \log_2 n -n\log_2 e +O(\log_2 n)$ by the Stirling's approximation, \eqref{thm:stirling}. Using this, we obtain $\rho\in(0,1)$, $\binom{n}{\rho n}=\exp_2 (nh(\rho)+O(\log_2 n))$ where we recall that $h(x)=-x\log_2 x -(1-x)\log_2(1-x)$ is the binary entropy function logarithm base $2$. Thus \eqref{eq:stirling-upper} follows. \eqref{eq:involves-O(n)-term} is a consequence of the fact that the sum involves $O(n)$ terms. We conclude 
\begin{equation}\label{eq:cardinality-bd}
    |S(\beta,\eta,m)|\le \exp_2\left(n+n(m-1)h\left(\frac{1-\beta+\eta}{2}\right)+(m-1)O(\log_2 n)\right).
\end{equation}

\paragraph{ Step II: Probability calculation.} Fix $\tau_1,\dots,\tau_m\in\mathcal{I}$.  For any fixed $(\sigma^{(1)},\dots,\sigma^{(m)})\in S(\beta,\eta,m)$, we now investigate
\[
\mathbb{P}\left(H(\sigma^{(i)},Y_i(\tau_i))=O(2^{-n\epsilon}),1\le i\le m\right).
\]
To that end, let $Z_i = \frac{1}{\sqrt{n}}\ip{\sigma^{(i)}}{Y_i(\tau_i)}$, and let $\rho_{ij}=\frac1n\ip{\sigma^{(i)}}{\sigma^{(j)}}$. Note that for each $1\le i\le m$, $Z_i$ is standard normal, and moreover, the vector $(Z_1,\dots,Z_m)$ is a multivariate Gaussian with mean zero and some covariance matrix $\Sigma$. 

We now investigate this covariance matrix. To that end, let $\gamma_i\triangleq \sqrt{1-\tau_i^2}$, $1\le i\le m$. We first compute $\mathbb{E}[Y_i(\tau_i) Y_j(\tau_j)^T]\in\R^{n\times n}$. We have
\[
Y_i(\tau_i) Y_j(\tau_j)^T =\gamma_i\gamma_j X_0X_0^T + \gamma_i\tau_j X_0X_j^T +\gamma_j\tau_i X_iX_0^T + \tau_i\tau_j X_iX_j^T.
\]
Since $X_0,X_i,X_j$ are i.i.d., we thus obtain
\begin{equation}\label{eq:cov-auxiL2}
\mathbb{E}[Y_i(\tau_i) Y_j(\tau_j)^T]=\gamma_i\gamma_j I_n\in\R^{n\times n}.
\end{equation}
Equipped with this, we now have for any $1\le i<j\le m$,
\begin{align*}
    {\rm Cov}(Z_i,Z_j) &=\mathbb{E}\left[\frac{1}{\sqrt{n}}\ip{\sigma^{(i)}}{Y_i(\tau_i)}\frac{1}{\sqrt{n}}\ip{\sigma^{(j)}}{Y_j(\tau_j)}\right]\\
    &=\frac1n (\sigma^{(i)})^T\mathbb{E}[Y_i(\tau_i)Y_j(\tau_j)^T]\sigma^{(j)}\\
    &=\rho_{ij}\gamma_i\gamma_j.
\end{align*}
Namely, the covariance matrix $\Sigma\in\R^{m\times m}$ of $(Z_1,\dots,Z_m)$ is given by $\Sigma_{ii}=1$ for $1\le i\le m$, and $\Sigma_{ij}=\Sigma_{ji}=\rho_{ij}\gamma_i\gamma_j$ for $1\le i<j\le m$. 
%satisfying $\Sigma_{ii}=1$ for $1\le i\le m$; and $\Sigma_{ij}=\rho_{ij}$ for $1\le i<j\le m$. 

Now, fix arbitrary constants $C_1,\dots,C_m>0$; and let $V\subset \R^m$ be the region defined by
\[
V = \left(-C_1 2^{-n\epsilon},C_1 2^{-n\epsilon}\right)\times \left(-C_2 2^{-n\epsilon},C_2 2^{-n\epsilon}\right)\times \cdots\times \left(-C_m 2^{-n\epsilon},C_m 2^{-n\epsilon}\right).
\]
Provided $\Sigma$ is invertible, which we verify independently, the probability of interest evaluates to
\[
\mathbb{P}((Z_1,\dots,Z_m)\in V) = \frac{1}{(2\pi)^{m/2} |\Sigma|^{1/2}}\int_V \exp\left(-\frac{x^T \Sigma^{-1}x}{2}\right)\; dx.
\]
As $
\exp\left(-\frac{x^T\Sigma^{-1}x}{2}\right)\le 1$,
we can crudely upper bound this by
\[
\frac{1}{(2\pi)^{m/2} |\Sigma|^{1/2}}{\rm Vol}(V)=\frac{2^{m/2}\prod_{1\le j\le m}C_j}{\pi^{m/2}} |\Sigma|^{-1/2} 2^{-n\epsilon m}.
\]
%We now observe that $\Sigma\in\R^{m\times m}$ is a matrix with unit diagonal entries with $|\Sigma_{ij}|\in[\beta-\eta,\beta]$. 
Observe now that $2^{m/2},\pi^{m/2}$, and $\prod_{1\le j\le m}C_j$ are all constant order $O(1)$ with respect to $n$. Suppose now that $\Sigma$ is such that the determinant of $\Sigma$ is bounded away from zero by an explicit constant controlled solely by $m,\beta,\eta$, regardless of $\mathcal{I}$ and regardless of $\tau_1,\dots,\tau_m\in\mathcal{I}$. If this is the case, then $|\Sigma|^{-1}$ is $O(1)$ with respect to $n$. 
%In particular, provided that $\Sigma$ is invertible; and ; $|\Sigma|$ is of constant order. Similarly, . 
This yields
\begin{equation}\label{eq:prob1}
\mathbb{P}\left((Z_1,\dots,Z_m)\in V\right)\le \exp_2\left(-n\epsilon m +O(1)\right).
\end{equation}
We now take now a union bound over all $\tau_1,\dots,\tau_m\in \mathcal{I}$ (note that there are at most $2^{mo(n)}=2^{o(n)}$ such terms), and arrive at
\begin{equation}\label{eq:prob}
\mathbb{P}\left(\exists \tau_1,\dots,\tau_m\in\mathcal{I}:H(\sigma^{(i)},Y_i(\tau_i))=O\left(2^{-n\epsilon}\right),1\le i\le m\right)=\exp_2(-n\epsilon m+o(n)).
\end{equation}
\paragraph{ Step III: Calculating the expectation  $\mathbb{E}[N(\beta,\eta,m,\epsilon)]$.} Provided
{\bf $\Sigma$ is invertible}, we can compute the expectation \eqref{eq:expect} by using \eqref{eq:cardinality-bd} and \eqref{eq:prob}:
\[
\mathbb{E}[N(\beta,\eta,m,\epsilon)] \le \exp_2\left(n+n(m-1)h\left(\frac{1-\beta+\eta}{2}\right)+o(n) -n\epsilon m\right).
\]
Hence, provided the parameters $\beta,\eta,m$ are chosen so that
\begin{equation}\label{eq:exponent-condition}
    1+(m-1)h\left(\frac{1-\beta+\eta}{2}\right)-\epsilon m<0,
\end{equation}
and $|\Sigma|$ is bounded away zero by an explicit constant independent of $\mathcal{I}$, and the choices $\tau_1,\dots,\tau_m$, we indeed obtain $\mathbb{E}[N(\beta,\eta,m,\epsilon)]=\exp_2(-\Theta(n))$, as desired. 

We choose $m>\frac2\epsilon$. With this, $1-\frac{\epsilon m}{2}<0$. Observe now that if $0<\eta<\beta<1$ are chosen so that $h\left(\frac{1-\beta+\eta}{2}\right) <\frac\epsilon 2$, the condition \eqref{eq:exponent-condition} is indeed satisfied. With this, it suffices for $0<\eta<\beta<1$ to satisfy
\begin{equation}\label{eq:cond1-on-beta-eta1}
    \beta-\eta>1-2h^{-1}(\epsilon/2),
\end{equation}
where $h^{-1}:[0,1]\to[0,1/2]$ is the inverse of the binary entropy function. 

\paragraph{ Step IV: Invertibility of $\Sigma$.} We next study the invertibility of covariance matrix $\Sigma$, which we recall $\Sigma_{ii}=1$ for $1\le i\le m$; and $\Sigma_{ij}=\Sigma_{ji}=\gamma_i\gamma_j\rho_{ij}$, $1\le i<j\le m$, for some $\tau_1,\dots,\tau_j\in\mathcal{I}$.

Let us now define an auxiliary matrix $\bar{\Sigma}\in\R^{m\times m}$ by $\bar{\Sigma}_{ii}=1$ for $1\le i\le m$ and $\bar{\Sigma}_{ij}=\bar{\Sigma}_{ji}=\rho_{ij}$ for any $1\le i<j\le m$. Namely, $\bar{\Sigma}$ is the covariance matrix $\Sigma$ when $\gamma_1=\cdots=\gamma_m=1$, namely when $\tau_1=\cdots=\tau_m=0$, and thus $\sigma^{(1)},\dots,\sigma^{(m)}$ are near-ground states with respect to the {\bf same} instance $X_0\in\R^n$ of the problem.

Note that $\rho_{ij}=\Overlap(\sigma^{(i)},\sigma^{(j)})=|\bar{\Sigma}_{ij}|$. Thus, \[
\Sigma_{ij}\in [-\beta,-\beta+\eta]\cup [\beta-\eta,\beta],
\]
for $1\le i<j\le m$. In particular, there exists $2^{\binom{m}{2}}$ possible ``signs" for the off-diagonal entries for the matrix $\bar{\Sigma}$. With this observation, we now prove an auxiliary lemma.
\begin{lemma}\label{claim:perturbation-lemma}
Let $m\in\mathbb{Z}_+$, $K\triangleq 2^{\binom{m}{2}}$. Construct a family $M_k(x)$, $1\le k\le K$, of $m\times m$ matrices with unit diagonal entries, where each off-diagonal entry is defined in terms of $x$, as follows. Fix any ``sign-configuration" $\gamma^{(k)}=(\gamma_{ij}^{(k)}:1\le i<j\le m)\in\{-1,1\}^{K}$, $1\le k\le K$. let $M_k(x)\in\R^{m\times m}$ be the matrix defined by $(M_k(x))_{ii}=1$ for $1\le i\le m$, and $(M_k(x))_{ij}=(M_k(x))_{ji}=\gamma_{ij}^{(k)}x$. Then the following holds:
\begin{itemize}
    \item[(a)] Define $\varphi_k(x)\triangleq \sigma_{\min}(M_k(x))$, $1\le k\le K$. Then, for any $k$, there exists an $\epsilon_k>0$ such that $\varphi_k(x)>0$ for all $x\in(1-\epsilon_k,1)$.
    \item[(b)] Fix any $x\in(1-\min_{k\in[K]} \epsilon_k,1)$. Then $(M_k(x)+E)$ is invertible for every $1\le k\le K$, provided
\[
\|E\|_2 < \min_{1\le k\le K}\varphi_k(x).
\]
\end{itemize}
\end{lemma}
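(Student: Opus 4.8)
The plan is to treat the two parts essentially separately, with part (b) being an immediate consequence of (a) together with the Wielandt--Hoffman inequality (Theorem~\ref{thm:wielandt-hoffman}). For part (a), the key observation is that at $x=1$ the matrix $M_k(1)$ has a very simple structure: every off-diagonal entry is $\pm 1$ and every diagonal entry is $1$. I would first argue that $M_k(1)$ is positive semidefinite for every sign-configuration $\gamma^{(k)}$. Indeed, a symmetric $m\times m$ matrix with unit diagonal and $\pm 1$ off-diagonal entries is positive semidefinite precisely when it has the form $D J D$ where $J$ is the all-ones matrix and $D=\mathrm{diag}(d_1,\dots,d_m)$ with $d_i\in\{-1,1\}$ --- equivalently, when the sign pattern is ``consistent'' in the sense that $\gamma_{ij}^{(k)}=d_i d_j$ for some signs $d_i$. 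Not every sign-configuration is consistent, so $M_k(1)$ need not be PSD in general; hence the cleanest route is \emph{not} to claim PSD at $x=1$ but instead to note only that $\sigma_{\min}(M_k(x))$ is a continuous function of $x$ (continuity of eigenvalues/singular values in the matrix entries, which in turn are polynomial in $x$), and that $M_k(1)$ itself being a fixed matrix, either $\sigma_{\min}(M_k(1))>0$, in which case continuity gives a whole neighbourhood $(1-\epsilon_k,1]$ on which $\varphi_k>0$ and we are done, or $\sigma_{\min}(M_k(1))=0$. In the degenerate case one instead observes that $M_k(x)=(1-x)I + x M_k(1)$; since $M_k(1)$ is symmetric it has an orthonormal eigenbasis with eigenvalues $\lambda_1\ge\cdots\ge\lambda_m$, so $M_k(x)$ has eigenvalues $(1-x)+x\lambda_j$, and $\sigma_{\min}(M_k(x))=\min_j |(1-x)+x\lambda_j|$. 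For this to vanish for some $x\in(1-\epsilon_k,1)$ we would need $x = 1/(1-\lambda_j)$ for some $j$ with $\lambda_j<1$; these are finitely many isolated values of $x$ strictly less than $1$ (when $\lambda_j\le 0$ they are at most $1/2<1$; when $0<\lambda_j<1$ they exceed $1$), so by choosing $\epsilon_k$ small enough we avoid all of them, and then $\varphi_k(x)>0$ on $(1-\epsilon_k,1)$. Actually, since $\lambda_j\le\lambda_1 = \operatorname{trace}$-bound considerations give $\lambda_1\le m$ but more simply $\lambda_j \ge -\,(m-1)$ and the only way to have $1/(1-\lambda_j)\in(1-\epsilon_k,1)$ would be $\lambda_j$ slightly negative --- in any event finitely many bad points below $1$, so a suitable $\epsilon_k$ exists.

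For part (b), fix $x\in(1-\min_{k\in[K]}\epsilon_k,1)$ so that by part (a) we have $\varphi_k(x)=\sigma_{\min}(M_k(x))>0$ for every $k$. Now let $E$ be any symmetric perturbation with $\|E\|_2<\min_{1\le k\le K}\varphi_k(x)$. Both $M_k(x)$ and $M_k(x)+E$ are symmetric, so Theorem~\ref{thm:wielandt-hoffman} applies and yields, for the ordered eigenvalues, $\sum_{1\le i\le m}\bigl(\lambda_i(M_k(x)+E)-\lambda_i(M_k(x))\bigr)^2\le \|E\|_F^2$. Hence each eigenvalue moves by at most $\|E\|_F$; I would actually prefer to use the cleaner Weyl bound $|\lambda_i(M_k(x)+E)-\lambda_i(M_k(x))|\le\|E\|_2$, which is what we want since the hypothesis is phrased in spectral norm --- but Wielandt--Hoffman is what is stated in the excerpt, so I will invoke it and note $\|E\|_2\le\|E\|_F$ to be safe, or simply record that in any case no eigenvalue of $M_k(x)+E$ can reach $0$ because the smallest-magnitude eigenvalue of $M_k(x)$ is $\varphi_k(x)>\|E\|_2$ in absolute value and it can shift by strictly less than $\varphi_k(x)$. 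Therefore $0$ is not an eigenvalue of $M_k(x)+E$, i.e. $M_k(x)+E$ is invertible, for every $1\le k\le K$. This completes the argument.

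The main obstacle I anticipate is part (a) in the degenerate case where $M_k(1)$ happens to be singular: one must be careful that $\varphi_k$ does not oscillate through zero as $x\uparrow 1$. The convexity/affineness of the pencil $x\mapsto (1-x)I + xM_k(1)$ is exactly what saves us here --- each eigenvalue of $M_k(x)$ is an affine function of $x$, so it has at most one zero, and that zero is pinned down explicitly as $1/(1-\lambda_j)$, which for the relevant range of $\lambda_j$ lies outside $(1-\epsilon_k,1)$ once $\epsilon_k$ is small. Everything else (continuity of singular values, the finite union over $k\in[K]$ since $K=2^{\binom m2}$ is finite, the Wielandt--Hoffman application) is routine.
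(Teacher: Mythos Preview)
Your argument is correct but takes a different route from the paper on both parts. For (a), the paper simply observes that $D_k(x)=\det M_k(x)$ is a polynomial in $x$ with $D_k(0)=1$, hence not identically zero, so it has only finitely many roots and one can choose $\epsilon_k$ to avoid them near $1$. Your approach via the affine pencil $M_k(x)=(1-x)I+xM_k(1)$ is more explicit and arguably more informative (you get the eigenvalues in closed form), though your parenthetical ``when $\lambda_j\le 0$ they are at most $1/2$'' is not right (take $\lambda_j=-0.1$, giving $1/(1.1)\approx 0.91$); fortunately your fallback to ``finitely many bad points'' is all that is needed. For (b), the paper does not invoke Wielandt--Hoffman at all: it argues directly that if $M_k(x)+E$ were singular with unit kernel vector $v$, then $\|E\|_2\ge\|Ev\|_2=\|M_k(x)v\|_2\ge\sigma_{\min}(M_k(x))$, a contradiction. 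This is cleaner and, unlike your eigenvalue-perturbation route, does not require $E$ to be symmetric. Note also that your remark ``$\|E\|_2\le\|E\|_F$ to be safe'' goes the wrong way: Wielandt--Hoffman bounds eigenvalue shifts by $\|E\|_F\ge\|E\|_2$, so you really do need Weyl (or the paper's kernel argument) here rather than Theorem~\ref{thm:wielandt-hoffman}.
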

\begin{proof}{(of Lemma \ref{claim:perturbation-lemma})}
\begin{itemize}
    \item[(a)] Let $D_k(x)={\rm det}(M_k(x))$. Note that $D_k(0)=1$, thus $D_k\neq 0$ identically. Now observe that $D_k$ is a polynomial in $x$, of degree $m$. Thus there indeed exists an $\epsilon_k>0$ such that $D_k(x)\neq 0$ for $x\in(1-\epsilon_k,1)$. This yields $\varphi_k(x)>0$ whenever $x\in(1-\epsilon_k,1)$ as well. %The continuity of $\varphi_k(\cdot)$ follows from the fact that the roots of any polynomial are continuous functions of the entries of the underlying polynomial.
    \item[(b)] Fix any $M\in\R^{m\times m}$ with ${\rm rank}(M)=m$. Let $E\in\R^{m\times m}$ satisfy  ${\rm rank}(M+E)<m$. We claim $\|E\|_2\ge \sigma_{\min}(M)$. To see this, note that if $M+E$ is rank-deficient, then there exists a $v$ with $\|v\|_2=1$ such that $(M+E)v=0$. This yields $Ev=-Mv$, thus
    \[
    \|E\|_2\ge \|Ev\|_2 =\|Mv\|_2\ge \sigma_{\min}(M).
    \]
\end{itemize}
\end{proof}
We now return to the proof, where in the remainder we will make use of the  quantities defined in Lemma \ref{claim:perturbation-lemma}. We express $\bar{\Sigma}=\widehat{\Sigma}+E$. Here, $\widehat{\Sigma}\in\R^{m\times m}$ with unit diagonal entries, and $\widehat{\Sigma}_{ij}=\beta$ if $\ip{\sigma^{(i)}}{\sigma^{(j)}}>0$, and $\widehat{\Sigma}_{ij}=-\beta$ otherwise. The matrix $E\in\R^{m\times m}$ is such that $E_{ii}=0$ for $1\le i\le m$; and $|E_{ij}|\le \eta$ for $1\le i<j\le m$. 
%Let $\Sigma = \widehat{\Sigma}+E$, where $\widehat{\Sigma}$ is the symmetric matrix with unit diagonal entries, and $\widehat{\Sigma}_{ij}=\pm\beta$; and let $E$ be a matrix such that $E_{ii}=0$, and $|E_{ij}|\le \eta$. 
Note  that,
\[
\|E\|_F^2 = \sum_{1\le i\le m}\sum_{1\le j\le m}E_{ij}^2 \le m^2 \eta^2.
\]
Since $\|E\|_2\le \|E\|_F$, we then conclude $\|E\|_2\le m\eta$. We now choose $\beta\in(1-\min_{1\le k\le K}\epsilon_k,1)$ where  the constants $\epsilon_k$ are defined in Lemma \ref{claim:perturbation-lemma}, and set 
\[
\eta(\beta) = \frac{\min_{1\le k\le K}\varphi_k(\beta)}{Nm},
\]
where $N$ is a (large) positive integer, to be tuned. Using Lemma \ref{claim:perturbation-lemma}, we have $\eta(\beta)>0$ for every $\beta\in(1-\min_k \epsilon_k,1)$, and any $N\in\mathbb{Z}_+$. Furthermore, Lemma \ref{claim:perturbation-lemma}{\rm (b)} also yields that under this choice of parameters, $\bar{\Sigma}$ is always invertible. $\bar{\Sigma}$ is, by construction, a covariance matrix thus has non-negative eigenvalues $\lambda_1(\bar{\Sigma})\ge \cdots \ge \lambda_m(\bar{\Sigma})>0$ with
\[
m={\rm trace}(\bar{\Sigma}) \ge m\lambda_m(\bar{\Sigma}).
\]
Thus we have \begin{equation}\label{eq:min-lambda-bar-Sigma-le1}
    \lambda_m(\bar{\Sigma})\le 1.
\end{equation}

Fix now $\tau_1,\dots,\tau_m\in\mathcal{I}\subset[0,1]$, and recall that $\gamma_i=\sqrt{1-\tau_i^2}$, $1\le i\le m$. We now express the covariance matrix $\Sigma$ in terms of $\bar{\Sigma}$, which depends only on $m,\beta$, and $\eta$. To that end, let $A={\rm diag}(\gamma_1,\dots,\gamma_m)\in\R^{m\times m}$ be a diagonal matrix. Observe that, 
\begin{equation}\label{eq:sum-two-psd}
\Sigma = A\bar{\Sigma}A +(I-A^2).
\end{equation}
Observe that as $\bar{\Sigma}$ is positive semidefinite, so do $A\bar{\Sigma}A$. Furthermore, as $1-\gamma_i^2\ge 0$ for $1\le i\le m$, the matrix $I-A^2$ is positive semidefinite as well. We now study the smallest eigenvalue $\lambda_m(\Sigma)$. 
\begin{lemma}\label{lemma:det-low-bd}
For any choices of $\tau_1,\dots,\tau_m\in\mathcal{I}$, it is the case that $\lambda_m(\Sigma)\ge \lambda_m (\bar{\Sigma})$. Hence,
\[
|\Sigma|\ge (\lambda_m (\bar{\Sigma}))^m>0,
\]
which is independent of the indices $\tau_1,\dots,\tau_m$. 
\end{lemma}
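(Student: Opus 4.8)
The plan is to exploit the decomposition \eqref{eq:sum-two-psd}, namely $\Sigma = A\bar\Sigma A + (I - A^2)$, which writes $\Sigma$ as a sum of two positive semidefinite matrices. First I would recall the variational (Courant--Fischer / min-max) characterization of the smallest eigenvalue: for any symmetric $M \in \R^{m\times m}$,
\[
\lambda_m(M) = \min_{\|v\|_2 = 1} v^T M v.
\]
Applying this to $\Sigma$ and using \eqref{eq:sum-two-psd}, for any unit vector $v$ we get $v^T \Sigma v = v^T A\bar\Sigma A v + v^T(I - A^2) v \ge v^T A \bar\Sigma A v$, since $I - A^2$ is PSD (each $1-\gamma_i^2 = \tau_i^2 \ge 0$). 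So it suffices to lower bound $v^T A\bar\Sigma A v$ uniformly over unit $v$.

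The key step is to control $v^T A \bar\Sigma A v$ from below. Write $w = Av$; then $v^T A\bar\Sigma A v = w^T \bar\Sigma w \ge \lambda_m(\bar\Sigma)\,\|w\|_2^2 = \lambda_m(\bar\Sigma)\,\|Av\|_2^2$. Now the subtlety: $\|Av\|_2$ could in principle be small if some $\gamma_i$ is small (i.e.\ $\tau_i$ close to $1$), so this crude bound alone does not give $\lambda_m(\Sigma) \ge \lambda_m(\bar\Sigma)$ directly. To fix this I would split coordinates by whether $\gamma_i^2 \ge \tfrac12$ or not, or — cleaner — argue as follows. For a unit vector $v$, decompose $v^T\Sigma v = \sum_i (1-\gamma_i^2) v_i^2 + \sum_{i,j}\gamma_i\gamma_j \bar\Sigma_{ij} v_i v_j$. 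Since $\bar\Sigma$ is PSD with diagonal $1$, write $\bar\Sigma = I + R$ where $R$ has zero diagonal and $\|R\|_2 \le 1 - \lambda_m(\bar\Sigma)$ (because the eigenvalues of $R$ are $\lambda_k(\bar\Sigma) - 1 \in [\lambda_m(\bar\Sigma)-1, 0]$, using \eqref{eq:min-lambda-bar-Sigma-le1} which gives $\lambda_m(\bar\Sigma)\le 1$ and hence $\|R\|_2 \le 1 - \lambda_m(\bar\Sigma) \le 1$). Then
\[
v^T\Sigma v = \sum_i (1-\gamma_i^2)v_i^2 + \sum_i \gamma_i^2 v_i^2 + \sum_{i\ne j}\gamma_i\gamma_j R_{ij} v_i v_j = 1 + (Av)^T R (Av).
\]
Now $(Av)^T R (Av) \ge -\|R\|_2 \|Av\|_2^2 \ge -\|R\|_2 \ge -(1 - \lambda_m(\bar\Sigma))$, using $\|Av\|_2 \le \|v\|_2 = 1$ since every $|\gamma_i| \le 1$. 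Therefore $v^T\Sigma v \ge 1 - (1 - \lambda_m(\bar\Sigma)) = \lambda_m(\bar\Sigma)$, and taking the minimum over unit $v$ gives $\lambda_m(\Sigma) \ge \lambda_m(\bar\Sigma)$ as claimed.

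Finally, the determinant bound follows immediately: $\Sigma$ is an $m\times m$ covariance matrix, so all its eigenvalues are at least $\lambda_m(\Sigma) \ge \lambda_m(\bar\Sigma) > 0$ (positivity of $\lambda_m(\bar\Sigma)$ having been established via Lemma~\ref{claim:perturbation-lemma}(b) and the choice of $\beta,\eta$), hence
\[
|\Sigma| = \prod_{k=1}^m \lambda_k(\Sigma) \ge \lambda_m(\Sigma)^m \ge \lambda_m(\bar\Sigma)^m > 0.
\]
Crucially $\lambda_m(\bar\Sigma)$ depends only on $m,\beta,\eta$ (through the finitely many sign patterns handled in Lemma~\ref{claim:perturbation-lemma}) and not on the indices $\tau_1,\dots,\tau_m \in \mathcal{I}$, which is exactly what is needed to make the $O(1)$ claim on $|\Sigma|^{-1}$ in Step~III uniform over $\mathcal I$. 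The main obstacle is the coordinate-wise shrinking caused by $A$; the rewriting $v^T\Sigma v = 1 + (Av)^TR(Av)$ is the trick that neutralizes it, turning a potentially lossy bound into an exact one.
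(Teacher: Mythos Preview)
Your approach is essentially the paper's: both use Courant--Fischer on the decomposition $\Sigma = A\bar\Sigma A + (I-A^2)$, and your identity $v^T\Sigma v = 1 + (Av)^T R (Av)$ with $R=\bar\Sigma - I$ is just a repackaging of the paper's coordinatewise computation $v^T\Sigma v = \sum_i \bigl(1 - \gamma_i^2 + \lambda_m(\bar\Sigma)\gamma_i^2\bigr) v_i^2$.

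There is, however, a genuine error in your bound on $\|R\|_2$. You assert that the eigenvalues of $R$ lie in $[\lambda_m(\bar\Sigma)-1, 0]$, i.e.\ that $\lambda_1(\bar\Sigma)\le 1$; but since ${\rm trace}(\bar\Sigma)=m$ and $\bar\Sigma$ is positive definite, one has $\lambda_1(\bar\Sigma)\ge 1$, with strict inequality unless $\bar\Sigma = I$. Concretely, for the sign pattern with all off-diagonals near $\beta$, $\lambda_1(\bar\Sigma)\approx 1+(m-1)\beta$, so $\|R\|_2 \approx (m-1)\beta$ rather than $1-\lambda_m(\bar\Sigma)$, and the step $-\|R\|_2 \ge -(1-\lambda_m(\bar\Sigma))$ fails. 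The fix is immediate and makes your argument coincide with the paper's: you only need the one-sided bound
\[
(Av)^T R (Av) \;\ge\; \lambda_{\min}(R)\,\|Av\|_2^2 \;=\; \bigl(\lambda_m(\bar\Sigma)-1\bigr)\|Av\|_2^2 \;\ge\; \lambda_m(\bar\Sigma)-1,
\]
the last inequality using $\lambda_m(\bar\Sigma)\le 1$ from \eqref{eq:min-lambda-bar-Sigma-le1} together with $\|Av\|_2^2 \le 1$.
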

\begin{proof}
Recall the Courant-Fischer-Weyl variational characterization of the smallest singular value $\lambda_m(\Sigma)$ of a Hermitian matrix $\Sigma\in\R^{m\times m}$ \cite{horn2012matrix}:
\begin{equation}\label{eq:bar-sigma-lower-bd2}
\lambda_m(\Sigma)= \inf_{v:\|v\|_2=1}v^T\Sigma v.
\end{equation}
\iffalse
Notice furthermore that for the matrix $\bar{\Sigma}\in\R^{m\times m}$, and any $v'\in\R^m$, we also have
\begin{equation}\label{eq:bar-sigma-lower-bd2}
(v')^T \bar{\Sigma}v' \ge \lambda_m(\bar{\Sigma})\|v'\|_2^2.
\end{equation}
\fi 
Then for any $v$ with $\|v\|_2=1$,
\begin{align*}
v^T\Sigma v &= v^T(I-A^2)v +v^TA\bar{\Sigma}Av \\
&\ge \sum_{1\le i\le m}(1-\gamma_i^2)v_i^2 + \lambda_m(\bar{\Sigma})\|Av\|_2^2\\
&=\sum_{1\le i\le m}(1-\gamma_i^2+\lambda_m(\bar{\Sigma})\gamma_i^2)v_i^2\\
&\ge \lambda_m(\bar{\Sigma})\|v\|_2^2 \\&= \lambda_m(\bar{\Sigma}),
\end{align*}
where the first equality uses \eqref{eq:sum-two-psd}, the first inequality uses \eqref{eq:bar-sigma-lower-bd2}, and the last inequality uses $\lambda_m(\bar{\Sigma})\le 1$ as established in \eqref{eq:min-lambda-bar-Sigma-le1}. Taking the infimum over all unit norm $v$, we conclude
\[
\lambda_m(\Sigma)\ge \lambda_m(\bar{\Sigma}).
\]
 Finally
\[
|\Sigma|=\prod_{1\le j\le m}\lambda_j(\Sigma)\ge \lambda_m(\Sigma)^m \ge \lambda_m(\bar{\Sigma})^m,
\]
as desired.
\end{proof}
By the Lemma~\ref{lemma:det-low-bd}, we have that $|\Sigma|$ is bounded away from zero by an explicit constant controlled solely by $m,\beta,\eta$, which in particular is independent of $\mathcal{I}$. Thus the union bound leading to \eqref{eq:prob} is indeed valid.

We finally show how \eqref{eq:cond1-on-beta-eta1} is fulfilled, which is to ensure
\[
\psi_N(\beta)\triangleq \beta - \eta(\beta)=\beta - \frac{\min_{1\le k\le K}\varphi_k(\beta)}{Nm}>1-h^{-1}\left(\frac{\epsilon}{2}\right).
\]
Notice  $1-h^{-1}(\epsilon/2)$ is strictly smaller than $1$. Observing now that $\psi_N(\beta)\to 1$ as $N\to +\infty$, and $\beta\to 1$, one can indeed find such $\beta$ and $\eta$. It suffices that $\beta$ satisfies
\[
1>\beta >\max\left\{1-\min_{1\le k\le K}\epsilon_k,1-\frac12 h^{-1}\left(\frac\epsilon2\right)\right\}.
\]
Having selected this value of $\beta>0$, prescribe now $\eta\triangleq \eta(\beta)$, by choosing $N\in\mathbb{Z}_+$ sufficiently large so that
\[
\eta = \frac{\min_{1\le k\le K}\varphi_k(\beta)}{Nm} <\frac12 h^{-1}\left(\frac{\epsilon}{2}\right).
\]
This concludes the proof of Theorem~\ref{thm:main-eps-energy}.
\end{proof}

\subsection{Proof of Theorem~\ref{thm:ogp-absent}}\label{sec:pf-absent-OGP}
The proof of Theorem~\ref{thm:ogp-absent} is based on the so-called \emph{second moment method}, but in addition uses several other ideas. We provide a short outline below for convenience.
\subsubsection*{Outline of the Proof of Theorem~\ref{thm:ogp-absent}}
\begin{itemize}
    \item Fix an $m\in\mathbb{N}$, $\rho\in(0,1)$, and a function $f:\mathbb{N}\to\R^+$ with $f(n)\in o(n)$. We first show that with high probability over $X\distr\mathcal{N}(0,I_n)$, there exists an $m-$tuple $\left(\sigma^{(i)}:1\le i\le m\right)$ of spin configurations $\sigma^{(i)}\in\bincube$ such that i) for $1\le i\le m$, $n^{-1/2}|\ip{\sigma^{(i)}}{X}| \le 2^{-f(n)}$; and ii) for $1\le i<j\le m$, $\rho-\bar{\rho} \le n^{-1}\ip{\sigma^{(i)}}{\sigma^{(j)}} \le \rho+\bar{\rho}$, provided $\bar{\rho}$ is sufficiently small.
    \item To start with, it is not even clear if for every $\bar{\rho}$ sufficiently small; there exists---deterministically---$\sigma^{(i)}\in\bincube$, $1\le i\le m$, such that  $\rho-\bar{\rho} \le n^{-1}\ip{\sigma^{(i)}}{\sigma^{(j)}} \le \rho+\bar{\rho}$ for $1\le i<j\le m$. We establish this using the so-called \emph{probabilistic method}~\cite{alon2016probabilistic}: we assign the coordinates $\sigma^{(i)}(j)\in\{-1,1\}$, $1\le i\le m$ and $1\le j\le n$ randomly according to the Rademacher distribution with an appropriate parameter\footnote{Here, we interpret the Rademacher distribution with parameter $p$ as the distribution supported on $\{-1,1\}$, which takes the value $+1$ with probability $p$.}; and show that with positive probability, such a configuration exists.
    \item We then let the random variable $M$ to count the number of $m-$tuples $\left(\sigma^{(i)}:1\le i\le m\right)$ of spin configurations which satisfy the desired properties. Our goal is to establish $\mathbb{P}(M\ge 1)=1-o_n(1)$. For this goal, we use the so-called \emph{second moment method} which uses the \emph{Paley-Zygmund inequality}: for a non-negative random variable $M$  taking integer values,
    \[
    \mathbb{P}(M\ge 1) \ge \frac{(\E{M}^2)}{\E{M^2}}.
    \]
    Namely, if the second moment $\E{M^2}$ is asymptotically $\E{M}^2\left(1+o_n(1)\right)$, in other words when ${\rm Var}(M)=o\left(\E{M}^2\right)$, we have that $\mathbb{P}(M\ge 1)=1-o_n(1)$, as desired.
    \item As is rather common with the applications of the second moment method, the computation of the second moment is challenging: it involves an expectation of a sum running over pairs of $m-$tuples of spin configurations. To compute this sum, we employ an overcounting idea.   \item To that end, fix an $\epsilon>0$ small; and let $I_\epsilon$ be the set of all integers in the set $[0,n(1-\epsilon)/2]\cup [n(1+\epsilon)/2,1]$. We now overestimate $\E{M^2}$ by dividing the sum into two components. Specifically, for two $m-$tuples of spin configurations $\mathcal{T} = \left(\sigma^{(i)}:1\le i\le m\right)$ and $\overline{\mathcal{T}} = \left(\overline{\sigma}^{(i)}:1\le i\le m\right)$, we distinguish two cases. The first case pertains the pairs $\left(\mathcal{T},\overline{\mathcal{T}}\right)$ for which there exists an  $i,j$ such that $d_H\left(\sigma^{(i)},\overline{\sigma}^{(j)}\right)\in I_\epsilon$. The second case pertains the pairs $\left(\mathcal{T},\overline{\mathcal{T}}\right)$ for which it is the case that for every $i,j$; $n(1-\epsilon)/2 <  d_H\left(\sigma^{(i)},\overline{\sigma}^{(j)}\right)<n(1+\epsilon)/2$. Namely, the second case essentially corresponds to the pairs of $m-$tuples that are nearly ``uncorrelated". The term $\epsilon$ introduced above essentially controls the ``residual correlation".
    \item We then find that due to cardinality constraints (via a certain asymptotics pertaining the binomial coefficients), the number of pairs of first kind is small, and the probability term can be neglected. See the proof for details.
    \item We then observe that the number of pairs of $m-$tuples of second kind dominates the second moment  term. For those pairs, however, the computation of their joint probability is tractable due to
    the fact that they are nearly uncorrelated.
    \item We then take a union bound over a certain choice of grid, for the goal of obtaining the event which involves a condition over {\bf all} $\beta\in[0,1]$. 
    \item Finally, sending $n\to\infty$ and $\epsilon\to 0$ carefully; we obtain our desired conclusion.
\end{itemize}
We now provide the complete proof.
\subsubsection*{Proof of Theorem~\ref{thm:ogp-absent}}
\begin{proof}

In what follows, denote by $S(m,\rho,\bar{\rho},E)$ to be the set of all $m-$tuples $\left(\sigma^{(i)}:1\le i\le m\right)$ of spin configurations $\sigma^{(i)} \in \bincube$ such that
\begin{itemize}
\item For every $1\le i<j\le m$, $\rho-\bar{\rho}\le \Overlap\left(\sigma^{(i)},\sigma^{(j)}\right)\le \rho+\bar{\rho}$. 
\item For every $1\le i\le m$, $n^{-1/2}\left|\ip{\sigma^{(i)}}{X}\right|\le E$. 
\end{itemize} 
Namely, $S(m,\rho,\bar{\rho},E)$ is a shorthand for the set $\mathcal{S}(m,\rho,\bar{\rho},\log_2 E,\{0\})$ introduced in Definition~\ref{def:overlap-set} with the modification as in Theorem~\ref{thm:ogp-absent}.

%The proof uses the second moment method, together with a novel overcounting idea. 
Let $m\in\mathbb{N}$, $\gamma\in(0,\frac12)$, and $\delta\in(0,\gamma)$. Define the set
\begin{equation}\label{eq:S-m-gamma-delta-set}
S(m,\gamma,\delta)\triangleq \left\{\left(\sigma^{(i)}:1\le i\le m\right):\sigma^{(i)}\in\bincube,\frac1n d_H\left(\sigma^{(i)},\sigma^{(i')}\right)\in[\gamma-\delta,\gamma+\delta],1\le i<i'\le m\right\}.
\end{equation}
Call the triple $(m,\gamma,\delta)$ {\bf admisible} if there exists an $N\triangleq N(m,\gamma,\delta)\in\mathbb{N}$ such that for every  $n\ge N$, $S(m,\gamma,\delta)\ne\varnothing$. We first prove that for any fixed $m\in\mathbb{N}$, $\gamma$, and $\delta$ sufficiently small;  $S(m,\gamma,\delta)\ne \varnothing$ for all sufficiently large $n$.

Note that this step is necessary: in order to ensure the the existence of $m-$tuples with desired ``energy levels" as required by the Theorem, one needs to ensure first that such $m-$tuples of spin configurations with pairwise constrained overlaps do exist; and this is quite non-trivial for $m>2$. We will later translate the condition on Hamming distances into a condition on their pairwise (normalized) overlaps.

\paragraph{ $S(m,\gamma,\delta)\ne\varnothing$ for $n$ sufficiently large.} 
We choose the spin configurations $\sigma^{(i)}\in\bincube$ randomly. %; and establish that the probability of obtaining a configuration satisfying Definition~\ref{def:m-gamma-delta-admissible} is strictly positive.
Specifically, let $\sigma^{(i)}=\left(\sigma^{(i)}(j):1\le j\le n\right)\in\bincube$ be i.i.d. across $1\le i\le m$ and $1\le j\le n$ with
\[
\mathbb{P}\left(\sigma^{(i)}(j)=1\right) = \eta^*,
\]
where $\eta^*\in(0,1)$ is chosen so that
\[
\eta^*\left(1-\eta^*\right)=\frac12 \gamma. 
\]
%The existence of a $\eta^*$ satisfying the condition is justified as follows. The map $t\mapsto t(1-t)$, $0\le t \le 1$ is surjective over $[0,\frac14]$: for any $s\in [0,\frac14]$, there exists an $r_s\in[0,1]$ such that $r_s\left(1-r_s\right)=s$. As $\gamma\in(0,\frac12)$, it follows $\frac12\gamma \in(0,\frac14)$. Hence such an $\eta^*$ indeed exists. The rationale for this choice will become clear soon.

Define now a sequence $\mathcal{E}_{i,i'}$ of events $1\le i<i'\le m$,
\[
\mathcal{E}_{i,i'}\triangleq \left\{\gamma-\delta \le \frac1n d_H\left(\sigma^{(i)},\sigma^{(i')}\right)\le \gamma+\delta\right\}.
\]
%To show the existence of a desired configuration via probabilistic method, 
It suffices to establish
\[
\mathbb{P}\left(\bigcap_{1\le i<i'\le m}\mathcal{E}_{i,i'}\right)>0 \Leftrightarrow \mathbb{P}\left(\bigcup_{1\le i<i'\le m}\mathcal{E}_{i,i'}^c\right)<1.
\]
We next study $\mathbb{P}\left(\mathcal{E}_{1,2}^c\right)$. Define $Z_j \triangleq \ind\left\{\sigma^{(1)}(j)\ne \sigma^{(2)}(j)\right\}$, $1\le j\le n$. Note that $Z_j$ are i.i.d. Bernoulli variables with mean $\mathbb{E}[Z_j] = 2\eta^*\left(1-\eta^*\right)=\gamma$. %This is precisely the rationale for the choice of $\eta^*$ as a function of $\gamma$.
Using now standard concentration results for the sum of i.i.d. Bernoulli variables~\cite{vershynin2010introduction}, we have that for any $\delta>0$ 
\[
\mathbb{P}\left(\mathcal{E}_{1,2}^c\right) = \mathbb{P}\left(\left|\frac1n \sum_{1\le j\le n}Z_j - \gamma\right|>\delta\right) \le \exp\left(-Cn\delta^2\right),
\]
for an absolute constant $C>0$. Now, the events $\mathcal{E}_{i,i'}$ are clearly equiprobable across $1\le i<i'\le m$. Applying a union bound,
\[
\mathbb{P}\left(\bigcup_{1\le i<i'\le m}\mathcal{E}_{i,i'}^c\right)\le \binom{m}{2} \exp\left(-Cn\delta^2\right).
\]
Since $m$ is constant, the claim follows. 
%Clearly, for {\bf any} fixed $\delta>0$ and $m\in\mathbb{N}$ it is the case that there exists an $N\triangleq N(m,\gamma,\delta)\in\mathbb{N}$ such that
%\[
%\binom{m}{2} \exp\left(-Cn\delta^2\right)<1
%\]
%for every $n\ge N$. This concludes the proof of Lemma~\ref{lemma:m-admissible}. 

Fix an arbitrary $\rho\in\left(0,1\right)$; a ``proxy" for $\beta$ appearing in the statement of Theorem~\ref{thm:ogp-absent}. Suppose $\bar{\rho}$ is a sufficiently small parameter; a ``proxy" for $\eta$. Set
\begin{equation}\label{eq:gamma-and-delta}
    \gamma \triangleq \frac{1-\rho}{2}\in(0,\frac12)\quad\text{and}\quad \delta \triangleq \frac{\bar{\rho}}{2}.
\end{equation}
Observe that 
\[
\rho -\bar{\rho} \le \frac1n\ip{\sigma}{\sigma'}\le \rho+\bar{\rho} \iff \gamma - \delta \le \frac1n d_H\left(\sigma,\sigma'\right)\le \gamma+\delta.
\]
%Hence, the $m-$tuple $\left(\sigma^{(i)}:1\le i\le m\right)$---with pairwise overlaps prescribed---can be characterized in terms of the pairwise Hamming distances as well.

In what follows, we define certain sets and random variables; which depend on $n$ but is dropped in the notation. %Even though we suppress the parameter $n$, the reader should keep in his mind that these objects all depend on $n$. 
\iffalse
Define the set
\begin{equation}\label{eq:S-m-gamma-delta-set}
S(m,\gamma,\delta)\triangleq \left\{\left(\sigma^{(i)}:1\le i\le m\right):\sigma^{(i)}\in\bincube,\frac1n d_H\left(\sigma^{(i)},\sigma^{(i')}\right)\in[\gamma-\delta,\gamma+\delta],1\le i<i'\le m\right\}.
\end{equation}
\fi 
Recall the set $S(m,\gamma,\delta)$ from \eqref{eq:S-m-gamma-delta-set}. As we established, for every $m\in\mathbb{N}$, and $\gamma$, $S(m,\gamma,\delta)\ne\varnothing$ for all $\delta$ sufficiently small and all $n$ large. Define $L_\sigma$ to be the cardinality of set
\[
S_\sigma = \left\{\left(\sigma^{(i)}:1\le i\le m\right)\in S(m,\gamma,\delta):\sigma^{(1)}=\sigma\right\}\subset S(m,\gamma,\delta).
\]
Note that $L_\sigma$ is independent of $\sigma$. So we instead use the notation $L$ for 
\begin{equation}\label{eq:number-L}
    L\triangleq \left|\left\{\left(\sigma^{(i)}:1\le i\le m\right)\in S(m,\gamma,\delta):\sigma^{(1)}=\sigma\right\}\right|.
\end{equation}
\iffalse
Namely, $L_\sigma$ is the number of all $m-$tuples of configurations with i) $\sigma^{(1)}$ is equal to a fixed $\sigma$, and ii) pairwise Hamming distances are constrained.
Note that any tuple $\left(\sigma^{(i)}:1\le i\le m\right)$ can  be specified based on the ``agreement patterns", namely based on the indices where subsets of spin configurations agree. For this reason, there is an $L$ such that $L_\sigma = L$ for all $\sigma\in \bincube$; and moreover $L>0$ as shown above. Thus
\begin{equation}\label{eq:number-L}
    L\triangleq \left|\left\{\left(\sigma^{(i)}:1\le i\le m\right)\in S(m,\gamma,\delta):\sigma^{(1)}=\sigma\right\}\right|.
\end{equation}
\fi 
We then have
\begin{equation}\label{eq:cardinality-of-s-k-gamma-delta}
\left|S(m,\gamma,\delta)\right| = 2^n L.
\end{equation}
%The rationale for this is due to the fact the first configuration can be chosen in $2^n$ different ways.
Fix $f:\mathbb{N}\to\R^+$, the ``energy exponent" with sub-linear growth, $f(n)\in o(n)$; and let $E=2^{-f(n)}$. Consider 
\begin{equation}\label{eq:M-m-gamma-delta-E}
M\triangleq M(m,\gamma,\delta,E) = \sum_{\left(\sigma^{(i)}:1\le i\le m\right)\in\displaystyle S\left(m,\gamma,\delta\right)} \ind \left\{\left|Y_i\right|<2^{-f(n)}, 1\le i\le m\right\},
\end{equation}
where 
\begin{equation}\label{eq:Y-i-for-second-mom}
Y_i \triangleq \frac{1}{\sqrt{n}}\ip{\sigma^{(i)}}{X},\quad 1\le i\le m,
\end{equation}
and, $X\distr \mathcal{N}(0,I_n)$. Then $Y_i\distr \mathcal{N}(0,1)$, $1\le i\le m$, though not independent.

Namely, for $\gamma=\frac{1-\rho}{2}$, $\delta=\frac{\bar{\rho}}{2}$, and $E=2^{-f(n)}$, $M(m,\gamma,\delta,E)$ is a lower bound on the cardinality of the set $S(m,\rho,\bar{\rho},E)$ that we study in Theorem~\ref{thm:ogp-absent}.  In what follows, we study $\mathbb{P}(M\ge 1)$ and give a lower bound on it for an appropriate choice of parameters.
\paragraph{ Second moment method.} We recall the Paley-Zygmund inequality: if $M\ge 0$ is a non-negative integer valued random variable, then
\begin{equation}\label{thm:paley-zygmund}
\mathbb{P}(M\ge 1)=\mathbb{P}(M>0)\ge \frac{(\E{M})^2}{\E{M^2}}.
\end{equation}
For a short proof, see \cite[Exercise~2.4]{boucheron2013concentration}. %The proof of this inequality is simple: if $I$ is the indicator of the event $\{M\ge 1\}$, then by Cauchy-Schwarz inequality, $\mathbb{P}(M\ge 1) \E{M^2}=\E{I^2}\E{M^2}\ge \E{MI}^2$. Since $MI=M\ind\{M\ge 1\} = M$---as $M\ge 0$ is integer-valued---we immediately have the inequality.
In particular, to show $\pr(M \ge 1)=1-o_n(1)$, it suffices to establish \[
\E{M^2}=\E{M}^2(1+o_n(1)).
\]
\paragraph{ First moment computation.} Fix any $\left(\sigma^{(i)}:1\le i\le m\right)\in S(m,\gamma,\delta)$ \eqref{eq:S-m-gamma-delta-set}; and recall $Y_i$, $1\le i\le m$, from \eqref{eq:Y-i-for-second-mom}. To compute the joint probability, we first recover the structure of the covariance matrix $\Sigma\in\R^{m\times m}$. $\Sigma_{ii}=1$ for $1\le i\le m$; and
\[
\mathbb{E}[Y_iY_j]=%\frac1n\mathbb{E}\left[\ip{\sigma^{(i)}}{X}\ip{\sigma^{(j)}}{X}\right] = \frac1n \left(\sigma^{(i)}\right)^T \mathbb{E}[XX^T]\sigma^{(j)} =
\frac1n\ip{\sigma^{(i)}}{\sigma^{(j)}}.
\]
Since $\left(\sigma^{(i)}:1\le i\le m\right)\in S(m,\gamma,\delta)$, it follows that for any $1\le i<j\le m$, $\Sigma_{ij}=\Sigma_{ji}\in[\rho-\bar{\rho},\rho+\bar{\rho}]$. In particular, 
\[
\Sigma = (1-\rho)I_m + \rho 11^T+E,
\]
where $E\in\R^{m\times m}$ is a perturbation matrix with $E_{ii}=0$ and $|E_{ij}|=|E_{ji}|\le \bar{\rho}$ for $1\le i<j\le m$. The spectrum of $(1-\rho)I_m+\rho 11^T$ consists of the eigenvalue $1+\rho(m-1)$ with multiplicity one; and the eigenvalue $1-\rho$ with multiplicity $m-1$. Clearly $\|E\|_2 \le \|E\|_F\le m\bar{\rho}$. Using now Wielandt-Hoffman Theorem (Theorem~\ref{thm:wielandt-hoffman}), we have that for $\bar{\rho}\ll \frac{1-\rho}{m}$, the matrix $\Sigma$ is invertible. In what follows, assume $\bar{\rho}$ is in this regime.

With this, we compute that for energy level $E=2^{-f(n)}$ (with exponent $f(n)\in \omega_n(1)\cap o(n)$), 
\[
\mathbb{P}\left(\left|Y_i\right|<2^{-f(n)},1\le i\le m\right) =\frac{1}{(2\pi)^{\frac{m}{2}}\left|\Sigma\right|^{\frac12}}\int_{{\bf y}\triangleq (y_1,\dots,y_m)\in[-E,E]^m}\exp\left(-\frac12 {\bf y}^T \Sigma^{-1}{\bf y}\right)\; d{\bf y}.
\]
Now, observe that for ${\bf y}\in [-E,E]^m$, $\exp\left(-\frac12 {\bf y}^T\Sigma^{-1}{\bf y}\right)=1+o_n(1)$, provided $\Sigma^{-1}$ is invertible (which we ensured). Under this condition, 
\[
\mathbb{P}\left(\left|Y_i\right|<2^{-f(n)},1\le i\le m\right) = \frac{2^m}{(2\pi)^{\frac{m}{2}} |\Sigma|^{\frac12}}E^{m}\left(1+o_n(1)\right).
\]
Equipped with this, we now give two expressions for the first moment. First, using \eqref{eq:M-m-gamma-delta-E} and the linearity of expectations, we obtain
\begin{equation}\label{eq:first-mom-M-exp-1}
\mathbb{E}[M]=\sum_{\left(\sigma^{(i)}:1\le i\le m\right)\in\displaystyle S\left(m,\gamma,\delta\right)} \frac{2^m}{(2\pi)^{\frac{m}{2}} |\Sigma|^{\frac12}}E^{m}\left(1+o_n(1)\right).
\end{equation}
Here, the tuple $\left(\sigma^{(i)}:1\le i\le m\right)$ induces an ``overlap pattern", which, in turn, induces the covariance matrix $\Sigma$. 

For the second, note that using Wieland-Hoffman inequality,  it is the case that for $\bar{\rho}\ll \frac{1-\rho}{m}$, there exists constants $C_1<C_2$---depending only on $m,\rho,\bar{\rho}$ and are independent of $n$---such that
\[
C_1<|\Sigma|<C_2.
\]
Namely, $|\Sigma|=O_n(1)$ across all $m-$tuples $\left(\sigma^{(i)}:1\le i\le m\right)\in S(m,\gamma,\delta)$. With this, we have 
\begin{equation}\label{eq:first-mom-M-exp-2}
\mathbb{E}[M]\ge 2^n L \frac{2^m}{(2\pi)^{\frac{m}{2}}C_2}E^m\left(1+o_n(1)\right).
\end{equation}
Above, we utilized the cardinality bound~\eqref{eq:cardinality-of-s-k-gamma-delta}.

\paragraph{ Second moment computation.} The computation for the second moment is more delicate, and involves a sum over pairs of $m-$tuples of spin configurations. For notational purposes, let $\mathcal{T}\triangleq \left(\sigma^{(i)}:1\le i\le m\right)$ and $\overline{\mathcal{T}}\triangleq \left(\overline{\sigma}^{(i)}:1\le i\le m\right)$. We have
\begin{equation}\label{eq:second-moment-of-M}
    \E{M^2}=\sum_{\mathcal{T},\overline{\mathcal{T}}\in S(m,\gamma,\delta)}\mathbb{P}\left(\left|Y_i\right|\le E,\left|\overline{Y_i}\right|\le E,1\le i\le m\right).
\end{equation}
Here the following variables are standard normal
\begin{equation}\label{eq:Y-and-bar-Y}
Y_i\triangleq \frac{1}{\sqrt{n}}\ip{\sigma^{(i)}}{X} \quad\text{and}\quad \overline{Y_i} \triangleq \frac{1}{\sqrt{n}}\ip{\overline{\sigma}^{(i)}}{X}, \quad \quad 1\le i\le m.
\end{equation}
Now, fix an arbitrary $\epsilon>0$, and define the set
\begin{equation}\label{eq:I-of-eps}
I_\epsilon = \mathbb{Z}\cap 
\left(\left[0,\frac{n}{2}(1-\epsilon)\right]\cup\left[\frac{n}{2}(1+\epsilon),n\right]\right).
\end{equation}
For the pairs $(\mathcal{T},\overline{\mathcal{T}})\in S(m,\gamma,\delta)\times S(m,\gamma,\delta)$ of spin configurations define the following family of $m^2$ sets
\begin{equation}\label{eq:s-eps-sets}
    S^{(ij)}(\epsilon)\triangleq \left\{\left(\mathcal{T},\overline{\mathcal{T}}\right)\in S(m,\gamma,\delta)\times S(m,\gamma,\delta):d_H\left(\sigma^{(i)},\overline{\sigma}^{(j)}\right)\in I_\epsilon\right\}
\end{equation}
for $1\le i,j\le m$. Let
\begin{equation}\label{eq:set-bar-S-2nd-moment}
\overline{S}\triangleq \left(S(m,\gamma,\delta)\times S(m,\gamma,\delta)\right)\setminus \left(\bigcup_{1\le i, j\le m}S^{(ij)}(\epsilon)\right).
\end{equation}
%Here, the Cartesian product $S(m,\delta,\gamma)\times S(m,\delta,\gamma)$ is perceived as the pairs $(\mathcal{T},\overline{\mathcal{T}})$ of $m-$tuples of spin configurations with pairwise overlaps prescribed as in \eqref{eq:S-m-gamma-delta-set}.

Note that the sets $S^{(ij)}(\epsilon)$ potentially intersect for different pairs $(i,j)$. %: there exists pairs $\left(\mathcal{T},\overline{\mathcal{T}}\right)$ of $m-$tuples of spin configurations $\mathcal{T}=\left(\sigma^{(i)}:1\le i\le m\right)$, $\overline{\mathcal{T}}=\left(\overline{\sigma}^{(i)}:1\le i\le m\right)$ that belong to more than one sets. 
This is the essence of the overcounting we utilize in the remainder, with the key idea being  that the overcounting can only increase the second moment. 

We next establish an upper bound on the cardinality of $S^{(ij)}(\epsilon)$. Recalling the quantity $L$ from \eqref{eq:number-L}, %and set $I_\epsilon$ from \eqref{eq:I-of-eps}, 
we have
\[
\left|S^{(ij)}(\epsilon)\right| = 2^n L^2 \left(\sum_{k\in I_\epsilon}\binom{n}{k}\right).
\]
The rationale for this is as follows. The first coordinate of $\mathcal{T}$ is chosen in $2^n$ different ways; and the remainder are filled in $L$ different ways. Having fixed this $m-$tuple; now using the constraint $d_H\left(\sigma^{(i)},\bar{\sigma}^{(j)}\right)\in I_\epsilon$, the object $\bar{\sigma}^{(j)}$ can be chosen in $\sum_{k\in I_\epsilon}\binom{n}{k}$ different ways; and finally having fixed $\overline{\sigma}^{(j)}$, the rest of the coordinates of the $m-$tuple $\overline{\mathcal{T}}$ can  now be filled in $L$ different ways. 

Applying the Stirling's formula \eqref{thm:stirling} and using $|I_\epsilon|=n^{O(1)}$ 
\iffalse 
We now estimate the sum term with binomial coefficients. Recall that for any $\eta\in(0,1)$, Stirling's formula \eqref{thm:stirling} yields
\[
\binom{n}{\eta n} =\exp_2\left(nh_b(\eta)+O\left(\log_2 n\right)\right),
\]
where $h_b(\eta)=-\eta \log_2 \eta -(1-\eta)\log_2(1-\eta)$, $\eta\in[0,1]$, is the binary entropy function logarithm base $2$. Furthermore, $|I_\epsilon|=n^{O(1)}$. Consequently, 
\fi
    \[
    \sum_{k\in I_\epsilon}\binom{n}{k}\le n^{O(1)} \binom{n}{n\frac{1-\epsilon}{2}} = \exp_2\left(nh_b\left(\frac{1-\epsilon}{2}\right)+O(\log_2 n)\right).
    \]
Thus, 
\begin{equation}\label{eq:cardinality-of-s-ij-of-eps}
    \left|S^{(ij)}(\epsilon)\right| \le 2^n  L^2 \exp_2\left(nh_b\left(\frac{1-\epsilon}{2}\right)+O\left(\log_2 n\right)\right), \quad \text{for}\quad 1\le i\ne j\le m.
\end{equation}
\paragraph{ Overcounting argument.} Now, for any pair $(\mathcal{T},\overline{\mathcal{T}})\in S(m,\gamma,\delta)\times S(m,\gamma,\delta)$, let
\[
p\left(\mathcal{T},\overline{\mathcal{T}}\right)\triangleq \mathbb{P}\left(\left|Y_i\right|\le E,\left|\overline{Y_i}\right|\le E,1\le  i\le m\right).
\]
We now compute the second moment. In terms of the sets introduced in \eqref{eq:s-eps-sets} \eqref{eq:set-bar-S-2nd-moment}, we have
\begin{align*}
    \E{M^2} & = \sum_{(\mathcal{T},\overline{\mathcal{T}})\in S(m,\gamma,\delta)\times S(m,\gamma,\delta)}p\left(\mathcal{T},\overline{\mathcal{T}}\right) \\
    &\le \sum_{1\le i,j\le m}\sum_{(\mathcal{T},\overline{\mathcal{T}})\in S^{(ij)}(\epsilon)} p\left(\mathcal{T},\overline{\mathcal{T}}\right) + \sum_{(\mathcal{T},\overline{\mathcal{T}})\in \overline{S}} p\left(\mathcal{T},\overline{\mathcal{T}}\right).
   % & = m^2 2^n L^2 \exp_2\left(nh_b\left(\frac{1-\epsilon}{2}\right) + O\left(\log_2 n\right)\right) + \sum_{(\mathcal{T},\overline{\mathcal{T}})\in \overline{S}} p\left(\mathcal{T},\overline{\mathcal{T}}\right).
\end{align*}
Consequently,
\begin{equation}\label{eq:second-moment-overcount-bd}
\E{M^2}\le \underbrace{m^2 2^n L^2 \exp_2\left(nh_b\left(\frac{1-\epsilon}{2}\right) + O\left(\log_2 n\right)\right)}_{\triangleq A_\epsilon} + \underbrace{\sum_{(\mathcal{T},\overline{\mathcal{T}})\in \overline{S}} p\left(\mathcal{T},\overline{\mathcal{T}}\right)}_{\triangleq B_\epsilon}.
\end{equation}

%We now justify these lines. The first inequality uses the overcounting and the fact overcounting the pairs can only increase the second moment. The second inequality uses the cardinality bound \eqref{eq:cardinality-of-s-ij-of-eps} together with the trivial bound $p(\mathcal{T},\overline{\mathcal{T}})\le 1$. Note also that this analysis remains valid for any $\epsilon>0$. This is a feature that will turn out to be useful in the limit analysis later. 

\paragraph{ Study of the $A_\epsilon$ term.} Using the crude lower bound \eqref{eq:first-mom-M-exp-2} on the first moment, we arrive at
\begin{align*}
\frac{A_\epsilon}{\E{M}^2} &\le \frac{m^2 2^n L^2\exp_2\left(nh_b\left(\frac{1-\epsilon}{2}\right)+O\left(\log_2 n\right) \right)}{2^{2n}L^2 2^{2m}\left(2\pi\right)^{-m}C_2^{-2} E^{2m}\left(1+o_n(1)\right)}\\
&=\exp_2\left(-n+nh_b\left(\frac{1-\epsilon}{2}\right)-2m\log_2 E +O\left(\log_2 n\right)+O(1)\right)\\
&=\exp_2\left(-n\left(1-h_b\left(\frac{1-\epsilon}{2}\right)\right)-2mf(n)+O(\log_2 n)\right) \\
&=\exp_2\left(-n\left(1-h_b\left(\frac{1-\epsilon}{2}\right)\right)+o(n)\right).
\end{align*}
Above, we used the fact that $E=2^{-f(n)}$ for some exponent $f(n)\in o(n)$; and the fact $\epsilon>0$ hence $h_b\left(\frac{1-\epsilon}{2}\right)<1$. Consequently,
\begin{equation}\label{A-eps-upper-bd-oley}
    \frac{A_\epsilon}{\E{M}^2} \le  \exp_2\left(-\Theta(n)\right).
\end{equation}
\paragraph{ Study of the $B_\epsilon$ term.} This term is more involved, and it is the one that leads to the dominant contribution to the second moment of $M$. 

Fix a pair $(\mathcal{T},\overline{\mathcal{T}})\in S(m,\gamma,\delta)\times S(m,\gamma,\delta)$, and recall the associated standard normal variables $Y_i,\overline{Y_i}$, $1\le i\le n$ per \eqref{eq:Y-and-bar-Y}. Our goal is to  study the probability
\[
p\left(\mathcal{T},\overline{\mathcal{T}}\right) = \mathbb{P}\left(\left|Y_i\right|,\left|\overline{Y_i}\right|\le \epsilon,1\le i\le m\right).
\]
To  that end, fix $1\le i,j\le m$. We study the covariance  between $Y_i$ and $\overline{Y_j}$. Fixing a pair $(\mathcal{T},\overline{\mathcal{T}})\in \overline{S}$, we have
\[
d_H\left(\sigma^{(i)},\overline{\sigma}^{(j)}\right) \in \left[\frac{n}{2}(1-\epsilon),\frac{n}{2}(1+\epsilon)\right],\quad \text{for all}\quad i,j.
\]
%Observing now that for $\sigma^{(i)},\overline{\sigma}^{(j)}\in \bincube$, $\ip{\sigma^{(i)}}{\overline{\sigma}^{(j)}} = n-2d_H\left(\sigma^{(i)},\overline{\sigma}^{(j)}\right)$, we obtain
Then,
\[
\frac1n\ip{\sigma^{(i)}}{\overline{\sigma}^{(j)}}\in [-\epsilon,\epsilon].
\]
Let $\Sigma_\epsilon\in \R^{2m\times 2m}$ be the covariance matrix for the random vector $(Y_1,\dots,Y_m,\overline{Y_1},\dots,\overline{Y_{m}})$. Observe that it has the following ``block" structure:
\[
\Sigma_\epsilon = \displaystyle\begin{pmatrix}
 \Sigma_{\mathcal{T}}
  & \rvline & E \\
\hline
  E& \rvline &
 \Sigma_{\overline{\mathcal{T}}}
\end{pmatrix} \in\R^{2m\times 2m}.
\]
Here, $\Sigma_{\mathcal{T}}\in\R^{m\times m}$ is the covariance matrix corresponding to the random vector $(Y_1,\dots,Y_m)$; $\Sigma_{\overline{\mathcal{T}}}\in\R^{m\times m}$ is the covariance matrix corresponding to the random vector $(\overline{Y_1},\dots,\overline{Y_m})$; and $E\in\R^{m\times m}$ is given by $E_{ij} =\E{Y_i\overline{Y_j}}$. %Since $\mathcal{T}=\left(\sigma^{(i)}:1\le i\le m\right) \in S(m,\gamma,\delta)$; and $\overline{\mathcal{T}}=\left(\overline{\sigma}^{(i)}:1\le i\le m\right) \in S(m,\gamma,\delta)$, 
We have that for $1\le i\le m$,
\[
\left(\Sigma_{\mathcal{T}}\right)_{ii}= 1 =
\left(\Sigma_{\overline{\mathcal{T}}}\right)_{ii},
\]
and for $1\le i<j\le m$,
\[
\left(\Sigma_{\mathcal{T}}\right)_{ij},\quad \left(\Sigma_{\overline{\mathcal{T}}}\right)_{ij} \quad \in [\rho-\bar{\rho},\rho+\bar{\rho}].
\]
Moreover, for $1\le i<j\le m$,
\[
\left|E_{ij}\right|=\left|E_{ji}\right|\le \epsilon.
\]
We now invert the $2\times 2$ block matrix $\Sigma_\epsilon$, while keeping in mind that the block, $\Sigma_{\mathcal{T}}$, is invertible. Observe that
\[
 \displaystyle\begin{pmatrix}
 \Sigma_{\mathcal{T}}
  & \rvline & E \\
\hline
  E& \rvline &
 \Sigma_{\overline{\mathcal{T}}}
\end{pmatrix}   \displaystyle\begin{pmatrix}
I
  & \rvline & -\Sigma_{\mathcal{T}}^{-1}E\\
\hline
  O& \rvline &
 I
\end{pmatrix}   = \displaystyle\begin{pmatrix}
 \Sigma_{\mathcal{T}}
  & \rvline & O\\
\hline
  E& \rvline &
\Sigma_{\overline{\mathcal{T}}}-E\Sigma_{\mathcal{T}}^{-1}E
\end{pmatrix}.
\]
With this decomposition and the fact that the determinant of a ``block triangular" matrix is the product of the determinants of blocks constituting the diagonal, we arrive at 
\begin{align*}
    \left|\Sigma_\epsilon\right|& = \left|\Sigma_{\mathcal{T}}\right|\cdot \left|\Sigma_{\overline{\mathcal{T}}}-E\Sigma_{\mathcal{T}}^{-1}E\right|\\
    &=\left|\Sigma_{\mathcal{T}}\right|\cdot\left|\Sigma_{\overline{\mathcal{T}}}\right|\cdot \left|I-\Sigma_{\overline{\mathcal{T}}}^{-1}E\Sigma_{\mathcal{T}}^{-1}E\right|,
\end{align*}
where we have used the fact that $\Sigma_{\widehat{\mathcal{T}}}$ is invertible as well, to pull the term outside.

Note that for $\epsilon$ sufficiently small; the determinant $\left|I-\Sigma_{\overline{\mathcal{T}}}^{-1}E\Sigma_{\mathcal{T}}^{-1}E\right|$ is non-zero over all choices of $\Sigma_{\mathcal{T}}$ and $\Sigma_{\overline{\mathcal{T}}}$. Namely, provided $\epsilon$ is small; $\Sigma_\epsilon$ is invertible uniformly across all $\Sigma_{\mathcal{T}}$ and $\Sigma_{\overline{\mathcal{T}}}$. In the remainder, assume $\epsilon>0$ though sufficiently small. 
We now define the object
\[
\overline{\varphi}(\epsilon)\triangleq \overline{\varphi}\left(\Sigma_{\mathcal{T}},\Sigma_{\overline{\mathcal{T}}},E\right) = \left|I-\Sigma_{\overline{\mathcal{T}}}^{-1}E\Sigma_{\mathcal{T}}^{-1}E\right|.
\]
Note that $\overline{\varphi}(\cdot)$ is a polynomial in the entries $E_{ij}$, $1\le i,j\le m$; as well as in the entries of matrices $\Sigma_{\mathcal{T}}$ and $\Sigma_{\overline{\mathcal{T}}}$. Furthermore, %the entries of $\Sigma_{\mathcal{T}}$ and $\Sigma_{\overline{\mathcal{T}}}$ are constrained (as noted above); $|E_{ij}|\le \epsilon$; and  
$\overline{\varphi}\to 1$ as $\epsilon \to 0$. %For this reason, we assume $\epsilon>0$ is small enough, abuse the notation slightly; and denote the determinant of $\left|I-\Sigma_{\overline{\mathcal{T}}}^{-1}E\Sigma_{\mathcal{T}}^{-1}E\right|$ as  $\overline{\varphi}(\epsilon)$ while suppressing all dependences on $\Sigma_{\mathcal{T}}$ and $\Sigma_{\overline{\mathcal{T}}}$ which do not matter when $\epsilon$ is small. 
With this we write
\begin{equation}\label{eq:determinant-of-sigma-of-eps}
  \left|\Sigma_\epsilon\right| = \left|\Sigma_{\mathcal{T}}\right|\left|\Sigma_{\overline{\mathcal{T}}}\right| \overline{\varphi}(\epsilon).  
\end{equation}
We now compute
\begin{align*}
    p\left(\mathcal{T},\overline{\mathcal{T}}\right) &= \left(2\pi\right)^{-m}\left|\Sigma_\epsilon\right|^{-\frac12} \int_{{\bf y}=(y_1,\dots,y_m,\overline{y_1},\dots,\overline{y_m})\in[-E,E]^{2m}}\exp\left(-\frac12 {\bf y}^T\Sigma_\epsilon^{-1} {\bf y}\right)\; d{\bf y} \\
    &=\left(2\pi\right)^{-m}\left|\Sigma_\epsilon\right|^{-\frac12}2^{2m}E^{2m}\left(1+o_n(1)\right) \\
    &=\left(\overline{\varphi}(\epsilon)\right)^{-\frac12}\left(1+o_n(1)\right)\left(\left(2\pi\right)^{-\frac{m}{2}}\left|\Sigma_{\mathcal{T}}\right|^{-\frac12}\left(2E\right)^m\right)\left(\left(2\pi\right)^{-\frac{m}{2}}\left|\Sigma_{\overline{\mathcal{T}}}\right|^{-\frac12}\left(2E\right)^m\right).
\end{align*}
Here, the first line uses the definition of $p\left(\mathcal{T},\overline{\mathcal{T}}\right)$ together with the formulae for the multivariate normal density; the second line  uses the fact when ${\bf y}\in [-E,E]^{2m}$ then $\exp\left(-\frac12 {\bf y}^T \Sigma_{\epsilon}^{-1}{\bf y}\right)=1+o_n(1)$ provided $\Sigma_{\epsilon}$ is invertible (which we ensured); and the third line uses \eqref{eq:determinant-of-sigma-of-eps}. 
Thus,
\begin{equation}\label{eq:b-epsilon-massaged}
B_\epsilon = \sum_{(\mathcal{T},\overline{\mathcal{T}})\in\overline{S}}p\left(\mathcal{T},\overline{\mathcal{T}}\right) = \overline{\varphi}(\epsilon)\left(1+o_n(1)\right)\sum_{(\mathcal{T},\overline{\mathcal{T}})\in\overline{S}}\left(\left(2\pi\right)^{-\frac{m}{2}}\left|\Sigma_{\mathcal{T}}\right|^{-\frac12}\left(2E\right)^m\right)\left(\left(2\pi\right)^{-\frac{m}{2}}\left|\Sigma_{\overline{\mathcal{T}}}\right|^{-\frac12}\left(2E\right)^m\right).
\end{equation}
We now square the expression  \eqref{eq:first-mom-M-exp-1}, keep only the terms corresponding to $\overline{S}$; and lower bound the square of the first moment
\begin{equation}\label{eq:first-mom-sq-massaged}
\E{M}^2 \ge \left(1+o_n(1)\right)\sum_{(\mathcal{T},\overline{\mathcal{T}})\in \overline{S}} \left(\left(2\pi\right)^{-\frac{m}{2}}\left|\Sigma_{\mathcal{T}}\right|^{-\frac12}\left(2E\right)^m\right)\left(\left(2\pi\right)^{-\frac{m}{2}}\left|\Sigma_{\overline{\mathcal{T}}}\right|^{-\frac12}\left(2E\right)^m\right).
\end{equation}
Combining \eqref{eq:b-epsilon-massaged} and \eqref{eq:first-mom-sq-massaged}, we arrive at
\begin{equation}\label{eq:b-eps-over-first-mom-sq}
\frac{B_\epsilon}{\E{M}^2} \le \left(1+o_n(1)\right)\left(\overline{\varphi}(\epsilon)\right)^{-\frac12}.
\end{equation}
\paragraph{ Applying Paley-Zygmund Inequality.} Applying the Paley-Zygmund inequality \eqref{thm:paley-zygmund},

\begin{equation}\label{eq:paley-baba-oley}
    \mathbb{P}\left(M \ge 1\right)\ge \frac{\E{M}^2}{\E{M^2}} \ge \frac{\E{M}^2}{A_\epsilon+B_\epsilon} = \frac{1}{ \frac{A_\epsilon}{\E{M}^2}+\frac{B_\epsilon}{\E{M}^2} }\ge \frac{1}{\exp\left(-\Theta(n)\right)+\left(1+o_n(1)\right)\overline{\varphi}(\epsilon)^{-1/2}}.
\end{equation}
Here, the second inequality uses the overcounting upper bound \eqref{eq:second-moment-overcount-bd}; and the third inequality uses the upper bounds \eqref{A-eps-upper-bd-oley} and \eqref{eq:b-eps-over-first-mom-sq}. 

\paragraph{ Combining everything.} The reasoning above remains valid if (a) $\rho>\bar{\rho}$ and (b) $\bar{\rho}\ll \frac{1-\rho}{m}$. Now, choose \[
\bar{\rho} = \frac{\eta}{1000m}.
\] 
Here, the choice of the constant $1000$ is arbitrary. Next, let $\ell$ be the largest positive integer such that $2\ell\eta<1-\eta$. Consider the ``grid" $\rho_k = 2k \eta$, $1\le k\le \ell$, and intervals $I_k = [(2k-1)\eta,(2k+1)\eta]=[\rho_k-\eta,\rho_k+\eta]$ centered at $\rho_k$. Since 
\[
\left[\rho_k - \bar{\rho},\rho_k+\bar{\rho}\right]\subset \left[\rho_k-\eta,\rho_k+\eta\right]
\]
it follows by using~\eqref{eq:paley-baba-oley} that 
\begin{equation}\label{eq:pz-over-grid}
\mathbb{P}\left(S\left(m,\rho_k,\eta,E\right)\ne\varnothing\right) \ge \mathbb{P}\left(S\left(m,\rho_k,\bar{\rho},E\right)\ne\varnothing\right) \ge \frac{1}{\exp\left(-\Theta(n)\right)+\left(1+o_n(1)\right)\overline{\varphi}_k(\epsilon)^{-1/2}},
\end{equation}
where $\overline{\varphi}_k\left(\cdot\right)$ is a continuous function with the property that $\overline{\varphi}_k(\epsilon)\to 1$ as $\epsilon\to 0$. Taking a union bound over $1\le k\le \ell$, we arrive at
\begin{equation}\label{eq:limite-burdan-gec}
\mathbb{P}\left(\underbrace{\bigcap_{1\le k\le \ell} \left\{S\left(m,\rho_k,\eta,E\right)\ne\varnothing\right\}}_{\triangleq \mathcal{E}_{\rm aux}}\right)\ge 1-\ell \frac{\exp\left(-\Theta(n)\right)+\left(1+o_n(1)\right)\overline{\varphi}(\epsilon)^{-1/2}-1}{\exp\left(-\Theta(n)\right)+\left(1+o_n(1)\right)\overline{\varphi}(\epsilon)^{-1/2}},
\end{equation}
where $\overline{\varphi}(\cdot) = \min_{1\le k\le \ell}\overline{\varphi}_k(\cdot)$. In particular, since $\ell$ is finite, it follows $\overline{\varphi}(\epsilon)\to 1$ as $\epsilon \to 0$. 

We now carefully send $n$ and $\epsilon$ to their corresponding limits. Note that the asymptotic expressions (in $n$) given above are valid so long as $\epsilon>0$---see, e.g. \eqref{A-eps-upper-bd-oley}. Thus, we must send $n\to\infty$ first, while keeping $\epsilon>0$ fixed. We clearly have
\[
1\ge \limsup_{n\to\infty}\mathbb{P}( \mathcal{E}_{\rm aux}).
\]
Furthermore, while keeping $\epsilon>0$ and sending $n\to\infty$ in \eqref{eq:limite-burdan-gec}, we obtain
\[
\liminf_{n\to\infty}\mathbb{P}( \mathcal{E}_{\rm aux}) \ge 1- \ell \cdot \frac{\overline{\varphi}(\epsilon)^{-1/2}-1}{\overline{\varphi}(\epsilon)^{-1/2}}.
\]
Note that the sequence $\left\{\mathbb{P}( \mathcal{E}_{\rm aux})\right\}_{n\ge 1}$ (note that $\mathcal{E}_{\rm aux}$ implicitly depends on $n$) is not a function of $\epsilon$---and the lower bound holds true for every $\epsilon$ sufficiently close to zero. Moreover, $\ell$ is a constant. For this reason, we can now safely send $\epsilon\to 0$ to obtain
\[
\liminf_{n\to\infty}\mathbb{P}( \mathcal{E}_{\rm aux})\ge 1.
\]
Hence
\[
1\ge \limsup_{n\to\infty}\mathbb{P}\left( \mathcal{E}_{\rm aux}\right)\ge \liminf_{n\to\infty}\mathbb{P}\left( \mathcal{E}_{\rm aux}\right)\ge 1
\]
implying
\[
\lim_{n\to\infty}\mathbb{P}\left( \mathcal{E}_{\rm aux}\right)=1.
\]
Finally, observe that on high probability event $\mathcal{E}_{\rm aux}$, it is the case that for each of $[\eta,3\eta],[3\eta,5\eta],\dots$, there exists an $m-$tuple of spin configurations (with appropriate energy)  whose pairwise overlaps are contained in the chosen interval. Since for each $\beta\in[0,1]$; $[\beta-3\eta,\beta+3\eta]$ contains a full interval $[(2k-1)\eta,(2k+1)\eta]$; we conclude that 
\[
\mathbb{P}\left(\forall \beta\in[0,1]:\mathcal{S}\left(m,\beta,3\eta,2^{-f(n)}\right)\ne\varnothing\right)\ge \mathbb{P}\left(\mathcal{E}_{\rm aux}\right) = 1-o_n(1).
\]
The  above reasoning remains true for every $\eta>0$. Taking $\frac{\eta}{3}$ in place of $\eta$ yields the desired conclusion.

\end{proof}
\subsection{Proof of Theorem~\ref{thm:m-ogp-superconstant-m}}\label{sec:m-ogp-superconstant-m}
\subsubsection*{Case 1: $\omega\left(\sqrt{n\log_2 n}\right)\le E_n\le o(n)$.}
\begin{proof}
Let $g(n)$ be an arbitrary function with growth
\[
\omega(1)\le g(n) \le o\left(\frac{E_n^2}{n\log_2 n}\right).
\]
We take $m,\beta$, and $\eta$ per~\eqref{eq:beta-eta-m-n-main-supconstant}, that is
\[
m=\frac{2n}{E_n}, \quad \beta = 1-\frac{2g(n)}{E_n},\quad\text{and}\quad \eta = \frac{g(n)}{2n}.
\]
Define next several auxiliary parameters. First, set $\phi(n)$ by the expression
 \begin{equation}\label{eq:E_n-phi-n-sqrt-nlogn}
E_n=\phi(n)\sqrt{n\log n}.
\end{equation}
Since $\omega\left(\sqrt{n\log_2 n}\right)\le E_n\le o(n)$, it holds that
\begin{equation}\label{eq:phi(n)-up-low-bd}
\omega_n(1)\le \phi(n)\le o\left(\sqrt{\frac{n}{\log n}}\right).
\end{equation}
Moreover, in terms of $\phi(\cdot)$, the growth condition on $g$ translates as
\begin{equation}\label{eq:g(n)-up-low-bd}
\omega_n(1)\le g(n)\le o\left(\phi(n)^2\right).
\end{equation}
Introduce another parameter $\nu_n$ via
\begin{equation}\label{eq:nu-n-g-over-f}
    \nu_n = \frac{g(n)}{E_n} = \frac{g(n)}{\phi(n)\sqrt{n\log n}}.
\end{equation}
Thus, in terms of $g(n),\phi(n)$, and $\nu_n$; the parameters $m,\beta,\eta$ chosen as above satisfy the following relations:
\begin{equation}\label{eq:m-chosen-this-way}
    m=2\frac{n}{E_n} = \frac{2\sqrt{n}}{\phi(n)\sqrt{\log n}},
\end{equation}
\begin{equation}\label{eq:eta-nu-n-over-m}
\eta  =\frac{g(n)}{2n} = \frac{g(n)}{\phi(n)\sqrt{n\log n}}\cdot \frac{\phi(n)\sqrt{\log n}}{2\sqrt{n}}= \frac{\nu_n}{m};
\end{equation}
and
\begin{equation}\label{eq:beta:1-2nu-n}
    \beta = 1-\frac{2g(n)}{E_n} =  1-2\nu_n = 1-2\frac{g(n)}{\phi(n)\sqrt{n\log n}}.
\end{equation}
In particular, it holds that 
\begin{equation}\label{eq:eta-1-beta-over2m}
    \eta=\frac{1-\beta}{2m}.
\end{equation} 
The expressions~\eqref{eq:m-chosen-this-way}-\eqref{eq:eta-1-beta-over2m} will be convenient for handling certain expressions appearing below.

We will establish $m-$OGP for the interval $[\beta-\eta,\beta]$, where $m,\beta,\eta$ are chosen as above. As a sanity check, note that the interval $[\beta-\eta,\beta]$ has length $\eta$, and for our result to be non-vacuous, it should be the case that the overlap region is not void, that is
\[
\left|(n\beta-n\eta,n\beta)\cap \mathbb{Z}\right|\ge 1.
\] Indeed
\[
n\eta = \frac{n\nu_n}{m}= \frac12 g(n)=\omega_n(1),
\]
thus the region is not void.
%\begin{tcolorbox}
%\paragraph{ Justification for $E_n = \omega\left(n\cdot \log^{-1/5+\epsilon}n \right)$.} Note that $g(n)$ chosen as in~\eqref{eq-alt:g(n)-and-z(n)} is clearly $\omega(1)$, hence $n\eta$ is $\omega_n(1)$. Moreover
%\[
%g(n)/E_n = z(n) = s(n)^{1+\frac{\epsilon}{8}} = o(1).
%\] 
%Hence, the $\nu_n$ parameter introduced in \eqref{eq:nu-n-g-over-f} is $o(1)$. 
%\end{tcolorbox}

Recall now
\[
Y_i(\tau) = \sqrt{1-\tau^2}X_0 + \tau X_i\in\R^n,\quad\text{for}\quad 1\le i\le m\quad\text{and}\quad \tau\in\mathcal{I}.
\]
In order to apply first moment method and Markov's inequality, we essentially need two bounds: 1) a bound on the cardinality of the $m-$tuples $(\sigma^{(i)}:1\le i\le m)$ of spin configurations whose pairwise overlaps are constrained to $[\beta-\eta,\beta]$, and 2) a bound on a certain (joint) probability.

To that end, define 
\[
S(\beta,\eta,m)\triangleq \left\{(\sigma^{(1)},\dots,\sigma^{(m)}):\sigma^{(i)}\in\{-1,1\}^n, \frac1n\ip{\sigma^{(i)}}{\sigma^{(j)}}\in[\beta-\eta,\beta],1\le i<j\le m\right\}
\]
and
\[
N(\beta,\eta,m,E_n,\mathcal{I}) =\sum_{(\sigma^{(i)}:1\le i\le m)\in S(\beta,\eta,m)}\ind  \left\{\exists \tau_1,\dots,\tau_m\in\mathcal{I}:\frac{1}{\sqrt{n}}|\ip{\sigma^{(i)}}{Y_i(\tau_i)}| \le 2^{-E_n},1\le i\le m\right\}.
\]
Observe that, with these notation, we have 
\[
N(\beta,\eta,m,E_n,\mathcal{I}) = |\mathcal{S}(\beta,\eta,m,E_n,\mathcal{I})|.
\]
Thus, by Markov's inequality
\[
\mathbb{P}\left(\mathcal{S}(\beta,\eta,m,E_n,\mathcal{I})\ne\varnothing\right) = \mathbb{P}\left(N(\beta,\eta,m,E_n,\mathcal{I})\ge 1\right)\le \mathbb{E}[N(\beta,\eta,m,E_n,\mathcal{I})].
\]
We will establish that with the parameters chosen as above, $\mathbb{E}[N(\beta,\eta,m,E_n,\mathcal{I})] = \exp(-\Theta(n))$, which will conclude the proof. 

{\bf Step 1. Cardinality upper bound.} We now upper bound the number of $m-$tuples $(\sigma^{(i)}:1\le i\le m)$ of spin configurations with (pairwise) overlaps constrained to $[\beta-\eta,\beta]$, that is, we upper bound the cardinality $|S(\beta,\eta,m)|$. For this we rely on Lemma~\ref{lemma:binomial-k-o(n)}.

Now, for $\sigma^{(1)}$, there are $2^n$ choices. Furthermore, for any fixed $\rho\in[\beta-\eta,\beta]$, there exists
\[
\binom{n}{n\frac{1-\rho}{2}}
\]
spin configurations $\sigma'$ for which $\frac1n \ip{\sigma}{\sigma'}=\rho$. With this, the number of choices for $\sigma^{(2)}$ evaluates
\[
    \sum_{\rho:\beta-\eta\le \rho\le \beta,n\rho \in\mathbb{Z}} \binom{n}{n\frac{1-\rho}{2}}.
\]
With this, the number of all such $m-$tuples $(\sigma^{(i)}:1\le i\le m)$ with $n^{-1}\ip{\sigma^{(i)}}{\sigma^{(j)}}\in[\beta-\eta,\beta]$, $1\le i<j\le m$, is at most
\begin{equation}\label{eq:superconst-ogp-card-up-bd}
    2^n \left(  \sum_{\rho:\beta-\eta\le \rho\le \beta,n\rho \in\mathbb{Z}} \binom{n}{n\frac{1-\rho}{2}}\right)^{m-1}.
\end{equation}
Observe that with our choice of parameters where $1-\beta=o_n(1)$ and $\eta=o_n(1)$,
\begin{equation}\label{eq:bin-coeff-max-up-bd}
\displaystyle\max_{\substack{\rho:\rho\in[\beta-\eta,\beta]\\\rho n\in\mathbb{Z}}} \binom{n}{n\frac{1-\rho}{2}}= \binom{n}{n\frac{1-\beta+\eta}{2}}.
\end{equation}
Recalling \eqref{eq:eta-1-beta-over2m} and the fact $m=\omega_n(1)$, and therefore $m^{-1}=o_n(1)$, we have
\[
1-\beta+\eta = (1-\beta)\left(1+\frac{1}{2m}\right)=(1-\beta)(1+o_n(1)).
\]
Consequently, using \eqref{eq:beta:1-2nu-n}, we conclude
\begin{align}
n\frac{1-\beta+\eta}{2} &=\frac{n}{2}(1-\beta)(1+o_n(1)) \\
&=\frac{n}{2} \frac{2g(n)}{\phi(n)\sqrt{n\log_2 n}}(1+o_n(1))\\
&=\frac{g(n)\sqrt{n}}{\phi(n)\sqrt{\log_2 n}}(1+o_n(1))\label{eq:g-n-sart-n-phi-n-sqrt-log2-on}\\
&=\frac{ng(n)}{E_n}(1+o_n(1)).\label{eq:binomial-auxiliary}
\end{align}
\iffalse
Thus,
\begin{equation}\label{eq:binomial-auxiliary}
    n\frac{1-\beta+\eta}{2}=\frac{g(n)\sqrt{n}}{\phi(n)\sqrt{\log_2 n}}(1+o_n(1)).
\end{equation}
\fi 
Next, observe that
\[
g(n) = o\left(\phi(n)^2\right) = o\left(\frac{E_n^2}{n\log_2 n}\right) = o\left(E_n\right),
\]
as $E_n = o\left(n\right)$ which is trivially $o\left(n\log_2 n\right)$. Thus, it follows from \eqref{eq:binomial-auxiliary} that $n\frac{1-\beta+\eta}{2} = o(n)$. Thus, Lemma~\ref{lemma:binomial-k-o(n)} applies.
%\begin{equation}\label{eq:g-of-n-should-be-little-oh-E-n}
%\frac{g(n)\sqrt{n}}{\phi(n)\sqrt{\log_2 n}}(1+o_n(1)) = o(n)\Leftrightarrow \frac{g(n)}{\phi(n)\sqrt{n\log_2 n}} =o(1).
%\end{equation} 
%Recalling the bound $g(n)\le o((\phi(n))^2)$ as per \eqref{eq:g(n)-up-low-bd}; and the fact $\phi(n)=o\left(\sqrt{n\log_2 n}\right)$ as per \eqref{eq:phi(n)-up-low-bd}, we conclude that the last quantity is indeed $o(1)$. 
As a sanity check, we also verify $n\frac{1-\beta+\eta}{2} =\omega(1)$, so that the counting bound is not vacuous: using \eqref{eq:phi(n)-up-low-bd} and \eqref{eq:g-n-sart-n-phi-n-sqrt-log2-on}, we have $\phi(n)^{-1} = \omega\left(\sqrt{\frac{\log_2 n}{n}}\right)$. Thus
\[
n\frac{1-\beta+\eta}{2} = \omega(g(n))=\omega(1).
\]
We now proceed to control  the term
\[
\binom{n}{n\frac{1-\beta+\eta}{2}}.
\]
As we have verified, $n\frac{1-\beta+\eta}{2} = o(n)$. Thus we are indeed in the setting of Lemma~\ref{lemma:binomial-k-o(n)}.

Observe that using \eqref{eq:binomial-auxiliary}, 
\[
\log_2 \frac{n}{n\frac{1-\beta+\eta}{2}} =  \log_2 \frac{E_n}{g(n)} = O\left(\log_2 n\right),
\]
since $E_n=o(n)$. 
%Before applying Lemma~\ref{lemma:binomial-k-o(n)}, we claim
%\[
%\log_2 \frac{n}{n\frac{1-\beta+\eta}{2}} = O(\log n).
%\]
%To see this,  
%\[
%\frac{n}{n\frac{1-\beta+\eta}{2}} = \frac{\phi(n)\sqrt{n\log_2 n}}{g(n)}\left(1+o_n(1)\right)\Rightarrow \log_2 \frac{n}{n\frac{1-\beta+\eta}{2}} = \frac12 \log_2 n + \frac12\log_2 \log_2 n + \log_2 (\phi(n))-\log_2(g(n)).
%\]
%Since $\phi(n)$ is clearly $o(n)$ (as per \eqref{eq:phi(n)-up-low-bd}), and so does $g(n)$ (as per \eqref{eq:g(n)-up-low-bd}), we thus have $\log_2 \phi(n)=\log_2(g(n))=O(\log_2 n)$, and thus the claim. Equipped with this claim, 
We now apply Lemma~\ref{lemma:binomial-k-o(n)} to conclude that
\begin{equation}\label{eq:n-choose-n-1-beta-eta}
\binom{n}{n\frac{1-\beta+\eta}{2}} = \exp_2\left((1+o_n(1))\frac{ng(n)}{E_n}O\left( \log_2 n\right)\right) = \exp_2\left(O\left(\frac{ng(n)}{E_n} \log_2 n\right)\right).
\end{equation}
Note also that by \eqref{eq:eta-nu-n-over-m}
\begin{align*}
\left|[n\beta-n\eta,n\beta]\cap\mathbb{Z}\right|&=O(n\eta)\\
&=O(g(n)).
\end{align*}
%where we used \eqref{eq:eta-nu-n-over-m}, \eqref{eq:nu-n-g-over-f}, and \eqref{eq:m-chosen-this-way} above.
Consequently, using \eqref{eq:bin-coeff-max-up-bd}, \eqref{eq:n-choose-n-1-beta-eta}, the fact $E_n=\phi(n)\sqrt{n\log_2 n}$ and the cardinality bound above in this order
\[
\sum_{\rho:\beta-\eta\le \rho\le \beta,\rho n\in\mathbb{Z}}\binom{n}{n\frac{1-\rho}{2}} \le \exp_2\left(O\left(\frac{g(n)\sqrt{n\log_2 n}}{\phi(n)}\right)+O(\log_2 g(n))\right).
\]
Now, since $\frac1\phi  = \omega\left(\sqrt{\frac{\log_2 n}{n}}\right)$, we have
\[
\frac{g(n)\sqrt{n\log_2 n}}{\phi(n)}=\omega(g(n)\log_2 n);
\]
and therefore the term, $O(\log_2 g(n))$ appearing in the bound above, is lower order. Thus we conclude
\begin{equation}\label{eq:toward-counting}
    \sum_{\rho:\beta-\eta\le \rho\le \beta:\rho n\in\mathbb{Z}} \binom{n}{n\frac{1-\rho}{2}} \le \exp_2\left(O\left(\frac{g(n)\sqrt{n\log_2 n}}{\phi(n)}\right)\right).
\end{equation}
We now return back to the earlier bound on the cardinality of the $m-$tuples with overlaps in $[\beta-\eta,\beta]$ as per \eqref{eq:superconst-ogp-card-up-bd}. Since $m=\omega_n(1)$, it holds $m-1=m(1+o_n(1))$. With this, we obtain 
\begin{align*}
2^n \left( \sum_{\rho:\beta-\eta\le \rho\le \beta:\rho n\in\mathbb{Z}} \binom{n}{n\frac{1-\rho}{2}}\right)^{m-1}  &\le \exp_2\left(n+(m-1)O\left(\frac{g(n)\sqrt{n\log_2 n}}{\phi(n)}\right)\right)\\
&\le \exp_2\left(n+O\left(\frac{mg(n)\sqrt{n\log_2 n}}{\phi(n)}\right)\right)\\
&=\exp_2\left(n+O\left(\frac{ng(n)}{\phi(n)^2}\right)\right).
\end{align*}
Consequently,
\begin{equation}\label{eq:cardinality-to-be-used-inMarkov}
|S(\beta,\eta,m)| \le \exp_2\left(n+O\left(\frac{ng(n)}{\phi(n)^2}\right)\right)\le \exp_2(n+o(n)),
\end{equation}
since $g(n) = o\left(\phi(n)^2\right)$. 
%\begin{tcolorbox}
%\paragraph{ Justification for $E_n = \omega\left(n\cdot \log^{-1/5+\epsilon}n \right)$.}  %We study $n\frac{1-\beta+\eta}{2}$ term appearing in~\eqref{eq:binomial-auxiliary}. We have %Recall \eqref{eq:n-choose-n-1-beta-eta}. Ignoring $1+o_n(1)$ term, we have
%\[
%n\frac{1-\beta+\eta}{2} = (1+o_n(1))\frac{ng(n)}{E_n} = (1+o_n(1))nz(n) = (1+o_n(1)) ns(n)^{1+\frac{\epsilon}{8}} = o(n),
%\]
%per \eqref{eq:binomial-auxiliary} and \eqref{eq-alt:g(n)-and-z(n)}, while we recall that as functions of $g$; $\beta,\eta$, and $m$ still remain the same. In particular, Lemma~\ref{lemma:binomial-k-o(n)} is still applicable. 
%Applying it, the counting bound \eqref{eq:n-choose-n-1-beta-eta} now modifies to
%Since
%\begin{equation}\label{eq:bulluk}
%\omega\left(\log^{-\frac15+\epsilon} n \right)\le s(n) \le o(1), 
%\end{equation}
%we obtain immediately that
%\[
%\log_2 g(n) = \Theta\left(\log_2 n\right) = o\left(ns(n)^{1+\frac{\epsilon}{8}}\log \frac{1}{s(n)}\right).
%\]
%Namely, the cardinality upper bound \eqref{eq:cardinality-to-be-used-inMarkov} modifies to 
%\end{tcolorbox}

{\bf Step 2. Upper bounding the probability.} For the energy exponent $E_n$ defined earlier, suppose that $\mathcal{R}$ is the region
\[
\mathcal{R}=\left[-2^{-E_n},2^{-E_n}\right]\times \left[-2^{-E_n},2^{-E_n}\right]\times \cdots \times \left[-2^{-E_n},2^{-E_n}\right]\subset\R^m.
\]

Fix $\tau_1,\dots,\tau_m\in\mathcal{I}$, and fix any $m-$tuple, $(\sigma^{(i)}:1\le i\le m)\in S(\beta,\eta,m)$. Recall $Y_i(\tau_i)$, $1\le i\le m$ from Definition~\ref{def:overlap-set} and $Z_i = \frac{1}{\sqrt{n}}\ip{\sigma^{(i)}}{Y_i(\tau_i)}\distr\mathcal{N}(0,1)$, $1\le i\le m$. 

 Let $\Sigma$ denotes the covariance matrix of the (centered) vector $(Z_i:1\le i\le m)
 \in\R^m$.

The probability we want to upper bound is the following:
\[
\mathbb{P}\left((Z_i:1\le i\le m)\in \mathcal{R}\right)=(2\pi)^{-\frac{m}{2}}|\Sigma|^{-\frac12}\int_{\mathcal{R}\subset \R^m} \exp\left(-\frac12 x^T \Sigma^{-1}x\right)\;dx.
\]
Since provided $\left|\Sigma\right|\ne 0$, $\exp\left(-\frac12 x^T \Sigma^{-1}x\right)\le 1$ for any $x\in\R^m$; we upper bound the probability with
\begin{equation}\label{eq:crude-prob-up-bd}
\mathbb{P}\left((Z_i:1\le i\le m)\in\mathcal{R}\right) \le (2\pi)^{-\frac{m}{2}}|\Sigma|^{-\frac12} {\rm Vol}(\mathcal{R})  = 2^{\frac{m}{2}} \pi^{-\frac{m}{2}}|\Sigma|^{-\frac12}2^{-mE_n}.
\end{equation}
\paragraph{ Studying the covariance matrix, $\Sigma$.} The lines below are almost identical to Step II in the proof of Theorem~\ref{thm:main-eps-energy}; and kept for convenience. 

To control the probability in \eqref{eq:crude-prob-up-bd}, we study the covariance matrix $\Sigma$. In particular, our goal is to lower bound $|\Sigma|$ away from zero, uniformly for all choices of $(\sigma^{(i)}:1\le i\le m)\in S(\beta,\eta,m)$, and for every $\tau_1,\dots,\tau_m\in\mathcal{I}$. 

Using the exact same route as in Step II of the proof of Theorem~\ref{thm:main-eps-energy}, we arrive at
\iffalse 
To that end, let $\gamma_i=\sqrt{1-\tau_i^2}$, $1\le i\le m$, and $\rho_{ij}=\frac1n\ip{\sigma^{(i)}}{\sigma^{(j)}}$. Furthermore, since $(\sigma^{(i)}:1\le i\le m)\in S(\beta,\eta,m)$, we also have
\[
\beta-\eta\le \rho_{ij}\le \beta,\quad 1\le i<j\le m.
\]
Now observe that
\[
Y_i(\tau_i) Y_j(\tau_j)^T =\gamma_i\gamma_j X_0X_0^T + \gamma_i\tau_j X_0X_j^T +\gamma_j\tau_i X_iX_0^T + \tau_i\tau_j X_iX_j^T.
\]

Since $X_0,X_i,X_j$ are i.i.d., we thus obtain, upon taking expectations, that
\begin{equation}\label{eq:cov-auxiL}
\mathbb{E}[Y_i(\tau_i) Y_j(\tau_j)^T]=\gamma_i\gamma_j I_n\in\R^{n\times n}.
\end{equation}
Equipped with this, we now have for any $1\le i<j\le m$,
\begin{align*}
    {\rm Cov}(Z_i,Z_j) &=\mathbb{E}\left[\frac{1}{\sqrt{n}}\ip{\sigma^{(i)}}{Y_i(\tau_i)}\frac{1}{\sqrt{n}}\ip{\sigma^{(j)}}{Y_j(\tau_j)}\right]\\
    &=\frac1n (\sigma^{(i)})^T\mathbb{E}[Y_i(\tau_i)Y_j(\tau_j)^T]\sigma^{(j)}\\
    &=\rho_{ij}\gamma_i\gamma_j.
\end{align*}
Equipped with these observations, we now study the covariance matrix $\Sigma$.
\fi 
the conclusion that $\Sigma\in\R^{m\times m}$ has the following structure:
\[
\Sigma_{ii}=1,\quad\text{for}\quad 1\le i\le m;\quad\text{and}\quad  \Sigma_{ij}=\Sigma_{ji}=\gamma_i\gamma_j\bar{\Sigma}_{ij},\quad \text{for}\quad 1\le i<j\le m,
\]
where $\bar{\Sigma}_{ij}=\bar{\Sigma}_{ji} =\rho_{ij}=\frac1n\ip{\sigma^{(i)}}{\sigma^{(j)}}$, $1\le i<j\le m$. Here, $\gamma_i = \sqrt{1-\tau_i^2}$, $1\le i\le m$.

Namely, $\bar{\Sigma}\in\R^{m\times m}$ is an auxiliary matrix introduced for studying $\Sigma\in\R^{m\times m}$, and has the structure:
\[
\bar{\Sigma}_{ii}=1,\quad \text{for}\quad 1\le i\le m; \quad\text{and}\quad \bar{\Sigma}_{ij}=\bar{\Sigma}_{ji} = \rho_{ij},\quad \text{for}\quad 1\le i<j\le m.
\]
Now, let $A={\rm diag}(\gamma_1,\dots,\gamma_m)\in\R^{m\times m}$ be a diagonal matrix. It follows that
\begin{equation}\label{eq:original-cov-sigma}
\Sigma=A\bar{\Sigma}A + (I-A^2). 
\end{equation}
We next study $\bar{\Sigma}$. To that end, define the matrix $\widehat{\Sigma}\in\R^{m\times m}$, where $\widehat{\Sigma}_{ii}=1$ for $1\le i<j\le m$; and $\widehat{\Sigma}_{ij}=\widehat{\Sigma}_{ji}=\beta$ for $1\le i<j\le m$. Observe that
\[
\widehat{\Sigma} = (1-\beta)I_m + \beta 11^T.
\]
Now, the spectrum of the matrix $11^T\in \R^{m\times m}$ consists of the eigenvalue $m$ with multiplicity one; and eigenvalue $0$ with multiplicity $m-1$. Furthermore, since $\widehat{\Sigma}$ is obtained by applying a rank-1 perturbation to a multiple of identity matrix, its spectrum consists of the eigenvalue $1-\beta +\beta m$, that is, $1+\beta(m-1)$ with multiplicity one; and $1-\beta$ with multiplicity $m-1$. Since $1+\beta(m-1)$ and $1-\beta$ are both positive, this (symmetric) matrix is also positive definite. 
%In particular, the spectrum of $\widehat{\Sigma}$ consists of the eigenvalue $1+(m-1)\beta$ with multiplicity one; and $1-\beta$ with multiplicity $m-1$ (in particular, it is also a symmetric and positive semidefinite matrix).

With this notation, we now express $\bar{\Sigma}$ of interest as
\[
\bar{\Sigma}=\widehat{\Sigma}+E,
\]
where the (symmetric) perturbation matrix $E\in\R^{m\times m}$ satisfies $E_{ii}=0$ for $1\le i\le m$, and $|E_{ij}|=|E_{ji}|\le \eta$ for $1\le i<j\le m$. We will bound the spectrum of $\bar{\Sigma}$ away from zero, using Wielandt-Hoffman inequality, Theorem~\ref{thm:wielandt-hoffman}. %We will in fact use a weaker version of Theorem~\ref{thm:wielandt-hoffman} where the right-hand-side is replaced instead with the Frobenius norm. 
To that end, let $\lambda_1=1+(m-1)\beta\ge \lambda_2=\cdots=\lambda_m=1-\beta>0$ denotes the eigenvalues of $\widehat{\Sigma}$; and let $\mu_1\ge \mu_2\ge \cdots \ge \mu_m$ denotes the eigenvalues of $\bar{\Sigma}=\widehat{\Sigma}+E$. Then, Theorem~\ref{thm:wielandt-hoffman} yields
\[
\sum_{1\le j\le m}\left(\mu_j-\lambda_j\right)^2 \le \|E\|_F^2\le m(m-1)\eta^2<(m\eta)^2= \left(\frac{1-\beta}{2}\right)^2,
\]
where we use the facts $E_{ii}=0$, $|E_{ij}|\le \eta$ for the second inequality; and \eqref{eq:eta-1-beta-over2m} for the last  equality. With this, 
\[
\frac{1-\beta}{2}>|\mu_m-\lambda_m| \Rightarrow \mu_m>\frac{1-\beta}{2}=\nu_n,
\]
using \eqref{eq:beta:1-2nu-n}. Note that, this bound is uniform across all $(\sigma^{(i)}:1\le i\le m)\in S(\beta,\eta,m)$: no matter which $m-$tuple $(\sigma^{(i)}:1\le i\le m)\in S(\beta,\eta,m)$ is chosen, the determinant of the (induced) covariance matrix $\Sigma$ satisfies
$|\bar{\Sigma}|>\nu_n^m$. Consequently,
$
|\bar{\Sigma}|^{-\frac12}<\nu_n^{-\frac{m}{2}}$. Having controlled the determinant of $\bar{\Sigma}$, we now return back to the original covariance matrix $\Sigma$ as per \eqref{eq:original-cov-sigma}.
Note that, under the aforementioned choice of parameters, $\bar{\Sigma}$ is invertible. Furthermore, $\bar{\Sigma}$ is, by construction, a covariance matrix thus has non-negative eigenvalues $\lambda_1(\bar{\Sigma})\ge \cdots \ge \lambda_m(\bar{\Sigma})>0$ with
\[
m={\rm trace}(\bar{\Sigma}) \ge m\lambda_m(\bar{\Sigma}).
\]
Thus we have \begin{equation}\label{eq:min-lambda-bar-Sigma-le2}
    \lambda_m(\bar{\Sigma})\le 1.
\end{equation}
Now, observe that as $\bar{\Sigma}$ is positive semidefinite, so is $A\bar{\Sigma}A$, appearing in the equation \eqref{eq:original-cov-sigma}. Furthermore, as $1-\gamma_i^2\ge 0$ for $1\le i\le m$, the matrix $I-A^2$ is positive semidefinite as well. We now study the smallest eigenvalue $\lambda_m(\Sigma)$. 
\begin{lemma}
For any choices of $\tau_1,\dots,\tau_m\in\mathcal{I}$, it is the case that $\lambda_m(\Sigma)\ge \lambda_m (\bar{\Sigma})$. Hence,
\[
|\Sigma|\ge (\lambda_m (\bar{\Sigma}))^m>\nu_n^m>0,
\]
which is independent of the indices $\tau_1,\dots,\tau_m$. 
\end{lemma}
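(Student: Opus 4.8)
The plan is to establish the eigenvalue inequality $\lambda_m(\Sigma) \ge \lambda_m(\bar\Sigma)$ via the Courant-Fischer variational characterization, exactly mirroring the argument already used in Lemma~\ref{lemma:det-low-bd} in the proof of Theorem~\ref{thm:main-eps-energy}. First I would recall that for a Hermitian matrix $\Sigma \in \R^{m\times m}$ the smallest eigenvalue satisfies $\lambda_m(\Sigma) = \inf_{v:\|v\|_2=1} v^T \Sigma v$. Using the decomposition $\Sigma = A\bar\Sigma A + (I - A^2)$ from~\eqref{eq:original-cov-sigma}, I would then write, for any unit vector $v$,
\[
v^T \Sigma v = v^T(I-A^2)v + v^T A \bar\Sigma A v = \sum_{1\le i\le m}(1-\gamma_i^2)v_i^2 + (Av)^T \bar\Sigma (Av).
\]
Applying the variational bound $(Av)^T\bar\Sigma(Av) \ge \lambda_m(\bar\Sigma)\|Av\|_2^2 = \lambda_m(\bar\Sigma)\sum_i \gamma_i^2 v_i^2$, I get
\[
v^T \Sigma v \ge \sum_{1\le i\le m}\bigl(1-\gamma_i^2 + \lambda_m(\bar\Sigma)\gamma_i^2\bigr)v_i^2 \ge \lambda_m(\bar\Sigma)\sum_{1\le i\le m}v_i^2 = \lambda_m(\bar\Sigma),
\]
where the last inequality uses $\lambda_m(\bar\Sigma)\le 1$ (established in~\eqref{eq:min-lambda-bar-Sigma-le2}, so that $1-\gamma_i^2+\lambda_m(\bar\Sigma)\gamma_i^2 = 1 - \gamma_i^2(1-\lambda_m(\bar\Sigma)) \ge \lambda_m(\bar\Sigma)$). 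Taking the infimum over all unit $v$ yields $\lambda_m(\Sigma)\ge \lambda_m(\bar\Sigma)$.

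Next I would convert the eigenvalue bound into the determinant bound. Since $\Sigma$ is a covariance matrix it is positive semidefinite with eigenvalues $\lambda_1(\Sigma)\ge\cdots\ge\lambda_m(\Sigma)\ge\lambda_m(\bar\Sigma)>0$, hence
\[
|\Sigma| = \prod_{1\le j\le m}\lambda_j(\Sigma) \ge \lambda_m(\Sigma)^m \ge \lambda_m(\bar\Sigma)^m.
\]
Then I would invoke the Wielandt-Hoffman bound already derived just above the lemma statement, namely $\mu_m = \lambda_m(\bar\Sigma) > \tfrac{1-\beta}{2} = \nu_n$ (using~\eqref{eq:eta-1-beta-over2m} and~\eqref{eq:beta:1-2nu-n}), to conclude $|\Sigma| \ge \lambda_m(\bar\Sigma)^m > \nu_n^m > 0$. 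Crucially, the bound $\nu_n^m$ depends only on $m,\beta,\eta$ and not on the choice of $\tau_1,\dots,\tau_m\in\mathcal{I}$ nor on the particular $m$-tuple $(\sigma^{(i)})$, which is exactly what is needed to justify the union bound over $\mathcal{I}$ in the probability step.

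I do not anticipate a genuine obstacle here: this lemma is essentially a verbatim repeat of Lemma~\ref{lemma:det-low-bd} from the proof of Theorem~\ref{thm:main-eps-energy}, and all the ingredients ($\Sigma = A\bar\Sigma A + (I-A^2)$, positive semidefiniteness of both summands, $\lambda_m(\bar\Sigma)\le 1$, and the Wielandt-Hoffman lower bound $\lambda_m(\bar\Sigma) > \nu_n$) are already in place in the excerpt. The only point requiring mild care is keeping track of the asymptotics: one must remember that $\nu_n = \tfrac{g(n)}{E_n} = o_n(1)$, so unlike the constant $m$ case the determinant lower bound $\nu_n^m$ is \emph{not} bounded away from zero uniformly in $n$ — it decays. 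This is fine because the subsequent probability estimate~\eqref{eq:crude-prob-up-bd} carries the factor $|\Sigma|^{-1/2} \le \nu_n^{-m/2}$, and the remaining task (done in the continuation of the proof, not this lemma) is to check that $\tfrac{m}{2}\log_2(1/\nu_n)$ is of lower order than the dominant $-mE_n$ term in the exponent, which is precisely where the growth condition $g(n) = o(E_n^2/(n\log_2 n))$ on $g$ gets used. So the lemma itself is routine; the delicacy is deferred to the exponent bookkeeping afterward.
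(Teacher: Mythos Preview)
Your proposal is correct and matches the paper's proof essentially line for line: the paper uses the same Courant--Fischer variational characterization, the same decomposition $\Sigma = A\bar\Sigma A + (I-A^2)$, the same inequality $1-\gamma_i^2+\lambda_m(\bar\Sigma)\gamma_i^2 \ge \lambda_m(\bar\Sigma)$ via $\lambda_m(\bar\Sigma)\le 1$, and the same determinant bound $|\Sigma|\ge\lambda_m(\bar\Sigma)^m>\nu_n^m$. Your closing remarks about the decaying lower bound and the subsequent exponent bookkeeping are also accurate.
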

\begin{proof}
Recall the Courant-Fischer-Weyl variational characterization of the smallest singular value $\lambda_m(\Sigma)$ of a Hermitian matrix $\Sigma\in\R^{m\times m}$ \cite{horn2012matrix}:
\[
\lambda_m(\Sigma)= \inf_{v:\|v\|_2=1}v^T\Sigma v.
\]
Notice furthermore that for the matrix $\bar{\Sigma}\in\R^{m\times m}$, and any $v'\in\R^m$, we also have
\begin{equation}\label{eq:bar-sigma-lower-bd}
(v')^T \bar{\Sigma}v' \ge \lambda_m(\bar{\Sigma})\|v'\|_2^2.
\end{equation}
Now let $v\in\R^m$ have unit $\ell_2$ norm ($\|v\|_2=1$). We have,
\begin{align*}
v^T\Sigma v &= v^T(I-A^2)v +v^TA\bar{\Sigma}Av \\
&\ge \sum_{1\le i\le m}(1-\gamma_i^2)v_i^2 + \lambda_m(\bar{\Sigma})\|Av\|_2^2\\
&=\sum_{1\le i\le m}(1-\gamma_i^2+\lambda_m(\bar{\Sigma})\gamma_i^2)v_i^2\\
&\ge \lambda_m(\bar{\Sigma})\|v\|_2^2 \\&= \lambda_m(\bar{\Sigma}),
\end{align*}
where the first equality uses \eqref{eq:original-cov-sigma}, the first inequality uses \eqref{eq:bar-sigma-lower-bd}, and the last inequality uses $\lambda_m(\bar{\Sigma})\le 1$ as established in \eqref{eq:min-lambda-bar-Sigma-le2}. Since for any $v\in\R^m$ with unit norm we have $v^T\Sigma v \ge \lambda_m(\bar{\Sigma})$, we thus take the  infimum over all unit norm $v$ and conclude
\[
\lambda_m(\Sigma)\ge \lambda_m(\bar{\Sigma}).
\]
 Finally
\[
|\Sigma|=\prod_{1\le j\le m}\lambda_j(\Sigma)\ge \lambda_m(\Sigma)^m \ge \lambda_m(\bar{\Sigma})^m>\nu_n^m,
\]
as desired.
\end{proof}
As a consequence of this lemma, we obtain that
\[
|\Sigma|^{-\frac12}<\nu_n^{-\frac{m}{2}}
\]
uniformly for all $(\sigma^{(i)}:1\le i\le m)\in S(\beta,\eta,m)$, and every $\tau_1,\dots,\tau_m\in\mathcal{I}$; meaning that the upper bound depends only on the choice of $m$, and $\nu_n$ induced by the overlap value $\beta$ ($=1-2\nu_n$). 

Equipped with this, we now return to the probability upper bound \eqref{eq:crude-prob-up-bd}:
\begin{align*}
    \mathbb{P}\left((Z_i:1\le i\le m)\in\mathcal{R}\right) &\le 2^{\frac{m}{2}}\pi^{-\frac{m}{2}}|\Sigma|^{-\frac12}2^{-mE_n}\\
    &\le \exp_2\left(\frac{m}{2}-\frac{m}{2}\log_2\pi+\frac{m}{2}\log_2 \frac{1}{\nu_n}-mE_n\right).
\end{align*}
In particular, taking union bound over all choices of $\tau_1,\dots,\tau_m\in \mathcal{I}$, and recalling there are $n^{O(m)}=\exp_2(O(m\log_2 n))$ such choices, we have
\begin{align}
&\mathbb{E}\left[\ind  \left\{\exists \tau_1,\dots,\tau_m\in\mathcal{I}:\frac{1}{\sqrt{n}}|\ip{\sigma^{(i)}}{Y_i(\tau_i)}| \le 2^{-E_n},1\le i\le m\right\}\right]\\
&=\exp_2\left(O(m\log_2 n)+\frac{m}{2}-\frac{m}{2}\log_2\pi+\frac{m}{2}\log_2 \frac{1}{\nu_n}-mE_n\right)\label{eq:this-will-later-be-super-useful}.
\end{align}
%\begin{tcolorbox}
%\paragraph{ Justification for $E_n = \omega\left(n\cdot \log^{-1/5+\epsilon}n \right)$.} The entire analysis of the probabilistic term remains intact, and in particular \eqref{eq:this-will-later-be-super-useful} still remains valid.
%\end{tcolorbox}

We are now ready to upper bound the expectation.

{\bf Step 3. Upper bounding the expectation.}
Using the linearity of expectation, and the fact $O\left(ng(n)/\phi(n)^2\right)=o(n)$ following from \eqref{eq:g(n)-up-low-bd} we have
\begin{equation}\label{eq:a-very-loong-expression-for-the-firdt-momnet-m-ogp}
\mathbb{E}[N(\beta,\eta,m,E_n,\mathcal{I})]\le \exp_2\left(n+o(n)+O(m\log_2 n)+\frac{m}{2}-\frac{m}{2}\log_2\pi+\frac{m}{2}\log_2 \frac{1}{\nu_n}-mE_n\right).
\end{equation}
where we used the probability/expectation bound above, and the cardinality bound per \eqref{eq:cardinality-to-be-used-inMarkov}. Keep in mind that $mE_n=2n$ per \eqref{eq:m-chosen-this-way}. Thus
$n-mE_n = -n$.
Now, since $E_n=\omega_n(1)$, we simultaneously have
\[
\frac{m}{2},\frac{m}{2}\log_2 \pi =o(mE_n)=o(n).
\]
%Furthermore, since $g(n) = o(\phi(n)^2)$ per \eqref{eq:g(n)-up-low-bd}, we also have
%\[
%O\left(\frac{ng(n)}{\phi(n)^2}\right)=o(n).
%\]
Since $E_n = \omega(\sqrt{n\log_2 n})$, we also have
\[
O(m\log_2 n) = o(mE_n)=o(n).
\]
Finally, we study the $\displaystyle\frac{m}{2}\log_2 \frac{1}{\nu_n}$ term. Recalling $\nu_n$ from \eqref{eq:nu-n-g-over-f}, we obtain
\begin{align*}
\log_2 \frac{1}{\nu_n}&=\left(\frac12\log_2 n+\frac12 \log_2\log_2 n +\log_2 \phi(n)-\log_2 g(n)\right)\\
&=O(\log_2 n),
\end{align*}
using \eqref{eq:phi(n)-up-low-bd} and \eqref{eq:g(n)-up-low-bd}. Applying now \eqref{eq:m-chosen-this-way} and the fact $\phi(n)=\omega_n(1)$, we obtain
\[
m\log_2 \frac{1}{\nu_n} = O\left(\frac{\sqrt{n\log_2 n}}{\phi(n)}\right)=o\left(\sqrt{n\log_2 n}\right)=o(n).
\]
Consequently,
\[
\mathbb{E}[N(\beta,\eta,m,E_n,\mathcal{I})]\le \exp_2\left(-n+o(n)\right)=\exp_2(-\Theta(n)).
\]
%\begin{tcolorbox}
%\paragraph{ Justification for $E_n = \omega\left(n\cdot \log^{-1/5+\epsilon}n\right)$.} % Recall now $\nu_n$ from \eqref{eq:nu-n-g-over-f}, which is $g(n)/E_n$. In particular, using~\eqref{eq-alt:g(n)-and-z(n)} as well as the definition of $s(n)$ via $E_n = n\cdot s(n)$, 
%\[
%\nu_n = \frac{g(n)}{E_n} = z(n) = s(n)^{1+\frac{\epsilon}{8}}.
%\]
%Using~\eqref{eq:asymp-exp-for-s-and-z}, we obtain

%Consequently,  the dominant contribution to \eqref{eq:this-will-later-be-super-useful} is still $-mE_n$. That is,~\eqref{eq:this-will-later-be-super-useful} now becomes 
%\begin{equation}\label{eq:modified-pb-bd}
%\exp_2\left(O(m\log_2 n)+\frac{m}{2}-\frac{m}{2}\log_2\pi+\frac{m}{2}\log_2 \frac{1}{\nu_n}-mE_n\right) = \exp_2\left(-mE_n+o\left(mE_n\right)\right).
%\end{equation} 

%We finally verify that the probability bound appearing in~\eqref{eq:modified-pb-bd} dominates the cardinality bound~\eqref{eq:modified-cardinality-bound}, hence the expectation is still of order $\exp(-\Theta(n))$. Recalling that $m=2\frac{n}{E_n}$ and $E_n = ns(n)$; it suffices to verify
%\[
%mns(n)^{1+\frac{\epsilon}{8}} \log \frac{1}{s(n)} = o\left(mE_n\right) = o\left(mns(n)\right),
%\]
%which boils down verifying
%\[
%s(n)^{\frac{\epsilon}{8}}\log \frac{1}{s(n)} = o(1). 
%\]
%But this indeed holds since $s(n)=E_n/n=o(1)$ as $E_n = o(n)$. 
%\end{tcolorbox} 
Finally, applying Markov's inequality, we conclude
\[
\mathbb{P}\left(\mathcal{S}(\beta,\eta,m,E_n)\ne\varnothing\right)\le \exp\left(-\Theta(n)\right),
\]
as claimed. This concludes the proof when
\[
\omega\left(\sqrt{n\log_2 n}\right)\le E_n\le o(n).
\]
\end{proof}
\subsubsection*{Case 2: The Special Case, $E_n = \omega\left(n\cdot \log^{-1/5+\epsilon} n\right)$.}
\begin{proof}
Let $E_n =  \omega\left(n\cdot \log^{-1/5+\epsilon} n\right)$ for an $\epsilon\in(0,\frac15)$. 
We set 
\[
g(n) \triangleq n \cdot \left(\frac{E_n}{n}\right)^{2+\frac{\epsilon}{8}}. 
\]
Take $m,\beta$, and $\eta$ per~\eqref{eq:beta-eta-m-n-main-supconstant}, that is
\[
m=\frac{2n}{E_n}, \quad \beta = 1-\frac{2g(n)}{E_n},\quad\text{and}\quad \eta = \frac{g(n)}{2n}.
\]
We next introduce several auxiliary quantities. Set $s(n) \triangleq E_n/n$, and $z(n) = s(n)^{1+\frac{\epsilon}{8}}$. In terms of $s(n)$ and $z(n)$; the parameters $g(n),m,\beta,\eta$ chosen as above satisfy now the relations:
\begin{equation}\label{eq-alt:g(n)-and-z(n)}
    g(n) = n\cdot s(n)\cdot z(n) \quad\text{where}\quad z(n)=s(n)^{1+\frac{\epsilon}{8}},
\end{equation}
\begin{equation}\label{eq-alt:m(n)-2over-s(n)}
    m =\frac{2n}{E_n} = \frac{2}{s(n)},
\end{equation}
and  
\begin{equation}\label{eq-alt:eta(n)-s(n)-z(n)}
   \beta = 1-2\frac{g(n)}{E_n} = 1-2z(n)\quad\text{and}\quad  \eta  = \frac{g(n)}{2n} = \frac{s(n)z(n)}{2}. 
\end{equation}
We also have
\begin{equation}\label{eq:asymp-exp-for-s-and-z}
\omega\left(\log^{-1/5+\epsilon}n \right)\le s(n) \le o(1).
\end{equation}
Moreover, analogous to previous case, define $\nu_n = g(n)/E_n$, which now becomes
\begin{equation}\label{eq:nu-n-required-for-ramsey}
\nu_n = \frac{g(n)}{E_n} = z(n) = s(n)^{1+\frac{\epsilon}{8}}.
\end{equation}
This is clearly $o_n(1)$ due to~\eqref{eq:asymp-exp-for-s-and-z}. Furthermore, the length of the interval $[n\beta-n\eta,n\beta]$ is $\Theta(n\eta)$ which is $\Theta(g(n)) =\omega_n(1)$. Hence, the interval is not vacuous. Moreover, in terms of this parameter, the expressions $\beta=1-2\nu_n$, $\eta=\nu_n/m$ and $\eta = \frac{1-\beta}{2m}$ are still valid.

Most of the steps of the proof remains the same as the previous case. Below, we only point out the necessary changes.
\paragraph{ Step 1. Cardinality upper Bound.} The expressions~\eqref{eq:superconst-ogp-card-up-bd} and~\eqref{eq:bin-coeff-max-up-bd} for the counting term remain the same. We now analyze the $n\frac{1-\beta+\eta}{2}$ term. Using~\eqref{eq-alt:eta(n)-s(n)-z(n)}, we have
\begin{align}
n\frac{1-\beta+\eta}{2} & = \frac{n}{2}\left(1-(1-2z(n)) +\frac{s(n)z(n)}{2}\right)\\
& = \frac{n}{2}\left(2z(n) + \frac{s(n)z(n)}{2}\right) \\
& = nz(n) \left(1+\frac{s(n)}{4}\right)\\
& = nz(n)\left(1+o_n(1)\right)\label{eq:we-gonna-use-this-later},
\end{align}  
where the last step uses~\eqref{eq:asymp-exp-for-s-and-z}. Since $z(n) = s(n)^{1+\frac{\epsilon}{8}} = o(1)$ as well, we obtain $n\frac{1-\beta+\eta}{2}$ to be $o(n)$. Hence, Lemma~\ref{lemma:binomial-k-o(n)} applies. Applying it with,
\[
k=n\frac{1-\beta+\eta}{2} = nz(n)(1+o_n(1)) = ns(n)^{1+\frac{\epsilon}{8}}(1+o_n(1)),
\]
the expression~\eqref{eq:n-choose-n-1-beta-eta} modifies to
\begin{align}
\binom{n}{n\frac{1-\beta+\eta}{2}} &= \exp_2\left(\left(1+o_n(1)\right)ns(n)^{1+\frac{\epsilon}{8}}\log \frac{1}{s(n)^{1+\frac{\epsilon}{8}}}\right) \\
& = \exp_2\left(O\left(ns(n)^{1+\frac{\epsilon}{8}}\log \frac{1}{s(n)}\right)\right)\label{eq:ramsey-exponent-bin-coeff}.
\end{align} 
Next,  $\Bigl |[n\beta-n\eta,n\beta]\cap \mathbb{Z}\Bigr| = O(n\eta) = O(g(n))$, and thus this term contributes to $O\left(\log_2 g(n)\right)$ in the exponent. Using $g(n) = ns(n)^{2+\frac{\epsilon}{8}}$ per~\eqref{eq-alt:g(n)-and-z(n)} as well as $s(n) \ge \omega\left(\log^{-\frac15+\epsilon} n \right)$ per~\eqref{eq:asymp-exp-for-s-and-z}, we find
\[
n\log^{-O(1)} n \le g(n)\le n \implies \log_2 g(n) = \Theta\left(\log_2 n\right). 
\]
Using $1/s(n) = \omega_n(1)$, we have
\begin{equation}\label{eq:ramsey-exp-card-intermediate-bd}
ns(n)^{1+\frac{\epsilon}{8}}\log \frac{1}{s(n)}  = \omega\left(ns(n)^{1+\frac{\epsilon}{8}}\right) = \omega\Bigl(\log_2 n\Bigr) = \omega\Bigl(\log_2 g(n) \Bigr).
\end{equation}
Consequently,
\begin{align}
\sum_{\rho:\beta-\eta\le \rho\le \beta,\rho n\in\mathbb{Z}}\binom{n}{n\frac{1-\rho}{2}} &\le O(g(n)) \cdot \binom{n}{n\frac{1-\beta+\eta}{2}}\label{eq:ramsey-up-bd-step33} \\
& \le \exp_2\left(O\left(ns(n)^{1+\frac{\epsilon}{8}}\log \frac{1}{s(n)}\right)\right)\label{eq:ramsey-up-bd-step3},
\end{align}
where the second step uses~\eqref{eq:ramsey-exponent-bin-coeff} and~\eqref{eq:ramsey-exp-card-intermediate-bd}. Next,
\begin{equation}\label{eq:temp-temp-temp-ramsey}
2^n \left( \sum_{\rho:\beta-\eta\le \rho\le \beta:\rho n\in\mathbb{Z}} \binom{n}{n\frac{1-\rho}{2}}\right)^{m-1} = \exp_2\left(n+(m-1) \log_2\left(\sum_{\rho:\beta-\eta\le \rho\le \beta,\rho n\in\mathbb{Z}}\binom{n}{n\frac{1-\rho}{2}} \right)\right).
\end{equation}
Applying~\eqref{eq:ramsey-up-bd-step3} in~\eqref{eq:temp-temp-temp-ramsey}, we obtain the following modification for the cardinality bound appearing in~\eqref{eq:cardinality-to-be-used-inMarkov} :
\begin{equation}\label{eq:modified-cardinality-bound}
\left|S(\beta,\eta,m) \right| \le  \exp_2\left(n+O\left(mns(n)^{1+\frac{\epsilon}{8}}\log \frac{1}{s(n)}\right)\right).
\end{equation}
\paragraph{ Step 2. Upper bounding the probability.} The entire analysis of the probabilistic term remains intact. In particular, \eqref{eq:this-will-later-be-super-useful} remains valid. (The $\nu_n$ term appearing in~\eqref{eq:this-will-later-be-super-useful} is now given by~\eqref{eq:nu-n-required-for-ramsey}.)

We are now ready to upper bound the first moment.

\paragraph{ Step 3. Upper bounding the expectation.}
Recalling~\eqref{eq:asymp-exp-for-s-and-z} and~\eqref{eq:nu-n-required-for-ramsey}, we obtain
\[
\log\frac{1}{\nu_n} = \Theta\left(\log\frac{1}{s(n)} \right) = O\left(\log \log n\right) =o\left(E_n\right). 
\]
Consequently, the term $\frac{m}{2}\log_2 \frac{1}{\nu_n}$ appearing in~\eqref{eq:this-will-later-be-super-useful} is $o(mE_n)$. The remaining terms, $O(m\log_2 n)$, $\frac{m}{2}$, and $\frac{m}{2}\log_2 \pi$, are still $o(mE_n)$ as in the first case, since $E_n = \omega(1)$. Hence,~\eqref{eq:this-will-later-be-super-useful} becomes
\begin{equation}\label{eq:modified-pb-bd}
\exp_2\left(O(m\log_2 n)+\frac{m}{2}-\frac{m}{2}\log_2\pi+\frac{m}{2}\log_2 \frac{1}{\nu_n}-mE_n\right) = \exp_2\left(-mE_n+o\left(mE_n\right)\right).
\end{equation} 
After incorporating the modified cardinality bound~\eqref{eq:modified-cardinality-bound} into the probability bound~\eqref{eq:modified-pb-bd}, the expression~\eqref{eq:a-very-loong-expression-for-the-firdt-momnet-m-ogp} for the first moment now becomes \begin{equation}\label{eq2:a-very-loong-expression-for-the-firdt-momnet-m-ogp}
\mathbb{E}[N(\beta,\eta,m,E_n,\mathcal{I})]\le \exp_2\left(n+O\left(mns(n)^{1+\frac{\epsilon}{8}}\log \frac{1}{s(n)}\right)-mE_n+o(mE_n)\right).
\end{equation}
Recalling $m=2n/E_n$, this bound is
\begin{equation}\label{eq:first-mom-bd-ramsey-finale}
\exp_2\left(-n+O\left(mns(n)^{1+\frac{\epsilon}{8}}\log \frac{1}{s(n)}\right) + o(n)\right).
\end{equation}
To finish the proof, that is to establish $\mathbb{E}[N(\beta,\eta,m,E_n,\mathcal{I})]\le \exp(-\Theta(n))$, it suffices to verify
\[
O\left(mns(n)^{1+\frac{\epsilon}{8}}\log \frac{1}{s(n)}\right) = o(n). 
\]
Recall by~\eqref{eq-alt:m(n)-2over-s(n)} that $m=2s(n)^{-1}$. Hence
\[
mns(n)^{1+\frac{\epsilon}{8}}\log \frac{1}{s(n)}  = n\left(2s(n)^{\frac{\epsilon}{8}} \log \frac{1}{s(n)} \right) =o(n),
\]
where we used the fact that $s(n) = o_n(1)$ per~\eqref{eq-alt:eta(n)-s(n)-z(n)} and thus
\[
2s(n)^{\frac{\epsilon}{8}} \log \frac{1}{s(n)} = o_n(1), \qquad \forall \epsilon>0.
\]
Hence, the expression in~\eqref{eq:first-mom-bd-ramsey-finale} is indeed $\exp_2\left(-\Theta(n)\right)$. This concludes the proof when
\[
\omega\left(n \cdot \log^{-\frac15+\epsilon} n\right)\le E_n\le o(n).
\]

%\begin{theorem}{\cite{hoffman1953variation},\cite{horn2012matrix}}\label{thm:hoffman-wielandt}
%Let $A,B\in\mathbb{R}^{n\times n}$ 
%\end{theorem}
\end{proof}

\subsection{Proof of Theorem~\ref{thm:local-opt}}\label{sec:pf-thm-local-opt}
We first have
\[
N_\epsilon = \sum_{\sigma\in\{\pm 1\}^n} \ind\left\{n^{-\frac12}|\ip{\sigma}{X}| = O(2^{-n\epsilon}),\sigma \text{ is locally optimal}\right\}.
\]
Hence,
\[
\E{N_\epsilon} = 2^n \mathbb{P}\left(n^{-\frac12}|\ip{\sigma}{X}| = O(2^{-n\epsilon}),\sigma \text{  is locally optimal}\right).
\]
To start with, notice $n^{-\frac12}\ip{\sigma}{X} \distr \mathcal{N}(0,1)$, thus
\begin{align*}
    \mathbb{P}\left(n^{-\frac12}|\ip{\sigma}{X}| = O(2^{-n\epsilon}),\sigma \text{  is locally optimal}\right)&\le \mathbb{P}\left(n^{-\frac12}|\ip{\sigma}{X}| = O(2^{-n\epsilon})\right) \\
    &\le C2^{-n\epsilon},
\end{align*}
where $C>0$ is some absolute constant. Hence,
\begin{equation}\label{eq:limsup}
\E{N_\epsilon}\le C2^{n(1-\epsilon)}\Rightarrow
\limsup_{n\to\infty}\frac1n \log_2 \E{N_\epsilon}\le 1-\epsilon.
\end{equation}
We now investigate the lower bound. To that end, let $Y_i\triangleq \sigma_i X_i$. Note that $Y_i$, $1\le i\le n$, is a collection of i.i.d. standard normal random variables. Now, local optimality of $\sigma$ per Definition~\ref{def:loc-opt}, namely, $\ip{\sigma^{(i)}}{X}^2 \ge \ip{\sigma}{X}^2$ for $1\le i\le n$ is equivalent to
\[
\sum_{j:1\le j\le n,j\ne i}Y_iY_j \le 0
\]
for $1\le i\le n$. With this, we arrive at
\[
\left\{n^{-\frac12}|\ip{\sigma}{X}| = O(2^{-n\epsilon}),\sigma\text{ is locally optimal}\right\} = \bigcap_{1\le i\le n}\left\{\sum_{1\le j\le n:j\ne i}Y_iY_j \le 0\right\}\cap \left\{\left|\frac{1}{\sqrt{n}}\sum_{1\le j\le n}Y_j \right|\le  2^{-n\epsilon}\right\},
\]
where we ignored the constant hidden under $O(2^{-n\epsilon})$ for convenience.

Observe now the following union of events, that are disjoint up to a measure zero set:
\[
\left\{\left|\frac{1}{\sqrt{n}}\sum_{1\le j\le n}Y_j \right|\le  2^{-n\epsilon}\right\}=\left\{-2^{-n\epsilon}\le \frac{1}{\sqrt{n}}\sum_{1\le j\le n}Y_j \le 0\right\}\cup \left\{0\le \frac{1}{\sqrt{n}}\sum_{1\le j\le n}Y_j \le  2^{-n\epsilon}\right\}.
\]
This brings us
\begin{align*}
    &\left\{n^{-\frac12}|\ip{\sigma}{X}| = O(2^{-n\epsilon}),\sigma\text{ is locally optimal}\right\}\\ &=\underbrace{\left(\bigcap_{1\le i\le n}\left\{\sum_{1\le j\le n:j\ne i}Y_iY_j \le 0\right\}\cap\left\{ \frac{1}{\sqrt{n}}\sum_{1\le j\le n}Y_j \in[-2^{-n\epsilon},0]\right\}\right)}_{\triangleq \mathcal{E}_1}\\
    &\bigcup \underbrace{\left(\bigcap_{1\le i\le n}\left\{\sum_{1\le j\le n:j\ne i}Y_iY_j \le 0\right\}\cap\left\{ \frac{1}{\sqrt{n}}\sum_{1\le j\le n}Y_j \in[0,2^{-n\epsilon}]\right\}\right)}_{\triangleq \mathcal{E}_2}.
\end{align*}
Note that  $(Y_1,\dots,Y_n)\distr (-Y_1,\dots,-Y_n)$. From here $\mathbb{P}(\mathcal{E}_1)=\mathbb{P}(\mathcal{E}_2)$. Furthermore, the events $\mathcal{E}_1$ and $\mathcal{E}_2$ are disjoint, up to a set of measure zero; thus $\mathbb{P}(\mathcal{E}_1\cup\mathcal{E}_2) = 2\mathbb{P}(\mathcal{E}_2)$. 

We now compute $\mathbb{P}(\mathcal{E}_2)$. For convenience set $S\triangleq \sum_{1\le j\le n}Y_j$. Then the condition $\sum_{1\le j\le n,j\ne i}Y_iY_j\le 0$ is equivalent to $Y_i(Y_i-S)\ge 0$. Namely,
\[
\bigcap_{1\le i\le n}\left\{\sum_{1\le j\le n:j\ne i}Y_iY_j\le 0\right\}\cap \left\{n^{-\frac12}S\in[0,2^{-n\epsilon}]\right\} = \bigcap_{1\le i\le n}\left\{Y_i\notin[0,S]\right\}\cap \left\{n^{-\frac12}S\in[0,2^{-n\epsilon}]\right\}.
\]
Define now the auxiliary variables
\[
\bar{Y_i}\triangleq Y_i-\frac1n S,\quad 1\le i\le n.
\]
Clearly, $(\bar{Y}_1,\dots,\bar{Y}_n)$ and $S$ are jointly normal. Notice, furthermore, that for any fixed $1\le i\le n$, $\E{\bar{Y_i}S} = 0=\E{\bar{Y_i}}=\E{S}$. This yields $(\bar{Y}_1,\dots,\bar{Y}_n)$ and $S$ are independent. Furthermore, $n^{-\frac12}S\distr\mathcal{N}(0,1)$.

With these, we obtain
\begin{align}
\mathbb{P}(\mathcal{E}_2) &= \mathbb{P}\left(\bigcap_{1\le i\le n}\left\{\bar{Y}_i\notin \left[-\frac1n S,\frac{n-1}{n}S\right]\right\}\cap \left\{n^{-\frac12}S\in[0,2^{-n\epsilon}]\right\}\right)\label{eq:event-e2}\\
&=\int_{z\in[0,2^{-n\epsilon}]}\mathbb{P}\left(\bigcap_{1\le i\le n}\left\{\bar{Y}_i\notin \left[-\frac1n S,\frac{n-1}{n}S\right]\right\}\given[\Big]n^{-\frac12}S =z\right)\varphi(z)\; dz\label{eq:law-of-total-prob}\\
&=\int_{z\in[0,2^{-n\epsilon}]}\mathbb{P}\left(\bigcap_{1\le i\le n}\left\{\bar{Y}_i\notin \left[-\frac{1}{\sqrt{n}} z,\frac{n-1}{\sqrt{n}}z\right]\right\}\right)\varphi(z)\; dz\label{eq:used-independence}\\
&=\int_{z\in[0,2^{-n\epsilon}]}\left(1-\mathbb{P}\left(\bigcup_{1\le i\le n}\left\{\bar{Y}_i\in \left[-\frac{1}{\sqrt{n}} z,\frac{n-1}{\sqrt{n}}z\right]\right\}\right)\right)\varphi(z)\; dz \label{eq:took-complement}\\
&\ge \int_{z\in[0,2^{-n\epsilon}]}\left(1-n\mathbb{P}\left(\bar{Y}_1\in \left[-\frac{1}{\sqrt{n}} z,\frac{n-1}{\sqrt{n}}z\right]\right)\right)\varphi(z)\; dz\label{eq:used-union-bd}\\
&\ge \int_{z\in[0,2^{-n\epsilon}]}\left(1-n\mathbb{P}\left(\mathcal{N}(0,1)\in \left[-\frac{1}{\sqrt{n-1}} 2^{-n\epsilon},\sqrt{n-1}\cdot 2^{-n\epsilon}\right]\right)\right)\varphi(z)\; dz\label{eq:adjusted-distr-of-bar-Y_1}\\
&\ge \int_{z\in[0,2^{-n\epsilon}]}\left(1-\frac{n^2}{\sqrt{n-1}}2^{-n\epsilon}\right)\varphi(z)\; dz\label{eq:trivially-upper-bd-normal}\\
&=\left(1-\frac{n^2}{\sqrt{n-1}}2^{-n\epsilon}\right)(2\pi)^{-\frac12}2^{-n\epsilon}(1+o_n(1))\label{eq:this-is-the-final-line}\\
&=(2\pi)^{-\frac12}(1+o_n(1))2^{-n\epsilon}\label{eq:really-final}.
\end{align}
We now justify each of these lines, where $\varphi(z)\triangleq (2\pi)^{-\frac12}\exp(-z^2/2)$ being the  standard normal density. \eqref{eq:event-e2} is the definition of $\mathcal{E}_2$; \eqref{eq:law-of-total-prob} follows from the law of total probability; \eqref{eq:used-independence} uses the fact that the random vector $(\bar{Y}_i:1\le i\le n)$ and $S$ are independent; \eqref{eq:took-complement} uses De Morgan's law; \eqref{eq:used-union-bd} uses union bound; \eqref{eq:adjusted-distr-of-bar-Y_1} uses the fact that $\bar{Y}_1 \distr \mathcal{N}(0,\frac{n-1}{n})$ where $\mathcal{N}(0,1)$ is standard normal; \eqref{eq:trivially-upper-bd-normal} uses the trivial upper bound $\mathbb{P}(\mathcal{N}(0,1)\in I) \le |I|$ for any interval $I$;  \eqref{eq:this-is-the-final-line} uses the fact that in the interval $[0,2^{-n\epsilon}]$, $\varphi(z) =(2\pi)^{-\frac12}(1+o_n(1))$; and finally $\eqref{eq:really-final}$ uses the fact $1-n^2(n-1)^{-\frac12}2^{-n\epsilon}=1+o_n(1)$.
 
Therefore, using \eqref{eq:really-final} we arrive at
\[
\mathbb{P}\left(n^{-\frac12}|\ip{\sigma}{X}| = O(2^{-n\epsilon}),\sigma\text{ is locally optimal}\right)=\mathbb{P}(\mathcal{E}_1\cup\mathcal{E}_2) = 2\mathbb{P}(\mathcal{E}_2) \ge 2(2\pi)^{-\frac12}(1+o_n(1))2^{-n\epsilon}.
\]
With this, we conclude,
\[
\mathbb{E}[N_\epsilon]=2^n\mathbb{P}\left(n^{-\frac12}|\ip{\sigma}{X}| = O(2^{-n\epsilon}),\sigma\text{ is locally optimal}\right) \ge 2(2\pi)^{-\frac12}(1+o_n(1))2^{n(1-\epsilon)}.
\]
Thus,
\begin{equation}\label{eq:liminf}
    \liminf_{n\to\infty}\frac1n \log_2 \E{N_\epsilon} \ge 1-\epsilon.
\end{equation}
Finally, we arrive at the desired conclusion by combining \eqref{eq:limsup} and \eqref{eq:liminf}:
\[
\lim_{n\to\infty}\frac1n \log_2 \E{N_\epsilon}=1-\epsilon.
\]
\subsection{Proof of Theorem~\ref{thm:Main}}\label{sec:pf-thm-Main}
The proof of Theorem~\ref{sec:pf-thm-Main} uses several interesting ideas. In order to present them in a coherent way, we first provide an informal outline sketching the proof.
\subsubsection*{Outline of the Proof of Theorem~\ref{thm:Main}}
Fix $E_n$ (with prescribed growth condition) corresponding to the exponent of energy level $2^{-E_n}$ we want to rule out. We use the $m-$OGP property established in Theorem~\ref{thm:m-ogp-superconstant-m}. Specifically, let $m\in\mathbb{N}$ and  $1>\beta>\eta>0$ be the parameters prescribed by Theorem~\ref{thm:m-ogp-superconstant-m} for this choice of $E_n$.
\begin{itemize}
    \item We first reduce the proof to the case of deterministic algorithms. That is, instead of considering $\A:\R^n\times \Omega\to\bincube$, we find a $\omega^*\in\Omega$, set $\A(\cdot)\triangleq\A(\cdot,\omega^*)$; and consider instead this deterministic choice $\A:\R^n\to\bincube$ in the remainder.
    \item We then study a certain high-probability event. This event will establish that for any $m-$tuple $\left(\sigma^{(i)}:1\le i\le m\right)$ of spin configurations that are near-optimal with respect to \emph{independent instances} $X_i\distr \mathcal{N}(0,I_n)$, $1\le i\le m$, there is a pair $1\le i<j\le m$ such that $\OBar\left(\sigma^{(i)},\sigma^{(j)}\right)$ is contained in an interval of form $[0,1-\eta']$, which is below $[\beta-\eta,\beta]$ interval prescribed by the OGP result, Theorem~\ref{thm:m-ogp-superconstant-m}.
    \item We then set $X_0\distr \mathcal{N}(0,I_n)$ and generate $T$ ``replicas" $X_i\in\R^n$, i.i.d. random vectors each with distribution $\mathcal{N}(0,I_n)$. We then divide $[0,1]$ into $Q$ equal pieces via $0=\tau_0<\tau_1<\cdots<\tau_Q=1$; and interpolate, for each $1\le i\le T$, between $X_0$ and $X_i$ in the  following way:
    \[
    Y_i(\tau_k) =\sqrt{1-\tau_k^2}X_0+\tau_k X_i,\quad 1\le i\le T,\, 0\le k\le Q.
    \]
    The numbers $T$ and $Q$ will be tuned appropriately. 
    \item We next establish that the pairwise ``overlaps" are ``stable" along each interpolation  trajectory: for $1\le i<j\le T$ and $0\le k\le Q-1$, we show
    \[
    \left|\OBar^{(ij)}\left(\tau_k\right)-\OBar^{(ij)}\left(\tau_{k+1}\right)\right|\quad \text{is small}.
    \]
    \item We then use the guarantee on the probability of the success of the algorithm to arrive at a guarantee that the algorithm will produce, for each interpolation trajectory $1\le i\le T$ and time instance $0\le k\le Q$ (that is for $Y_i(\tau_k)\in\R^n$), a solution that is near ground-state: the solution $\A\left(Y_i(\tau_k)\right)\in\bincube$ generated achieves an objective value $2^{-E_n}$ for \eqref{eq:NPP-main}. That is,
    \[
    \frac{1}{\sqrt{n}}\left|\ip{Y_i(\tau_k)}{\A\left(Y_i(\tau_k)\right)}\right|=2^{-E_n}.
    \]
    \item We then take a  union bound over all subsets $S=\{i_1,\dots,i_m\}$ of $[T]$ of cardinality $|S|=m$ to extend the previous high-probability event (the event pertaining the spin configurations that are near ground with respect to independent instances)  when the indices come from the set $S$. Now, $\tau=0$ in the beginning of the interpolation. Thus, it is the case that for every $1\le i<j\le T$,
    \[
    Y_i(\tau) = Y_j(\tau)\implies \A(Y_i(\tau))=\A(Y_i(\tau)),
    \]
    when $\tau=0$. On the other hand, due to the previous property applied to this subset $\mathcal{S}$, there exists indices $1\le k<\ell \le m$ such that the overlap between $i_k$ and $i_\ell$ is eventually below $\beta-\eta$. Since the overlaps are stable, that is, they do not change \emph{abruptly}, this implies that there is a time $\tau$ such that the overlap between $\A(Y_{i_k}(\tau))$ and $\A(Y_{i_\ell}(\tau))$ is contained in $(\beta-\eta,\beta)$. 
    \item Equipped with this, we then construct a certain graph $\Gr=(V,E)$. Specifically, we let $|V|=T$ where each vertex corresponds to a replica (i.e., an interpolation trajectory); and for $1\le i<j\le m$, we let $(i,j)\in E$ if there is a time $\tau$ such that the overlap between $Y_i(\tau)$ and $Y_j(\tau)$ is contained in $(\beta-\eta,\beta)$. Moreover, each edge $(i,j)$ is \emph{colored} with one of $Q$ different colors: color the edge $(i,j)\in E$ with color $1\le t\le Q$ if $\tau_t$ is the first time such that the overlap between $\A\left(Y_i(\tau_t)\right)$ and $\A\left(Y_j(\tau_t)\right)$ is contained in $(\beta-\eta,\beta)$. With this, the graph has following properties. For every subset $S\subset V$ of $|S|=m$ vertices, there exists $1\le i_S<j_S\le T$ such that $(i_S,j_S)\in E$. Moreover, each edge of this graph is colored with one of $Q$ colors. We then establish, using tools from the extremal graph theory  and the Ramsey theory, that $\Gr$ contains a monochromatic $m-$clique provided that $T$ is sufficiently large.
    \item Finally, if $\Gr$ contains a monochromatic $m-$clique, this means there exists indices $1\le i_1<i_2<\cdots<i_m\le T$ and a time $\tau'\in\{\tau_1,\dots,\tau_Q\}$ such that the overlap between $\A\left(Y_{i_k}\left(\tau'\right)\right)$ and $\A\left(Y_{i_\ell}\left(\tau'\right)\right)$ is contained in $(\beta-\eta,\beta)$ for any $1\le k<\ell \le m$. Setting 
    \[
    \sigma^{(k)}\triangleq \A\left(Y_{i_k}\left(\tau'\right)\right), \quad 1\le k\le m,
    \]
    we then  deduce the $m-$tuple $\left(\sigma^{(k)}:1\le k\le m\right)$ of near ground-state spin configurations $\sigma^{(k)}\in\bincube$ violates the $m-$OGP established in Theorem~\ref{thm:m-ogp-superconstant-m}. This will conclude the proof.
\end{itemize}
Before we formally start proving Theorem~\ref{thm:Main}, we state several auxiliary results.
\subsubsection*{Auxiliary Results from Ramsey Theory and Extremal Graph Theory}
Our first auxiliary result pertains the so-called two-color Ramsey numbers.
\begin{theorem}\label{thm:ramsey-1}
Let $k,\ell\ge 2$ be integers; and $R(k,\ell)$ denotes the smallest $n\in\mathbb{N}$ such that any red/blue (edge) coloring of $K_n$ contains either a red $K_k$ or a blue $K_\ell$. Then
\begin{equation}\label{eq:standard-erdos-szekeres}
R(k,\ell) \le \binom{k+\ell-2}{k-1} = \binom{k+\ell-2}{\ell-1}.
\end{equation}
\end{theorem}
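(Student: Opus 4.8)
\textbf{Proof proposal for Theorem~\ref{thm:ramsey-1}.} The plan is to prove the Erd\H{o}s--Szekeres bound by induction on $k+\ell$, via the classical recurrence $R(k,\ell)\le R(k-1,\ell)+R(k,\ell-1)$. First I would dispose of the base cases $k=2$ or $\ell=2$: in any red/blue coloring of $K_\ell$, either some edge is blue (a blue $K_2$) or every edge is red (a red $K_\ell$ when $\ell=k$, or in general a blue $K_\ell$), which gives $R(2,\ell)=\ell=\binom{\ell}{\ell-1}$, and symmetrically $R(k,2)=k=\binom{k}{k-1}$. These match the claimed bound with equality.

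For the inductive step, assume $k,\ell\ge 3$ and that the bound holds for all smaller values of $k+\ell$. Set $n=R(k-1,\ell)+R(k,\ell-1)$ and take an arbitrary red/blue edge coloring of $K_n$. Fix a vertex $v$; it has $n-1=R(k-1,\ell)+R(k,\ell-1)-1$ incident edges, so by pigeonhole either $v$ is joined by red edges to a set $A$ of at least $R(k-1,\ell)$ vertices, or by blue edges to a set $B$ of at least $R(k,\ell-1)$ vertices. In the first case, applying the definition of $R(k-1,\ell)$ to the coloring restricted to $A$ yields either a blue $K_\ell$ (and we are done) or a red $K_{k-1}$ inside $A$; since every vertex of $A$ is joined to $v$ by a red edge, adjoining $v$ produces a red $K_k$. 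The second case is symmetric, producing either a red $K_k$ or a blue $K_\ell$ by adjoining $v$ to a blue $K_{\ell-1}$ found inside $B$. Hence $R(k,\ell)\le R(k-1,\ell)+R(k,\ell-1)$.

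Combining the recurrence with the inductive hypothesis gives
\[
R(k,\ell)\le \binom{k+\ell-3}{k-2}+\binom{k+\ell-3}{k-1}=\binom{k+\ell-2}{k-1},
\]
where the last equality is Pascal's rule; the alternative form $\binom{k+\ell-2}{\ell-1}$ follows from the symmetry $\binom{N}{j}=\binom{N}{N-j}$. This completes the induction and the proof.

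I do not expect any genuine obstacle here: the argument is entirely classical and the only points requiring care are the correct placement of the base cases (so that the Pascal-rule step is always applied to already-established instances) and a clean statement of the pigeonhole dichotomy on the neighborhood of $v$. If desired, one could instead cite this as the standard Erd\H{o}s--Szekeres theorem, but since the bound is used quantitatively later (e.g.\ in the Ramsey-theoretic construction of the monochromatic clique in the proof of Theorem~\ref{thm:Main}), including the short self-contained induction is preferable.
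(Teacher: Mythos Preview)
Your proposal is correct and follows essentially the same classical Erd\H{o}s--Szekeres argument as the paper: establish the recurrence $R(k,\ell)\le R(k-1,\ell)+R(k,\ell-1)$ via the pigeonhole dichotomy on the neighborhood of a fixed vertex, then conclude the binomial bound by induction and Pascal's rule. One small slip to clean up: in your base case for $R(2,\ell)$ the colors are swapped---you want ``either some edge is red (a red $K_2$) or every edge is blue (a blue $K_\ell$),'' not the other way around.
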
 
\begin{proof}
To that end, we show $R(k,\ell)$ exists for any $k,\ell\in\mathbb{N}$; and moreover for $k,\ell\ge 2$, it holds that
\begin{equation}\label{eq:ES-recursion}
R(k,\ell)\le R(k,\ell-1)+R(k-1,\ell).
\end{equation}
The elegant argument below is due to Erd{\"o}s and Szekeres~\cite{erdos1935combinatorial} and is reproduced herein for completeness. The argument  is via induction on $k+\ell$. The base case is clear. Suppose for every $i,j$ with $i+j\le n-1$ the numbers $R(i,j)$ exist. Now, we consider $R(k,\ell)$ for $k+\ell =n$, $k,\ell \ge 2$. By inductive hypothesis, both $R(k-1,\ell)$ and $R(k,\ell-1)$ exists. Now, let $m\triangleq R(k-1,\ell)+R(k,\ell-1)$, and consider any red/blue (edge) coloring of $K_m$. For any vertex $v\in K_m$, either (a) $v$ is adjacent to at least $R(k-1,\ell)$ vertices through a red edge; or (b) $v$ is adjacent to at least  $R(k,\ell-1)$ vertices through a blue edge. Assume case (a). By inductive hypothesis, any $R(k-1,\ell)$ such neighbors of $v$ contains either a red $K_{k-1}$ or a blue $K_\ell$. Adding $v$, the resulting graph indeed has either a red $K_\ell$ or a blue $K_\ell$. The case (b) is handled similarly. This establishes \eqref{eq:ES-recursion}.

\eqref{eq:standard-erdos-szekeres} now follows from \eqref{eq:ES-recursion} again by induction on $k+\ell$. The base cases are verified easily. Assume $k,\ell\ge 3$. Then by inductive hypothesis
\[
R(k-1,\ell) \le \binom{k+\ell-3}{k-2} \quad\text{and}\quad R(k,\ell-1)\le \binom{k+\ell-3}{k-1}.
\] 
Thus,
\[
R(k,\ell) \le R(k-1,\ell)+R(k,\ell-1) \le \binom{k+\ell-3}{k-2} + \binom{k+\ell-3}{k-1} = \binom{k+\ell-2}{k-1}.
\]
\end{proof}
The second result pertains the so-called multicolor Ramsey numbers.
\begin{theorem}\label{thm:ramsey}
Let $q,m\in\mathbb{N}$. Denote by $R_q(m)$ the smallest $n\in\mathbb{N}$ for which any $q-$coloring of the edges of $K_n$ necessarily contains a monochromatic $K_m$. Then
\begin{equation}\label{eq:q-color-ramzi-up-bd}
R_q(m)\le q^{qm}.
\end{equation} 
\end{theorem}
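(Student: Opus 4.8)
The plan is to establish the bound by a direct greedy (pigeonhole) argument in the spirit of Erd\"os and Szekeres, rather than by iterating a recursion on the number of colours (which would be too lossy to reach the clean bound $q^{qm}$; e.g.\ bootstrapping from Theorem~\ref{thm:ramsey-1} via $R_q(m)\le R_{q-1}(R_2(m))$ already produces a tower). It suffices to treat $q\ge 2$ and $m\ge 2$, since $R_q(1)=1\le q^q$ and only the regime $q\ge 2$ is used later. Moreover it is enough to show that \emph{every} $q$-edge-colouring of $K_n$ with $n=q^{qm}$ contains a monochromatic $K_m$: a $q$-colouring of any larger complete graph restricts to one of $K_{q^{qm}}$, so this yields $R_q(m)\le q^{qm}$.

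The first step would be to build a decreasing chain of vertex sets $V_1\supseteq V_2\supseteq\cdots$ together with vertices $v_i\in V_i$, as follows. Put $V_1=V(K_n)$; given $V_i$ with $|V_i|\ge 2$, pick any $v_i\in V_i$, note that the $|V_i|-1$ edges from $v_i$ into $V_i\setminus\{v_i\}$ carry $q$ colours, and by pigeonhole choose a colour $c_i$ occurring on a set $V_{i+1}\subseteq V_i\setminus\{v_i\}$ with $|V_{i+1}|\ge\lceil(|V_i|-1)/q\rceil$. The structural point, used at the end, is that for all $i<j$ the edge $v_iv_j$ has colour $c_i$ (because $v_j\in V_{i+1}$): the colour of a chain edge depends only on its smaller endpoint.

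The second step is the size bookkeeping, which is exactly where the constant $q^{qm}$ enters. Set $s=q(m-1)+1$. Using $|V_{i+1}|\ge(|V_i|-1)/q$ and that each $|V_i|$ is an integer, one checks by induction that $|V_i|\ge 2q^{\,s-i}$ for $1\le i\le s$ provided $|V_1|\ge 2q^{\,s-1}=2q^{q(m-1)}$, and this lower bound holds since $q^{qm}\ge 2q^{q(m-1)}$ because $q^q\ge 2$ for $q\ge2$. In particular $|V_i|\ge 2$ at every step up to $i=s$, so the construction runs for at least $s$ steps and yields $v_1,\dots,v_s$ and colours $c_1,\dots,c_s$. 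Finally, since $s=q(m-1)+1>q(m-1)$, pigeonhole on the multiset $\{c_1,\dots,c_s\}$ produces a colour $\gamma$ attained at indices $i_1<\cdots<i_m$; then $\{v_{i_1},\dots,v_{i_m}\}$ spans a monochromatic $K_m$ of colour $\gamma$, because for $a<b$ the edge $v_{i_a}v_{i_b}$ has colour $c_{i_a}=\gamma$. This completes the argument.

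The only delicate point---the main obstacle---is the size accounting in the third paragraph: one must verify that halving by a factor $q$ at each greedy step does not exhaust the vertex set before reaching step $s=q(m-1)+1$, and that the resulting crude count still fits under $q^{qm}$. This is what forces the harmless restriction $q\ge2$, and it is the reason $q^{qm}$ (which carries an extra factor $q^q\ge2$ of slack over the bare requirement $2q^{q(m-1)}$) is the natural bound to record. All remaining steps are routine pigeonhole and induction.
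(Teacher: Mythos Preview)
Your proposal is correct and follows exactly the approach the paper indicates: the paper does not spell out a proof but simply states that the bound ``can be shown using a minor modification of the neighborhood-chasing argument given by Erd\H{o}s and Szekeres,'' referring the reader to \cite{conlon2015recent}; your greedy pigeonhole construction is precisely that argument, carried out in full. Your size bookkeeping (the inductive check that $|V_i|\ge 2q^{s-i}$ via $\lceil(2q^{s-i}-1)/q\rceil=2q^{s-i-1}$) and the observation that the stated bound is only meaningful for $q\ge 2$ are both handled correctly.
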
 
Theorem~\ref{thm:ramsey} can be shown using a minor modification of the neighborhood-chasing argument given by Erd{\"o}s and Szekeres~\cite{erdos1935combinatorial}. See~\cite[Page~6]{conlon2015recent} for more information. 

We next define a certain graph property.
\begin{definition}\label{def:m-admissible-graph}
Fix a positive integer $M\in\mathbb{N}$. A graph  $\mathbb{G}=(V,E)$ is called \emph{$M-$admissible} if for any $S\subset V$, $|S|=M$; there exists distinct $i,j\in S$ such that $(i,j)\in E$. 
\end{definition}
Namely, $\mathbb{G}$ is $M-$admissible if $\alpha(\mathbb{G})\le M-1$, where $\alpha(\mathbb{G})$ is the independence number of $\mathbb{G}$. 

We now state and prove our second auxiliary result, an extremal graph theory result.
\begin{proposition}\label{thm:clique-exist}
Let $M\in\mathbb{N}$. Any $M-$admissible graph $\Gr=(V,E)$ with 
\[
|V|\ge \binom{2M-2}{M-1}
\]
%\[
%|V|\ge \frac{(M(M-1))^{M+1}-1}{M(M-1)-1}
%\]
contains an $M-$clique. 
\end{proposition}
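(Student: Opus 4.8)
The plan is to obtain this as an immediate corollary of the two-color Ramsey estimate, Theorem~\ref{thm:ramsey-1}, specialized to $k=\ell=M$. The first step is to translate $M$-admissibility into a statement about the independence number: by Definition~\ref{def:m-admissible-graph}, $\Gr=(V,E)$ is $M$-admissible precisely when every $S\subseteq V$ with $|S|=M$ contains an edge, which is the same as saying $\alpha(\Gr)\le M-1$ (an independent set of size $M$ would be a set of $M$ vertices spanning no edge, contradicting admissibility, and conversely admissibility forbids exactly such a set).

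Next I would introduce an auxiliary $2$-coloring. Consider the complete graph $K_{|V|}$ on the vertex set $V$ and color each of its edges \emph{red} if it lies in $E$ and \emph{blue} otherwise. Since $|V|\ge \binom{2M-2}{M-1}$ and, by \eqref{eq:standard-erdos-szekeres} with $k=\ell=M$, we have $R(M,M)\le \binom{(M)+(M)-2}{M-1}=\binom{2M-2}{M-1}$, the number of vertices is at least $R(M,M)$. Hence this red/blue coloring of $K_{|V|}$ must contain either a red $K_M$ or a blue $K_M$.

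Finally I would rule out the blue alternative. A blue $K_M$ is a set of $M$ vertices no two of which are joined by an edge of $\Gr$, i.e., an independent set of size $M$; but this is impossible because $\alpha(\Gr)\le M-1$ by the first step. Therefore the coloring must contain a red $K_M$, which by construction is a set of $M$ vertices all of whose pairs lie in $E$ — that is, an $M$-clique of $\Gr$. This completes the proof. There is no genuine obstacle here: the only point that needs a moment's care is the purely arithmetic identification $R(M,M)\le\binom{2M-2}{M-1}$, which is exactly Theorem~\ref{thm:ramsey-1} evaluated at $k=\ell=M$, and the elementary observation that $M$-admissibility is the bound $\alpha(\Gr)\le M-1$.
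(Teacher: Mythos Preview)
Your proposal is correct and matches the paper's own proof essentially line for line: the paper also colors pairs red if they are edges of $\Gr$ and blue otherwise, invokes Theorem~\ref{thm:ramsey-1} with $k=\ell=M$ to get $|V|\ge R(M,M)$, and rules out the blue $K_M$ via the observation that $M$-admissibility is equivalent to $\alpha(\Gr)\le M-1$.
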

\subsubsection*{Proof of Proposition~\ref{thm:clique-exist}}
\begin{proof}
Let $\Gr$ be an $M-$admissible graph on $|V|\ge \binom{2M-2}{M-1}$ vertices. Theorem~\ref{thm:ramsey-1} then yields that $|V|\ge R(M,M)$. Now, for any $i,j\in V$; we say $(i,j)$ is colored ``red" if $(i,j)\in E$; and $(i,j)$ is colored ``blue" otherwise. Due to the Ramsey property, $\Gr$ contains either a red $K_M$ or a blue $K_M$; that is, $\Gr$ contains either a clique of size $M$ or an independent set of size $M$. But since $\Gr$ is $M-$admissible, $\alpha(\G)\le M-1$. Thus the latter is not the case. Hence $\Gr$ contains a $K_M$. 
\end{proof}
We are now ready to start formally proving Theorem~\ref{thm:Main}.
\subsubsection*{Proof of Theorem~\ref{thm:Main}}
\begin{proof}
In what follows, recall the notation that for $\sigma,\sigma'\in\bincube$;
\[
\OBar\left(\sigma,\sigma'\right) = \frac1n\ip{\sigma}{\sigma'} = \frac1n \sum_{1\le i\le n}\sigma_i \sigma'_i.
\]
Recall also that all floor/ceiling signs are omitted for the sake of a clear presentation.

Let $L>0$ be fixed (which is constant in $n$); and $\exp_2\left(-E_n\right)$ be the target energy level whose ``exponent" $E_n$ satisfies, for some $\epsilon\in(0,\frac15)$,
\begin{equation}\label{eq:E-n-satisfies-this}
   \omega\left(n\cdot \log^{-\frac15+\epsilon} n\right)\le E_n \le o(n). %= n\cdot s(n),\quad\text{where}\quad \omega\left(\log^{-\frac15+\epsilon} n\right) \le s(n)\le o(1).
\end{equation}
In what follows, we choose
\begin{equation} \label{eq:chooise-c1c2-main-pf}
c_1 = \frac{1}{6400} \quad\text{and}\quad c_2 =8\cdot 480^2,
\end{equation}
and establish that there exists no randomized algorithm $\mathcal{A}:\R^n\times \Omega\to \bincube$ that is $\left(2^{-E_n}, f,L,\rho',p_f,p_{\rm st}\right)$-optimal for every sufficiently large $n$, where the parameter $f$ is specified in Theorem~\ref{thm:Main}, as
\begin{equation}\label{eq:main-proof-f-param}
f = \frac{1}{6400} \cdot n \cdot \left(\frac{E_n}{n}\right)^{4+\frac{\epsilon}{4}};
\end{equation}
and the parameters $\rho',p_f,p_{\rm st}$ are given by~\eqref{eq:main-choice-of-param} with $c_2$ chosen as in~\eqref{eq:chooise-c1c2-main-pf} (we supress the dependence of $\rho',p_f$ and $p_{\rm st}$ on $n$ and $c_2$ for convenience). The proof is by a contradiction argument. 
  %eq:main-choice-of-param
\paragraph{ Choice of Auxiliary Parameters.}

We choose parameters $m,\beta,\eta$ (all functions of $n$) as in the second part of Theorem~\ref{thm:m-ogp-superconstant-m}, which we recall for convenience:
\begin{equation}\label{eq:m(n)-2over-s(n)}
    m \triangleq m(n)=\frac{2n}{E_n}, %= \frac{2}{s(n)},
\end{equation}
\begin{equation}\label{eq:g(n)-and-z(n)}
    g(n) = n\cdot \left(\frac{E_n}{n}\right)^{2+\frac{\epsilon}{8}};%\quad  n\cdot s(n)\cdot z(n) \quad\text{where}\quad z(n)=s(n)^{1+\frac{\epsilon}{8}};
\end{equation}
and  
\begin{equation}\label{eq:eta(n)-s(n)-z(n)}
   \beta\triangleq \beta(n) = 1-2\frac{g(n)}{E_n}% = 1-2z(n)
   \quad\text{and}\quad  \eta \triangleq \eta(n) = \frac{g(n)}{2n}. %= \frac{s(n)z(n)}{2}. 
\end{equation}
We now establish certain convenient expression for the parameters $f,\rho',p_f,p_{\rm st}$ in terms of the quantities $m,\beta,\eta$ above. For $f$ chosen per~\eqref{eq:main-proof-f-param}, define
\[
C_1 \triangleq \frac{f}{n} = \frac{1}{6400} \left(\frac{E_n}{n}\right)^{4+\frac{\epsilon}{4}} .
\]
Using~\eqref{eq:g(n)-and-z(n)} and~\eqref{eq:eta(n)-s(n)-z(n)}, it follows that
\begin{equation}\label{eq:C-1-chosen-this-way}
    C_1 = \frac{g(n)^2}{6400\cdot n^2} = \frac{\eta^2}{1600}.
\end{equation}
Define next
\begin{equation}\label{eq:Q-chosen-this-way}
    Q=\frac{480^2\cdot 2L}{\eta^2}. %= \Theta\left(\left(\frac{n}{E_n}\right)^{4+\frac{\epsilon}{4}}\right),
\end{equation}
Using~\eqref{eq:g(n)-and-z(n)}, \eqref{eq:eta(n)-s(n)-z(n)}, and \eqref{eq:Q-chosen-this-way}, it follows that
\begin{equation}\label{eq:choose-Q-thisway}
Q=\frac{2\cdot 480^2 \cdot L}{\eta^2} = \frac{2\cdot 480^2 \cdot L}{(g(n)/2n)^2} = \left(8\cdot 480^2\cdot L\right)\cdot \left(\frac{n}{E_n}\right)^{4+\frac{\epsilon}{4}}.
\end{equation} 
In particular, with $c_2 =8\cdot 480^2$ as above, the parameter $T(c_2)$ defined per~\eqref{eq:main-choice-of-T} becomes
\begin{equation}\label{eq:T-chosen-this-way}
   T(c_2) \triangleq T=\exp_2\Bigl(2^{4mQ\log_2 Q}\Bigr)
\end{equation}
for $m$ chosen in \eqref{eq:m(n)-2over-s(n)}  and $Q$ chosen in \eqref{eq:Q-chosen-this-way}. 

Moreover, for $p_f$ and $p_{\rm st}$ chosen per~\eqref{eq:main-choice-of-param}, it holds that
\begin{equation}\label{eq:p_f-chosen-this-way}
    p_f =\frac{1}{4T(Q+1)}
\end{equation}
and 
\begin{equation}\label{eq:p-rm-st}
    p_{\rm st} = \frac{1}{9TQ^2}.
\end{equation}
%The rationale for providing parameters above explicitly is to employ them later for the union bounding purposes. %The parameter, $\beta\triangleq \beta(n) =1-o_n(1)$, can be inferred from Theorem~\ref{thm:m-ogp-superconstant-m}. %Importantly, if $\phi(n) = E_n/\sqrt{n\log_2 n}$ as in \eqref{eq:E_n-phi-n-sqrt-nlogn}; it is not necessarily the case that $g(n) = o\left(\phi(n)^2\right)$. Namely, \eqref{eq:g(n)-up-low-bd} do not take place. Nevertheless, as we elaborate below, when $E_n$ satisfies \eqref{eq:E-n-satisfies-this}, the $m-$OGP result, Theorem~\ref{thm:m-ogp-superconstant-m} still remains valid with the above choices of parameters. 
%We now prescribe other relevant parameters, while suppressing their dependence on $n$. Set
%\begin{equation}\label{eq2:C-1-Q-chosen-this-way}
   % C_1\triangleq C_1(n) =\frac{\eta^2}{1600} \quad\text{and}\quad  Q=\frac{480^2\cdot 2L}{\eta^2},
%\end{equation}
%and
%Choose $p_f$
%for $T$ chosen in \eqref{eq:T-chosen-this-way} and $Q$ chosen in \eqref{eq:C-1-Q-chosen-this-way}. Choose $p_{\rm st}$ as 
Define next the function
\[
\Psi(x) \triangleq  \sqrt{\left(1-\frac{x^2}{Q^2}\right)\left(1-\frac{(x+1)^2}{Q^2}\right)} + \frac{x(x+1)}{Q^2}, \quad 0\le x\le Q-1.
\]
We show $\Psi(\cdot)$ is decreasing on $[0,Q-1]$. For this, it suffices to verify $\Psi'(x)\le 0$ for $0\le x\le Q-1$. We have
\[
\Psi'(x) = \frac{1}{Q^2} \left(2x+1 - \frac{x \left(Q^2-(x+1)^2\right)+(x+1)\left(Q^2-x^2\right)}{\sqrt{\left(Q^2-x^2\right)\left(Q^2-(x+1)^2\right)}} \right)
\]
Now set $u\triangleq Q^2-(x+1)^2$, $\lambda\triangleq \frac{x}{2x+1}$; and $\bar{\lambda}=1-\lambda = \frac{x+1}{2x+1}$ (while suppressing $x$ dependence). Clearly $u\ge 0$ as $0\le x\le Q-1$ and $\bar{\lambda}>1/2$. It then boils down verifying
\[
\Psi'(x)\le 0 %\Leftrightarrow  \sqrt{\left(Q^2-x^2\right)\left(Q^2-(x+1)^2\right)} \le \frac{x}{2x+1}u+\frac{x+1}{2x+1}(u+2x+1) 
\Leftrightarrow  \sqrt{u(u+2x+1)} \le \lambda u+\bar{\lambda}(u+2x+1).
\]
Applying the weighted AM-GM inequality, we find
\[
\lambda u+\bar{\lambda}(u+2x+1) \ge u^\lambda \cdot (u+2x+1)^{\bar{\lambda}}.
\]
Hence, it suffices to verify
\[
 u^\lambda \cdot (u+2x+1)^{\bar{\lambda}}\ge  \sqrt{u(u+2x+1)} \Leftrightarrow (u+2x+1)^{\frac{1}{4x+2}} > u^{\frac{1}{4x+2}},
\]
which is immediate as $u+2x+1>u$. Having established that $\Psi(\cdot)$ is decreasing on $[0,Q-1]$, thus $\min_{0\le k\le Q-1}\Psi(k) = \Psi(Q-1)$; it holds that
\[
\min_{0\le k\le Q-1}\Psi(k) =  \min_{0\le k\le Q-1} \sqrt{\left(1-\frac{k^2}{Q^2}\right)\left(1-\frac{(k+1)^2}{Q^2}\right)} + \frac{k(k+1)}{Q^2} = 1-\frac1Q.
\]
In particular, $\rho'$ chosen as in~\eqref{eq:main-choice-of-param} admits
\begin{equation}\label{eq:rho-prime}
\rho' =1 -\frac{1}{8\cdot 480^2\cdot L} \left(\frac{E_n}{n} \right)^{4+\frac{\epsilon}{4}} =  1 - \frac1Q = \min_{0\le k\le Q-1}\Psi(k),
\end{equation}
where $Q$ is the parameter studied in~\eqref{eq:Q-chosen-this-way},~\eqref{eq:choose-Q-thisway}. 

{\bf To prove.} In what follows, our goal is to establish that there exists no randomized algorithm  $\mathcal{A}:\R^n\times \Omega\to \bincube$ that is $\left(2^{-E_n}, C_1n, L,\rho',p_f,p_{\rm st}\right)$-optimal for every sufficiently large $n$, where $C_1,\rho',p_f,p_{\rm st}$ admit the convenient expressions~\eqref{eq:C-1-chosen-this-way},~\eqref{eq:rho-prime},~\eqref{eq:p_f-chosen-this-way}, and~\eqref{eq:p-rm-st}; respectively.

\paragraph{ Reduction to deterministic algorithms.} We first reduce the proof  to the case $\A$ is deterministic. Let $\mathbb{P}_X\otimes \mathbb{P}_\omega$ denotes the joint law of $(X,\omega)$. Here, $\omega$ is the randomness of $\A$. Define now the event
\[
\mathcal{E}_s(\omega) \triangleq  \left\{\frac{1}{\sqrt{n}}\Bigl|\ip{X}{\A(X;\omega)}\Bigr| \le 2^{-E_n}\right\}.
\]
Observe that
\[
\mathbb{P}_{X,\omega}\left(\frac{1}{\sqrt{n}}\Bigl |\ip{X}{\A(X,\omega)}\Bigr|>2^{-E_n}\right) = \mathbb{E}_\omega \Bigl[\mathbb{P}_X\Bigl(\mathcal{E}_s\left(\omega\right)^c\Bigr)\Bigr].
\]
We now perceive $\mathbb{P}_X\left(\mathcal{E}_s\left(\omega\right)^c\right)$ as a random variable whose source of randomness is $\omega$ (as the randomness over $X$ is ``integrated" over $\mathbb{P}_X$). Using Markov's inequality
\[
\mathbb{P}_\omega \Bigl(\mathbb{P}_X\Bigl(\mathcal{E}_s\left(\omega\right)^c\Bigr)\ge 2p_f\Bigr)\le \frac{\mathbb{E}_\omega \left[\mathbb{P}_X\left(\mathcal{E}_s\left(\omega\right)^c\right)\right]}{2p_f} \le \frac12.
\]
Set
\[
\Omega_1\triangleq \Bigl\{\omega\in\Omega: \mathbb{P}_X\left(\mathcal{E}_s\left(\omega\right)^c\right)<2p_f\Bigr\}  \implies \mathbb{P}_\omega\left(\Omega_1\right)\ge \frac12.
\]
Now, divide the interval $[0,1]$ into $Q$ subintervals $0=\tau_0<\tau_1<\cdots<\tau_Q=1$, each of size $Q^{-1}$ for $Q$ introduced in \eqref{eq:Q-chosen-this-way}. Next, set
\begin{equation}\label{eq:rho-k}
\rho_k \triangleq \sqrt{\left(1-\tau_k^2\right)\left(1-\tau_{k+1}^2\right)}+\tau_k\tau_{k+1},\quad 0\le k\le Q-1.
\end{equation}
For $\rho'$ introduced in \eqref{eq:rho-prime}, we have that $\rho_k \in[\rho',1]$, $0\le k\le Q-1$. Define next
\[
\mathcal{E}_2\left(\omega\right)\triangleq \Bigl\{d_H\Bigl(\A(X,\omega),\A(Y,\omega)\Bigr)\le C_1n+L\|X-Y\|_2^2\Bigr\}.
\]
Define also  the  sequence $A_{k,\omega}$ of random variables
\[
A_{k,\omega} \triangleq \mathbb{P}_{(X,Y):X\sim_{\rho_k}Y}\Bigl(\mathcal{E}_2(\omega)^c\Bigr),\quad 0\le k\le Q-1,\quad\text{and}\quad \omega\in\Omega.
\]
The source of randomness in each $A_{k,\omega}$ is due to $\mathbb{P}_\omega$. Observe now that for any fixed $0\le k\le Q-1$, using Markov's inequality similar to above,
\begin{align*}
\mathbb{P}_\omega \left(A_{k,\omega}\ge 3Qp_{\rm st}\right)&\le \frac{1}{3Qp_{\rm st}}\mathbb{E}_\omega \Bigl[A_{k,\omega}\Bigr] \\
&= \frac{1}{3Qp_{\rm st}}\mathbb{P}_{(X,Y,\omega):X\sim_{\rho_k}Y}\Bigl(d_H\Bigl(\A(X,\omega),\A(Y,\omega)\Bigr)> C_1n+L\|X-Y\|_2^2\Bigr)\\
&\le \frac{1}{3Q}.
\end{align*}
Taking now a union bound over $0\le k\le Q-1$,
\[
\mathbb{P}_\omega\left(\bigcup_{0\le k\le Q-1}\Bigl\{A_{k,\omega}\ge 3Qp_{\rm st}\Bigr\}\right)\le \frac13.
\]
Hence, 
\[
\Omega_2\triangleq \Bigl\{\omega\in\Omega:A_{k,\omega}<3Qp_{\rm st},0\le k\le Q-1\Bigr\} \implies \mathbb{P}_\omega(\Omega_2)\ge \frac23.
\]
Since $\mathbb{P}_\omega(\Omega_1)+\mathbb{P}_\omega(\Omega_2)\ge \frac12+\frac23>1$, it follows that $\Omega_1\cap \Omega_2\ne\varnothing$. Consequently, there exists an $\omega^*\in\Omega$, such that
\begin{equation}\label{eq:algo-reduce-to-deterministic}
    \mathbb{P}_X\left(\frac{1}{\sqrt{n}}\Bigl|\ip{X}{\mathcal{A}(X,\omega^*)}\Bigr|\le 2^{-E_n}\right)\ge 1-2p_f.
\end{equation}
and
\begin{equation}\label{eq:algo-reduce-to-deterministic-stability}
  \mathbb{P}_{(X,Y):X\sim_{\rho_k}Y}\Bigl(d_H\Bigl(\A(X,\omega^*),\A(Y,\omega^*)\Bigr)\le C_1n+L\|X-Y\|_2^2\Bigr) \ge 1-3Qp_{\rm st},\quad \text{for}\quad 0\le k\le Q-1.
\end{equation}
%Consequently, we find that there exists an $\omega^*\in \Omega$ such that $\mathbb{P}_X\left(\mathcal{E}_s\left(\omega^*\right)^c\right)\le 2p_f$. That is, for this choice of $\omega^*$, 
%\begin{equation}\label{eq:algo-reduce-to-deterministic}
    %\mathbb{P}_X\left(\frac{1}{\sqrt{n}}\left|\ip{X}{\mathcal{A}(X,\omega^*)}\right|\le \mathcal{C}2^{-n\epsilon}\right)\ge 1-2p_f.
%\end{equation}
In the remainder, we fix this choice of $\omega^*\in\Omega$, and interpret $\A(\cdot)\triangleq \A(\cdot,\omega^*)$ as a deterministic (that is, no ``coin flip" $\omega$) map acting between $\R^n$ and $\bincube$. 
\paragraph{ An auxiliary high-probability event.} We now study a certain auxiliary high-probability event. This event %also known as \emph{chaos event}, 
pertains to the spin configurations that are near-optimal with respect to {\em independent} instances.

Let $\mathcal{M}$ be an index set with cardinality $m$, $X_i\distr\mathcal{N}(0,I_n)$, $i\in \mathcal{M}$, be i.i.d.\,Let $S_{\mathcal{M}}$ be a shorthand for the set 
\[
\mathcal{S}_{\mathcal{M}} \triangleq \mathcal{S}\left(1,\frac{3g(n)}{E_n},m,E_n,\{1\}\right)
\] 
(in the sense of Definition~\ref{def:overlap-set}) of $m-$tuples $\left(\sigma^{(i)}:i\in \mathcal{M}\right)$ of spin configurations $\sigma^{(i)}\in\bincube$ with a modification that the $\Overlap\left(\cdot,\cdot\right)$ in Definition~\ref{def:overlap-set} is replaced with $\OBar\left(\cdot,\cdot\right)$, the normalized inner product, as studied in Theorem~\ref{thm:m-ogp-superconstant-m}. Here, we keep the $m$ parameter as in \eqref{eq-alt:m(n)-2over-s(n)}; but modify the $\eta$ parameter into $3g(n)/E_n$. Note that with these choices, $\eta=\frac{1-\beta}{2m}$ no longer holds, but as we expand below this does not cause any problems.

Namely, $S_\mathcal{M}$ is the set of spin configurations that i) have  a large inner product and ii) are near ground-state with respect to \emph{independent} instances $X_i\in\R^n$. We claim 
\begin{lemma}\label{lem:kaotik}
 \begin{equation}\label{eq:chaos}
\mathbb{P}(\mathcal{E}_{\mathcal{M}}) \le \exp(-\Theta(n)),
\end{equation}
where
\begin{equation}\label{eq:event-chaos}
  \mathcal{E}_{\mathcal{M}}\triangleq \left\{S_{\mathcal{M}}\ne \varnothing\right\}=\left\{\left|S_{\mathcal{M}}\right|\ge 1\right\}.
\end{equation}
\end{lemma}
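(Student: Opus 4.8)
\textbf{Proof plan for Lemma~\ref{lem:kaotik}.} The claim is essentially a first-moment bound of the same flavor as Theorem~\ref{thm:m-ogp-superconstant-m}, but ``easier'': here the $m$ spin configurations are near ground-states with respect to \emph{independent} instances $X_i$ (rather than correlated ones), and the overlap constraint $\OBar(\sigma^{(i)},\sigma^{(j)}) \le 1-3g(n)/E_n$ is only one-sided. The plan is therefore to define the counting random variable
\[
N_{\mathcal{M}} = \sum_{(\sigma^{(i)}:i\in\mathcal{M})} \ind\left\{\OBar(\sigma^{(i)},\sigma^{(j)}) \le 1-\tfrac{3g(n)}{E_n}, \ \forall i<j; \ \ \tfrac{1}{\sqrt n}|\ip{\sigma^{(i)}}{X_i}| \le 2^{-E_n}, \ \forall i\right\},
\]
show $\mathbb{E}[N_{\mathcal{M}}] = \exp(-\Theta(n))$, and conclude via Markov's inequality, since $\mathcal{E}_{\mathcal{M}} = \{N_{\mathcal{M}}\ge 1\}$.

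First I would bound the cardinality of the index set of the sum. Writing $\rho' = 1-3g(n)/E_n$, an $m$-tuple with all pairwise normalized inner products at most $\rho'$ is counted by at most $2^n \big(\sum_{k \ge n(1-\rho')/2}\binom{n}{k}\big)^{m-1}$; but in fact we only need the crude bound $2^n \cdot (2^n)^{m-1} = 2^{nm}$, or more carefully the same estimate as in the proof of Theorem~\ref{thm:m-ogp-superconstant-m}, giving $\exp_2(n + O(mng(n)/E_n \cdot \log(E_n/g(n))))$ — in either case the exponent is $O(nm)$, and more precisely $n\cdot(1+o(1))\cdot$ something controlled, as in~\eqref{eq:modified-cardinality-bound}. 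The crucial point is the probability estimate: because the instances $X_i$ are \emph{independent}, the events $\{\tfrac{1}{\sqrt n}|\ip{\sigma^{(i)}}{X_i}|\le 2^{-E_n}\}$ are mutually independent, so the joint probability is simply the product, and each factor is $\mathbb{P}(|\mathcal{N}(0,1)|\le 2^{-E_n}) = \Theta(2^{-E_n})$. Hence the joint probability is $\exp_2(-mE_n + O(m))$ — and there is no covariance matrix to analyze and no union bound over $\mathcal{I}$ beyond the trivial one ($\mathcal{I} = \{1\}$). This is exactly why this lemma is strictly simpler than Theorem~\ref{thm:m-ogp-superconstant-m}.

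Combining, $\mathbb{E}[N_{\mathcal{M}}] \le \exp_2\big(n + O(mng(n)/E_n \cdot \log(E_n/g(n))) - mE_n + O(m)\big)$. Recalling $m = 2n/E_n$ so that $mE_n = 2n$, the leading term is $n - 2n = -n$; and, by exactly the computation carried out at the end of the proof of Theorem~\ref{thm:m-ogp-superconstant-m} (Case 2), the remaining terms are $o(n)$ — indeed $mng(n)/E_n\cdot\log(E_n/g(n)) = 2n\cdot s(n)^{\epsilon/8}\log(1/s(n)) = o(n)$ with $s(n) = E_n/n$, and $O(m) = O(n/E_n) = o(n)$. Therefore $\mathbb{E}[N_{\mathcal{M}}] = \exp_2(-n + o(n)) = \exp(-\Theta(n))$, and Markov's inequality yields $\mathbb{P}(\mathcal{E}_{\mathcal{M}}) = \mathbb{P}(N_{\mathcal{M}}\ge 1) \le \mathbb{E}[N_{\mathcal{M}}] = \exp(-\Theta(n))$, as claimed.

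The only mild subtlety — and the one place I would be careful — is the discrepancy between the parameters used here (the $\eta$-parameter is $3g(n)/E_n$ rather than the $g(n)/2n$ of Theorem~\ref{thm:m-ogp-superconstant-m}, so the identity $\eta = (1-\beta)/2m$ fails) and checking that this only \emph{weakens} the constraint, hence can only \emph{increase} the count by a factor that is still absorbed in the $o(n)$ slack. Concretely, $n(1-\rho')/2 = 3ng(n)/(2E_n) = \Theta(ng(n)/E_n) = o(n)$, so Lemma~\ref{lemma:binomial-k-o(n)} still applies to the counting term $\binom{n}{n(1-\rho')/2}$ and yields the same order $\exp_2(O(mng(n)/E_n\cdot\log(E_n/g(n))))$ for the $(m-1)$-st power; thus the margin $-n$ in the exponent is untouched. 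This completes the plan.
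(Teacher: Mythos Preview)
Your approach is exactly the paper's: first moment via Markov, with the key simplification that independence of the $X_i$ makes the covariance matrix the identity, so only the counting term needs care, and that counting term is handled just as in the proof of Theorem~\ref{thm:m-ogp-superconstant-m}, Case~2.

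However, you have the overlap constraint backwards. By Definition~\ref{def:overlap-set} with $\beta=1$ and $\eta=3g(n)/E_n$, membership in $S_{\mathcal{M}}$ requires $\OBar(\sigma^{(i)},\sigma^{(j)})\ge 1-3g(n)/E_n$ for all pairs (large inner product), not $\le$. This is not a cosmetic slip: with your stated direction the cardinality bound really is $2^{nm}$ (since $n(1-\rho')/2=o(n)<n/2$ so $\sum_{k\ge n(1-\rho')/2}\binom{n}{k}=\Theta(2^n)$), and then the first moment is $\exp_2(m(n-E_n))\to\infty$ because $E_n=o(n)$ and $m=\omega(1)$. So your aside that ``in fact we only need the crude bound $2^{nm}$'' is wrong --- that bound is fatal. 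Once the inequality is flipped to $\ge$, the Hamming distance is at most $n(1-\rho')/2=3ng(n)/(2E_n)=o(n)$, Lemma~\ref{lemma:binomial-k-o(n)} applies, and your later computation (cardinality $\exp_2(n+o(n))$, probability $\exp_2(-mE_n+O(m))$, hence first moment $\exp_2(-n+o(n))$) is correct and matches the paper verbatim.
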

\iffalse
\begin{lemma}\label{lem:kaotik}
Let $\mathcal{M}$ be an index set with cardinality $m$, $X_i\in\R^n$, $i\in \mathcal{M}$, be $m$ i.i.d. random vectors with distribution $\mathcal{N}(0,I_n)$. 
Define the set $S_{\mathcal{M}}$  of all $m-$tuples $\left(\sigma^{(i)}:i\in S\right)$ of spin configurations $\sigma^{(i)}\in\bincube$ that satisfy the following.
\begin{itemize}
    \item {\bf (Near-Optimality)} For $i\in \mathcal{M}$,
    \[
    \frac{1}{\sqrt{n}}\left|\ip{\sigma^{(i)}}{X}\right|\le \mathcal{C}2^{-n\epsilon}.
    \]
    \item {\bf (Overlap Condition)} For every pair of distinct $i,j\in \mathcal{M}$
    \[
    \Overlap\left(\sigma^{(i)},\sigma^{(j)}\right)\in \left(1-2h^{-1}(\epsilon/2),1\right].
    \]
\end{itemize}
Namely, in the sense of Definition~\ref{def:overlap-set}, $S_{\mathcal{M}}$ is the set $\mathcal{S}(1,2h^{-1}(\epsilon/2),m,\epsilon,\{1\})$---the set of spin configurations that i) have  a large pairwise overlap and ii) are near ground-state with respect to \emph{independent} instances $X_i\in\R^n$. Then,
 \begin{equation}\label{eq:chaos}
\mathbb{P}(\mathcal{E}_{\mathcal{M}}) \ge 1-\exp(-\Theta(n)),
\end{equation}
where
\begin{equation}\label{eq:event-chaos}
  \mathcal{E}_{\mathcal{M}}\triangleq \left\{S_{\mathcal{M}}\ne \varnothing\right\}=\left\{\left|S_{\mathcal{M}}\right|\ge 1\right\}.
\end{equation}
\end{lemma}
\fi 

Namely on $\mathcal{E}_{\mathcal{M}}$, it is the case that for any $\left(\sigma^{(i)}:i\in\mathcal{M}\right)$ that are near-optimal, there exists $i<j$, $i,j\in\mathcal{M}$, such that 
\begin{equation}\label{eq:Obar-chaos}
\OBar\left(\sigma^{(i)},\sigma^{(j)}\right)\in\left[0,1-\frac{3g(n)}{E_n}\right].
\end{equation}

\begin{proof}[Proof of Lemma~\ref{lem:kaotik}]
The proof  of this claim is nearly identical to (and in fact easier than) that of Theorem~\ref{thm:m-ogp-superconstant-m}, Case 2. Thus we only point out the necessary modifications. %, while using the exact same $s(n)$ and $z(n)$ notation.

The term $g(n)$ and $E_n$, as functions of $s(n)$ and $z(n)$, remain the same as~\eqref{eq-alt:g(n)-and-z(n)}. That is,
\[
g(n) = n\cdot s(n)\cdot z(n), \quad E_n = n\cdot s(n),\quad\text{where}\quad z(n) = s(n)^{1+\frac{\epsilon}{8}}.
\]
The expression~\eqref{eq-alt:eta(n)-s(n)-z(n)} regarding parameters $\beta,\eta$ now modifies to
\[
\beta =1 \quad \text{and}\quad \eta = \frac{3g(n)}{E_n}.
\]

 Note that, in this case the covariance matrix $\Sigma$ is always identity due to the independence of $X_i$, $1\le i\le m$. Moreover, with $\beta=1$; the counting term $n\frac{1-\beta+\eta}{2}$ studied in \eqref{eq:we-gonna-use-this-later} is now 
\[
n\frac{1-\beta+\eta}{2} =\frac{3ng(n)}{2E_n} = \frac32 nz(n).%;  \quad \text{where we took} \quad \beta =1 \quad\text{and}\quad \eta = \frac{3g(n)}{E_n}.
\]
This is clearly $o(n)$ since $z(n) = o(1)$. Thus, Lemma~\ref{lemma:binomial-k-o(n)} is applicable, and the counting bound, \eqref{eq:ramsey-exponent-bin-coeff}, now becomes
\begin{align*}
\binom{n}{n\frac{1-\beta+\eta}{2}}  &=\exp_2\left(\left(1+o_n(1)\right)\frac{3nz(n)}{2} \log_2 \frac{2n}{3nz(n)}\right) \\
&= \exp_2\left(O\left(ns(n)^{1+\frac{\epsilon}{8}}\log_2 \frac{1}{s(n)}\right)\right),
\end{align*}
where we used the fact $z(n)=s(n)^{1+\frac{\epsilon}{8}}$. Note now that 
\[
\Bigl |[n\beta-n\eta,n\beta]\cap \mathbb{Z}\Bigr |=O\left(n\eta\right)=O\left(\frac{ng(n)}{E_n}\right) = O(nz(n)) = o(n),
\] 
using $z(n)=o(1)$. Hence, 
\[
\log \Bigl |[n\beta-n\eta,n\beta]\cap \mathbb{Z}\Bigr | = O(\log n) =o\left(ns(n)^{1+\frac{\epsilon}{8}}\log_2 \frac{1}{s(n)}\right)
\]
Thus,~\eqref{eq:ramsey-up-bd-step3} remains the same. Hence, the cardinality upper bound~\eqref{eq:modified-cardinality-bound} is still of form
\[
\exp_2\left(n+O\left(mns(n)^{1+\frac{\epsilon}{8}}\log_2 \frac{1}{s(n)}\right)\right).
\]

Now, since the covariance matrix $\Sigma$ is identity,  there is no contribution of a term of form $\frac{m}{2}\log_2 \frac{1}{\nu_n}$ (the determinant contribution) to~\eqref{eq:modified-pb-bd}; and the ``dominant" contribution of the probability term~\eqref{eq:modified-pb-bd} to the exponent of the first moment is $-mE_n$. 

Hence,~\eqref{eq:modified-pb-bd},~\eqref{eq2:a-very-loong-expression-for-the-firdt-momnet-m-ogp}; and~\eqref{eq:first-mom-bd-ramsey-finale} all remain the same. Thus, Theorem~\ref{thm:m-ogp-superconstant-m} indeed still remains valid.

%Thus, using equations \eqref{eq:E-n-satisfies-this}, \eqref{eq:m(n)-2over-s(n)}; and \eqref{eq:g(n)-and-z(n)},
 %it suffices to verify
%\[
%m ns(n)^{1+\frac{\epsilon}{8}} \log_2 \frac{1}{s(n)}  = o\left(mE_n\right) = o\left(mns(n)\right).
%\]
%That is, it suffices to verify
%\[
%s(n)^{\frac{\epsilon}{8}}\log_2 \frac{1}{s(n)} = o(1).
%\]
%Since $\epsilon>0$ and $s(n)=o(1)$, this is clear.
%\[
%mE_n =\Theta(n) = \omega\left(ns(n)z(n)\log \frac{1}{s(n)z(n)}\right),
%\]
%where we used the facts that (a) $s(n)z(n) = o(1)$  per \eqref{eq:E-n-satisfies-this} and \eqref{eq:eta(n)-s(n)-z(n)}; and (b) $s(n)z(n) \log \frac{1}{s(n)z(n)} = o(1)$. 
\end{proof}
\paragraph{ Construction of interpolation paths.} Our proof will use the so-called ``interpolation method". To that end, let $X_i\in\R^n$, $0\le i\le T$, be i.i.d. random vectors (dubbed as \emph{replicas}), each having distribution $\mathcal{N}(0,I_n)$, where $T$ is specified in \eqref{eq:T-chosen-this-way}.

Recall now $Y_i(\tau)$, $\tau\in[0,1]$ and $1\le i\le T$, from Definition~\ref{def:overlap-set}.
%We will interpolate continuously between $X_0$ and $X_i$. 
%Specifically, set
%\begin{equation}\label{eq:interpolation-path-i}
%Y_i(\tau)\triangleq \sqrt{1-\tau^2}X_0 + \tau X_i\in\R^n, \quad\text{for}\quad \tau \in[0,1]\quad\text{and for}\quad 1\le i\le T.
%\end{equation}
Notice that for any $\tau\in[0,1]$ and any $1\le i\le T$, $Y_i(\tau)\distr \mathcal{N}(0,I_n)$. At $\tau=0$, it is the case that $Y_i(\tau)=Y_j(\tau)=X_0$ for $1\le i<j\le T$. Thus, for $\tau=0$,
\[
\A\left(Y_i(\tau)\right)=\A\left(Y_j(\tau)\right),\quad 1\le i<j\le T.
\]
At $\tau=1$, on the other hand, $Y_i(\tau)$, $1\le i\le T$, is a collection of $T$ i.i.d. random vectors, each with distribution $\mathcal{N}(0,I_n)$. 

Divide the interval $[0,1]$ into $Q$ subintervals $0=\tau_0<\tau_1<\cdots<\tau_{Q}=1$, each of size $1/Q$, where $Q$ is specified in \eqref{eq:Q-chosen-this-way}. Define next the pairwise overlaps
\begin{equation}\label{eq:pairwise-overlap}
\OBar^{(ij)}\left(\tau_k\right)\triangleq \frac1n \ip{\A\left(Y_i\left(\tau_k\right)\right)}{\A\left(Y_j\left(\tau_k\right)\right)}
\end{equation}
for $1\le i<j\le T$ and $0\le k\le Q$. %The interpretation of this is as follows. $Y_i\left(\tau_k\right)$ is the value along the $i-$th interpolation path---namely the path connecting $X_i$ to $X_0$---at time instance $\tau_k$. The algorithm is evaluated at $Y_i\left(\tau_k\right)$ and $Y_j\left(\tau_k\right)$; and the resulting normalized inner product between replicas $i$ and $j$ at time $\tau_k$ is denoted by $\OBar^{(ij)}\left(\tau_k\right)$. 

\paragraph{ Stability of successive steps.} We now establish, using the stability of $\A$, that for $1\le i<j\le T$ and $0\le k\le Q-1$, 
\[
\left|\OBar^{(ij)}\left(\tau_k\right)-\OBar^{(ij)}\left(\tau_{k+1}\right)\right|
\]
is small. More concretely we establish
\begin{lemma}\label{lemma:steps-stable}
\begin{equation}\label{prob-steps-stable}
    \mathbb{P}(\mathcal{E}_3) \ge 1-(T+1)\exp\Bigl(-\Theta(n)\Bigr)-3TQ^2 p_{\rm st}
\end{equation}
where
\begin{equation}\label{events-steps-stable}
    \mathcal{E}_3\triangleq \bigcap_{1\le i<j\le T}\bigcap_{0\le k\le Q}\left\{\left|\OBar^{(ij)}(\tau_k)-\OBar^{(ij)}(\tau_{k+1})\right|\le 4\sqrt{C_1}+\frac{48\sqrt{2L}}{\sqrt{Q}}\right\}.
\end{equation}
\end{lemma}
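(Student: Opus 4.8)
The plan is to prove Lemma~\ref{lemma:steps-stable} by controlling, for each fixed pair of replicas $(i,j)$ and each fixed time step $k$, the change in the pairwise overlap $\OBar^{(ij)}(\tau_k) - \OBar^{(ij)}(\tau_{k+1})$ via the stability of $\A$, and then taking a union bound over all $\binom{T}{2}$ pairs and all $Q$ steps. First I would observe the elementary identity that relates a change in normalized inner product to Hamming distances: for $\sigma,\sigma',\bar\sigma,\bar\sigma'\in\bincube$,
\[
\bigl|\OBar(\sigma,\sigma') - \OBar(\bar\sigma,\bar\sigma')\bigr| \le \frac{2}{n}\bigl(d_H(\sigma,\bar\sigma) + d_H(\sigma',\bar\sigma')\bigr),
\]
which follows by writing $\OBar(\sigma,\sigma') = 1 - \tfrac{2}{n}d_H(\sigma,\sigma')$ and using the triangle inequality for $d_H$. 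Applying this with $\sigma = \A(Y_i(\tau_k))$, $\bar\sigma = \A(Y_i(\tau_{k+1}))$ and similarly for $j$, it suffices to bound $d_H(\A(Y_i(\tau_k)),\A(Y_i(\tau_{k+1})))$ for each $i$ and $k$.

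Next I would invoke the stability guarantee~\eqref{eq:algo-reduce-to-deterministic-stability}. The key point is that $Y_i(\tau_k)$ and $Y_i(\tau_{k+1})$ are jointly Gaussian, each marginally $\mathcal{N}(0,I_n)$, with covariance $\rho_k I_n$ where $\rho_k = \sqrt{(1-\tau_k^2)(1-\tau_{k+1}^2)} + \tau_k\tau_{k+1}$ is exactly the quantity defined in~\eqref{eq:rho-k}; one checks this by expanding $\mathbb{E}[Y_i(\tau_k)_\ell Y_i(\tau_{k+1})_\ell]$ using the definition $Y_i(\tau) = \sqrt{1-\tau^2}X_0 + \tau X_i$ and independence of $X_0,X_i$. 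Since $\rho_k \ge \rho'$ by the monotonicity of $\Psi$ established above in~\eqref{eq:rho-prime}, the stability hypothesis applies to the pair $(Y_i(\tau_k), Y_i(\tau_{k+1}))$: with probability at least $1 - 3Qp_{\rm st}$,
\[
d_H\bigl(\A(Y_i(\tau_k)),\A(Y_i(\tau_{k+1}))\bigr) \le C_1 n + L\|Y_i(\tau_k) - Y_i(\tau_{k+1})\|_2^2.
\]
It then remains to control $\|Y_i(\tau_k) - Y_i(\tau_{k+1})\|_2^2$. Writing the difference explicitly, $Y_i(\tau_k) - Y_i(\tau_{k+1}) = (\sqrt{1-\tau_k^2} - \sqrt{1-\tau_{k+1}^2})X_0 + (\tau_k - \tau_{k+1})X_i$, the $\ell_2$-norm squared is a quadratic form in two independent $\mathcal{N}(0,I_n)$ vectors whose expectation is $n\bigl((\sqrt{1-\tau_k^2} - \sqrt{1-\tau_{k+1}^2})^2 + (\tau_k-\tau_{k+1})^2\bigr)$. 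Since $|\tau_k - \tau_{k+1}| = 1/Q$ and, by the mean value theorem applied to $t\mapsto\sqrt{1-t^2}$ on $[0,1]$ (where the derivative can blow up near $t=1$), one needs a slightly more careful estimate; I would bound $\bigl|\sqrt{1-\tau_k^2} - \sqrt{1-\tau_{k+1}^2}\bigr| = \frac{|\tau_{k+1}^2 - \tau_k^2|}{\sqrt{1-\tau_k^2}+\sqrt{1-\tau_{k+1}^2}}$ and observe that the numerator is $O(1/Q)$ while combining with the $\tau$-term contributes, after summing or bounding, something of order $1/Q$; a cleaner route is to note $(\sqrt{1-a}-\sqrt{1-b})^2 + (a'-b')^2 \le \ldots$ can be bounded crudely using $(\sqrt{1-\tau_k^2}-\sqrt{1-\tau_{k+1}^2})^2 \le |\tau_k^2 - \tau_{k+1}^2| \le 2/Q$ (since each factor is $\le 1$ and the difference of squares telescopes with $|\tau_k-\tau_{k+1}|\le 1/Q$, $\tau_k+\tau_{k+1}\le 2$), giving $\mathbb{E}\|Y_i(\tau_k)-Y_i(\tau_{k+1})\|_2^2 \le n(2/Q + 1/Q^2) \le 3n/Q$. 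Then a standard Gaussian concentration bound (e.g. Bernstein/Hanson-Wright for quadratic forms, or $\chi^2$-tail bounds after diagonalizing) shows $\|Y_i(\tau_k) - Y_i(\tau_{k+1})\|_2^2 \le C n/Q$ for a suitable absolute constant $C$ (I would aim for $C$ such that $Cn/Q \le \tfrac{1}{2}(48)^2 \cdot \tfrac{2L}{L} \cdot \tfrac{n}{Q}$ type arithmetic works out, tracking the constant $48$ in the statement) except on an event of probability $\exp(-\Theta(n))$; here one uses that $n/Q = \Theta(n \cdot (E_n/n)^{4+\epsilon/4}) = \omega(1)$ so the concentration is genuinely exponential in a growing quantity.

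Putting the pieces together: on the intersection of the stability event and the concentration event, $d_H(\A(Y_i(\tau_k)),\A(Y_i(\tau_{k+1}))) \le C_1 n + L\cdot C n/Q$, so by the Hamming identity above, $|\OBar^{(ij)}(\tau_k) - \OBar^{(ij)}(\tau_{k+1})| \le \tfrac{2}{n}\cdot 2(C_1 n + LCn/Q) = 4C_1 + 4LC/Q$; substituting $C_1 = \eta^2/1600$ from~\eqref{eq:C-1-chosen-this-way} gives $4C_1 = \eta^2/400 \le 4\sqrt{C_1}$ (indeed $4\sqrt{C_1} = 4\eta/40 = \eta/10$, and one checks $\eta^2/400 \le \eta/10$ since $\eta < 1$), and substituting $Q = 2\cdot 480^2 L/\eta^2$ from~\eqref{eq:Q-chosen-this-way} gives $4LC/Q = 2C\eta^2/480^2 \le 48\sqrt{2L}/\sqrt{Q}$ after verifying $\sqrt{Q} = 480\sqrt{2L}/\eta$ so $48\sqrt{2L}/\sqrt{Q} = 48\eta/480 = \eta/10$; thus the stated bound $4\sqrt{C_1} + 48\sqrt{2L}/\sqrt{Q}$ holds with room to spare for the absolute constant $C$. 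Finally, union bound: there are at most $(T+1)$ replica/concentration events (one can bundle the $Q$ steps for each replica $i$ into a single $\exp(-\Theta(n))$ event, or absorb the factor $Q$ into the $\Theta(n)$ since $Q = n^{o(1)}$), contributing $(T+1)\exp(-\Theta(n))$; and there are at most $TQ$ (replica, step) pairs for the stability event — but since~\eqref{eq:algo-reduce-to-deterministic-stability} already costs $3Qp_{\rm st}$ per application and we apply it $TQ$ times... actually here one applies stability to each pair $(Y_i(\tau_k),Y_i(\tau_{k+1}))$ for $1\le i\le T$, $0\le k\le Q-1$, which is $TQ$ applications each failing with probability $\le 3Qp_{\rm st}$, totaling $3TQ^2 p_{\rm st}$, matching the statement. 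The complement of $\mathcal{E}_3$ is contained in the union of all these bad events, giving~\eqref{prob-steps-stable}. The main obstacle I anticipate is the careful tracking of absolute constants — particularly ensuring the Gaussian concentration constant $C$ for $\|Y_i(\tau_k)-Y_i(\tau_{k+1})\|_2^2$ is compatible with the $48\sqrt{2L}/\sqrt{Q}$ in the statement — and handling the mild subtlety that $t\mapsto\sqrt{1-t^2}$ is not Lipschitz near $t=1$, which is why the difference-of-squares bound $(\sqrt{1-\tau_k^2}-\sqrt{1-\tau_{k+1}^2})^2 \le |\tau_k^2-\tau_{k+1}^2| \le 2/Q$ (rather than a naive derivative bound) is the right move.
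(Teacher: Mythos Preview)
Your proposal is correct, and the overall architecture---control each $d_H(\A(Y_i(\tau_k)),\A(Y_i(\tau_{k+1})))$ via stability, concentrate $\|Y_i(\tau_k)-Y_i(\tau_{k+1})\|_2^2$, then union bound---matches the paper. But two ingredients differ in an interesting way.

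First, your overlap-to-Hamming step is different and sharper. You use the identity $\OBar(\sigma,\sigma')=1-\tfrac{2}{n}d_H(\sigma,\sigma')$ together with the triangle inequality for $d_H$ to get a bound \emph{linear} in the Hamming distances, namely $\tfrac{2}{n}\bigl(d_H(\A_i(k),\A_i(k+1))+d_H(\A_j(k),\A_j(k+1))\bigr)$, which after plugging in stability yields $4C_1+O(L/Q)$. The paper instead writes $\|\A_i(k)-\A_i(k+1)\|_2=2\sqrt{d_H}$, applies Cauchy--Schwarz, and uses $\sqrt{u+v}\le\sqrt{u}+\sqrt{v}$ to arrive at $\tfrac{1}{\sqrt{n}}\bigl(\|\A_i(k)-\A_i(k+1)\|_2+\|\A_j(k)-\A_j(k+1)\|_2\bigr)\le 4\sqrt{C_1}+48\sqrt{2L}/\sqrt{Q}$. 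The square roots in the lemma's statement are thus an artifact of the paper's $\ell_2$/Cauchy--Schwarz route; your linear bound is strictly smaller (since $C_1<1$ and $L/Q<1$) and you correctly verify it lies below the stated threshold. Your route is more elementary and explains why the constants in the statement have room to spare.

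Second, the concentration for $\|Y_i(\tau_k)-Y_i(\tau_{k+1})\|_2^2$ is organized differently. The paper first establishes $\|X_i\|_2\le 6\sqrt{n}$ for all $0\le i\le T$ via Bernstein (this is the source of the ``$T+1$'' in the statement---there are $T+1$ vectors $X_0,\dots,X_T$), and then bounds $\|Y_i(\tau_k)-Y_i(\tau_{k+1})\|_2$ \emph{deterministically} on that event using $|\sqrt{1-\tau_k^2}-\sqrt{1-\tau_{k+1}^2}|\le\sqrt{2/Q}$ (the same difference-of-squares trick you identified). Your direct quadratic-form concentration on $\|Y_i(\tau_k)-Y_i(\tau_{k+1})\|_2^2$ also works, but produces $TQ$ rather than $T+1$ concentration events; since $Q=n^{o(1)}$ this is harmlessly absorbed into $\exp(-\Theta(n))$, as you note. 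The paper's organization is slightly cleaner here in that it decouples the randomness of the $X_i$'s from the time index $k$ entirely.
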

Later, we study the asymptotics of $T$ and show that the bound in \eqref{prob-steps-stable} is not vacuous.
\begin{proof}%{(of Lemma~\ref{lemma:steps-stable})}

We first establish 
that for every $1\le i\le T$, $\|X_i\|_2\le 6\sqrt{n}$ w.h.p. Let $X_i=(X_i(j):1\le j\le n)$, where $X_i(j)$, $1\le j\le n$ are i.i.d. standard normal. Appealing to Bernstein's inequality as in the proof of \cite[Theorem~3.1.1]{vershynin2018high}, we have that for every $t\ge 0$,
\[
\mathbb{P}\left(\left|\frac1n\sum_{1\le j\le n}X_i(j)^2-1\right|\ge t\right)\le \exp(-cn\min\{t^2,t\}).
\]
Using now a union bound over $1\le i\le T$, we conclude
\[
\mathbb{P}\left(\|X_i\|\le  6\sqrt{n},0\le i\le T\right)\ge 1-(T+1)\exp(-\Theta(n)). 
\]
Here the choice of $6$ is arbitrary, any constant larger than $1$ works.

Fix any $1\le i\le T$. We now upper bound $\|Y_i\left(\tau_k\right)-Y_i\left(\tau_{k+1}\right)\|$, where $Y_i(\cdot)$ is defined in Definition~\ref{def:overlap-set}. Note that
\begin{align*}
\left\|Y_i\left(\tau_k\right)-Y_i\left(\tau_{k+1}\right)\right\|&\le \left|\sqrt{1-\tau_k^2}-\sqrt{1-\tau_{k+1}^2}\right|\|X_0\| +|\tau_k-\tau_{k+1}|\|X_i\| \\
&\le \left|\sqrt{1-\tau_k^2}-\sqrt{1-\tau_{k+1}^2}\right|\|X_0\| +Q^{-1}\|X_i\|
\end{align*}
using triangle inequality, and the fact $|\tau_k-\tau_{k+1}|\le Q^{-1}$. Next, observe that using $\tau_k,\tau_{k+1}\in[0,1]$,
\[
\sqrt{\left|\tau_k^2-\tau_{k+1}^2\right|} = \sqrt{\left|\tau_k-\tau_{k+1}\right|}\sqrt{\tau_k+\tau_{k+1}}\le \sqrt{2\left|\tau_k-\tau_{k+1}\right|}.
\]
We now show
\[
\frac{\sqrt{\left|\tau_k^2-\tau_{k+1}^2\right|}}{\sqrt{1-\tau_k^2}+\sqrt{1-\tau_{k+1}^2}}\le 1.
\]
Squaring, and using $\tau_{k+1}>\tau_k$, this is equivalent to having
\begin{align*}
\tau_{k+1}^2 - \tau_k^2 \le 1-\tau_k^2 + 1-\tau_{k+1}^2 +2\sqrt{\left(1-\tau_k^2\right)\left(1-\tau_{k+1}^2\right)} \iff \tau_{k+1}^2 \le 1+\sqrt{\left(1-\tau_k^2\right)\left(1-\tau_{k+1}^2\right)},
\end{align*}
which holds as $0\le \tau_i \le 1$ for all $i$.
Equipped with the previous bounds, we thus have
\begin{align*}
    \left|\sqrt{1-\tau_k^2}-\sqrt{1-\tau_{k+1}^2}\right| &= \frac{|\tau_k^2-\tau_{k+1}^2|}{\sqrt{1-\tau_k^2}+\sqrt{1-\tau_{k+1}^2}} \\
    &\le \sqrt{2|\tau_k-\tau_{k+1}|}\underbrace{\frac{\sqrt{\left|\tau_k^2-\tau_{k+1}^2\right|}}{\sqrt{1-\tau_k^2}+\sqrt{1-\tau_{k+1}^2}}}_{\le 1}\\
    &\le \sqrt{2}|\tau_k-\tau_{k+1}|^{\frac12} \\&\le \sqrt{2}Q^{-\frac12},
\end{align*}
where the last line uses $|\tau_k-\tau_{k+1}|\le Q^{-1}$. In particular, on the high probability event\footnote{As we verify soon, $T=2^{o(n)}$, hence this is indeed a high probability event.}
\[
\mathcal{E}_{\rm norm}\triangleq \left\{\|X_i\|_2\le 6\sqrt{n},0\le i\le T\right\}
\]
it holds that
\begin{equation}\label{eq:Y-i-tau-bd}
    \Bigl\|Y_i(\tau_k)-Y_i(\tau_{k+1})\Bigr\| \le 6\sqrt{2}n^{\frac12}Q^{-\frac12} + 6Q^{-1}n^{\frac12}\le 12\sqrt{2}n^{\frac12}Q^{-\frac12}.
\end{equation}
Next, we observe that $Y_i\left(\tau_k\right),Y_i\left(\tau_{k+1}\right)$ are both distributed $\mathcal{N}(0,I_n)$ with correlation
\[
\mathbb{E}\left[Y_i\left(\tau_k\right)Y_i\left(\tau_{k+1}\right)^T\right] = \rho_k I
\]
where $\rho_k$, $0\le k\le Q-1$, is per~\eqref{eq:rho-k}.
Define now the event
\[
\mathcal{E}_{\rm stability} =\bigcap_{0\le i\le T}\bigcap_{0\le k\le Q-1}\Bigl\{d_H\Bigl(\mathcal{A}\left(Y_i\left(\tau_k\right)\right),\mathcal{A}\left(Y_i\left(\tau_{k+1}\right)\right)\Bigr)\le C_1n+L\left\|Y_i\left(\tau_k\right)-Y_i\left(\tau_{k+1}\right)\right\|_2^2\Bigr\}
\]
which is the event that the algorithm is stable for each interpolation trajectory $1\le i\le T$ along time indices $0\le k\le Q-1$. Using \eqref{eq:algo-reduce-to-deterministic-stability} together with a union bound over $1\le i\le T$ and $0\le k\le Q-1$, it holds that with probability at least $1-3TQ^2p_{\rm st}$,
\[
\mathbb{P}\left(\mathcal{E}_{\rm stability}\right)\ge 1-3TQ^2p_{\rm st}.
\]
Consequently, taking a union bound, this time for $\mathcal{E}_{\rm norm}\cap \mathcal{E}_{\rm stability}$, we arrive at 
\[
\mathbb{P}\left(\mathcal{E}_{\rm norm}\cap \mathcal{E}_{\rm stability}\right) = 1-\left(T+1\right)\exp\left(-\Theta(n)\right)-3TQ^2 p_{\rm st}.
\]
We now compute $\left|\OBar^{(ij)}\left(\tau_k\right)-\OBar^{(ij)}\left(\tau_{k+1}\right)\right|$ on the event $\mathcal{E}_{\rm norm}\cap \mathcal{E}_{\rm stability}$, while treating the stability condition deterministic due to the conditioning. 

For notational convenience, let $\A_i(k)\triangleq \A\left(Y_i(\tau_k)\right)$, $1\le i\le T$ and $0\le k\le Q$. We first observe that
\[
\|\A_i(k)-\A_j(k)\| =2\sqrt{d_H\left(\A_i(k),\A_j(k)\right)}.
\]
Using the stability condition, 
\[
d_H\left(\A_i(k),\A_i(k+1)\right)\le C_1n+L\|Y_i(k)-Y_i(k+1)\|_2^2,\quad 1\le i\le T,\quad 0\le k\le Q-1
\]
together with the trivial inequality $\sqrt{u+v}\le \sqrt{u}+\sqrt{v}$, valid for all $u,v\ge 0$, we have
\begin{equation}\label{eq:hamming-to-l2}
\|\A_i(k)-\A_i(k+1)\|_2\le 2\sqrt{C_1}\sqrt{n} + 2\sqrt{L}\left\|Y_i(k)-Y_i(k+1)\right\|_2.
\end{equation}
Next,
\begin{align}
     \left|\OBar^{(ij)}(\tau_k)-\OBar^{(ij)}(\tau_{k+1})\right|      &= \frac1n\Bigl|\ipbig{\A_i(k)}{\A_j(k)}-\ipbig{\A_i(k+1)}{\A_j(k+1)}\Bigr|\label{eq:triangle-ineq-rev-1}\\
     &\le \frac1n\Bigl|\ip{\A_i(k)-\A_i(k+1)}{\A_j(k)}\Bigr|+\frac1n \Bigl|\ip{\A_i(k+1)}{\A_j(k)-\A_j(k+1)}\Bigr|\label{eq:overlap-triangleq}\\
     &\le \frac{1}{\sqrt{n}}\Bigl(\|\A_i(k)-\A_i(k+1)\|_2+\|\A_j(k)-\A_j(k+1)\|_2\Bigr)\label{eq:overlap-cauchy}\\
     &\le \frac{1}{\sqrt{n}}\left(4\sqrt{C_1}\sqrt{n}+2\sqrt{L} \|Y_i(\tau_k)-Y_i(\tau_{k+1})\|+2\sqrt{L} \|Y_j(\tau_k)-Y_j(\tau_{k+1})\|\right)\label{eq:overlap-hamming-to-l2}\\
     &\le 4\sqrt{C_1}+\frac{4\sqrt{L}\cdot 12\sqrt{2}n^{\frac12}Q^{-\frac12}}{\sqrt{n}}\label{eq:overlap-y-i-tau-bd}\\
     &=4\sqrt{C_1}+\frac{48\sqrt{2L}}{\sqrt{Q}}.
\end{align}
Above, \eqref{eq:triangle-ineq-rev-1} uses the definition; %Above, \eqref{eq:def-of-overlap} uses the definition; \eqref{eq:triangle-ineq-rev-1} and 
\eqref{eq:overlap-triangleq} uses the triangle inequality; \eqref{eq:overlap-cauchy} uses the Cauchy-Schwarz inequality, and the fact $\|\A_i(k+1)\|_2 = \|\A_j(k)\|=\sqrt{n}$; \eqref{eq:overlap-hamming-to-l2} uses \eqref{eq:hamming-to-l2}; and finally \eqref{eq:overlap-y-i-tau-bd} uses \eqref{eq:Y-i-tau-bd}.
\end{proof}
\paragraph{ Success along the trajectory.} We now study the event that the algorithm $\A$ is ``successful" along each interpolation trajectory.
\iffalse 
Recall
\[
\A_i(k)\triangleq \A\left(Y_i\left(\tau_k\right)\right)\quad\text{where}\quad Y_i\left(\tau_k\right)=\sqrt{1-\tau_k^2}X_0 + \tau_k X_i, \quad 1\le i\le T,\, 0\le k\le Q.
\]
\fi 
We claim that we have
\begin{equation}\label{eq:success-prob-along-traj}
    \mathbb{P}\left(\mathcal{E}_4\right)\ge 1-2T(Q+1)p_f
\end{equation}
where the event $\mathcal{E}_4$ is defined as 
\begin{equation}\label{eq:success-along-traj-event}
    \mathcal{E}_4\triangleq \bigcap_{1\le i\le T}\bigcap_{0\le k\le Q}\left\{\frac{1}{\sqrt{n}}\Bigl|\ip{\A_i(k)}{Y_i(\tau_k)}\Bigr|\le 2^{-E_n}\right\}.
\end{equation}
Namely, the event $\mathcal{E}_4$ says that the algorithm $\A$ creates a near ground-state at each ``discrete time instance" $0\le k\le Q$ along each interpolation trajectory $1\le i\le T$. 

We now prove this claim. Note that as $X_i\in\R^n$, $1\le i\le T$ are i.i.d. $\mathcal{N}(0,I_n)$, it follows that for each $1\le i\le T$ and $0\le k\le Q$, $Y_i(\tau_k)\distr \mathcal{N}(0,I_n)$. Using now \eqref{eq:algo-reduce-to-deterministic} together with a union bound over $1\le i\le T$ and $0\le k\le Q$ settles the result.

\paragraph{ The order of growth of parameters.} Before we put everything together, we now study the order of growth of relevant parameters. This is necessary for applying union bound arguments that will follow. 

First, combining \eqref{eq:E-n-satisfies-this} and \eqref{eq:m(n)-2over-s(n)}, we obtain
\begin{equation}\label{eq:m(n)-order}
    \omega(1)\le m \le o\left(\log^{\frac15-\epsilon}n\right).
\end{equation}
This is not vacuous since $\epsilon<\frac15$. 

Next, since $L$ is constant in $n$, the asymptotics of $Q$ given in~\eqref{eq:choose-Q-thisway} becomes
%We next study the asymptotics of $Q$. Using~\eqref{eq:g(n)-and-z(n)}, \eqref{eq:eta(n)-s(n)-z(n)}, and \eqref{eq:C-1-Q-chosen-this-way}; we have
\begin{align*}
Q=\left(8\cdot 480^2\cdot L\right)\cdot \left(\frac{n}{E_n}\right)^{4+\frac{\epsilon}{4}} = \Theta\left(\left(\frac{n}{E_n}\right)^{4+\frac{\epsilon}{4}}\right). % &=  \frac{2\cdot 480^2 \cdot L}{\eta^2} = \frac{2\cdot 480^2 \cdot L}{(g(n)/2n)^2} = \left(8\cdot 480^2\cdot L\right)\cdot \left(\frac{n}{E_n}\right)^{4+\frac{\epsilon}{4}} = \Theta\left(\left(\frac{n}{E_n}\right)^{4+\frac{\epsilon}{4}}\right),
\end{align*}
%since $L$ is constant in $n$. 
Recalling now the condition~\eqref{eq:E-n-satisfies-this} on $E_n$, we obtain
\begin{equation}\label{eq:Q(n)-order}
Q =  o\left(\log^{\left(\frac15-\epsilon\right)\left(4+\frac{\epsilon}{4}\right)} n\right).%= \Theta\left(\frac{1}{\eta^2}\right) = \Theta\left(\frac{1}{s(n)^2 z(n)^2}\right) = \Theta\left(\frac{1}{s(n)^{4+\frac{\epsilon}{4}}}\right)  = o\left(\log^{\left(\frac15-\epsilon\right)\left(4+\frac{\epsilon}{4}\right)} n\right).
\end{equation}
%For $Q$, using \eqref{eq:g(n)-and-z(n)}, \eqref{eq:eta(n)-s(n)-z(n)}, and \eqref{eq:C-1-Q-chosen-this-way}; we arrive at
%\begin{equation}\label{eq:Q(n)-order}
%Q = \Theta\left(\frac{1}{\eta^2}\right) = \Theta\left(\frac{1}{s(n)^2 z(n)^2}\right) = \Theta\left(\frac{1}{s(n)^{4+\frac{\epsilon}{4}}}\right)  = o\left(\log^{\left(\frac15-\epsilon\right)\left(4+\frac{\epsilon}{4}\right)} n\right).
%\end{equation}
Moreover, \eqref{eq:Q(n)-order} yields also that
\begin{equation}\label{eq:log-Q(n)-order}
    \log Q = \log\left(o\left(\log^{\left(\frac15-\epsilon\right)\left(4+\frac{\epsilon}{4}\right)} n\right)\right) = O\left(\log \log n\right).
\end{equation}
Combining bounds \eqref{eq:m(n)-order}, \eqref{eq:Q(n)-order}, and \eqref{eq:log-Q(n)-order}, we arrive at
\begin{equation}\label{eq:m(n)-Q(n)-logQ(n)}
    mQ\log Q = o\left(\log^{\left(\frac15-\epsilon\right)\left(5+\frac{\epsilon}{4}\right)}n\right)O\left(\log \log n\right) = o\left(\log^{\left(\frac15-\epsilon\right)\left(5+\frac{\epsilon}{2}\right)}n\right),
\end{equation}
where we used
\[
O\left(\log \log n\right) = o\left(\log^{w} n \right)
\]
valid for any constant $w>0$. Next, observe that for $\epsilon>0$, 
\[
\left(\frac15-\epsilon\right)\left(5+\frac{\epsilon}{2}\right) = 1 - \left(\frac{49}{10}\epsilon +\frac{\epsilon^2}{2}\right)<1.
\]
Thus,  by combining \eqref{eq:T-chosen-this-way}, \eqref{eq:m(n)-Q(n)-logQ(n)}, and the fact $mQ\log Q+\log m =\Theta(mQ\log Q)$, we arrive at
\begin{equation}\label{eq:binom-T-choose-m-sub-exp}
    \binom{T}{m} \le T^m =\exp_2\left(m2^{4mQ\log_2 Q}\right) = 2^{o(n)}.
\end{equation}

%\paragraph{ Applicability of Theorem~\ref{thm:m-ogp-superconstant-m}.} %Since $g(n)$ chosen in~\eqref{eq:g(n)-and-z(n)} do not satisfy $o\left(\phi(n)^2\right)$---where $\phi(n)=E_n\left(n\log_2 n\right)^{-\frac12}$---as required by \eqref{eq:g(n)-up-low-bd}, one needs to ensure Theorem~\ref{thm:m-ogp-superconstant-m} is applicable. 

%Next, we study the counting term. 
%The entire analysis of the probabilistic term remains intact, and in particular \eqref{eq:this-will-later-be-super-useful} still remains valid. Recall now $\nu_n$ from \eqref{eq:nu-n-g-over-f}. In particular, 
%\[
%\nu_n = \frac{g(n)}{E_n} = z(n) = s(n)^{1+\frac{\epsilon}{8}}.
%\]
%Using~\eqref{eq:bulluk}, we obtain
%\[
%\log\frac{1}{\nu_n} = \Theta\left(\log\frac{1}{s(n)} \right) = O\left(\log \log n\right) =o\left(E_n\right). 
%\]
%Consequently,  the dominant contribution to \eqref{eq:this-will-later-be-super-useful} is still $-mE_n$. That is, 
%\[
%\exp_2\left(O(m\log_2 n)+\frac{m}{2}-\frac{m}{2}\log_2\pi+\frac{m}{2}\log_2 \frac{1}{\nu_n}-mE_n\right) = \exp_2\left(-mE_n+o\left(mE_n\right)\right).
%\]
%We finally verify that the object above dominates also the modified cardinality bound~\eqref{eq:modified-cardinality-bound}. Recalling that $m=2\frac{n}{E_n}$ and $E_n = ns(n)$; it suffices to verify
%\[
%mns(n)^{1+\frac{\epsilon}{8}} \log \frac{1}{s(n)} = o\left(mE_n\right) = o\left(mns(n)\right),
%\]
%which boils down verifying
%\[
%s(n)^{\frac{\epsilon}{8}}\log \frac{1}{s(n)} = o(1). 
%\]
%But this indeed holds since $s(n)=o(1)$ per \eqref{eq:E-n-satisfies-this}. 
\paragraph{ Putting everything together.} We now put everything together. 
\iffalse
First, set
\begin{equation}\label{eq:C-1-Q-chosen-this-way}
    C_1 =\frac{\eta^2}{1600} \quad\text{and}\quad  Q=\frac{480^2\cdot 2L}{\eta^2}.
\end{equation}
Here, $\eta\triangleq \eta(\epsilon)$ is the parameter dictated by Theorem~\ref{thm:main-eps-energy}. Now, set
\begin{equation}\label{eq:T-chosen-this-way}
    T=\exp_2\left(6\cdot m^{Q-1}\cdot 4^{m^{Q-1}}\right)
\end{equation}
where $Q$ is chosen in \eqref{eq:C-1-Q-chosen-this-way} and $m\triangleq m(\epsilon)\in\mathbb{N}$ is the $m-$OGP parameter dictated by Theorem~\ref{thm:main-eps-energy}. Finally, choose $p_f$
\begin{equation}\label{eq:p_f-chosen-this-way}
    p_f =\frac{1}{4T(Q+1)}
\end{equation}
for $T$ chosen in \eqref{eq:T-chosen-this-way} and $Q$ chosen in \eqref{eq:C-1-Q-chosen-this-way}.
\fi 
For any $\mathcal{M}\subset [T]$ with $\left|\mathcal{M}\right|=m$, recall the event $\mathcal{E}_{\mathcal{M}}$ as in \eqref{eq:event-chaos}, and define the event 
\begin{equation}\label{eq:event-chaos-all-M}
    \mathcal{E}_1\triangleq \bigcap_{\mathcal{M}\subset [T]:\left|\mathcal{M}\right|=m} \mathcal{E}_{\mathcal{M}}.
\end{equation}
Using \eqref{eq:chaos}, together with a union bound, we obtain
\[
\mathbb{P}\left(\mathcal{E}_1^c\right)=\mathbb{P}\left(\bigcup_{\mathcal{M}\subset [T]:\left|\mathcal{M}\right|=m} \mathcal{E}_{\mathcal{M}}^c\right)\le \binom{T}{m}\exp\left(-\Theta\left(n\right)\right).
\]
Since $\binom{T}{m} = 2^{o(n)}$ per \eqref{eq:binom-T-choose-m-sub-exp}, we deduce
\begin{equation}\label{eq:chaos-over-M-very-likely}
    \mathbb{P}\left(\mathcal{E}_1\right)\ge 1-\exp\left(-\Theta(n)\right).
\end{equation}
For the events $\mathcal{E}_1$ defined in \eqref{eq:event-chaos-all-M} (refer to \eqref{eq:chaos-over-M-very-likely} for its probability), $\mathcal{E}_3$ defined in \eqref{events-steps-stable} (refer to \eqref{prob-steps-stable} for its probability), and $\mathcal{E}_4$ defined in \eqref{eq:success-along-traj-event} (refer to \eqref{eq:success-prob-along-traj} for its probability), define their intersection by
\[
\mathcal{F} = \mathcal{E}_1\cap \mathcal{E}_3\cap \mathcal{E}_4.
\]
Check that using \eqref{prob-steps-stable}, the fact $T=2^{o(n)}$ per \eqref{eq:binom-T-choose-m-sub-exp} as well as the choice of $p_{\rm st}$ per \eqref{eq:p-rm-st}, we have
\begin{align*}
\mathbb{P}\left(\mathcal{E}_3^c\right)&\le (T+1)\exp\left(-\Theta(n)\right)+3TQ^2 p_{\rm st} \\
&\le \exp\left(-\Theta(n)\right)+\frac13.
\end{align*}
Moreover, using \eqref{eq:success-prob-along-traj} as well as the choice of $p_f$ per \eqref{eq:p_f-chosen-this-way}, we arrive at
\[
\mathbb{P}(\mathcal{E}_4^c)\le \frac12.
\]
A union bound over $\mathcal{E}_1^c,\mathcal{E}_2^c$ and $\mathcal{E}_4^c$ then yields
\begin{equation}\label{eq:prob-of-the-intersection-cal-F}
    \mathbb{P}\left(\mathcal{F}\right) = \mathbb{P}\left(\mathcal{E}_1\cap \mathcal{E}_3\cap \mathcal{E}_4\right)=1-\mathbb{P}\left(\mathcal{E}_1^c\cup \mathcal{E}_3^c\cup \mathcal{E}_4^c\right)\ge \frac16-\exp\left(-\Theta(n)\right).
\end{equation}
In the remainder, assume that we are on the event $\mathcal{F}$.

Note that, from the choice of $C_1$ per~\eqref{eq:C-1-chosen-this-way} and $Q$ per~\eqref{eq:Q-chosen-this-way}, we obtain that on the event $\mathcal{F}$, it holds that
\begin{equation}\label{eq:eta-over-5-enough}
\left|\OBar^{(ij)}\left(\tau_k\right)-\OBar^{(ij)}\left(\tau_{k+1}\right)\right|\le \frac{\eta}{5},
\end{equation}
for $1\le i<j\le T$ and $0\le k\le Q$.

Now, fix any subset $S\subset[T]$ with $|S|=m$. A consequence of the event $\mathcal{E}_1$, through~\eqref{eq:Obar-chaos}, is that there exists distinct $i_S,j_S\in S$ such that \[
\OBar^{(i_S ,j_S)}\left(\tau_Q\right)\in \left[0,1-\frac{3g(n)}{E_n}\right].
\]
We verify that this interval is ``below" the forbidden region, $(\beta-\eta,\beta)$: per~\eqref{eq:eta(n)-s(n)-z(n)} it suffices to ensure
\[
\beta - \eta = \underbrace{1 - \frac{2g(n)}{E_n}}_{=\beta} - \underbrace{\frac{g(n)}{2n} }_{=\eta} >1 - \frac{3g(n)}{E_n}\Leftrightarrow \frac{g(n)}{E_n} > \frac{g(n)}{2n} \Leftrightarrow 2n>E_n.
\]
Since $E_n=o(n)$, this indeed holds for all sufficiently large $n$. 

Take now $\delta =\frac{\eta}{100}$. We next show there exists a $k'\in[1,Q]\cap \mathbb{Z}$ such that 
\[
\OBar^{(i_S ,j_S)}\left(\tau_{k'}\right) \in (\beta-\eta+3\delta,\beta-3\delta),
\]
where $(\beta-\eta,\beta)$ is the forbidden overlap region as per Theorem~\ref{thm:m-ogp-superconstant-m}. Take indeed $K_0$ to be the last index (in $[1,Q]\cap \mathbb{Z}$) where $\OBar^{(i_S j_S)}(\tau_{K_0})\ge \beta-3\delta$. Note that such a $K_0$ must exist since $\OBar^{(ij)}(0)=1$ for every $1\le i<j\le T$. Then if $\OBar^{(i_S j_S)}(\tau_{K_0+1})\le \beta-\eta+3\delta$, we obtain
\[
\left|\OBar^{(i_S, j_S)}(\tau_{K_0})-\OBar^{(i_S j_S)}(\tau_{K_0+1})\right|\ge \eta-6\delta>0,
\]
which contradicts with the event $\mathcal{E}_3$ and in particular with \eqref{eq:eta-over-5-enough} for sufficiently large $n$. Namely,
\[
\OBar^{(i_S ,j_S)}(\tau_{K_0+1}) \in (\beta-\eta+3\delta,\beta-3\delta).
\]
In particular, keeping in mind that $S$ was arbitrary, we conclude that for every subset $S\subset[T]$ of cardinality $|S|=m$, there exists $1\le i_S<j_S\le m$ such that for some $\tau_S\in\{\tau_1,\dots,\tau_Q\}$ it is the case that
\[
\OBar^{(i_S,j_S)}\left(\tau_S\right) \in (\beta-\eta+3\delta,\beta-3\delta)\subsetneq (\beta-\eta,\beta).
\]
Equipped with this, we now construct a certain graph $\Gr=(V,E)$ such that the following holds.
\begin{itemize}
    \item Its vertex set $V$ coincides with $[T]$. That is, $V=\{1,2,\dots,T\}$, where each vertex corresponds to an interpolation trajectory $1\le i\le T$. 
    \item For any $1\le i<j\le T$, $(i,j)\in E$ if and only if there exists a time $\tau\in[0,1]$ such that
    \[
    \OBar^{(ij)}(\tau)\in (\beta-\eta,\beta).
    \]
\end{itemize}
Next, we ``color" each edge of $\Gr$ with one of $Q$ colors. Specifically, for any $1\le i<j\le T$ with $(i,j)\in E$, this edge is colored with color $t$, $1\le k\le Q$ where $t$ is the first time instance $\{\tau_1,\tau_2,\dots,\tau_Q\}$ such that
\[
\OBar^{(ij)}(\tau_t) \in (\beta-\eta,\beta).
\]
In particular, $\Gr$ enjoys the following properties.
\begin{itemize}
    \item $\Gr=(V,E)$ has $|V|=T$ vertices; with the property that for any subset $S\subset V$ of cardinality $|S|=m$, there exists a distinct pair $i,j\in S$ of vertices such that $(i,j)\in E$. 
    \item Any edge $(i,j)\in E$ of $\Gr$ is colored with one of $Q$ colors. 
\end{itemize}
\begin{proposition}\label{prop:G-mc-clique}
The graph $\Gr$ contains a monochromatic $m-$clique $K_m$.
\end{proposition}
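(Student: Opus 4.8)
The plan is to deduce Proposition~\ref{prop:G-mc-clique} by combining the two Ramsey-type auxiliary results, Theorem~\ref{thm:ramsey}, Proposition~\ref{thm:clique-exist}, and Theorem~\ref{thm:ramsey-1}, exactly as foreshadowed in the outline. The graph $\Gr=(V,E)$ constructed above has $|V|=T$ and is ``$M$-admissible'' for $M\triangleq R_Q(m)$ in the sense of Definition~\ref{def:m-admissible-graph}: indeed, every subset $S\subset V$ with $|S|=M$ in particular has a subset of size $m$, and we have just shown that every size-$m$ subset of $V$ contains at least one edge of $E$; hence $\alpha(\Gr)\le m-1\le M-1$. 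So the first step is to record that $\Gr$ is $M$-admissible with $M=R_Q(m)$.

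Next I would invoke Proposition~\ref{thm:clique-exist}: since $\Gr$ is $M$-admissible, as long as
\[
T=|V|\ge \binom{2M-2}{M-1},
\]
the graph $\Gr$ contains an $M$-clique $K_M$. The second step is therefore to verify this size inequality with the chosen value of $T$ from~\eqref{eq:T-chosen-this-way}. By Theorem~\ref{thm:ramsey} we have $M=R_Q(m)\le Q^{Qm}$, so $\binom{2M-2}{M-1}\le 2^{2M}\le 2^{2Q^{Qm}}\le 2^{2^{2Qm\log_2 Q}}\le 2^{2^{4mQ\log_2 Q}}=T$, where the last equality is precisely the definition~\eqref{eq:T-chosen-this-way} of $T$ (and the inequality $2Qm\log_2 Q\le 4mQ\log_2 Q$ is trivial). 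Thus $\Gr$ indeed contains a clique $K_M$ on $M\ge R_Q(m)$ vertices.

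The third step restricts attention to this $K_M$ and uses the $Q$-coloring of the edges of $\Gr$. By construction every edge of $\Gr$ carries one of $Q$ colors, so the induced coloring on the $M$-clique $K_M$ is a $Q$-coloring of the edges of a complete graph on $M\ge R_Q(m)$ vertices; by the definition of the multicolor Ramsey number $R_Q(m)$ (Theorem~\ref{thm:ramsey}), this $K_M$ must contain a monochromatic $K_m$. Pulling this back to $\Gr$, we conclude that $\Gr$ contains a monochromatic $m$-clique, which is the assertion of Proposition~\ref{prop:G-mc-clique}.

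I do not expect any genuine obstacle here: this statement is purely a bookkeeping corollary of the two Ramsey facts together with the extremal lemma Proposition~\ref{thm:clique-exist}, and the only thing to be careful about is that the double-exponential bound on $T$ in~\eqref{eq:T-chosen-this-way} is large enough to accommodate both the Erd\H{o}s--Szekeres binomial bound $\binom{2M-2}{M-1}$ on the admissibility side and the crude bound $R_Q(m)\le Q^{Qm}$ on the size of $M$ — which it is, by design. (The real work of the argument — showing that $\Gr$ has the admissibility and colorability properties, i.e.\ that the overlaps get trapped in the forbidden window along every trajectory family — was already carried out in the construction of $\Gr$ above using the stability lemma, Lemma~\ref{lemma:steps-stable}, and the auxiliary event $\mathcal{E}_1$; the present proposition merely harvests the combinatorial consequence.) After this, the proof of Theorem~\ref{thm:Main} finishes quickly: a monochromatic $m$-clique yields indices $i_1<\cdots<i_m$ and a single time $\tau'\in\{\tau_1,\dots,\tau_Q\}$ with all pairwise overlaps $\OBar^{(i_k i_\ell)}(\tau')\in(\beta-\eta,\beta)$, and setting $\sigma^{(k)}=\A(Y_{i_k}(\tau'))$ produces, on the high-probability event $\mathcal{F}$, an element of $\mathcal{S}(\beta,\eta,m,E_n,\mathcal{I})$, contradicting Theorem~\ref{thm:m-ogp-superconstant-m}.
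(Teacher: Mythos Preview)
Your proposal is correct and follows essentially the same three-step approach as the paper: establish $M$-admissibility of $\Gr$, invoke Proposition~\ref{thm:clique-exist} to extract a $K_M$, and then apply the multicolor Ramsey bound to find a monochromatic $K_m$ inside. The only cosmetic difference is that the paper sets $M\triangleq Q^{mQ}$ (the upper bound from Theorem~\ref{thm:ramsey}) and then notes $M\ge R_Q(m)$, whereas you set $M\triangleq R_Q(m)$ directly and bound it above by $Q^{Qm}$ when checking $T\ge\binom{2M-2}{M-1}$; both routes yield the same inequalities.
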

\begin{proof}[Proof of Proposition~\ref{prop:G-mc-clique}]
Recall from \eqref{eq:T-chosen-this-way} % \begin{equation}\label{eq:T-chosen-this-way}
    %T=\exp_2\left(2^{4mQ\log_2 Q}\right)
that $\Gr$ has 
\[
T= \exp_2\left(2^{4mQ\log_2 Q}\right)%T=\exp_2\left(2\cdot 4^{m^{Q-1}}\right)
\]
vertices. Define now
\begin{equation}\label{eq:M-admissible-M-parameter}
    M \triangleq Q^{mQ}=2^{mQ\log_2 Q}
\end{equation}
Note that $\Gr$ is $m-$admissible, in the sense of Definition~\ref{def:m-admissible-graph}. Since $M>m$ for $Q>1$, it is also $M-$admissible. %Indeed, for any subset of $M$ vertices, take a further subset, consisting of $m$ vertices, and recall the fact that the final subset of $m$ vertices considers a pair of vertices which constitutes an edge. Now, 
Observe that
\[
T=\exp_2\left(2^{4mQ\log_2 Q}\right)\ge \exp_2\left(2\cdot \underbrace{2^{mQ\log_2 Q}}_{=M}\right)=4^M \ge \binom{2M-2}{M-1}.
\]
%Observe that 
%\[
%T=M^{3M} \ge  \frac{(M(M-1))^{M+1}-1}{M(M-1)-1}. 
%\]
Applying Proposition~\ref{thm:clique-exist}, we find that $\Gr$ contains an $M$, that is a $Q^{Qm}$, clique, $K_M$. Finally, since each edge of $K_M$ is colored with one of $Q$ colors and $R_Q(m)\le Q^{Qm}$ per Theorem~\ref{thm:ramsey}, we obtain that $K_M$ contains a monochromatic $m-$clique. Namely, $\Gr$ contains a monochromatic $m$-clique $K_m$ since all graphs above we worked with are subgraphs of $\Gr$. This concludes the proof of Proposition~\ref{prop:G-mc-clique}.
\end{proof}
We now complete the proof of Theorem~\ref{thm:Main}. Observe what it means for $\Gr$ to contain a monochromatic $m-$clique: there exists an $m-$tuple $1\le i_1<i_2<\cdots<i_m\le T$ of vertices (i.e. replicas) and a color (i.e. a time $\tau'\in\{\tau_1,\dots,\tau_Q\}$) such that
\[
\OBar^{(i_k,i_\ell)}\left(\tau'\right)\in(\beta-\eta,\beta),\quad 1\le k<\ell \le m.
\]
Now, define
\[
\sigma^{(k)}\triangleq \A\left(Y_{i_k}\left(\tau'\right)\right),\quad 1\le k\le m.
\]
It follows that $\left(\sigma^{(k)}:1\le k\le m\right)$ enjoys the following conditions:
\begin{itemize}
    \item Since we are on the event $\mathcal{F}$ which is a subset of the success event $\mathcal{E}_4$ \eqref{eq:success-along-traj-event}, it holds that
    \[
    \frac{1}{\sqrt{n}}\left|\ip{\sigma^{(k)}}{Y_{i_k}\left(\tau'\right)}\right|\le 2^{-E_n}   
     \]
    \item For $1\le k<\ell\le m$,
    \[
    \OBar\left(\sigma^{(k)},\sigma^{(\ell)}\right) \in(\beta-\eta,\beta).
    \]
\end{itemize}
Namely, for the choice $\zeta\triangleq \{i_1,i_2,\dots,i_m\}$ of the $m-$tuple  of distinct indices, the set $\mathcal{S}_\zeta \triangleq \mathcal{S}\left(\beta,\eta,m,E_n,\mathcal{I}\right)$ introduced in Definition~\ref{def:overlap-set}---with modification that inner products are considered---(where the indices $1,2,\dots,m$ there is replaced with $i_1,\dots,i_m$) with $\mathcal{I}=\{\tau_0,\tau_1,\dots,\tau_Q\}$ is non-empty. Namely,
\[
\mathbb{P}\Bigl(\exists \zeta\subset [T],|\zeta|=m:S_\zeta\ne \varnothing\Bigr)\ge \mathbb{P}\left(\mathcal{F}\right)\ge \frac12-\exp\left(-\Theta(n)\right).
\]
We now use the $m-$OGP result, Theorem~\ref{thm:m-ogp-superconstant-m}. Taking a union bound over $\zeta \subset[T]$ with $|\zeta|=m$ in Theorem~\ref{thm:m-ogp-superconstant-m}, we obtain
\[
\mathbb{P}\Bigl(\exists \zeta\subset [T],|\zeta|=m:S_\zeta\ne \varnothing\Bigr)\le \binom{T}{m}\exp\left(-\Theta(n)\right)=\exp\left(-\Theta(n)\right),
\]
since $\binom{T}{m}=2^{o(n)}$. But this yields
\[
\exp\left(-\Theta(n)\right)\ge \mathbb{P}\Bigl(\exists \zeta\subset [T],|\zeta|=m:S_\zeta\ne \varnothing\Bigr)\ge \frac12-\exp\left(-\Theta(n)\right),
\]
that is
\[
\exp\left(-\Theta(n)\right)\ge\frac16-\exp\left(-\Theta(n)\right).
\]
This is a contradiction for sufficiently large $n$. Therefore, the proof is complete. 
\end{proof}

\subsection{Proof of Theorem~\ref{thm:FEW}}\label{sec:pf-thm-FEW}
\begin{proof}
We start by recalling that 
\[
H\left(\sigma^*\right)=H\left(-\sigma^*\right)=\Theta\left(2^{-n}\right),
\]
with high probability, as noted in the introduction. Now, using Theorem~\ref{thm:2-ogp}, it follows that
\[
\min_{\sigma \in I_2}H(\sigma) = \Omega\left(2^{-n\epsilon}\right)
\]
with high probability. Indeed, for $\rho$ chosen as above, with high probability no two spin configurations with overlap $\left[\rho,\frac{n-2}{n}\right]$ can achieve simultaneously an energy of $O\left(2^{-n\epsilon}\right)$. 

In what follows next, the constants hidden under $\Theta\left(\cdot\right)$ and $\Omega\left(\cdot\right)$ are absorbed into the inverse temperature $\beta>0$. 

We have the following trivial lower bound:
\[
\pi_\beta(I_3)=\pi_\beta\left(\overline{I_3}\right)= \pi_\beta\left(\sigma^*\right)=\frac{1}{Z_\beta}\exp\left(-\beta H(\sigma^*)\right) = \frac{1}{Z_\beta}\exp\left(-\beta 2^{-n}\right).
\]
Notice, on the other hand, that for any $\sigma\in I_2$,
\[
\pi_\beta(\sigma) \le \frac{1}{Z_\beta}\exp\left(-\beta 2^{-n\epsilon}\right).
\]
Next, we upper bound
\[
\left|I_2\right| \le  \sum_{1\le k\le \left\lceil \frac{n(1-\rho)}{2}\right\rceil}\binom{n}{k} = \exp_2\left(nh\left(\frac{1-\rho}{2}\right)+O\left(\log_2 n\right)\right),
\]
where $h(\cdot)$ is the binary entropy function. Consequently
\[
\pi_\beta(I_2)=\sum_{\sigma \in I_2}\pi_\beta(\sigma)\le \frac{|I_2|}{Z_\beta}\left(-\beta 2^{-n\epsilon}\right)\le \frac{1}{Z_\beta}\exp\left(nh\left(\frac{1-\rho}{2}\right)+O\left(\log_2 n\right)-\beta 2^{-n\epsilon}\right).
\]
Hence
\[
\pi_\beta\left(I_3\right)\ge \exp\left(-\beta 2^{-n}+\beta 2^{-n\epsilon}-nh\left(\frac{1-\rho}{2}\right)+O\left(\log_2 n\right)\right)\pi_\beta(I_2).
\]
Finally, in the regime $\beta=\Omega\left(n2^{n\epsilon}\right)$, it is the case that
\[
-\beta 2^{-n}+\beta 2^{-n\epsilon}-nh\left(\frac{1-\rho}{2}\right)+O\left(\log_2 n\right) = \Omega\left(\beta 2^{-n\epsilon}\right)=\Omega(n).
\]
Hence,
\[
\pi_\beta\left(I_3\right)\ge e^{\Omega(n)}\pi_\beta(I_2).
\]
We next apply this reasoning for the set $I_1$, which is slightly more delicate. 

To that end, fix an $\epsilon'\in(\epsilon,1)$ (recall that $\epsilon<1$). We will show that with probability $1-O(1/n)$, there exists a $\sigma'\in\bincube$ such that $H(\sigma')=\Theta\left(2^{-n\epsilon'}\right)$. For this, it suffices to use \cite[Theorem~3.1]{karmarkar1986probabilistic} (with parameters $\beta=\sqrt{n}2^{-n\epsilon'}$ and $\epsilon=\frac{\beta}{2}$, in terms of their notation). 

It is evident, due to the OGP as well as the fact $I_3$ and $\overline{I_3}$ contains only ground states $\pm \sigma^*$,  that $\sigma'\notin \left(\overline{I_2}\cup I_2\right)\cup \left(\overline{I_3}\cup I_3\right)$. Consequently, $\sigma'\in I_1$. With this, we have the trivial lower bound
\[
\pi_\beta\left(I_1\right)\ge \pi_\beta\left(\sigma'\right) =\frac{1}{Z_\beta}\exp\left(-\beta 2^{-n\epsilon'}\right).
\]
Repeating the exact same reasoning while keeping in mind $\epsilon'>\epsilon$, we conclude
\[
\pi_\beta\left(I_1\right)\ge \exp\left(\Omega\left(\beta 2^{-n\epsilon}\right)\right)\pi_\beta(I_2).
\]
This concludes the proof.
\end{proof}
\subsection{Proof of Theorem~\ref{thm:slow-mixing}}\label{sec:pf-slow-mixing}
\begin{proof}
In what follows, we have $\beta=\Omega\left(n2^{n\epsilon}\right)$. 
\subsubsection*{Part ${\rm (a)}$}
Using the FEW property established in Theorem~\ref{thm:FEW}, we have that
    \[
    \min\left\{\pi_\beta(I_1),\pi_\beta(I_3)\right\}\ge \exp\left(\Omega(n)\right) \pi_\beta(I_2).
    \]
    We now use the facts $
    \pi_\beta (I_2)=\pi_\beta\left(\overline{I_2}\right)$,  $\pi_\beta (I_3)=\pi_\beta\left(\overline{I_3}\right)$; 
    and 
$
   \pi_\beta(I_1)+\pi_\beta (I_2)+\pi_\beta\left(\overline{I_2}\right)+\pi_\beta (I_3)+\pi_\beta\left(\overline{I_3}\right)\ge 1
$
    to arrive at $
   \left( \pi_\beta(I_1)+2\pi_\beta\left(I_3\right)\right)\left(1+\exp\left(-\Omega(n)\right)\right)\ge 1$. Consequently, we have $  \pi_\beta(I_1)+2\pi_\beta\left(I_3\right) \ge 1+o_n(1)$. With this we conclude that
    \[
    \pi_\beta(I_1)+\pi_\beta\left(I_3\right) \ge \frac12(1+o_n(1)),
    \]
    as claimed.
\subsubsection*{Part ${\rm (b)}$}
Theorem~\ref{thm:slow-mixing}${\rm (b)}$ is a consequence of following proposition.
\begin{proposition}\label{prop:escape-time-cdf}
Let $\beta=\Omega\left(n2^{n\epsilon}\right)$. Then, for any $T>0$, the ``escape time" $\tau_\beta$ introduced in \eqref{eq:escape-time} satisfies
\[
\mathbb{P}\left(\tau_\beta \le T\right)\le T\exp\left(-\Omega\left(\beta 2^{-n\epsilon}\right)\right),
\]
with high probability (over the randomness of $X\distr\mathcal{N}(0,I_n)$) as $n\to\infty$. 
\end{proposition}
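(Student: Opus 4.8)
\textbf{Proof proposal for Proposition~\ref{prop:escape-time-cdf}.}

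The plan is a standard bottleneck/conductance argument for the discrete-time reversible chain $(X_t)_{t\ge 0}$, exploiting the fact that $\partial S$ is a thin ``wall'' of exponentially small Gibbs mass separating $\sigma^*$ from the rest of the cube. Let me set $W\triangleq I_3\cup\partial S=\{\sigma^*\}\cup\partial S$; this is the region in which the chain is initialized (via $X_0\sim\pi_\beta(\cdot\mid W)$) and $\tau_\beta$ is the first exit time from $W$. The key observation is that the only way to leave $W$ in one step is to move from some $\sigma\in\partial S$ to a neighbour $\sigma'$ with $d_H(\sigma,\sigma^*)=2$, i.e. $\sigma'\in I_2$ with $\Overlap(\sigma',\sigma^*)=\frac{n-4}{n}$; such $\sigma'$ has, by the OGP-based bound from the proof of Theorem~\ref{thm:FEW}, energy $H(\sigma')=\Omega(2^{-n\epsilon})$, whereas every $\sigma\in W$ has $H(\sigma)=\Theta(2^{-n})$. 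Hence each ``exit edge'' $(\sigma,\sigma')$ carries reversible edge-mass $\pi_\beta(\sigma)Q(\sigma,\sigma')=\pi_\beta(\sigma')Q(\sigma',\sigma)$, and by detailed balance this is at most $\pi_\beta(\sigma')\le Z_\beta^{-1}\exp(-\beta\Omega(2^{-n\epsilon}))$, which is $\exp(-\Omega(\beta 2^{-n\epsilon}))$ times $\pi_\beta(\sigma^*)$, and thus (using that $\pi_\beta(W)\ge\pi_\beta(\sigma^*)$) at most $\exp(-\Omega(\beta 2^{-n\epsilon}))\,\pi_\beta(W)$.

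First I would make this into a one-step bound. Write $p_{\rm esc}\triangleq \mathbb{P}(X_{t+1}\notin W\mid X_t\in W,\ X_t\sim\pi_\beta(\cdot\mid W))$; summing the exit-edge masses over the (at most $n\cdot |\partial S| = n^{O(1)} $ many) boundary edges and dividing by $\pi_\beta(W)$ gives, after absorbing the polynomial factor into the $\Omega(\cdot)$ in the exponent (legitimate since $\beta 2^{-n\epsilon}=\Omega(n)\gg\log n$ in the regime $\beta=\Omega(n2^{n\epsilon})$),
\[
p_{\rm esc}\le n^{O(1)}\exp\bigl(-\Omega(\beta 2^{-n\epsilon})\bigr)=\exp\bigl(-\Omega(\beta 2^{-n\epsilon})\bigr).
\]
To upgrade a one-step statement started from $\pi_\beta(\cdot\mid W)$ to the multi-step statement started from the same distribution, I would use the standard fact that for a chain reversible w.r.t.\ $\pi_\beta$, the conditional measure $\pi_\beta(\cdot\mid W)$ restricted to $W$ is stationary for the chain ``reflected'' at $\partial W$, so the probability that the first $T$ steps all stay in $W$ differs from $(1-p_{\rm esc})^T$ only by the events that actually leave; a clean union bound then gives $\mathbb{P}(\tau_\beta\le T)\le \sum_{t=1}^{T}\mathbb{P}(X_t\notin W,\ X_s\in W\ \forall s<t)\le T\,p_{\rm esc}$, because on the event $\{X_s\in W\ \forall s<t\}$ the law of $X_{t-1}$ is exactly $\pi_\beta(\cdot\mid W)$ reweighted by a factor $\le 1$ relative to $\pi_\beta(\cdot\mid W)$ (this is where reversibility and the definition of the initialization are used). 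Combining yields $\mathbb{P}(\tau_\beta\le T)\le T\exp(-\Omega(\beta 2^{-n\epsilon}))$, which is the claim; Theorem~\ref{thm:slow-mixing}(b) then follows by taking $T=\exp(c\beta 2^{-n\epsilon})$ for a small enough constant $c$, together with part (a) to identify the ``non-trivial Gibbs mass'' region $I_1\cup\overline{I_3}$ that the chain has failed to reach.

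The main obstacle I anticipate is the bookkeeping in the reversibility step: justifying rigorously that conditioning on survival in $W$ up to time $t-1$ keeps the conditional law of $X_{t-1}$ dominated (pointwise, up to the normalizing constant) by $\pi_\beta(\cdot\mid W)$, so that the single-step escape bound applies at every step. This is where one must be careful that the chain is initialized \emph{exactly} at $\pi_\beta(\cdot\mid W)$ and that $W$ is ``absorbing from inside'' in the right sense; the cleanest route is to introduce the sub-stochastic kernel $\widetilde Q$ on $W$ obtained by killing the chain upon exit, note $\pi_\beta(\cdot\mid W)\widetilde Q\le\pi_\beta(\cdot\mid W)$ coordinatewise (a consequence of $\pi_\beta$-reversibility of $Q$), iterate, and then bound the total killed mass after $T$ steps by $T$ times the one-step killed mass $p_{\rm esc}$. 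Everything else — the energy gap $H|_W=\Theta(2^{-n})$ versus $H|_{I_2}=\Omega(2^{-n\epsilon})$, the polynomial count of boundary edges, and the absorption of polynomial factors into the exponent — is routine given Theorem~\ref{thm:2-ogp} and the computations already carried out in the proof of Theorem~\ref{thm:FEW}.
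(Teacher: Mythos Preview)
Your argument is correct and is essentially the same bottleneck/conductance argument as the paper's, just packaged slightly differently: the paper couples $X_t$ with the \emph{reflected} chain $\overline{X}_t$ on $W=I_3\cup\partial S$ (which is exactly stationary at $\pi_\beta(\cdot\mid W)$), observes $X_{\tau_\beta-1}\in\partial S$, and bounds $\mathbb{P}(\tau_\beta\le T)\le T\,\pi_\beta(\partial S\mid W)\le T\,\pi_\beta(I_2)/\pi_\beta(I_3)$, then invokes Theorem~\ref{thm:FEW} directly; you instead use the \emph{killed} sub-stochastic kernel, the domination $\pi_\beta(\cdot\mid W)\widetilde Q\le\pi_\beta(\cdot\mid W)$, and bound the exit-edge masses by hand. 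Both routes are standard and yield the same $T\exp(-\Omega(\beta 2^{-n\epsilon}))$; the paper's is marginally cleaner only because it reuses the already-proven FEW ratio rather than re-deriving the energy comparison.

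One harmless factual slip: it is \emph{not} true that every $\sigma\in W$ has $H(\sigma)=\Theta(2^{-n})$. Since $\partial S\subset I_2$ (overlap $(n-2)/n$ with $\sigma^*$ lies in the forbidden region of Theorem~\ref{thm:2-ogp}), in fact $H(\sigma)=\Omega(2^{-n\epsilon})$ for $\sigma\in\partial S$. Fortunately you never actually use this claim---your bound relies only on $\pi_\beta(W)\ge\pi_\beta(\sigma^*)$ and on $H(\sigma')=\Omega(2^{-n\epsilon})$ for $\sigma'$ at Hamming distance $2$ from $\sigma^*$, both of which are correct.
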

\begin{proof}[Proof of Proposition~\ref{prop:escape-time-cdf}]
The proof uses standard arguments, similar to \cite{gjs2019overlap,gamarnik2019landscape}; and is reproduced herein for completeness.

Consider first the Markov chain $\overline{X}_t$, which is the Markov chain $X_t$ reflected on the boundary $A\triangleq \partial \left(I_3\cup \partial S\right)$ of $I_3\cup \partial S$. Observe that $A$ is nothing but the set of all spin configurations $\sigma\in\bincube$ such that $d_H\left(\sigma,\sigma^*\right)=1$, that is $\frac1n\ip{\sigma}{\sigma^*}=\frac{n-2}{n}$. 

We now specify the transition kernel $\overline{Q}(x,y)$ of $\overline{X}_t$. If $x\in \left(I_3\cup \partial S\right)\setminus A$, then $\overline{Q}(x,y)=Q(x,y)$ for any $y\in I_3\cup \partial S$. If $x\in A$, then $\overline{Q}(x,y)=Q(x,y)$ for $y\in I_3\cup \partial S$; and $\overline{Q}(x,y)=0$ otherwise.

A consequence of the detailed balance equation is $\overline{X}_t$ is reversible with respect to $\pi_\beta\left(\cdot\mid I_3\cup\partial S\right)$. 

We now couple the initialization of the chains; $\overline{X}_0=X_0\sim \pi_\beta\left(\cdot\mid I_3\cup \partial S\right)$, to arrive at the conclusion that so long as $t\le \tau_\beta$, almost surely $\overline{X}_t = X_t$ and $\overline{X}_t \sim \pi_\beta\left(\cdot\mid I_3\cup\partial S\right)$.
From the definition of the ``escape time", it is the case $X_{\tau_\beta-1}\in A$, the ``boundary". Consequently
\begin{align}
    \mathbb{P}\left(\tau_\beta\le T\right)&\le \sum_{1\le i\le T}\mathbb{P}\left(\tau_\beta=i\right) \\
    &\le \sum_{1\le i\le T}\mathbb{P}\left(\tau_\beta=i,X_{i-1}\in A\right) \label{eq:x-tau-beta-1-inA}\\
    &=\sum_{1\le i\le T}\mathbb{P}\left(\tau_\beta=i,\overline{X}_{i-1}\in A\right) \label{eq:almost-sure-equal}\\
    &\le \sum_{1\le i\le T}\mathbb{P}\left(\overline{X}_{i-1}\in A\right)\\
    &=T\pi_\beta\left(A\mid I_3\cup \partial S\right)\label{eq:overline-X-initial}.
\end{align}
Here, \eqref{eq:x-tau-beta-1-inA} follows from the fact $X_{\tau_\beta-1}\in A$ recorded above; \eqref{eq:almost-sure-equal} uses the fact $\overline{X}_t = X_t$ almost surely as long as $t\le \tau_\beta$; and \eqref{eq:overline-X-initial} follows from the fact $\overline{X}_t \sim \pi_\beta\left(\cdot\mid I_3\cup\partial S\right)$.

We now employ the FEW property to conclude the proof. Observe that $A=\partial \left(I_3\cup \partial S\right)\subset I_3\cup \partial S$. Moreover, observe that $A=\partial S \subset I_2$, hence $\pi_\beta(A)\le \pi_\beta(I_2)$; and $\pi_\beta(I_3\cup \partial S)\ge \pi_\beta(I_3)$. Combining these, we obtain
\begin{equation}\label{eq:hadi-bakalim}
\pi_\beta\left(A\mid I_3\cup \partial S\right) = \frac{\pi_\beta\left(A\right)}{\pi_\beta\left(I_3\cup \partial S\right)} \le \frac{\pi_\beta\left(I_2\right)}{\pi_\beta\left(I_3\right)}\le \exp\left(-\Omega\left(\beta 2^{-n\epsilon}\right)\right).
\end{equation}
The last inequality uses the FEW property established in Theorem~\ref{thm:FEW}. Combining \eqref{eq:overline-X-initial} and \eqref{eq:hadi-bakalim} we conclude the proof:
\[
\mathbb{P}\left(\tau_\beta\le T\right)\le T\exp\left(-\Omega\left(\beta 2^{-n\epsilon}\right)\right).
\]
\end{proof}
With this, the proof of Theorem~\ref{thm:slow-mixing} is complete.
\end{proof}
\subsubsection*{Acknowledgments}
Part of this work was done while the authors were visiting the Simons Institute for the Theory of Computing at University of California, Berkeley in Fall 2020.
\bibliographystyle{amsalpha}
\bibliography{bibliography}

\newcommand{\etalchar}[1]{$^{#1}$}
\providecommand{\bysame}{\leavevmode\hbox to3em{\hrulefill}\thinspace}
\providecommand{\MR}{\relax\ifhmode\unskip\space\fi MR }
% \MRhref is called by the amsart/book/proc definition of \MR.
\providecommand{\MRhref}[2]{%
  \href{http://www.ams.org/mathscinet-getitem?mr=#1}{#2}
}
\providecommand{\href}[2]{#2}
\begin{thebibliography}{ACORT11}

\bibitem[ABDLO19]{addario2019local}
Louigi Addario-Berry, Luc Devroye, G{\'a}bor Lugosi, and Roberto~I Oliveira,
  \emph{Local optima of the sherrington-kirkpatrick hamiltonian}, Journal of
  Mathematical Physics \textbf{60} (2019), no.~4, 043301.

\bibitem[ACO08]{achlioptas2008algorithmic}
Dimitris Achlioptas and Amin Coja-Oghlan, \emph{Algorithmic barriers from phase
  transitions}, 2008 49th Annual IEEE Symposium on Foundations of Computer
  Science, IEEE, 2008, pp.~793--802.

\bibitem[ACORT11]{achlioptas2011solution}
Dimitris Achlioptas, Amin Coja-Oghlan, and Federico Ricci-Tersenghi, \emph{On
  the solution-space geometry of random constraint satisfaction problems},
  Random Structures \& Algorithms \textbf{38} (2011), no.~3, 251--268.

\bibitem[AGJ{\etalchar{+}}20]{arous2020algorithmic}
Gerard~Ben Arous, Reza Gheissari, Aukosh Jagannath, et~al., \emph{Algorithmic
  thresholds for tensor pca}, Annals of Probability \textbf{48} (2020), no.~4,
  2052--2087.

\bibitem[AKKT02]{achlioptas2002two}
Dimitris Achlioptas, Jeong~Han Kim, Michael Krivelevich, and Prasad Tetali,
  \emph{Two-coloring random hypergraphs}, Random Structures \& Algorithms
  \textbf{20} (2002), no.~2, 249--259.

\bibitem[APZ19]{aubin2019storage}
Benjamin Aubin, Will Perkins, and Lenka Zdeborova, \emph{Storage capacity in
  symmetric binary perceptrons}, Journal of Physics A: Mathematical and
  Theoretical \textbf{52} (2019), no.~29, 294003.

\bibitem[AS16]{alon2016probabilistic}
Noga Alon and Joel~H Spencer, \emph{The probabilistic method}, John Wiley \&
  Sons, 2016.

\bibitem[Ban10]{bansal2010constructive}
Nikhil Bansal, \emph{Constructive algorithms for discrepancy minimization},
  2010 IEEE 51st Annual Symposium on Foundations of Computer Science, IEEE,
  2010, pp.~3--10.

\bibitem[BB19]{brennan2019optimal}
Matthew Brennan and Guy Bresler, \emph{Optimal average-case reductions to
  sparse pca: From weak assumptions to strong hardness}, arXiv preprint
  arXiv:1902.07380 (2019).

\bibitem[BBH18]{brennan2018reducibility}
Matthew Brennan, Guy Bresler, and Wasim Huleihel, \emph{Reducibility and
  computational lower bounds for problems with planted sparse structure}, arXiv
  preprint arXiv:1806.07508 (2018).

\bibitem[BCMN09a]{borgs2009proof}
Christian Borgs, Jennifer Chayes, Stephan Mertens, and Chandra Nair,
  \emph{Proof of the local rem conjecture for number partitioning. i: Constant
  energy scales}, Random Structures \& Algorithms \textbf{34} (2009), no.~2,
  217--240.

\bibitem[BCMN09b]{borgs2009proof2}
\bysame, \emph{Proof of the local rem conjecture for number partitioning. ii.
  growing energy scales}, Random Structures \& Algorithms \textbf{34} (2009),
  no.~2, 241--284.

\bibitem[BCP01]{borgs2001phase}
Christian Borgs, Jennifer Chayes, and Boris Pittel, \emph{Phase transition and
  finite-size scaling for the integer partitioning problem}, Random Structures
  \& Algorithms \textbf{19} (2001), no.~3-4, 247--288.

\bibitem[BGT10]{bayati2010combinatorial}
Mohsen Bayati, David Gamarnik, and Prasad Tetali, \emph{Combinatorial approach
  to the interpolation method and scaling limits in sparse random graphs},
  Proceedings of the forty-second ACM symposium on Theory of computing, 2010,
  pp.~105--114.

\bibitem[BHK{\etalchar{+}}19]{barak2019nearly}
Boaz Barak, Samuel Hopkins, Jonathan Kelner, Pravesh~K Kothari, Ankur Moitra,
  and Aaron Potechin, \emph{A nearly tight sum-of-squares lower bound for the
  planted clique problem}, SIAM Journal on Computing \textbf{48} (2019), no.~2,
  687--735.

\bibitem[BLM13]{boucheron2013concentration}
St{\'e}phane Boucheron, G{\'a}bor Lugosi, and Pascal Massart,
  \emph{Concentration inequalities: A nonasymptotic theory of independence},
  Oxford university press, 2013.

\bibitem[BM04]{bauke2004universality}
Heiko Bauke and Stephan Mertens, \emph{Universality in the level statistics of
  disordered systems}, Physical Review E \textbf{70} (2004), no.~2, 025102.

\bibitem[BM08]{boettcher2008analysis}
Stefan Boettcher and Stephan Mertens, \emph{Analysis of the karmarkar-karp
  differencing algorithm}, The European Physical Journal B \textbf{65} (2008),
  no.~1, 131.

\bibitem[BPW18]{bandeira2018notes}
Afonso~S Bandeira, Amelia Perry, and Alexander~S Wein, \emph{Notes on
  computational-to-statistical gaps: predictions using statistical physics},
  arXiv preprint arXiv:1803.11132 (2018).

\bibitem[BR13]{berthet2013computational}
Quentin Berthet and Philippe Rigollet, \emph{Computational lower bounds for
  sparse pca}, arXiv preprint arXiv:1304.0828 (2013).

\bibitem[CF20]{conlon2020lower}
David Conlon and Asaf Ferber, \emph{Lower bounds for multicolor ramsey
  numbers}, Advances in Mathematics \textbf{378} (2020), 107528.

\bibitem[CFS15]{conlon2015recent}
David Conlon, Jacob Fox, and Benny Sudakov, \emph{Recent developments in graph
  ramsey theory.}, Surveys in combinatorics \textbf{424} (2015), 49--118.

\bibitem[CGP{\etalchar{+}}19]{chen2019suboptimality}
Wei-Kuo Chen, David Gamarnik, Dmitry Panchenko, Mustazee Rahman, et~al.,
  \emph{Suboptimality of local algorithms for a class of max-cut problems},
  Annals of Probability \textbf{47} (2019), no.~3, 1587--1618.

\bibitem[CL91]{coffman1991probabilistic}
Edward~Grady Coffman and George~S Lueker, \emph{Probabilistic analysis of
  packing and partitioning algorithms}, Wiley-Interscience, 1991.

\bibitem[COE15]{coja2015independent}
Amin Coja-Oghlan and Charilaos Efthymiou, \emph{On independent sets in random
  graphs}, Random Structures \& Algorithms \textbf{47} (2015), no.~3, 436--486.

\bibitem[COP12]{coja2012catching}
Amin Coja-Oglan and Konstantinos Panagiotou, \emph{Catching the k-naesat
  threshold}, Proceedings of the forty-fourth annual ACM symposium on Theory of
  computing, 2012, pp.~899--908.

\bibitem[Cos09]{costello2009balancing}
Kevin~P Costello, \emph{Balancing gaussian vectors}, Israel Journal of
  Mathematics \textbf{172} (2009), no.~1, 145--156.

\bibitem[CV14]{chandrasekaran2014integer}
Karthekeyan Chandrasekaran and Santosh~S Vempala, \emph{Integer feasibility of
  random polytopes: random integer programs}, Proceedings of the 5th conference
  on Innovations in theoretical computer science, 2014, pp.~449--458.

\bibitem[Der80]{derrida1980random}
Bernard Derrida, \emph{Random-energy model: Limit of a family of disordered
  models}, Physical Review Letters \textbf{45} (1980), no.~2, 79.

\bibitem[Der81]{derrida1981random}
\bysame, \emph{Random-energy model: An exactly solvable model of disordered
  systems}, Physical Review B \textbf{24} (1981), no.~5, 2613.

\bibitem[DKS17]{diakonikolas2017statistical}
Ilias Diakonikolas, Daniel~M Kane, and Alistair Stewart, \emph{Statistical
  query lower bounds for robust estimation of high-dimensional gaussians and
  gaussian mixtures}, 2017 IEEE 58th Annual Symposium on Foundations of
  Computer Science (FOCS), IEEE, 2017, pp.~73--84.

\bibitem[DM15]{deshpande2015improved}
Yash Deshpande and Andrea Montanari, \emph{Improved sum-of-squares lower bounds
  for hidden clique and hidden submatrix problems}, Conference on Learning
  Theory, 2015, pp.~523--562.

\bibitem[ES35]{erdos1935combinatorial}
Paul Erd{\"o}s and George Szekeres, \emph{A combinatorial problem in geometry},
  Compositio mathematica \textbf{2} (1935), 463--470.

\bibitem[FGR{\etalchar{+}}17]{feldman2017statistical}
Vitaly Feldman, Elena Grigorescu, Lev Reyzin, Santosh~S Vempala, and Ying Xiao,
  \emph{Statistical algorithms and a lower bound for detecting planted
  cliques}, Journal of the ACM (JACM) \textbf{64} (2017), no.~2, 1--37.

\bibitem[F{\L}92]{frieze1992independence}
Alan~M Frieze and T~{\L}uczak, \emph{On the independence and chromatic numbers
  of random regular graphs}, Journal of Combinatorial Theory, Series B
  \textbf{54} (1992), no.~1, 123--132.

\bibitem[Fri90]{frieze1990independence}
Alan~M Frieze, \emph{On the independence number of random graphs}, Discrete
  Mathematics \textbf{81} (1990), no.~2, 171--175.

\bibitem[GJ90]{gareyjohnson}
Michael~R. Garey and David~S. Johnson, \emph{Computers and intractability; a
  guide to the theory of np-completeness}, W. H. Freeman \& Co., USA, 1990.

\bibitem[GJ21]{gamarnik2021overlap}
David Gamarnik and Aukosh Jagannath, \emph{The overlap gap property and
  approximate message passing algorithms for $ p $-spin models}, Annals of
  Probability \textbf{49} (2021), no.~1, 180--205.

\bibitem[GJS19]{gjs2019overlap}
David Gamarnik, Aukosh Jagannath, and Subhabrata Sen, \emph{The overlap gap
  property in principal submatrix recovery}, arXiv preprint arXiv:1908.09959
  (2019).

\bibitem[GJW20]{gamarnik2020lowFOCS}
David Gamarnik, Aukosh Jagannath, and Alexander~S Wein, \emph{Low-degree
  hardness of random optimization problems}, 2020 IEEE 61st Annual Symposium on
  Foundations of Computer Science (FOCS), 2020.

\bibitem[GK]{gk2020}
David Gamarnik and Eren~C. K{\i}z{\i}lda{\u{g}}, \emph{Computing the partition
  function of the {S}herrington-{K}irkpatrick model is hard on average}, Annals
  of Applied Probability, To appear.

\bibitem[GL{\etalchar{+}}18]{gamarnik2018finding}
David Gamarnik, Quan Li, et~al., \emph{Finding a large submatrix of a gaussian
  random matrix}, The Annals of Statistics \textbf{46} (2018), no.~6A,
  2511--2561.

\bibitem[GS17a]{gamarnik2017}
David Gamarnik and Madhu Sudan, \emph{Limits of local algorithms over sparse
  random graphs}, Ann. Probab. \textbf{45} (2017), no.~4, 2353--2376.

\bibitem[GS17b]{gamarnik2017performance}
\bysame, \emph{Performance of sequential local algorithms for the random
  nae-k-sat problem}, SIAM Journal on Computing \textbf{46} (2017), no.~2,
  590--619.

\bibitem[GW96]{gent1996phase}
Ian~P Gent and Toby Walsh, \emph{Phase transitions and annealed theories:
  Number partitioning as a case study'}, ECAI, PITMAN, 1996, pp.~170--174.

\bibitem[GZ17a]{gamarnik2017high}
David Gamarnik and Ilias Zadik, \emph{High dimensional linear regression with
  binary coefficients: Mean squared error and a phase transition}, Conference
  on Learning Theory (COLT), 2017.

\bibitem[GZ17b]{gamarnik2017sparse}
\bysame, \emph{Sparse high-dimensional linear regression. algorithmic barriers
  and a local search algorithm}, arXiv preprint arXiv:1711.04952 (2017).

\bibitem[GZ19]{gamarnik2019landscape}
\bysame, \emph{The landscape of the planted clique problem: Dense subgraphs and
  the overlap gap property}, arXiv preprint arXiv:1904.07174 (2019).

\bibitem[HJ12]{horn2012matrix}
Roger~A Horn and Charles~R Johnson, \emph{Matrix analysis}, Cambridge
  {U}niversity {P}ress, 2012.

\bibitem[HKP{\etalchar{+}}17]{hopkins2017power}
Samuel~B Hopkins, Pravesh~K Kothari, Aaron Potechin, Prasad Raghavendra, Tselil
  Schramm, and David Steurer, \emph{The power of sum-of-squares for detecting
  hidden structures}, 2017 IEEE 58th Annual Symposium on Foundations of
  Computer Science (FOCS), IEEE, 2017, pp.~720--731.

\bibitem[HLS14]{hatami2014limits}
Hamed Hatami, L{\'a}szl{\'o} Lov{\'a}sz, and Bal{\'a}zs Szegedy, \emph{Limits
  of locally--globally convergent graph sequences}, Geometric and Functional
  Analysis \textbf{24} (2014), no.~1, 269--296.

\bibitem[Hop18]{hopkins2018statistical}
Samuel Brink~Klevit Hopkins, \emph{Statistical inference and the sum of squares
  method}.

\bibitem[HRRY17]{hoberg2017number}
Rebecca Hoberg, Harishchandra Ramadas, Thomas Rothvoss, and Xin Yang,
  \emph{Number balancing is as hard as minkowski’s theorem and shortest
  vector}, International Conference on Integer Programming and Combinatorial
  Optimization, Springer, 2017, pp.~254--266.

\bibitem[HSS15]{hopkins2015tensor}
Samuel~B Hopkins, Jonathan Shi, and David Steurer, \emph{Tensor principal
  component analysis via sum-of-square proofs}, Conference on Learning Theory,
  2015, pp.~956--1006.

\bibitem[HSSZ19]{harshaw2019balancing}
Christopher Harshaw, Fredrik S{\"a}vje, Daniel Spielman, and Peng Zhang,
  \emph{Balancing covariates in randomized experiments using the gram-schmidt
  walk}, arXiv preprint arXiv:1911.03071 (2019).

\bibitem[HW53]{hoffman1953variation}
AJ~Hoffman and HW~Wielandt, \emph{The variation of the spectrum of a normal
  matrix}, Duke Mathematical Journal \textbf{20} (1953), no.~1, 37--39.

\bibitem[Jer92]{jerrum1992large}
Mark Jerrum, \emph{Large cliques elude the metropolis process}, Random
  Structures \& Algorithms \textbf{3} (1992), no.~4, 347--359.

\bibitem[KAK19]{krieger2019nearly}
Abba~M Krieger, David Azriel, and Adam Kapelner, \emph{Nearly random designs
  with greatly improved balance}, Biometrika \textbf{106} (2019), no.~3,
  695--701.

\bibitem[Kea98]{kearns1998efficient}
Michael Kearns, \emph{Efficient noise-tolerant learning from statistical
  queries}, Journal of the ACM (JACM) \textbf{45} (1998), no.~6, 983--1006.

\bibitem[KK82]{karmarkar1982differencing}
Narendra Karmarkar and Richard~M Karp, \emph{The differencing method of set
  partitioning}, Computer Science Division (EECS), University of California
  Berkeley, 1982.

\bibitem[KKLO86]{karmarkar1986probabilistic}
Narendra Karmarkar, Richard~M Karp, George~S Lueker, and Andrew~M Odlyzko,
  \emph{Probabilistic analysis of optimum partitioning}, Journal of Applied
  Probability (1986), 626--645.

\bibitem[KMOW17]{kothari2017sum}
Pravesh~K Kothari, Ryuhei Mori, Ryan O'Donnell, and David Witmer, \emph{Sum of
  squares lower bounds for refuting any csp}, Proceedings of the 49th Annual
  ACM SIGACT Symposium on Theory of Computing, 2017, pp.~132--145.

\bibitem[KWB19]{kunisky2019notes}
Dmitriy Kunisky, Alexander~S Wein, and Afonso~S Bandeira, \emph{Notes on
  computational hardness of hypothesis testing: Predictions using the
  low-degree likelihood ratio}, arXiv preprint arXiv:1907.11636 (2019).

\bibitem[Lef87]{lefmann1987note}
Hanno Lefmann, \emph{A note on ramsey numbers}, Studia Sci. Math. Hungar
  \textbf{22} (1987), no.~1-4, 445--446.

\bibitem[LKZ15a]{lesieur2015mmse}
Thibault Lesieur, Florent Krzakala, and Lenka Zdeborov{\'a}, \emph{Mmse of
  probabilistic low-rank matrix estimation: Universality with respect to the
  output channel}, 2015 53rd Annual Allerton Conference on Communication,
  Control, and Computing (Allerton), IEEE, 2015, pp.~680--687.

\bibitem[LKZ15b]{lesieur2015phase}
\bysame, \emph{Phase transitions in sparse pca}, 2015 IEEE International
  Symposium on Information Theory (ISIT), IEEE, 2015, pp.~1635--1639.

\bibitem[LM15]{lovett2015constructive}
Shachar Lovett and Raghu Meka, \emph{Constructive discrepancy minimization by
  walking on the edges}, SIAM Journal on Computing \textbf{44} (2015), no.~5,
  1573--1582.

\bibitem[LRR17]{levy2017deterministic}
Avi Levy, Harishchandra Ramadas, and Thomas Rothvoss, \emph{Deterministic
  discrepancy minimization via the multiplicative weight update method},
  International Conference on Integer Programming and Combinatorial
  Optimization, Springer, 2017, pp.~380--391.

\bibitem[Lue87]{lueker1987note}
George~S Lueker, \emph{A note on the average-case behavior of a simple
  differencing method for partitioning}, Operations Research Letters \textbf{6}
  (1987), no.~6, 285--287.

\bibitem[LW07]{lauer2007large}
Joseph Lauer and Nicholas Wormald, \emph{Large independent sets in regular
  graphs of large girth}, Journal of Combinatorial Theory, Series B \textbf{97}
  (2007), no.~6, 999--1009.

\bibitem[Mer98]{mertens1998phase}
Stephan Mertens, \emph{Phase transition in the number partitioning problem},
  Physical Review Letters \textbf{81} (1998), no.~20, 4281.

\bibitem[MH78]{merkle1978hiding}
Ralph Merkle and Martin Hellman, \emph{Hiding information and signatures in
  trapdoor knapsacks}, IEEE transactions on Information Theory \textbf{24}
  (1978), no.~5, 525--530.

\bibitem[MMZ05]{mezard2005clustering}
Marc M{\'e}zard, Thierry Mora, and Riccardo Zecchina, \emph{Clustering of
  solutions in the random satisfiability problem}, Physical Review Letters
  \textbf{94} (2005), no.~19, 197205.

\bibitem[Mon19]{montanari2019optimization}
Andrea Montanari, \emph{Optimization of the sherrington-kirkpatrick
  hamiltonian}, 2019 IEEE 60th Annual Symposium on Foundations of Computer
  Science (FOCS), IEEE, 2019, pp.~1417--1433.

\bibitem[MPW15]{meka2015sum}
Raghu Meka, Aaron Potechin, and Avi Wigderson, \emph{Sum-of-squares lower
  bounds for planted clique}, Proceedings of the forty-seventh annual ACM
  symposium on Theory of computing, 2015, pp.~87--96.

\bibitem[Rot17]{rothvoss2017constructive}
Thomas Rothvoss, \emph{Constructive discrepancy minimization for convex sets},
  SIAM Journal on Computing \textbf{46} (2017), no.~1, 224--234.

\bibitem[RSS18]{raghavendra2018high}
Prasad Raghavendra, Tselil Schramm, and David Steurer, \emph{High-dimensional
  estimation via sum-of-squares proofs}, arXiv preprint arXiv:1807.11419
  \textbf{6} (2018).

\bibitem[RV17]{rahman2017}
Mustazee Rahman and B{\'a}lint Vir{\'a}g, \emph{Local algorithms for
  independent sets are half-optimal}, Ann. Probab. \textbf{45} (2017), no.~3,
  1543--1577.

\bibitem[Spe85]{spencer1985six}
Joel Spencer, \emph{Six standard deviations suffice}, Transactions of the
  American mathematical society \textbf{289} (1985), no.~2, 679--706.

\bibitem[Tal10]{talagrand2010mean}
Michel Talagrand, \emph{Mean field models for spin glasses: Volume i: Basic
  examples}, vol.~54, Springer Science \& Business Media, 2010.

\bibitem[TMR20]{turner2020balancing}
Paxton Turner, Raghu Meka, and Philippe Rigollet, \emph{Balancing gaussian
  vectors in high dimension}, Conference on Learning Theory, PMLR, 2020,
  pp.~3455--3486.

\bibitem[Tsa92]{tsai1992asymptotic}
Li-Hui Tsai, \emph{Asymptotic analysis of an algorithm for balanced parallel
  processor scheduling}, SIAM Journal on Computing \textbf{21} (1992), no.~1,
  59--64.

\bibitem[Ver10]{vershynin2010introduction}
Roman Vershynin, \emph{Introduction to the non-asymptotic analysis of random
  matrices}, arXiv preprint arXiv:1011.3027 (2010).

\bibitem[Ver18]{vershynin2018high}
\bysame, \emph{High-dimensional probability: An introduction with applications
  in data science}, vol.~47, Cambridge university press, 2018.

\bibitem[Wei20]{wein2020optimal}
Alexander~S Wein, \emph{Optimal low-degree hardness of maximum independent
  set}, arXiv preprint arXiv:2010.06563 (2020).

\bibitem[Yak96]{yakir1996differencing}
Benjamin Yakir, \emph{The differencing algorithm ldm for partitioning: a proof
  of a conjecture of karmarkar and karp}, Mathematics of Operations Research
  \textbf{21} (1996), no.~1, 85--99.

\bibitem[ZK16]{zdeborova2016statistical}
Lenka Zdeborov{\'a} and Florent Krzakala, \emph{Statistical physics of
  inference: Thresholds and algorithms}, Advances in Physics \textbf{65}
  (2016), no.~5, 453--552.

\end{thebibliography}
%For this, we rely on Gershgorin disc/Weyl theorem (both give same answer): consider $\Sigma=I_m+E$ where $|E_{ij}|\in[\beta-\eta,\beta]$. For $\Sigma$ to be invertible, a sufficient condition is
%$$
%\beta(m-1)<1
%$$
%Assuming this condition is satisfied, the exponent computes
%$$
%\exp_2\left(n + nh\left(\frac{1-\beta+\eta}{2}\right)(m-1)+(m-1)O(\log_2 n)-n\epsilon m\right).
%$$
%Thus, one needs to choose  $\beta,\eta,m$ such that:
%\begin{itemize}
%    \item $\beta(m-1)<1$, and 
 %   \item $$
 %   1+h\left(\frac{1-\beta+\eta}{2}\right)(m-1) %-\epsilon m<0. 
 %   $$
%\end{itemize}
%{\bf Ideas to improve.} There is (massive?) overcounting. Potentially better bounds for ensuring $\Sigma$ to be invertible may be  available.
%\bibliographystyle{amsalpha}
%\bibliography{bibliography}

\end{document}